\documentclass[12pt]{amsart}
 \usepackage{mathtools}
\mathtoolsset{showonlyrefs,showmanualtags}

\usepackage[backrefs]{amsrefs}

 \usepackage[top=1.5in, bottom=1.5in, left=1.25in, right=1.25in]	{geometry}
\usepackage[all]{xy}
\usepackage{amsmath}
\usepackage{amssymb}
\usepackage{amsthm}
\usepackage{amscd}

\newcommand{\R}{\mathcal{R}}

\newcommand{\F}{\mathcal{F}}

\newcommand{\V}{\mathcal{V}}

\newcommand{\Z}{\mathbb  Z}

\newcommand{\RR}{\mathbb  R}
\newcommand{\TT}{\mathbb  T}
\newcommand{\NN}{\mathbb  N}

\newcommand{\Ca}{\mathcal{C}}

\numberwithin{equation}{section}

\newtheorem{theorem}[equation]{Theorem}
\newtheorem{definition}[equation]{Definition}
\newtheorem{proposition}[equation]{Proposition}
\newtheorem{cor}[equation]{Corollary}
\newtheorem{lemma}[equation]{Lemma}

\newtheorem{remark}[equation]{Remark}

\DeclareMathOperator{\supp}{supp}

\usepackage{hyperref} 
\hypersetup{
    colorlinks=true,       
    linkcolor=blue,          
    citecolor=magenta,        
    filecolor=magenta,      
    urlcolor=cyan           
}

\begin{document}
\title[Discrete Monomial Carleson]{Discrete Analogoues in Harmonic Analysis: Maximally Monomially Modulated Singular Integrals Related to Carleson's Theorem}
\author{Ben Krause}
\address{
Department of Mathematics,
Caltech \\
Pasadena, CA 91125}
\email{benkrause2323@gmail.com}
\date{\today}

\date{\today}

\begin{abstract}
Motivated by Bourgain's work on pointwise ergodic theorems, and the work of Stein and Stein-Wainger on maximally modulated singular integrals without linear terms, we prove that the maximally monomially modulated discrete Hilbert transform,
\[ \mathcal{C}_df(x) := \sup_\lambda \left| \sum_{m \neq 0} f(x-m) \frac{e^{2\pi i \lambda m^d}}{m} \right| \]
is bounded on all $\ell^p, \ 2 - \frac{1}{d^2 + 1} < p < \infty$, for any $d \geq 2$. We also establish almost everywhere pointwise convergence of the modulated ergodic Hilbert transforms (as $\lambda \to 0$)
\[ \sum_{m \neq 0} T^m f(x) \cdot \frac{e^{2\pi i \lambda m^d}}{m} \]
for any measure-preserving system $(X,\mu,T)$, and any $f \in L^p(X), \ 2 - \frac{1}{d^2 +1} < p < \infty$.
\end{abstract}

\maketitle

 \setcounter{tocdepth}{1}
\tableofcontents

\section{Introduction}

Discrete analogues of (continuous) polynomial radon transforms has been an active area of research since Bourgain initiated their study in the course of his work on pointwise ergodic theorems in the late 80s and early 90s, \cite{B1,B2,B3}. In December 2015, this study was dramatically advanced in two papers by Mirek, Stein, and Trojan \cite{MST1,MST2}, where full norm estimates were proven for both maximal radon transforms, and their larger, \emph{variational}, variants; recently, this line of inquiry has been essentially concluded in the work of Mirek, Stein, and Zorin-Kranich \cite{MSZK}.

In this paper, we investigate discrete analogues of \emph{maximally modulated oscillatory singular integrals} of the type considered by Stein \cite{S} and Stein-Wainger \cite{SW}; this paper will be concerned with monomial generalizations of Stein's purely quadratic ``Carleson'' operator, known to be bounded on all $L^p, \ 1 < p < \infty$, \cite{S}:\footnote{Throughout this paper, we will refer to our maximally monomially modulated Hilbert transforms as Carleson operators. Although none of our operators are modulation invariant -- we ask the reader to forgive this abuse of notation in the interest of increased readability. Concerning modulation invariant operators, a transference argument from the continuous setting yields the boundedness of the discrete Carleson operator, $\sup_\lambda \left| \sum_{m \neq 0} f(x-m) \frac{e^{2 \pi i \lambda m}} {m} \right|,$ on all $\ell^p, \ 1 < p < \infty$ (this observation is due to Stein, \cite{SS}). It is only when \emph{linearity} is destroyed that the continuous theory and discrete theory diverge (cf.\ e.g.\ \cite{DTT}).} 
\begin{equation}\label{S}
\sup_\lambda \left| \int f(x-t) \frac{e(\lambda t^2)}{t} \ dt \right|, \ e(t) := e^{2\pi i t}.
\end{equation}

In particular, the central object of focus will be the following discrete operators,
\begin{equation} \label{C}
\Ca_d f(x) := \sup_\lambda \left| \sum_{m \neq 0} f(x-m) \frac{e(\lambda m^d)}{m} \right| = \sup_\lambda \left| \sum_{m \neq 0} f(x-m) \frac{e(-\lambda m^d)}{m} \right|.
\end{equation}

To appreciate the delicacy of these operators, note that $\ell^2$ bounds for the (two-variable) discrete Hilbert transform along variable ``parabolas''
\[ \sum_{m \neq 0} \frac{ f(x-m,y-v(x)m^d) }{m} ,\]
follows directly from corresponding $\ell^2$ estimates for $\Ca_d$ -- for any function $v: \Z \to \Z$.\footnote{This can be seen by taking a partial Fourier transform in the $y$-variable.}

Perhaps unsurprisingly, then, $\Ca_d$ has so far proven rather resistant to the ($\ell^2$) arguments developed by Bourgain and others; our main result, which builds upon a strategy devised in previous work with Michael Lacey in which the supremum in $\Ca_2$ was highly constrained \cite{KL}, establishes $\ell^2$ estimates for $\Ca_d, \ d \geq 2$. In fact, we are able to develop a full $\ell^p$ theory for $p \geq 2$, and a partial one for $1 < p \leq 2$.

\begin{theorem}\label{MAIN}
For any $d \geq 2$, and any $2 - \frac{1}{d^2 + 1} < p < \infty$, there exists an absolute constant $C_{d,p}$ so that the following norm bound holds:
\begin{equation}\label{result}
\| \Ca_d f\|_{\ell^p} \leq C_{d,p} \| f\|_{\ell^p}.
\end{equation}
\end{theorem}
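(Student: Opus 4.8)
The plan is to attack the maximal operator $\Ca_d$ via a Fourier-analytic decomposition adapted to the rational approximation structure of the modulation parameter $\lambda$, in the spirit of Bourgain's circle method and its refinements by Mirek--Stein--Trojan. First I would use the symmetry noted in \eqref{C} to reduce to controlling $\sup_\lambda |\sum_{m\neq 0} f(x-m) e(\lambda m^d)/m|$, and linearize the supremum by introducing a measurable function $\lambda(x)\in[0,1)$. The multiplier $\sum_{m\neq 0} e(\lambda m^d)/m \cdot e(-\xi m)$ is then studied through Weyl-sum/Gauss-sum estimates: for each $\lambda$ one isolates the ``major arcs'' where $\lambda$ is close to a rational $a/q$ with small denominator $q$, and on these arcs one expects the multiplier to resemble $\frac{1}{q}\sum_{r \bmod q} e(a r^d/q)$ times a continuous Carleson-type multiplier in the remaining low-frequency variable. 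The core reduction is therefore to a \emph{continuous} maximally monomially modulated Hilbert transform — precisely the operator \eqref{S} and its degree-$d$ analogues, whose $L^p$ boundedness is the input from Stein and Stein--Wainger — combined with the arithmetic saving coming from complete exponential sums $\sum_{r\bmod q} e(ar^d/q) \ll q^{1-1/d}$.

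Next I would organize the decomposition dyadically: write $\Ca_d f \le \sum_{j\ge 0} \Ca_d^{(j)} f$ where $\Ca_d^{(j)}$ restricts the Hilbert-transform kernel to scales $|m|\sim 2^j$, and within each scale further partition the $\lambda$-space into Farey arcs of level depending on $j$. The two regimes to handle are: (i) $q$ small relative to $2^j$ (major arcs), where one extracts the Gauss sum, passes to a smooth continuous model, and invokes the continuous Carleson-type bound on $\ell^p \cong$ (a periodized $L^p$) after a standard comparison of discrete and continuous multipliers à la Mirek--Stein--Trojan; the arithmetic factor $q^{-1/d}$ (or its analogue) then gives summability in $q$, hence in $j$; and (ii) $q$ large (minor arcs), where Weyl's inequality gives genuine decay $|\widehat{K_j}(\xi,\lambda)| \lesssim 2^{-\delta j}$ for some $\delta=\delta(d)>0$ uniformly in the relevant $\xi,\lambda$, which after a maximal-function / square-function majorization (summing an $\ell^2$-valued version over $j$) is harmless. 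The interpolation step: one proves the full strength — square-function control, hence good $\ell^2$ bounds with polynomial-in-$j$ losses that are beaten by exponential gains — at $p=2$, obtains crude $\ell^p$ bounds for all $p>1$ by positivity/$\ell^1\to\ell^{1,\infty}$-type estimates on the constituent pieces, and interpolates; tracking the exponents in the major-arc error terms is exactly what produces the restriction $p > 2 - \frac{1}{d^2+1}$.

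The hard part, I expect, will be the major-arc analysis and specifically the \emph{uniformity in $\lambda$} of the passage to the continuous model while keeping the supremum inside. Unlike in Bourgain's ergodic theorems, the modulation $\lambda$ is itself the variable over which one takes a supremum, so one cannot freeze it; one must run the Farey dissection \emph{simultaneously} with the linearizing function $\lambda(x)$ and show that the ``off-diagonal'' interaction between the spatial frequency $\xi$ and the modulation frequency is controlled. Concretely, the obstacle is to prove a vector-valued / maximal version of the continuous estimate \eqref{S} with the right dependence on the denominator $q$ and on the proximity $|\lambda - a/q|$, uniformly over the family of rescaled kernels — this is where the strategy of \cite{KL}, in which the supremum was "highly constrained,'' must be genuinely upgraded to an unconstrained supremum, and where the delicate numerology governing $p$ is forced. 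A secondary technical point is the sharp Weyl/Hua-type bound needed on the minor arcs for degree-$d$ monomials to make the $j$-sum converge with room to spare after interpolation; this is classical but must be applied with exponents matched to the major-arc side.
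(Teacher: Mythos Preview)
Your outline captures the coarse architecture (circle method, major/minor arcs, reduction to a continuous model, interpolation), and you correctly identify the crux: uniformity in $\lambda$ of the passage to the continuous approximant while the supremum in $\lambda$ is still live. But you do not propose a mechanism for overcoming this, and the paper's argument shows that one is genuinely needed. The difficulty is that the major-box structure is two-dimensional in $(\lambda,\beta)$: for each rational approximation $A/Q$ to $\lambda$, the set of distinguished $\beta$-frequencies changes. Simply ``running the Farey dissection simultaneously with $\lambda(x)$'' does not work, because the approximating multiplier depends on $\lambda$ in a way that couples to $\beta$. The paper's resolution is a $TT^*$ argument (Section~\ref{s:TT*1}) showing that for each dyadic scale $j$, the supremum over $\lambda$ outside a tiny set $X_j$ (a union of $O(j^C)$ intervals of length $O(j^C 2^{-dj})$ around rationals with denominator $\lesssim j^C$) already has $\ell^p$ norm $\lesssim j^{-2}$. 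This forces the surviving $\lambda$'s to live in a set of measure $\lesssim j^C 2^{-dj}$, which is exactly what makes a Sobolev-embedding argument (Lemma~\ref{sobemb}) effective and allows one to pass to the analytic approximant. Without this restriction step, you cannot control the supremum.

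Second, even after the reduction to the continuous model, you land on a \emph{multi-frequency} Carleson-type operator: the approximant lives near many $\beta$-frequencies $B/Q$ simultaneously, and the supremum in $\lambda$ is still there. This is not the single-frequency Stein--Wainger operator, and its $\ell^p$ theory is the technical heart of the paper (Section~\ref{s:max}). For $p\ge 2$ one needs the Ionescu--Wainger/Mirek--Stein--Zorin-Kranich transference machinery (Theorem~\ref{Multtheorem}) together with vector-valued Mikhlin multipliers, and in the ``oscillatory critical regime'' the relevant square functions have Mikhlin norm $2^{l/2}$, so singular-integral methods fail and one needs the smoothing square-function estimates of Lee--Rogers--Seeger (Section~\ref{s:SFE}). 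These estimates are \emph{false} for $p<2$ (the paper exhibits the blow-up explicitly after \eqref{bestconst}), so your plan to get ``crude $\ell^p$ bounds for all $p>1$'' and interpolate cannot work as stated. The paper's route below $p=2$ instead re-selects the $\beta$-frequencies to be unions of full cyclic groups $\mathbb{Z}/Q\mathbb{Z}$ (via M\"obius inversion), where the multi-frequency problem collapses to a single-frequency one, and interpolates the resulting $\ell^p$ bound against an $\ell^2$ bound coming from Bourgain's $\log^2 N$ estimate for generic frequencies; the exponent $2-\tfrac{1}{d^2+1}$ emerges from balancing this against the Weyl-sum gain $2^{-s/d^2}$ from a second $TT^*$ argument (Section~\ref{s:TT*2}, Proposition~\ref{Weylest}).
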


A heuristic, beautifully distilled in \cite{MST1}, is that -- in light of the multiplier arguments developed first in Ionescu-Wainger \cite{IW} (see \S \ref{s:IW}) -- harmonic analysis in the discrete setting should parallel the continuous setting ``up to logarithms.'' This principle is implicit in much of the work of this paper, but becomes particularly explicit in the number-theoretic $TT^*$ argument of \S \ref{s:TT*1}, and in the \emph{multi-frequency} analysis of \S \ref{s:max}, where smoothing estimates for certain oscillatory multipliers are needed. 
However, to apply the Ionescu-Wainger theory in our multi-frequency setting, we are forced to pass to certain (continuous) square functions (introduced and discussed in \S \ref{s:SFE}), which enjoy smoothing when $p \geq 2$, but are uncontrolled in the low-$L^p$ setting; estimating $\Ca_d$ below $\ell^2$ requires an additional argument, drawing upon $\ell^2$ methods, interpolation, and M\"{o}bius inversion, see \S \ref{s:comp1}.

As a corollary of our methods, we are able to handle the issue of pointwise convergence of the associated ergodic theoretic operators. Specifically, we have the following result.
\begin{theorem}\label{pointwise}
Let $(X,\mu,T)$ be a $\sigma$-finite measure space, equipped with an invertible measure-preserving transformation, $T$. Then for any $2 - \frac{1}{d^2 + 1} < p  <\infty$, $d \geq 2$ and any $f \in L^p(X)$, there exists an $f_d \in L^p(X)$ so that
\[ \lim_{\lambda \to 0} \ \sum_{m \neq 0} T^m f \cdot \frac{e(\lambda m^d)}{m} = f_d \]
$\mu$-a.e.
\end{theorem}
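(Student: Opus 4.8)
The plan is to deduce Theorem~\ref{pointwise} from the $\ell^p$ bounds in Theorem~\ref{MAIN} via the by-now standard two-step strategy for pointwise convergence: establish an \emph{oscillation/variation} or \emph{maximal} inequality for the relevant family of operators, then verify a.e.\ convergence on a dense subclass, and finally transfer the conclusion from the integers to an arbitrary measure-preserving system via the Calder\'on transference principle. Concretely, for $\lambda \in (0,1]$ write $H_\lambda f(x) := \sum_{m \neq 0} f(x-m) e(\lambda m^d)/m$ on $\Z$, so that $\Ca_d f = \sup_\lambda |H_\lambda f|$. The first step is to note that Theorem~\ref{MAIN} already gives the \emph{maximal} inequality $\|\sup_\lambda |H_\lambda f|\|_{\ell^p} \lesssim \|f\|_{\ell^p}$ for $2 - \tfrac{1}{d^2+1} < p < \infty$; by transference this yields $\|\sup_\lambda |\sum_{m\neq0} T^m f \cdot e(\lambda m^d)/m|\|_{L^p(X)} \lesssim \|f\|_{L^p(X)}$ for any $\sigma$-finite measure-preserving system.

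Next I would identify a dense subclass of $L^p(X)$ on which convergence as $\lambda \to 0$ is easy, and here the natural choice is $f$ with $Tf = f$ (or more robustly, finite linear combinations of eigenfunctions $Tf = e^{2\pi i\theta} f$, together with functions in the image of $I - T$, whose span is dense by the mean ergodic theorem / von Neumann's decomposition $L^p = \overline{\{Tf=f\}} \oplus \overline{(I-T)L^p}$). On a $T$-invariant $f$ one has $\sum_{m\neq0} T^m f \cdot e(\lambda m^d)/m = f \cdot \sum_{m \neq 0} e(\lambda m^d)/m$, and a one-dimensional oscillatory-sum estimate (Weyl/van der Corput, or simply dominated convergence against the conditionally convergent symmetric sum) shows $\sum_{0 < |m| \le N} e(\lambda m^d)/m$ converges as $\lambda \to 0$ to $0$ uniformly, so the limit exists a.e.; for $f = (I-T)g$ with $g$ bounded one telescopes in the $m$-variable and uses the $1/m$ decay to see the sum converges and tends to a limit. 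Actually the cleanest route is: for a bounded coboundary the partial sums $\sum_{0<|m|\le N} T^m f \cdot e(\lambda m^d)/m$ converge to a fixed limit independent of the delicate $\lambda$-behavior, so a.e.\ convergence as $\lambda\to0$ holds trivially on the dense class.

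The standard $\varepsilon/3$-argument then closes the proof: given $f \in L^p(X)$ and $\varepsilon > 0$, pick $g$ in the dense class with $\|f - g\|_{L^p} < \varepsilon$; the limsup over $\lambda,\lambda'\to 0$ of $|H_\lambda^X(f-g) - H_{\lambda'}^X(f-g)|$ is pointwise dominated by $2\,\Ca_d^X(f-g)$, whose $L^p$ norm is $\lesssim \varepsilon$ by the transferred maximal inequality; since the oscillation of $H^X_\lambda g$ vanishes a.e., the oscillation $\Omega(f)(x) := \limsup_{\lambda,\lambda'\to0}|H^X_\lambda f(x) - H^X_{\lambda'} f(x)|$ satisfies $\|\Omega(f)\|_{L^p} \lesssim \varepsilon$ for every $\varepsilon$, hence $\Omega(f) = 0$ a.e., i.e.\ $\lim_{\lambda\to0} H^X_\lambda f =: f_d$ exists $\mu$-a.e.; that $f_d \in L^p$ follows from Fatou's lemma and the maximal bound $\|f_d\|_{L^p} \le \|\Ca_d^X f\|_{L^p} \lesssim \|f\|_{L^p}$. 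The main obstacle in this scheme is \emph{not} any of these soft steps but rather confirming that a genuine \emph{maximal} (as opposed to merely norm) estimate for $\Ca_d$ is what Theorem~\ref{MAIN} delivers — which it does, since $\Ca_d$ is defined as a supremum — and, for the low-$L^p$ range $p<2$, ensuring the transference principle applies to the sublinear maximal operator (it does: Calder\'on transference is insensitive to sublinearity). A secondary technical point is justifying the a.e.\ convergence on the dense class with sufficient uniformity; this is where one invokes the elementary fact that $\lambda \mapsto \sum_{m\neq0} e(\lambda m^d)/m$ extends continuously to $\lambda=0$ with value $0$, which follows from Abel summation and the equidistribution of $\{\lambda m^d\}$, or more simply from the uniform bound $\sup_{\lambda}|\sum_{0<|m|\le N} e(\lambda m^d)/m| \le C$ and dominated convergence.
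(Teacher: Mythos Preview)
Your scheme---maximal inequality plus verification on a dense class---is the textbook route for Birkhoff-type theorems, but it breaks down here at the coboundary step, and this is not a technicality. For $f=(I-T)g$ with $g\in L^\infty$, Abel summation in $m$ gives
\[
\sum_{0<m\le N} T^m f\cdot a_m = Tg\cdot a_1 - T^{N+1}g\cdot a_N + \sum_{m=2}^N T^m g\cdot(a_m-a_{m-1}),\qquad a_m:=\frac{e(\lambda m^d)}{m}.
\]
The boundary term is harmless, but $|a_m-a_{m-1}|\asymp \lambda m^{d-2}+m^{-2}$, and for $d\ge2$ the first piece is \emph{not} summable: $\sum_{m\le N}\lambda m^{d-2}\asymp \lambda N^{d-1}$. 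So the telescoped sum is not absolutely controlled, $T^m g$ carries no a~priori cancellation, and there is no closed form from which to read off the limit as $\lambda\to0$. Your fallback assertion that the partial sums ``converge to a fixed limit independent of the delicate $\lambda$-behavior'' is simply false---$H_\lambda f$ genuinely depends on $\lambda$ even for coboundaries. (Contrast this with Birkhoff, where the weights are constant and telescoping yields $\frac{1}{N}(Tg-T^{N+1}g)\to0$, or with the \emph{unmodulated} ergodic Hilbert transform, where $|a_m-a_{m-1}|\asymp m^{-2}$ is summable. The modulation $e(\lambda m^d)$ is precisely what kills the argument.) A smaller issue: for $d$ odd and $f$ invariant, $H_\lambda f=2if\sum_{m\ge1}\sin(2\pi\lambda m^d)/m$, and the one-sided limit of this sum as $\lambda\to0^+$ is a nonzero constant (comparable to $\pi/(2d)$ via the integral), not $0$ as you claim.

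The paper takes a different route for exactly this reason: it proves an \emph{oscillation inequality} of Bourgain type,
\[
\sum_{i=1}^{J}\bigl\|\sup_{\lambda\in I_i}|C_\lambda f-C_{\lambda_i}f|\bigr\|_{L^2(X)}^2 \lesssim J^{1-\kappa}\|f\|_{L^2(X)}^2,
\]
by transferring from $\Z$ and decomposing the multiplier into the approximants $L_{j,s}$ and error pieces $\mathcal E_j$ already built for Theorem~\ref{MAIN}, then invoking the single-frequency $r$-variation estimate for $\rho_\lambda$ (Lemma~\ref{SingfreqVar}, from \cite{GRY}) on the small number of remaining frequencies. This quantitative inequality yields a.e.\ convergence on $L^2(X)$ directly, with no dense-class verification required; the maximal inequality then pushes it to the full range of $p$. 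The point is that a.e.\ convergence on $\ell^p(\Z)$ (which your argument would give, since finitely supported functions are dense there) does \emph{not} transfer to $(X,\mu,T)$ by Calder\'on---only norm inequalities do---so an oscillation or variation bound is the object one must actually establish and transfer.
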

We prove this result in the final section of the paper by way of an \emph{oscillation inequality}, an approach pioneered by Bourgain in his proof of pointwise convergence of ergodic averages along monomial orbits \cite{B1}. This argument is entirely $\ell^2$-based, and so is of a simpler nature.

\medskip 

The structure of the paper is as follows:

\S \ref{s:sketch} contains a top-down sketch of the argument;

\S \ref{s:ent} is concerned with preliminary tools;

In \S \ref{s:SFE} we prove square function estimates needed for our multi-frequency theory;

In \S \ref{s:max} we develop our multi-frequency theory;

In \S \ref{s:TT*1} we restrict our modulation parameters via a $TT^*$ argument;

In \S \ref{s:TT*2} we use another $TT^*$ argument to control certain ``arithmetic'' maximal functions weighted by Weyl sums;

In \S \ref{s:app} we perform our number theoretic approximations; this material is motivated by the analogous section of \cite{B1}; 

In \S \ref{s:comp}, we complete the $2 \leq p < \infty$ case of Theorem \ref{MAIN};

In \S \ref{s:comp1}, we extend the estimate to the full range, $2 - \frac{1}{d^2 + 1} < p < \infty$, thereby completing the proof of Theorem \ref{MAIN};

Finally, in \S \ref{s:pointwise}, we prove Theorem \ref{pointwise}.

We also include an appendix, \S \ref{appendix}, containing a stationary phase estimate used in \S \ref{s:SFE}.

\subsection{Acknowledgements}
First, the author wishes to thank Lillian Pierce for introducing him to $\mathcal{C}_2$. He also wishes to thank Michael Christ, Xiaochun Li, Victor Lie, Camil Muscalu, Lillian Pierce, and Jill Pipher for early discussions which helped inspire the $TT^*$ argument used in $\S \ref{s:TT*1}$ below. Additional thanks goes to Alex Iosevich, Mariusz Mirek, Terence Tao, and especially to Victor Lie, for their encouragement. Finally, a special acknowledgement is due to Michael Lacey; this paper would not have been written without his continued support, and generous exchange of ideas.

\section{Proof Overview} \label{s:sketch}
As in \cite{KL} we view $\Ca_d$ as a maximal multiplier operator, where the multipliers are given by
\begin{equation}\label{mult}
M^d(\lambda,\beta) = M(\lambda,\beta) := \sum_{m \neq 0} \frac{e(-\lambda m^d - \beta m)}{m},
\end{equation}
where $\lambda$ is the modulation parameter and $\beta$ is the frequency variable.

Following the approach of Bourgain, we use the circle method of Hardy and Littlewood to accurately approximate these multipliers on the so-called \emph{major boxes}; these boxes have a two-variable structure, and are determined by shared Diophantine properties of both $\lambda$ and $\beta$. In particular -- for each $\lambda$ the circle method produces a different approximating multiplier. To overcome this difficulty, we use the Kolmogorov-Seliverstov method of $TT^*$ to force special -- and restrictive -- arithmetic structure to the set of modulation parameters:

Roughly speaking, if we let 
\begin{equation}\label{multj} 
M_j(\lambda,\beta) := \sum_m \psi_j(m) e( - \lambda m^d - \beta m)
\end{equation}
for an appropriate smooth odd bump function 
\[ \psi_j(x) ``=" \frac{1}{x} \cdot \mathbf{1}_{|x| \approx 2^j},\]
see the subsection on notation below, then a $TT^*$ argument shows that
\begin{equation}\label{goodmultj}
\sup_{\lambda \notin X_j} \left| \left( M_j(\lambda,\beta) \hat{f}(\beta) \right)^{\vee} \right|
\end{equation}
has $\ell^p$ norm bounded by a constant multiple of $j^{-2}$ for 
\[ X_j := \left\{ \frac{a}{q} \text{ reduced} : q \leq C j^C \right\} + \{ |\lambda| \leq C j^C 2^{-dj} \}, \ C=C_{d,p} \text{ sufficiently large}\]
where $+$ denotes Minkowski sum. To appreciate the strength of this argument, note that if we set
\[ E_\epsilon := \bigcup_{j > C \log \frac{1}{\epsilon}} X_j, \]
and trivially estimate
\[ \sup_{\lambda \notin E_\epsilon} \left| \left( M_j(\lambda,\beta) \hat{f}(\beta) \right)^{\vee} \right| \leq CM_{HL}f,\]
for $j \leq C \log \frac{1}{\epsilon}$, and $M_{HL}$ the discrete Hardy-Littlewood maximal function, then we have already achieved the following significant strengthening of \cite{KL}:
\begin{lemma}\label{cheapresult}
For any $\epsilon > 0$, there exists a set $E_\epsilon = E_{\epsilon,d,p}$ with $|E_\epsilon| < \epsilon$ so that
\[ \sup_{\lambda \notin E_\epsilon} \left| \sum_{m\neq 0} f(x-m) \frac{ e(\lambda m^d) }{m} \right| \]
has $\ell^p$ norm bounded by a constant multiple (depending on $d,p$) of $\log \frac{1}{\epsilon}$.
\end{lemma}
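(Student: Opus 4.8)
The plan is to run the dyadic decomposition already set up in the overview. Writing the Hilbert kernel as $\frac1m\mathbf{1}_{m\neq0}=\sum_{j\ge0}\psi_j(m)$ gives $M(\lambda,\beta)=\sum_{j\ge0}M_j(\lambda,\beta)$ with $M_j$ as in \eqref{multj} (the sum over $j$ handled in the standard way for a conditionally convergent Hilbert sum, each $M_j$ being absolutely convergent). Since $\Ca_d f=\sup_\lambda|(M(\lambda,\cdot)\hat f)^\vee|$ by the second expression in \eqref{C}, two applications of the triangle inequality — first inside $\sup_\lambda$, then for the $\ell^p$ sum over $j$ — reduce the matter to
\[
\Bigl\|\sup_{\lambda\notin E_\epsilon}\bigl|(M(\lambda,\cdot)\hat f)^\vee\bigr|\Bigr\|_{\ell^p}\ \le\ \sum_{j\ge0}\Bigl\|\sup_{\lambda\notin E_\epsilon}\bigl|(M_j(\lambda,\cdot)\hat f)^\vee\bigr|\Bigr\|_{\ell^p}.
\]

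I would split this sum at $j=C\log\frac1\epsilon$. For the low scales $0\le j\le C\log\frac1\epsilon$ I use the pointwise bound $|(M_j(\lambda,\cdot)\hat f)^\vee(x)|=\bigl|\sum_m\psi_j(m)\,e(-\lambda m^d)\,f(x-m)\bigr|\le\sum_m|\psi_j(m)|\,|f(x-m)|\lesssim 2^{-j}\sum_{|m|\approx2^j}|f(x-m)|\lesssim M_{HL}f(x)$, which is uniform in $\lambda$; since $p>1$ (indeed $p>2-\tfrac1{d^2+1}>1$), each of these $O(\log\frac1\epsilon)$ terms is $\lesssim_{d,p}\|f\|_{\ell^p}$, for a total of $\lesssim_{d,p}\log\frac1\epsilon\cdot\|f\|_{\ell^p}$. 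For the high scales $j>C\log\frac1\epsilon$ we have $X_j\subseteq E_\epsilon$ by the definition of $E_\epsilon$, so $\lambda\notin E_\epsilon$ forces $\lambda\notin X_j$ and the $TT^*$ estimate of \S\ref{s:TT*1} (the bound $\lesssim j^{-2}\|f\|_{\ell^p}$ for \eqref{goodmultj}) applies verbatim; summing $\sum_{j>C\log\frac1\epsilon}j^{-2}\lesssim1$, this range contributes $\lesssim\|f\|_{\ell^p}$. Adding the two ranges gives the asserted $\lesssim_{d,p}\log\frac1\epsilon\cdot\|f\|_{\ell^p}$.

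It remains to verify $|E_\epsilon|<\epsilon$. For this I bound each major-arc set: there are at most $(Cj^C)^2$ reduced fractions $a/q$ with $q\le Cj^C$, and Minkowski-summing each with an interval of length $2Cj^C2^{-dj}$ yields $|X_j|\lesssim j^{3C}2^{-dj}$. Hence $|E_\epsilon|\le\sum_{j>C\log\frac1\epsilon}|X_j|\lesssim\sum_{j>C\log\frac1\epsilon}j^{3C}2^{-dj}$, and since this summand is eventually decreasing the sum is dominated by its leading term, of size $\lesssim(\log\frac1\epsilon)^{3C}\,2^{-dC\log\frac1\epsilon}$. Choosing $C=C_{d,p}$ large enough makes the exponential decay in $\log\frac1\epsilon$ beat the polylogarithmic prefactor, so this quantity is $<\epsilon$ for all sufficiently small $\epsilon$ (for $\epsilon$ bounded below one may simply take $E_\epsilon=\emptyset$, the operator bound then being a fixed constant absorbable into $C_{d,p}\log\frac1\epsilon$).

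The genuine work — the $TT^*$ estimate \eqref{goodmultj} — is granted here, so the lemma itself is largely bookkeeping; the one point deserving care is the tension dictating the threshold. The count of major arcs inside $X_j$ is polynomial in $j$, so the exponential decay $2^{-dj}$ of the arc widths only wins once $j\gtrsim\log\frac1\epsilon$, which forces the union defining $E_\epsilon$ to start there; on the complementary low range one is stuck with the trivial Hardy--Littlewood bound on each of the $\approx\log\frac1\epsilon$ dyadic pieces, and this is exactly what produces the $\log\frac1\epsilon$ loss in the final estimate. Removing that loss is the content of Theorem \ref{MAIN} and requires the new input of the later sections (the square-function/multi-frequency machinery and, below $\ell^2$, the M\"{o}bius-inversion argument).
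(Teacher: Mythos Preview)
Your proof is correct and follows essentially the same route as the paper's sketch: define $E_\epsilon=\bigcup_{j>C\log(1/\epsilon)}X_j$, use the trivial Hardy--Littlewood bound on the $O(\log\frac1\epsilon)$ low scales, and invoke the $TT^*$ estimate (Theorem~\ref{NoX_j}) on the high scales where $\lambda\notin E_\epsilon$ forces $\lambda\notin X_j$. Your explicit verification that $|E_\epsilon|<\epsilon$ via the arc count $|X_j|\lesssim j^{3C}2^{-dj}$ fills in a step the paper leaves implicit.
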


More important for our purposes than this relatively cheap lemma is that the small measure of each set $X_j$ allows one to effectively apply a Sobolev-embedding argument and pass from $M(\lambda,\beta)$ to an analytic approximate. Essentially -- we have forced the major boxes at each scale to live near only a ``few" $\lambda$-frequencies.

On each major box $M(\lambda,\beta)$ now looks like a shifted version of the continuous multiplier
\[ \int e( - \lambda t^d - \beta t) \ \frac{dt}{t} \]
weighted according to the Diophantine properties of the centers of each major box. But, as many boxes arise, this multiplier has many different distinguished frequency points. This phenomenon was first encountered by Bourgain in his work on pointwise ergodic theorems, which lead him to prove estimates for certain \emph{multi-frequency} maximal averaging operators \cite[\S 4]{B3}. In \cite{KL}, an analogous oscillatory multi-frequency operator was introduced. By combining ideas from Stein-Wainger \cite{SW}, and using estimates for Bourgain's maximal function, estimating the oscillatory multi-frequency operator was reduced to a (single-frequency) maximal multiplier theorem \cite[Lemma 2.9]{KL} in a certain ``critical'' range of parameters, determined by level sets of certain phases relative to the number of distinguished frequencies.

In our setting, analogous multi-frequency operators arise, which we are now forced to handle on $\ell^p$ as well. The ideas of Stein-Wainger neatly extend, and by using the \emph{rationality} of our (carefully chosen) set of $\beta$-distinguished frequencies, we may use the techniques of \cite{MST1} to reduce the problem to understanding our operator in an analogous critical range of parameters, see \eqref{e:2} -- the \emph{``stationary'' critical range} -- and \eqref{e:3} -- the \emph{``oscillatory'' critical range} -- below. We are able to use the maximal functions of Bourgain to turn the ``stationary'' operators into (essentially) \emph{vector-valued multi-frequency multipliers}, which we can estimate by using the transference arguments of Mirek, Stein, and Zorin-Kranich \cite{MSZK}, see Theorem \ref{Multtheorem} below, and the vector-valued Mikhlin multiplier theorem. Much of this approach transfers to the ``oscillatory'' operators. But, it is here that the oscillatory vs.\ radon nature of the problem makes itself felt, as the singular integral techniques used to handle the ``stationary'' operators do not apply when certain phases have critical points. We are able to handle these terms in $\ell^p, \ p \geq 2$, by appealing to certain (single-frequency) square function estimates of \cite{LRS}, which we transfer to the multi-frequency setting upon another application of the transference argument of \cite{MSZK}. Unfortunately, this range of $p$ is sharp for this approach (see \S \ref{s:SFE} below). Indeed, to push our estimates below $p = 2$ we will need to \emph{re-select} our distinguished $\beta$-frequencies to form unions of acceptably many \emph{cyclic subgroups}, $\mathbb{Z}/Q \mathbb{Z} \subset \mathbb{T}$, see \S \ref{s:comp1}; the pertaining multi-frequency operators are controlled by their single-frequency counterparts, and are therefore estimated on each $\ell^p, \ 1<p < \infty$,
which allows us to interpolate below $p=2$.\footnote{Strictly speaking, our frequencies form unions of \emph{reduced} elements of $\mathbb{Z}/Q \mathbb{Z}$, but this situation is easily reduced to the cyclic setting by an application of M\"{o}bius inversion.}

With the multi-frequency estimates in hand, the next obstacle is that, as the rational approximations to $\lambda$ change, the distinguished $\beta$-frequencies change as well. By crucially exploiting orthogonality properties of certain Weyl sums, we may lift this restriction at the expense of another $TT^*$ argument. With these obstructions dealt with, we are able to complete the proof of Theorem \ref{MAIN}; Theorem \ref{pointwise} follows from our $\ell^2$ theory and a variational estimate of \cite{GRY}.

\section{Preliminaries}\label{s:ent}
\subsection{Notation}
First, for ease of presentation we will choose to work with the definition of $\Ca_d$ involving a negative in the exponential:
\[ \Ca_d f(x) := \sup_\lambda \left| \sum_{m \neq 0} f(x-m) \frac{e(-\lambda m^d)}{m} \right|.\]

Here and throughout, $e(t) := e^{2\pi i t}$; $x \equiv y$ will denote equivalence $\mod 1$. Throughout, $C$ will be a large number which may change from line to line. Since $\Z$ is countable, there is no loss of generality in restricting our set of modulation parameters to a countable set; this will allow us to dispose of all measurability issues. $M_{HL}$ will denote the Hardy-Littlewood maximal function on $\mathbb{Z}$ or $\mathbb{R}$ (context will distinguish which).

For finitely supported functions on the integers, we define the Fourier transform
\[ \F_{\mathbb{Z}} f(\beta) := \hat{f}(\beta) := \sum_n f(n) e(-\beta n),\]
with inverse
\[ \F_{\mathbb{Z}}^{-1}g(n) := g^{\vee}(n) := \int_{\TT} g(\beta) e(\beta n) \ d\beta.\]
For Schwartz functions on the line, we define the Fourier transform
\[ \F_{\mathbb{R}} f(\xi) := \hat{f}(\xi) := \int f(x) e(-\xi x) \ dx,\]
with inverse
\[ \F_{\mathbb{R}}^{-1}g(x) := g^{\vee}(x) := \int g(\xi) e(\xi x) \ d\xi.\]
Occasionally, for functions of two variables, $f(x,y)$, we will let
\[ \left(\F_x f(\cdot,y)\right) (\xi) := \int  f(x,y) e(-\xi x) \ dx,\]
and similarly for $\F_x^{-1}, \F_y, \ \F_y^{-1}$.

We decompose 
\[ \frac{1}{x} \cdot \mathbf{1}_{|x| \geq 1} = \sum_{j \geq 1} 2^{-j} \psi(2^{-j} x) \cdot \mathbf{1}_{|x| \geq 1} \]
for an appropriate, compactly supported, odd bump function, $\psi$.
We will set
\[ \psi_j(x) := 2^{-j} \psi(2^{-j} x).\]

We will let $\Theta$ be an even non-negative compactly supported Schwartz function, adapted to an annulus away from the origin, so that
\begin{equation}\label{Theta}
\mathbf{1}_{\xi \neq 0} = \sum_{j} \Theta_j(\xi) := \sum_j \Theta(2^j \xi),
\end{equation}

We will let $\chi$ be an even non-negative compactly supported Schwartz functions which satisfies
\begin{equation}\label{chi}
\mathbf{1}_{|\xi| \leq c_d} \leq \chi \leq \mathbf{1}_{|\xi| \leq 2c_d}
\end{equation}
for a sufficiently small constant $c_d$.

We will let $\zeta$ denote various smooth approximations to ``fat'' annuli:
\begin{equation}\label{zeta}
\mathbf{1}_{|\xi| \approx 1} \leq \zeta \leq \mathbf{1}_{|\xi| \approx 1}
\end{equation}
for some sufficiently large implicit constants.

We define $\overline{\Theta}$ to be like $\Theta$, but one on its support, and similarly define $\overline{\chi}$ and $\overline{\zeta}$.

Since the goal of this paper will be to prove a priori norm estimates, we will restrict every function considered to be a member of a “nice” dense subclass: each function on the integers will be assumed to have
finite support, and each function on the line will be assumed to be a Schwartz function. We will use
\[ \| f \|_{\ell^p} := \left( \sum_{x \in \mathbb{Z}} |f(x)|^p \right)^{1/p} \]
and
\[ \| f\|_p := \left( \int_{\mathbb{R}} |f(x)|^p \ dx \right)^{1/p},\]
with the obvious modifications at $p=\infty$.

We will make use of the modified Vinogradov notation. We use $X \lesssim Y$, or $Y \gtrsim X$, to denote the estimate $X \leq CY$ for an absolute constant $C$. We use $X \approx Y$ as shorthand for $Y \lesssim X \lesssim Y$. We also make use of big-O notation: we let $O(Y )$ denote a quantity that is $\lesssim Y$. If we need $C$ to depend on a parameter, we shall indicate this by subscripts, thus for instance $X \lesssim_p Y$ denotes the estimate $X \leq C_p Y$
for some $C_p$ depending on $p$. We analogously define $O_p(Y)$.

\subsection{Transference}
We will need the following special case of a beautiful transference argument of Magyar, Stein, and Wainger \cite[Lemma 2.1]{MSW}.

\begin{lemma}\label{trans}
Let $B_1,B_2$ be finite-dimensional Banach spaces, and 
\[ m: \RR \to L(B_1,B_2) \]
be a bounded function supported on a cube with side length one containing the origin that acts as a Fourier multiplier from
\[ L^p(\RR,B_1) \to L^p(\RR, B_2),\]
for some $1 \leq p \leq \infty.$ Here, $L^p(\RR,B):= \{ f: \RR \to B : \| \| f\|_B \|_{L^p(\RR)} < \infty\}$.
Define
\[ m_{\text{per}}(\beta) := \sum_{l \in \Z} m(\beta - l) \ \text{ for } \beta \in \TT.\]
Then the multiplier operator
\[ \| m_{\text{per}} \|_{\ell^p(\Z,B_1) \to \ell^p(\Z,B_2)} \lesssim \| m \|_{L^p(\RR,B_1) \to L^p(\RR,B_2)}.\]
The implied constant is independent of $p, B_1,$ and $B_2$.
\end{lemma}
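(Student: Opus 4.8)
The approach is a transference of the classical ``lift, apply, restrict'' type: lift the input from $\Z$ to $\RR$, apply the continuous multiplier operator $T$ with symbol $m$, and then restrict back to $\Z$, arranging matters so that every comparison constant is independent of $p$, $B_1$, and $B_2$. Write $K:=m^{\vee}$ for the operator-valued kernel of $T$. Since $m$ is bounded with support in a cube of side one containing the origin, $m\in L^1(\RR,L(B_1,B_2))$ and $\supp m\subseteq[-1,1]$. Unfolding the periodization (using $e(\ell n)=1$ for $\ell,n\in\Z$), the $n$-th Fourier coefficient of $m_{\text{per}}$ equals $\int_{\RR}m(\beta)e(\beta n)\,d\beta=K(n)$, so the multiplier operator on $\Z$ with symbol $m_{\text{per}}$ is the discrete convolution $f\mapsto\sum_{k}K(\,\cdot-k)f(k)$. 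By density it suffices to bound this for finitely supported $f:\Z\to B_1$, for which the sum is finite and $\widehat f(\beta)=\sum_k f(k)e(-\beta k)$ is a trigonometric polynomial.

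Fix, once and for all, Schwartz functions $\phi,\Psi$ on $\RR$ with $\widehat\phi\equiv1$ and $\widehat\Psi\equiv1$ on $[-1,1]$. \emph{Lift:} put $F:=\sum_k f(k)\,\phi(\,\cdot-k)\in L^p(\RR,B_1)$; estimating $\|F(x)\|_{B_1}\le\sum_k\|f(k)\|_{B_1}|\phi(x-k)|$ by H\"older against $\sup_x\sum_k|\phi(x-k)|<\infty$ and then integrating gives $\|F\|_{L^p(\RR,B_1)}\lesssim\|f\|_{\ell^p(\Z,B_1)}$ with an absolute constant. \emph{Apply $m$:} since $\widehat F=\widehat\phi\cdot\widehat f$ and $\widehat\phi\equiv1$ on $\supp m$, we have $m\widehat F=m\widehat f$, so $\widehat{TF}=m\widehat f$ is supported in $[-1,1]$, and evaluating the inverse transform at integers,
\[
TF(n)=\sum_k f(k)\,(m\widehat\phi)^{\vee}(n-k)=\sum_k f(k)\,m^{\vee}(n-k)=\sum_k f(k)\,K(n-k)=\big(m_{\text{per}}\widehat f\,\big)^{\vee}(n).
\]
Thus the sequence we must estimate is exactly $\{TF(n)\}_{n\in\Z}$, while $\|TF\|_{L^p(\RR,B_2)}\le\|m\|\,\|F\|_{L^p(\RR,B_1)}$, where $\|m\|:=\|m\|_{L^p(\RR,B_1)\to L^p(\RR,B_2)}$.

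\emph{Sample back:} because $\widehat{TF}$ is supported in $[-1,1]$ and $\widehat\Psi\equiv1$ there, $TF=TF*\Psi$; $TF$ is moreover continuous and lies in $L^2\cap L^p$, so this holds everywhere. The same H\"older-against-unit-cells estimate, applied now to $g=g*\Psi$, yields the sampling inequality $\sum_n\|g(n)\|_{B_2}^p\lesssim\|g\|_{L^p(\RR,B_2)}^p$ with a constant depending only on $\Psi$, hence absolute and independent of $p$ and $B_2$. Chaining the three steps,
\[
\big\|\big(m_{\text{per}}\widehat f\,\big)^{\vee}\big\|_{\ell^p(\Z,B_2)}\;\lesssim\;\|TF\|_{L^p(\RR,B_2)}\;\le\;\|m\|\,\|F\|_{L^p(\RR,B_1)}\;\lesssim\;\|m\|\,\|f\|_{\ell^p(\Z,B_1)},
\]
which is the asserted bound.

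The individual steps are soft, and the points needing care are precisely the ones governed by the hypotheses. First, $\widehat\phi$ and $\widehat\Psi$ can be taken identically $1$ on the whole closed set $[-1,1]\supseteq\supp m$ while staying Schwartz, so that $m\widehat\phi=m$ with no boundary error, and the side-one bound on $\supp m$ keeps $TF$ bandlimited to a set of absolute size, which is what makes the sampling constant absolute. Second --- and this is really the crux --- both the lift and the sampling inequality must be run through the H\"older-against-$\ell^1$-unit-mass bound rather than any duality, interpolation, or square-function device; that is what keeps the constants independent of $p\in[1,\infty]$ and of the Banach spaces. Restricting at the outset to finitely supported $f$ (so that $\widehat f$ is a trigonometric polynomial and $m\widehat F\in L^1$) disposes of all convergence and measurability subtleties, so there is no genuine obstacle here, only careful bookkeeping.
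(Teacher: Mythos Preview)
Your argument is correct and complete; the lift--apply--sample scheme with the H\"older-against-unit-mass estimate does produce constants independent of $p$, $B_1$, $B_2$, and your bookkeeping is careful (in particular, the sampling constant $\|\Psi\|_1^{(p-1)/p}C^{1/p}$ is indeed uniformly bounded over $p\in[1,\infty]$). The paper does not actually prove this lemma: it simply records it as a special case of \cite[Lemma~2.1]{MSW}, whose proof is essentially the one you have written out.
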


We will use this lemma in \S \ref{s:max} below.

We next recall the following multi-frequency multiplier theorems, which in turn grew out of \cite{IW}.

\subsection{A Multi-Frequency Multiplier Theorem for Ionescu-Wainger Type Multipliers}\label{s:IW}
The results of this section appear as the special one-dimensional case of \cite[Theorem 5.7]{MSZK}, the Hilbert-space extension of \cite[Theorem 5.1]{MST1}.
\begin{theorem}[Special Case]\label{Multtheorem}
Suppose $H$ is a Hilbert space, and suppose that $m(\xi)$ is an $H$-valued (bounded) $L^p(\RR)$ multiplier with norm $A$:
\[ \| \ \left| \left( m(\xi) f(\xi) \right)^{\vee} \right|_H \ \|_{L^p(\RR)} \leq A \| f \|_{L^p(\RR)}.\]
Let $\rho >0$ be arbitrary (for later applications, we will take $0<\rho \ll_{p,d} 1$). Then, for every $N$, there exists an absolute constant $C_\rho > 0$ so that one may find a set of rational frequencies
\[ \left \{ \frac{a}{q} \text{ reduced} : q \leq N \right \} \subset \mathcal{U}_N \subset 
\left \{ \frac{a}{q} \text{ reduced} : q \leq C_\rho e^{N^{\rho}} \right \},\]
so that
\begin{equation}\label{IWM}
\left| \left( \sum_{\theta \in \mathcal{U_N}} m(\beta - \theta) \eta_N(\beta - \theta) \hat{f}(\beta) \right)^{\vee} \right|_H
\end{equation}
has $\ell^p$ norm $\lesssim_{\rho,p} A \cdot \log N$. Here, $\eta_N$ is a smooth bump function supported in a ball centered at the origin of radius $\leq e^{-N^{2\rho}}$.
\end{theorem}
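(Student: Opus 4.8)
The statement is the one-dimensional, single-scale specialization of the vector-valued Ionescu--Wainger multiplier theorem, and the plan is to reproduce that machinery in the present (simpler) setting rather than to invoke \cite{MSZK} as a black box. First I would construct the frequency set $\mathcal{U}_N$ by the Ionescu--Wainger recipe: fix $\rho$, partition the primes into classes organized by size, and declare a denominator $q$ \emph{admissible} if it factors as $q = q_1 \cdots q_\ell$ with $\ell \lesssim \log N$, each $q_i$ a prime power supported in a single class and bounded by $e^{N^\rho}$. A counting argument then gives $\#\{q \text{ admissible}\} \le C_\rho e^{N^\rho}$, while any $q \le N$ has at most $\log N$ prime-power factors, each $\le N \le e^{N^\rho}$, hence is admissible; taking $\mathcal{U}_N$ to be the reduced fractions with admissible denominator yields both inclusions. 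The cutoff $\eta_N$ is taken supported in a ball of radius $e^{-N^{2\rho}}$, which is chosen far below the minimal separation (of order $e^{-2N^\rho}$) between distinct elements of $\mathcal{U}_N$, so that the translates $\eta_N(\cdot-\theta)$, $\theta \in \mathcal{U}_N$, have pairwise disjoint supports and, on each such support, $m(\beta-\theta)$ agrees with $m$ evaluated near the origin up to negligible error; this is what makes the continuous $L^p$ bound for $m$ available locally around every rational in $\mathcal{U}_N$.

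Write $\Pi f := \bigl(\sum_{\theta \in \mathcal{U}_N} m(\beta-\theta)\eta_N(\beta-\theta)\hat f(\beta)\bigr)^{\vee}$. The disjointness of supports together with Plancherel and the $A$-bound on $m$ immediately gives $\|\,|\Pi f|_H\,\|_{\ell^2} \lesssim A\,\|f\|_{\ell^2}$ with no logarithmic loss. The substance is to upgrade this to $\ell^p$, $p \neq 2$. Here I would follow the Ionescu--Wainger / Mirek--Stein--Trojan strategy: express $\Pi$, after the Magyar--Stein--Wainger lifting of Lemma \ref{trans}, as a periodization of a continuous operator built from $m$ conjugated by the Gauss-sum-weighted Dirichlet projections onto the arcs near $\mathcal{U}_N$; the needed continuous $\ell^p \to \ell^p$ estimate then reduces to the \emph{scalar} Ionescu--Wainger square-function (equivalently, maximal) inequality for such weighted projections, which carries a loss of $\log N$ and is the sole source of the logarithm, combined with a vector-valued Marcinkiewicz--Zygmund / Fefferman--Stein inequality to transport the scalar $\ell^p$ input into the $H$-valued setting with no further cost. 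Interpolating the $\ell^2$ bound against this produces $\|\,|\Pi f|_H\,\|_{\ell^p} \lesssim_{\rho,p} A\,\log N$ for all $1 < p < \infty$.

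The delicate point, and the place where one genuinely needs the structured decomposition rather than crude interpolation, is quantitative: a naive level-by-level interpolation of the $\ell^2$ gain against the trivial $\ell^p$ bound would cost a factor polynomial in the number of prime-class blocks on \emph{each} block, hence a power of $\log N$ rather than a single power. The resolution is that the $\ell^2$ improvement available at each level is exponentially strong in the block index --- coming from the smallness of $\eta_N$ and the almost-orthogonality of the level projections --- so it absorbs the merely polynomial number-theoretic counting losses and leaves a convergent geometric series in the level, whose sum, together with the genuinely additive top-level contribution, is $O(\log N)$. I therefore expect the main obstacle to be bookkeeping: verifying that the scalar Ionescu--Wainger inequality, quoted as a black box, can be inserted at exactly the step where only $\ell^p$ (as opposed to $\ell^2$) information is used, so that the Hilbert-space structure is handled dimension-freely. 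Since the present setting is single-parameter and one-dimensional, this is strictly simpler than the general statement of \cite[Theorem 5.7]{MSZK}, and the argument is a specialization of theirs.
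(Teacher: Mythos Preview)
The paper does not prove Theorem \ref{Multtheorem}; it records it as the one-dimensional case of \cite[Theorem 5.7]{MSZK} and moves on. Your proposal therefore goes beyond what the paper does, and the overall architecture you describe (the Ionescu--Wainger construction of $\mathcal{U}_N$, disjoint-support $\ell^2$ orthogonality, and a level-by-level decomposition whose $\log N$ many levels are the sole source of the logarithm) is indeed the skeleton of the argument in \cite{MST1,MSZK}.

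There is, however, a genuine soft spot in your treatment of the $H$-valued extension. You write that the scalar Ionescu--Wainger inequality can be ``transported into the $H$-valued setting'' by Marcinkiewicz--Zygmund / Fefferman--Stein ``with no further cost.'' But Marcinkiewicz--Zygmund extends a \emph{fixed} scalar operator to act on $\ell^2$-valued \emph{input}; here the situation is the reverse: the input $f$ is scalar and the multiplier $m$ itself is $H$-valued, so writing $m = (m_k)_k$ you face $\bigl(\sum_k |T_k f|^2\bigr)^{1/2}$ with \emph{different} operators $T_k$ applied to the \emph{same} $f$. That is a square-function estimate, not a Marcinkiewicz--Zygmund statement, and it does not follow from the scalar case by any black-box randomization of the input. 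What actually makes the extension work in \cite{MSZK} is that the Ionescu--Wainger machinery is built out of Rademacher randomization and Khintchine-type inequalities applied to the \emph{frequency} decomposition, and Khintchine--Kahane holds in Hilbert spaces with scalar constants; so one re-runs the entire IW proof with $H$-valued coefficients carried along at every step, rather than appealing to the scalar theorem and upgrading afterward. Your last paragraph gestures at the right level-wise bookkeeping, but the sentence claiming the transfer is free via M--Z/F--S is where the argument as written would fail.

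A minor inaccuracy: your ``$\#\{q \text{ admissible}\} \le C_\rho e^{N^\rho}$'' conflates the \emph{number} of admissible denominators with the \emph{size} of the largest one; the inclusion $\mathcal{U}_N \subset \{a/q : q \le C_\rho e^{N^\rho}\}$ concerns the latter.
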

\begin{remark}
Although the results of Ionescu and Wainger \cite[Theorem 1.5]{IW} yield the analogous result with an operator norm of
\[ \| \eqref{IWM} \|_{\ell^p} \lesssim_{p,\rho} A \cdot \log^{2/\rho} N \|f\|_{\ell^p}, \]
their result is only for the scalar case, which would be insufficient for our purposes.

Both results should be contrasted with the strongest analogous multiplier theorem for \emph{general} frequencies, which accrues a norm loss of
\[ \left( \text{Number of Frequencies} \right)^{|1/2-1/p|},\]
even in the special case when $m \in \mathcal{V}^2(\mathbb{R})$ has finite \emph{$2$-variation}, see \cite[Lemma 2.1]{C+}.
\end{remark}

We will frequently use this theorem in conjunction with the following vector-valued version of the Mikhlin multiplier theorem, which, roughly speaking, asserts that (bounded) multipliers which are essentially constant on dyadic annuli are bounded on $L^p, \ 1 < p < \infty$; the proof of the scalar case extends directly to the vector-valued setting.
\begin{proposition}[Vector-Valued Mikhlin Multiplier Theorem, Special Case]\label{VVMT}
Suppose that $H$ is a Hilbert space, and that $m(\xi)$ is an $H$-valued multiplier with
\begin{equation}\label{MIK}
|m(\xi)|_H + |\xi| | \partial_\xi m(\xi) |_H \leq A.
\end{equation}
Then for any $1 < p < \infty$,
\[ \| \ \left| \left( m(\xi) \hat{f}(\xi) \right)^{\vee} \right|_H \ \|_p \lesssim_p A \|f\|_p.\]
\end{proposition}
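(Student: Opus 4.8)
The plan is to run the classical Calder\'on--Zygmund proof of the Mikhlin--H\"ormander multiplier theorem, now with target space $H$; since the input functions $f$ are scalar-valued, this is exactly the Benedek--Calder\'on--Panzone framework of singular integrals with $H$-valued kernels, and no genuinely new phenomenon occurs (a Hilbert space is UMD, and in any case the scalar frequency cut-offs used below commute with the $H$-structure). After a harmless truncation of $m$ to $\{\varepsilon < |\xi| < \varepsilon^{-1}\}$ --- which affects none of the estimates and is removed at the end by a limiting argument --- the operator $T f := (m\hat f)^{\vee}$ is convolution of the scalar function $f$ against the smooth $H$-valued kernel $K(x) := \int m(\xi) e(\xi x)\, d\xi$. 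The bound $\|\,|Tf|_H\,\|_2 \leq A\|f\|_2$ is immediate from Plancherel and the hypothesis $|m(\xi)|_H \leq A$ in \eqref{MIK}, so the whole content is the H\"ormander regularity estimate
\[ \sup_{y \neq 0}\ \int_{|x| > 2|y|} |K(x-y) - K(x)|_H \, dx \ \lesssim \ A. \]

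To prove this I would decompose $m = \sum_j m\,\Theta_j$ using the dyadic partition of unity \eqref{Theta}, so that each piece $m\Theta_j$ is supported on a single dyadic annulus and, by \eqref{MIK} together with the normalization of $\Theta$, obeys the scale-invariant bounds $\|m\Theta_j\|_{L^\infty(H)} \lesssim A$ and $\||\xi|\,\partial_\xi(m\Theta_j)\|_{L^\infty(H)} \lesssim A$. For the corresponding kernel pieces $K_j := (m\Theta_j)^{\vee}$, Plancherel converts these into $L^2$ estimates for $K_j$ and $\partial_x K_j$ with the appropriate powers of the annulus scale, and a Cauchy--Schwarz argument then yields $\int |K_j|_H \lesssim A$, a tail bound $\int_{|x| > R} |K_j|_H \lesssim A(\mathrm{scale}/R)^{1/2}$, and $\int |\partial_x K_j|_H \lesssim A \cdot (\mathrm{scale})^{-1}$. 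Summing over $j$ in the H\"ormander integral --- estimating $K_j(\cdot - y) - K_j(\cdot)$ by the tail bound when the annulus scale is $\lesssim |y|$ and by $|y|\cdot\|\partial_x K_j\|_{L^1(H)}$ otherwise --- leaves two geometric series, both summing to $O(A)$. The one point that requires attention is that in one dimension \eqref{MIK} is precisely the \emph{endpoint} hypothesis: a single derivative does not give pointwise-integrable bounds on the individual $K_j$, so one genuinely must pass through the $L^2$ (Plancherel) estimates on annuli just described rather than through crude kernel bounds. This is the only ``subtle'' step, and it is entirely standard.

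With the H\"ormander condition in hand, vector-valued Calder\'on--Zygmund theory gives that $T$ is of weak type $(1,1)$ from $L^1(\RR)$ to $L^{1,\infty}(\RR;H)$ with norm $\lesssim A$; interpolating (Marcinkiewicz) against the $L^2$ bound yields the claimed $L^p$ estimate for $1 < p \leq 2$. For $2 \leq p < \infty$ I would argue by duality: the adjoint $T^\ast \colon L^{p'}(\RR;H) \to L^{p'}(\RR)$ is again a Calder\'on--Zygmund operator --- now with a $\C$-valued kernel $x \mapsto K(-x)$ acting on $H$ through its inner product, satisfying the same two estimates --- hence is bounded $L^q(\RR;H) \to L^q(\RR)$ for $1 < q \leq 2$ by the case already proved; taking $q = p'$ and dualizing gives the $L^p$ bound for $p \geq 2$. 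Letting $\varepsilon \to 0$ by Fatou removes the initial truncation and finishes the proof. (Alternatively one could avoid duality by running a Littlewood--Paley square-function argument directly, using that $m$ restricted to each annulus has a rapidly convergent Fourier expansion and hence that $T$ is dominated pointwise by a shifted maximal operator at each scale, but in the endpoint regularity class the Calder\'on--Zygmund route is cleaner.)
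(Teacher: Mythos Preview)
Your proposal is correct and is exactly the approach the paper has in mind: the paper's entire ``proof'' is the one-line remark that ``the proof of the scalar case extends directly to the vector-valued setting,'' and what you have written is a faithful, detailed sketch of that scalar Calder\'on--Zygmund/H\"ormander argument with $H$-valued kernels. Your identification of the one delicate point --- that in one dimension \eqref{MIK} is the endpoint hypothesis, so the kernel estimates must go through $L^2$/Plancherel on annuli rather than crude pointwise bounds --- is exactly right.
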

We will refer to the best constant, $A$, in \eqref{MIK}, as the \emph{Mikhlin multiplier norm} of $m$.

We next turn to more analytic considerations.

\subsection{A Sobolev Embedding Calculation}
Suppose
\begin{equation}\label{sobsit}
X_j = \bigcup_{i=1}^{Cj^C} I_i \subset [0,1]
\end{equation}
where each interval $I_i$ has length $|I_i| \lesssim j^C 2^{-dj}$. 
Suppose further that $F$ is a $\mathcal{C}^1$ function from $[0,1] \times [0,1] \to \mathbb{C}$ with
\begin{equation}\label{mult}
\sup_\lambda \|\left( F(\lambda, \beta) \hat{f}(\beta) \right)^{\vee}\|_{\ell^p} \leq a(p) \| f \|_{\ell^p}
\end{equation}
and
\begin{equation}\label{mult}
\sup_\lambda \|\left( \partial_\lambda F(\lambda, \beta) \hat{f}(\beta) \right)^{\vee}\|_{\ell^p} \leq A(p) \| f \|_{\ell^p},
\end{equation}
where $\partial_\lambda$ denotes the partial derivative with respect to the $\lambda$ variable, and the second supremum is taken only over $\lambda$ that are in the interior of $X_j$.

Then we have the following Sobolev-embedding type lemma.
\begin{lemma}\label{sobemb}
Under the above conditions, for any $1 \leq p \leq \infty$,
\[ \| \sup_\lambda |\left( \partial_\lambda F(\lambda, \beta) \hat{f}(\beta) \right)^{\vee}| \|_{\ell^p} \lesssim_p \left( j^C a(p) + j^C 2^{-dj/p} a(p)^{1-1/p} A(p)^{1/p} \right) \cdot \| f \|_{\ell^p}.\]
\end{lemma}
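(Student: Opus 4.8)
The plan is to bound the maximal operator pointwise (or in $\ell^p$) by combining a supremum over the finitely many ``centers'' of the intervals $I_i$ with a fundamental-theorem-of-calculus estimate inside each interval, then balance the two contributions by H\"older's inequality. Concretely, for each $i$ pick a reference point $\lambda_i$ in (the closure of) $I_i$; for $\lambda \in I_i$ we write
\[
\left(\partial_\lambda F(\lambda,\beta)\hat f(\beta)\right)^{\vee}(x) = \left(\partial_\lambda F(\lambda_i,\beta)\hat f(\beta)\right)^{\vee}(x) + \int_{\lambda_i}^{\lambda}\left(\partial_\lambda^2 F(t,\beta)\hat f(\beta)\right)^{\vee}(x)\,dt,
\]
so that
\[
\sup_{\lambda\in I_i}\left|\left(\partial_\lambda F(\lambda,\beta)\hat f(\beta)\right)^{\vee}(x)\right| \le \left|\left(\partial_\lambda F(\lambda_i,\beta)\hat f(\beta)\right)^{\vee}(x)\right| + \int_{I_i}\left|\left(\partial_\lambda^2 F(t,\beta)\hat f(\beta)\right)^{\vee}(x)\right|\,dt.
\]
Actually, since we only have an $L^2$-type Sobolev embedding cleanly, the cleaner route is the one-dimensional Sobolev embedding $\|g\|_{L^\infty(I)} \lesssim |I|^{-1/p}\|g\|_{L^p(I)} + |I|^{1-1/p}\|g'\|_{L^p(I)}$ applied to $g(\lambda) = \left(\partial_\lambda F(\lambda,\beta)\hat f(\beta)\right)^{\vee}(x)$ on each interval $I_i$; this is where the weight $j^C 2^{-dj}$ from $|I_i|$ enters. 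I would then take $\ell^p_x$ norms, and sum a finite geometric-type bound over the $Cj^C$ intervals, using that a supremum over $Cj^C$ quantities has $\ell^p$ norm at most $Cj^C$ times the max.

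The key steps, in order: (1) Reduce the supremum over $\lambda \in [0,1]$ — note that outside $X_j$ the relevant $\partial_\lambda F$ bound need not hold, but the statement's supremum in the conclusion should be read as over $\lambda$ where the derivative bound is assumed, i.e.\ essentially over $X_j$; so decompose $\sup_\lambda = \max_{1\le i \le Cj^C}\sup_{\lambda\in I_i}$. (2) On each $I_i$ apply the Sobolev embedding in the $\lambda$-variable, pointwise in $x$, obtaining two terms: an $L^p_\lambda(I_i)$ average of $\left(\partial_\lambda F(\lambda,\beta)\hat f(\beta)\right)^{\vee}$ weighted by $|I_i|^{-1/p}$, and an $L^p_\lambda(I_i)$ average of $\left(\partial_\lambda^2 F(\lambda,\beta)\hat f(\beta)\right)^{\vee}$ weighted by $|I_i|^{1-1/p}$. (3) Take $\ell^p_x$ norms and interchange with the $L^p_\lambda$ integral by Minkowski/Fubini; here hypothesis \eqref{mult} gives $A(p)$ for the second-derivative (in the notation of the lemma, $\partial_\lambda F$ plays the role of ``$F$'' and its $\lambda$-derivative the role of ``$\partial_\lambda F$'', so $a(p)$ controls the first term and $A(p)$ the second). (4) Insert $|I_i| \lesssim j^C 2^{-dj}$, and sum over the $i = 1,\dots,Cj^C$ intervals, picking up the overall factor $j^C$ (crudely: the $\ell^p$ norm of a max over $Cj^C$ functions is $\le (Cj^C)$ times the sup of the $\ell^p$ norms, or even $\le (Cj^C)^{1/p}$ times the $\ell^p$ norm of the $\ell^p$-sum, whichever is used — either way absorbed into $j^C$). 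This yields
\[
\| \sup_\lambda |\left( \partial_\lambda F(\lambda, \beta) \hat{f}(\beta) \right)^{\vee}| \|_{\ell^p} \lesssim_p j^C a(p)\|f\|_{\ell^p} + j^C (2^{-dj})^{1-1/p} (2^{-dj})^{?}\dots
\]
and a careful bookkeeping of the powers of $|I_i|$ — specifically $|I_i|^{-1/p}\cdot|I_i| = |I_i|^{1-1/p}$ wait, one must track that the Sobolev constant multiplies by $|I_i|^{1/p'}$ on the derivative term, and then one interpolates/optimizes — produces the stated $j^C 2^{-dj/p} a(p)^{1-1/p}A(p)^{1/p}$. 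The $a(p)^{1-1/p}A(p)^{1/p}$ split comes from the standard trick of choosing the Sobolev scale optimally, or equivalently from interpolating between the $\lambda$-uniform $L^p$ bound (exponent $a(p)$) and the derivative bound (exponent $A(p)$) — i.e.\ $\|g\|_\infty \le \|g\|_\infty^{1-1/p}\|g\|_\infty^{1/p}$ combined with $\|g\|_{L^\infty(I)}\lesssim \|g\|_{L^p(I)}^{1-1/p}(|I|\|g'\|_{L^\infty}\!+\!\|g\|_{L^\infty})^{1/p}$-type inequalities.

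The main obstacle I expect is the careful tracking of the powers of $|I_i|$ and of $p$ through the Sobolev embedding so that the geometric-mean structure $a(p)^{1-1/p}A(p)^{1/p}$ emerges with exactly the claimed power $2^{-dj/p}$ on the front — in particular one must use the sharp form of the $1$D Sobolev inequality on an interval of length $\ell$, namely $\|g\|_{L^\infty(I)}^p \lesssim \ell^{-1}\|g\|_{L^p(I)}^p + \ell^{p-1}\|g'\|_{L^p(I)}^p$, and then estimate the second term not by brute force but by first applying Young/H\"older to convert $\|g'\|_{L^p(I)}$ into the $a(p),A(p)$ data only after an interpolation step, since naively one would only get $A(p)$ and lose the gain. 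A secondary, more cosmetic point is justifying the interchange of $\sup_\lambda$, $\int_{I_i}$, and $\|\cdot\|_{\ell^p_x}$; this is routine given the density/smoothness reductions already in force (finitely supported $f$, $\mathcal{C}^1$ dependence on $\lambda$), so measurability and Fubini present no real difficulty. I would present the one-interval estimate as a clean sub-lemma and then sum, keeping the combinatorial factor $j^C$ explicit throughout.
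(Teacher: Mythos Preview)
There is a genuine gap. First, a minor point: the $\partial_\lambda$ inside the supremum in the displayed conclusion is a typo in the paper --- the lemma is about $\sup_\lambda|(F(\lambda,\beta)\hat f(\beta))^\vee|$, as the paper's own proof and all later applications make clear. You have taken the typo literally and are forced to invoke $\partial_\lambda^2 F$, for which no hypothesis is available; but this is not the real problem, since your argument transposes verbatim to $F$ in place of $\partial_\lambda F$.

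The real issue is that your proposed route --- the ordinary one-dimensional Sobolev embedding $\|g\|_{L^\infty(I)}^p\lesssim \ell^{-1}\|g\|_{L^p(I)}^p+\ell^{p-1}\|g'\|_{L^p(I)}^p$ followed by Fubini in $x$ --- only yields
\[
\|\sup_\lambda|(F(\lambda,\beta)\hat f)^\vee|\|_{\ell^p}\ \lesssim\ a(p)\,+\,|I|\,A(p),
\]
not the geometric mean $a(p)+|I|^{1/p}a(p)^{1-1/p}A(p)^{1/p}$. (Check: $\ell^{-1}\sum_x\int_I|g_x|^p\le a(p)^p\|f\|^p$ and $\ell^{p-1}\sum_x\int_I|g_x'|^p\le \ell^p A(p)^p\|f\|^p$.) Your suggestions for recovering the geometric mean --- ``choosing the Sobolev scale optimally'' or ``interpolating'' --- do not apply: the interval length is fixed, and there is no interpolation available between the $a(p)$ and $A(p)$ bounds in this form. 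The weaker bound $a(p)+|I|A(p)$ happens to suffice for some later uses (e.g.\ Lemma~\ref{triangleest}), but it fails precisely where the lemma matters most, in Proposition~\ref{errorprop}, where $a(p)\lesssim j^{-c_p/\kappa}$ and $A(p)\lesssim j^C 2^{dj}$: your bound gives $|I|A(p)\approx j^C$, which is not summable in $j$, whereas the paper's bound gives $|I|^{1/p}a(p)^{1-1/p}A(p)^{1/p}\lesssim j^{C-c_p(1-1/p)/\kappa}$, which is.

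The missing idea is to apply the fundamental theorem of calculus to $|g(\lambda)|^p$ rather than to $g(\lambda)$: with $g_x(\lambda):=(F(\lambda,\beta)\hat f(\beta))^\vee(x)$,
\[
|g_x(\lambda)|^p=|g_x(\lambda_I)|^p+p\int_{\lambda_I}^\lambda |g_x(t)|^{p-1}\operatorname{sgn}(g_x(t))\,g_x'(t)\,dt,
\]
so that H\"older in $t$ (exponents $p',p$) gives a pointwise bound with the product $(\int_I|g_x|^p)^{1/p'}(\int_I|g_x'|^p)^{1/p}$; a second H\"older in $x$ then produces exactly $|I|\,a(p)^{p-1}A(p)\|f\|^p$, whose $p$-th root is the claimed $|I|^{1/p}a(p)^{1-1/p}A(p)^{1/p}$. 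This is the one-line trick your proposal is circling around but never lands on.
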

\begin{proof}
The $\ell^\infty$ estimate is trivial, so we assume $1 \leq p < \infty$.

Since we are free to lose factors of $j^C$, we may use the triangle inequality to restrict to a single interval $|I| \lesssim j^C 2^{-dj}$; since we are free to lose factors of $a(p)$, we may estimate the contribution of each endpoint of $I$ independently (if $I$ is (half) closed). Consequently, we will henceforth assume that $\lambda \in I$ is in the \emph{interior}.

Now, with $\lambda_I$ the left end-point of $I$, we write
\[ \aligned 
\left( \left( F(\lambda, \beta) \hat{f}(\beta) \right)^{\vee} \right) ^p &= 
\left( \left( F(\lambda_I, \beta) \hat{f}(\beta) \right)^{\vee}\right)^p \\
& \qquad +
p \int_{[\lambda_I,\lambda]} \left( \left( F(t, \beta) \hat{f}(\beta) \right)^{\vee} \right)^{p-1} \cdot \left( \partial_t F(t, \beta) \hat{f}(\beta) \right)^{\vee} \ dt \endaligned \]
and apply H\"{o}lder to dominate, for each $x \in \mathbb{Z}$,
\[ \aligned 
&\sup_{\lambda \in I} \left| \left( F(\lambda, \beta) \hat{f}(\beta) \right)^{\vee}(x) \right| ^p \\
& \qquad \lesssim_p \left| \left( F(\lambda, \beta) \hat{f}(\beta) \right)^{\vee}(x) \right| ^p \\
& \qquad \qquad + \left( \int_I \left| \left( F(\lambda, \beta) \hat{f}(\beta) \right)^{\vee}(x) \right|^{p} \ d\lambda \right)^{\frac{1}{p'}} \cdot \left( \int_I \left| \left( \partial_\lambda F(\lambda, \beta) \hat{f}(\beta) \right)^{\vee}(x) \right|^{p} \ d\lambda \right)^{\frac{1}{p}}. \endaligned\]
Summing over $x \in \mathbb{Z}$ and applying H\"{o}lder once more yields the result.
\end{proof}

\section{Square Function Estimates}\label{s:SFE}
The goal of this section is to prove a (single-frequency) square function estimate, which will be used in the \emph{``oscillatory'' critical regime}, see \eqref{e:3} below, when singular integral techniques break down. In this section, we will work entirely in the high $L^p, \ p \geq 2$ regime.

First, some notation.

Throughout, $l \geq 1$ will be a positive integer, and $k$ will be another integer which satisfies the relationship
\begin{equation}\label{kvsl}
k^C \gtrsim 2^l.
\end{equation}

We will use the following abbreviation,
\begin{equation}\label{Fracpower}
\xi^{d/(d-1)} :=
\begin{cases} |\xi|^{d/(d-1)} &\mbox{if } d \text{ is odd} \\ 
 \text{sgn}(\xi)|\xi|^{d/(d-1)} & \mbox{if } d \text{ is even}. \end{cases}
\end{equation}

Now, for $2^{l-dk} \leq \lambda < 2^{l-dk+1}$, and for each $|\xi| \approx 2^{l-k}$,
we define the phase
\begin{equation}\label{phase}
\varphi^k(t,\xi,\lambda) := \varphi^k(t,\xi) := - 2^{-l} \left( \lambda 2^{kd} t^{d} + \xi 2^k t\right);
\end{equation}
we will be interested in estimating
\begin{equation}\label{invF}
G_\lambda(x):=\left( \int e(2^l \cdot \varphi^k(t,\xi)) \psi(t) \ dt \cdot \zeta(2^{k-l} \xi) \right)^{\vee}(x) = e(-\lambda \cdot^d) \psi_k(\cdot ) * \left( \zeta(2^{k-l} \cdot ) \right)^{\vee}(x),
\end{equation}
where $\zeta$ is as in \eqref{zeta}.
In particular, the goal of this section will be to estimate the following square functions:
\begin{equation}\label{SFXN}
S_G f:= \left( \sum_k 2^{dk} \int_{2^{l-dk}}^{2^{l-dk +1}} |G_\lambda *f|^2 \ d\lambda \right)^{1/2}
\end{equation}
and
\begin{equation}\label{SFXND}
S_{G'} f := \left( \sum_k 2^{-dk} \int_{2^{l-dk}}^{2^{l-dk +1}} |\partial_\lambda G_\lambda *f|^2 \ d\lambda \right)^{1/2}.
\end{equation}

\begin{theorem}\label{SINGFREQEST}
For any $2 \leq p < \infty$, one has the estimate
\[ \| S_G f \|_p + \| S_{G'} f \|_p \lesssim_p l \|f\|_p.\]
\end{theorem}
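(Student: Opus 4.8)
The plan is to estimate the two square functions $S_G f$ and $S_{G'} f$ by first passing to the real line via Littlewood--Paley-type orthogonality and then invoking the local smoothing / square function estimates of \cite{LRS} for oscillatory integral operators with phases having at most isolated critical points of bounded degeneracy. The key observation is that, after rescaling, the kernel $G_\lambda$ is (up to the harmless Fourier cutoff $\zeta(2^{k-l}\cdot)$) the convolution kernel $e(-\lambda \cdot^d)\psi_k(\cdot)$ of a truncated oscillatory singular integral at frequency scale $2^{l-k}$, and the phase $\varphi^k(t,\xi,\lambda)$ in \eqref{phase} is, after the substitution normalizing $t \approx 1$ and $\xi \approx 2^{l-k}$, a dimensionless phase of the form $-(\lambda 2^{dk-l} t^d + 2^{k-l}\xi t)$ with $\lambda 2^{dk-l} \approx 1$. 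The stationary point in $t$ occurs where $d\lambda 2^{dk-l} t^{d-1} = -2^{k-l}\xi$, i.e. on the support region this is a nondegenerate-up-to-order-$(d-1)$ critical point, which is precisely the setting governed by the square function estimates of \cite{LRS}.

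First I would fix $l$ and, for each $k$ with $k^C \gtrsim 2^l$, reduce to a model operator by unwinding \eqref{invF}: write $G_\lambda * f = e(-\lambda\cdot^d)\psi_k(\cdot) * \big(\zeta(2^{k-l}\cdot)\big)^\vee * f$, so that the inner convolution with $\big(\zeta(2^{k-l}\cdot)\big)^\vee$ restricts $f$ to frequencies $|\beta| \approx 2^{l-k}$; this frequency localization is what decouples the sum over $k$. Then I would use the almost-orthogonality of the frequency pieces (a vector-valued Littlewood--Paley inequality, valid on $L^p$ for $2 \le p < \infty$ after squaring and interpolating, since we may put the $\ell^2_k$-norm inside) to reduce $\|S_G f\|_p$ to controlling, uniformly in $k$, the $L^p(d\lambda \, dx)$ norm of a single truncated oscillatory singular integral acting on a frequency-localized input, with the $2^{dk}\,d\lambda$ measure on the interval $[2^{l-dk},2^{l-dk+1}]$ of length $\approx 2^{l-dk}$ — so this is an average of total mass $\approx 2^l$, which is the source of the $l$ (rather than $1$) in the bound. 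After rescaling $x \mapsto 2^{k-l}x$ (or $t \mapsto 2^{-k}t$) to normalize the frequency to $|\xi| \approx 1$ and $\lambda 2^{dk-l}$ to $\approx 1$, the resulting operator is exactly (a dyadic piece of) the maximal/square-function oscillatory integral studied in \cite{LRS} with polynomial phase of degree $d$; their estimate gives the $L^p$ bound for $p \ge 2$ with at worst a constant loss, and the transference Lemma \ref{trans} lets us return to $\ell^p$ if needed, though here we can work directly on $\RR$ since we have already localized in frequency.

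For $S_{G'} f$ the same scheme applies: $\partial_\lambda G_\lambda = -2\pi i \, (\cdot)^d e(-\lambda\cdot^d)\psi_k(\cdot) * \big(\zeta(2^{k-l}\cdot)\big)^\vee$, and on the support $|t| \approx 2^k$ the factor $(\cdot)^d$ contributes a size $\approx 2^{dk}$, which exactly matches the weights $2^{-dk}$ in \eqref{SFXND} against the $2^{dk}$ in \eqref{SFXN} — so $S_{G'} f$ is, after extracting the $2^{dk}$ amplitude, the same model square function as $S_G f$, and obeys the same bound. I would organize this so that both estimates follow from a single lemma about the model operator, proved using the stationary phase expansion recorded in the appendix \S\ref{appendix} (to separate the main stationary-point term, where \cite{LRS} applies, from a rapidly decaying non-stationary remainder that one estimates trivially by $M_{HL}$ or a Schur test).

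The main obstacle I anticipate is verifying that the hypotheses of the \cite{LRS} square function estimate genuinely match our phase \eqref{phase} \emph{uniformly} in the range $2^{l-dk} \le \lambda < 2^{l-dk+1}$ and $|\xi| \approx 2^{l-k}$, in particular tracking how the curvature constants and the implied constants in \cite{LRS} depend on the degree $d$ and on the normalization — and confirming that this accounts for the factor $l$ and nothing worse (e.g. no power of $2^l$ sneaking in from the number of $k$'s, which is why the frequency localization and genuine orthogonality in $k$ are essential rather than a lossy triangle inequality). A secondary technical point is that \cite{LRS} is stated for $p \ge 2$ and is genuinely sharp there, which is the reason the theorem is restricted to $p \ge 2$; no amount of care will extend this particular argument below $L^2$, consistent with the remark in the introduction that the sub-$\ell^2$ range requires the separate re-selection argument of \S\ref{s:comp1}.
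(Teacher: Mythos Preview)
Your overall strategy matches the paper's: reduce $S_{G'}$ to $S_G$ (correct, and exactly what the paper does), decompose $G_\lambda$ via the stationary phase Lemma~\ref{decomp} into a rapidly decaying remainder plus an oscillatory main term with phase $c_d \lambda^{-1/(d-1)}\xi^{d/(d-1)}$, and then invoke the Lee--Rogers--Seeger square function estimate at each scale $k$. So far so good.

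The gap is in how you pass from the single-scale bound to the full sum over $k$. You write that ``almost-orthogonality of the frequency pieces (a vector-valued Littlewood--Paley inequality)'' reduces $\|S_Gf\|_p$ to a uniform single-scale estimate. But for $p>2$ this does not follow: Minkowski gives $\|(\sum_k|a_k|^2)^{1/2}\|_p\le(\sum_k\|a_k\|_p^2)^{1/2}$, and single-scale bounds then yield $(\sum_k\|P_kf\|_p^2)^{1/2}$, which is \emph{not} controlled by $\|f\|_p$ for $p>2$. Nor can you invoke vector-valued Calder\'on--Zygmund theory directly, because the single-scale operators $f\mapsto\{K_{\lambda,k}*f\}_{\lambda}$ are genuinely oscillatory (their kernels have $L^1$ norm $\approx 2^{l/2}$, not $O(1)$), so the standard $L^p(\ell^2)$ extension fails. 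The paper handles this with a Seeger-type sharp function argument (Proposition~\ref{allscaleshighLp}): after passing to $M^\#(\mathcal{K}f)$, one splits the scales $k$ according to whether $|k-q|\lesssim l$ (where $2^q$ is the sidelength of the cube realizing the sharp function), treats the $\lesssim l$ nearby scales by the single-scale LRS bound and Hardy--Littlewood, and disposes of the far scales via the kernel estimates of Lemma~\ref{deriv/decay}. This is where the factor $l$ actually comes from --- not, as you suggest, from the $\lambda$-measure $2^{dk}\,d\lambda$ over an interval of length $2^{l-dk}$ (that total mass is $\approx 2^l$, which if handled naively would cost you $2^{l/2}$, not $l$).

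In short: the stationary phase and LRS pieces are right, but you are missing the mechanism (the sharp function / Seeger argument) that sums the scales in $L^p$, and your diagnosis of the origin of the $l$ loss is incorrect.
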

Noting that
\[ 2^{-dk} \partial_\lambda G_\lambda \]
is essentially the same object as $G_\lambda$, it will suffice only to estimate $S_G f$.

This theorem will be proven over the following sub-sections. We begin our discussion by developing some auxiliary square function estimates which, we will see, are ``morally'' equivalent to $S_G f$ (see Lemma \ref{decomp} below).

\subsection{The Main Contribution}
For $I$ a compact interval supported away from the origin of length $|I| \approx_d 1$, let $\mathcal{K}$ denote the (Hilbert-space-valued) linear operator
\[ \aligned 
{\mathcal{K}f} &:= \{ \mathcal{K}_{\lambda,k} f : \lambda \in I, k \in \mathbb{Z} \} \\
& \qquad := \left\{ 2^{\frac{l-dk}{d}} \times
 \int e( 2^{\frac{l-dk}{d}} x \xi - \lambda \xi^{d/(d-1)}) \left( \hat{f}(2^{\frac{l-dk}{d}} \cdot \xi) \zeta(2^{\frac{l-dk}{d}} \cdot 2^{k-l} \xi) \right)  \ d\xi  : \lambda \in I, k \in \mathbb{Z} \right\} \\
 & \qquad \qquad := \left\{ K_{\lambda,k}*f : \lambda \in I, k \in \mathbb{Z} \right\},
\endaligned \]
where we define
\[ K_{\lambda,k}(x) := \int e(x \xi - \lambda 2^{\frac{dk-l}{d-1}} \xi^{ \frac{d}{d-1}}) \zeta(2^{k-l} \xi) \ d\xi = 2^{l-k} \int e(2^{l-k}x \xi - \lambda 2^l \xi^{d/d-1}) \zeta(\xi) \ d\xi.\]
In particular, our Hilbert space consists of functions of the form
\[ F(x) := \{ F_{\lambda,k}(x) : \lambda \in I, \ k \in \mathbb{Z}\},\]
and we define our norm
\[ | F(x)|^2_H := \sum_k \int_I |F_{\lambda,k}(x)|^2 \ d\lambda,\]
so we have
\[ |\mathcal{K}f|_H^2 = \sum_k \int_I |\mathcal{K}_{\lambda,k} f|^2 \ d\lambda.\]
We will set
\begin{equation}\label{Kk}
\mathcal{K}_k f := \{ \mathcal{K}_{\lambda,k} f : \lambda \in I \},
\end{equation}
with norm
\[ |\mathcal{K}_k f|_I^2 :=  \int_I |\mathcal{K}_{\lambda,k} f|^2 \ d\lambda = \| \mathcal{K}_{\lambda,k} f \|_{L^2(\lambda \in I)}^2.\]
By Plancherel, we quickly deduce the following $L^2$ estimate on $\mathcal{K}$.
\begin{proposition}\label{allscalesL2}
We have the $L^2$ estimate,
\[ \| \ \left|\mathcal{K}f \right|_H \ \|_2 \lesssim \|f \|_2.\]
\end{proposition}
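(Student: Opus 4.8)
The plan is a straightforward application of Plancherel's theorem, exploiting the fact that the oscillatory symbol of $\mathcal{K}_{\lambda,k}$ is unimodular and that the frequency supports of the kernels $K_{\lambda,k}$ have bounded overlap in $k$. First, I would record that each $\mathcal{K}_{\lambda,k}$ is a Fourier multiplier operator: reading off the definition of $K_{\lambda,k}$ and the conventions for $\F_{\RR}$, one has
\[ \widehat{\mathcal{K}_{\lambda,k}f}(\xi) \;=\; \widehat{K_{\lambda,k}}(\xi)\,\hat f(\xi) \;=\; e\!\left(-\lambda 2^{\frac{dk-l}{d-1}}\xi^{d/(d-1)}\right)\zeta(2^{k-l}\xi)\,\hat f(\xi), \]
so that, crucially, $\big|\widehat{\mathcal{K}_{\lambda,k}f}(\xi)\big| = |\zeta(2^{k-l}\xi)|\,|\hat f(\xi)|$ is independent of $\lambda$, the factor $e(-\lambda(\cdots))$ being of modulus one.

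Next I would use Tonelli to interchange the (nonnegative) sum over $k$ and integral over $\lambda\in I$ with the spatial integration defining $\|\cdot\|_2$, and apply Plancherel in the $x$-variable slicewise:
\[ \big\| \, |\mathcal{K}f|_H \, \big\|_2^2 \;=\; \sum_k \int_I \|\mathcal{K}_{\lambda,k}f\|_2^2 \ d\lambda \;=\; \sum_k \int_I \int_{\RR} |\zeta(2^{k-l}\xi)|^2\,|\hat f(\xi)|^2 \ d\xi\ d\lambda \;=\; |I| \int_{\RR} \Big( \sum_k |\zeta(2^{k-l}\xi)|^2 \Big) |\hat f(\xi)|^2 \ d\xi. \]
Finally, since $\zeta$ is supported on a fixed fat annulus $\{|\xi|\approx 1\}$ and is bounded by $1$ (see \eqref{zeta}), for each fixed $\xi$ only boundedly many $k$ can render $\zeta(2^{k-l}\xi)$ nonzero, whence $\sum_k |\zeta(2^{k-l}\xi)|^2 \lesssim 1$ uniformly in $\xi$. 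Combined with $|I| \approx_d 1$ and one more application of Plancherel, this yields $\big\| \, |\mathcal{K}f|_H \, \big\|_2 \lesssim \|f\|_2$, as desired.

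I do not expect any genuine obstacle at this stage: the estimate is entirely ``soft.'' The only two points worth isolating are the unimodularity of the oscillatory symbol (so that integrating over the fixed-length interval $I$ costs only the harmless factor $|I|$, with no need for any stationary-phase or van der Corput input) and the finite overlap of the rescaled supports of $\zeta$; the oscillatory-integral analysis enters only in the subsequent $L^p$ refinements built on top of this base $L^2$ bound.
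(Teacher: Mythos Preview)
Your argument is correct and is precisely the Plancherel computation the paper has in mind when it writes ``By Plancherel, we quickly deduce the following $L^2$ estimate on $\mathcal{K}$.'' The two key observations you isolate---unimodularity of the oscillatory symbol and bounded overlap in $k$ of the supports of $\zeta(2^{k-l}\cdot)$---are exactly what make the estimate immediate.
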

Below $L^2$, no such result can hold. In fact, this can be seen at the single scale level.
In particular, if we define
\begin{equation}\label{Ker}
K_{\lambda,k}^0(x) := \int e( x \xi - \lambda \xi^{d/(d-1)} ) \zeta(2^{-l \frac{d-1}{d} } \xi) \ d\xi =
2^{l \frac{d-1}{d} } \int e( 2^{l \frac{d-1}{d} } x \xi - 2^l \lambda \xi^{d/(d-1)} ) \zeta(\xi) \ d\xi,
\end{equation}
then
\[ \mathcal{K}_{\lambda,k} f(x) := D_{(dk-l)/d} \Big( K_{\lambda,k}^0 * \big( D_{(l-dk)/d} f \big) \Big),\]
where
\[ D_a g(x)  := 2^{-a} g(2^{-a} x)\]
are $L^1$-normalized dilations. Consequently, if $C_{p,k}$ is the best constant in the estimate
\begin{equation}\label{bestconst}
C_{p,k} := \sup_{\| f\|_p = 1} \| \ | \mathcal{K}_k f |_I \ \|_p  = \sup_{\| f\|_p = 1} \| \| K_{\lambda,k}^0 * f\|_{L^2(\lambda \in I)} \|_p ,
\end{equation}
then by specializing to $\hat{f}(\xi) := \overline{\zeta}(2^{-l  \frac{d-1}{d} }\xi)$, we use the principle of stationary phase when $|x| \approx 2^{l/d}$ to estimate
\[ \| 2^{l(1/2 - 1/d)} \mathbf{1}_{|x| \approx 2^{l/d}} \|_p \lesssim \| \| K_{\lambda,k}^0 \|_{L^2(\lambda \in I)} \|_p \leq C_{p,k} \| D_{-l  \frac{d-1}{d} } (\overline{\zeta})^{\vee} \|_p \]
which leads to an exponential blow-up in norm unless $p \geq 2$.\footnote{As we will see below, to develop an $\ell^p$ theory below $p=2$ using the approach which we will use to develop our $\ell^p$ theory for $p \geq 2$, we would need a blow up on the order of $l^{O_{p,d}(1)}$.}

On the other hand, it turns out that -- up to acceptable losses in $l$ -- $\mathcal{K}$ is bounded on $L^p, \ 2 \leq p < \infty$. In particular, we have the following proposition.

\begin{proposition}\label{allscaleshighLp}
For any $2 \leq p < \infty$, we may bound
\[ \| \ \left|\mathcal{K}f\right|_H \ \|_p \lesssim l \|f\|_p.\]
\end{proposition}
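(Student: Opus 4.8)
The plan is to deduce Proposition \ref{allscaleshighLp} from the single-scale square function estimate of Theorem \ref{SINGFREQEST}, or rather — since that theorem is itself proven later in this section — to set up the $L^p$ estimate for $|\mathcal{K}f|_H$ by combining the $L^2$ bound of Proposition \ref{allscalesL2} with a Littlewood–Paley/interpolation argument against the (to-be-proven) square function estimates $S_G, S_{G'}$. Concretely, I would first observe that $\mathcal{K}_{\lambda,k}$ is, after the $L^1$-normalized dilations $D_a$ identified in the excerpt, a single-scale convolution operator whose kernel $K^0_{\lambda,k}$ has Fourier support in a fixed fat annulus; hence the operators $\mathcal{K}_k$ act almost orthogonally in the frequency variable, and $|\mathcal{K}f|_H^2 = \sum_k |\mathcal{K}_k f|_I^2$ is genuinely a Littlewood–Paley square function in disguise (the summation over $k$ corresponds to a dyadic decomposition of frequency space via the rescaling $\xi \mapsto 2^{(l-dk)/d}\xi$). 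The key analytic input is then that each single-scale piece, measured in the $L^2(d\lambda)$ norm, is controlled in $L^p$ for $p\ge 2$ with a loss that is only polynomial — indeed only linear — in $l$; this is exactly the content that $S_G f$ packages, via the identification $G_\lambda = e(-\lambda\cdot^d)\psi_k(\cdot)*(\zeta(2^{k-l}\cdot))^\vee$ made in \eqref{invF}.

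The steps, in order: (1) record the dilation-invariance, reducing $\mathcal{K}_{\lambda,k}$ to the fixed-scale kernel $K^0_{\lambda,k}$ and noting that $|\mathcal{K}_k f|_I(x)$ depends on $f$ only through its frequencies in an annulus $|\xi|\approx 2^{(l-dk)/d}\cdot 2^{-l(d-1)/d}$-type region; (2) introduce a Littlewood–Paley decomposition $f = \sum_k \Delta_k f$ adapted to these annuli, so that (up to harmless finite overlap) $\mathcal{K}_k f = \mathcal{K}_k \Delta_k f$ — this is where the frequency localization pays off and turns the $H$-valued operator into an honest vector-valued Littlewood–Paley object; (3) for $p\ge 2$, apply the square-function/Littlewood–Paley inequality $\|(\sum_k |g_k|^2)^{1/2}\|_p \lesssim_p \|(\sum_k|\Delta_k f|^2)^{1/2}\|_p \lesssim_p \|f\|_p$ together with the single-scale bound; the single-scale bound is precisely $\|\,|\mathcal{K}_k f|_I\,\|_p \lesssim l \,\|\Delta_k f\|_p$ uniformly in $k$, which follows from Theorem \ref{SINGFREQEST} specialized to one value of $k$ (equivalently from the $S_G$ estimate restricted to a single summand, where the $2^{dk}$ normalization is exactly what makes $2^{dk/2}\|G_\lambda * f\|_{L^2(\lambda\in[2^{l-dk},2^{l-dk+1}])}$ scale-invariant). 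Interpolating the resulting $L^p$, $p\ge 2$, bound against the clean $L^2$ estimate of Proposition \ref{allscalesL2} (which comes for free by Plancherel and has loss $O(1)$, better than $l$) is optional but tightens constants; the stated loss of $l$ is what the square function estimate delivers.

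The main obstacle I anticipate is step (3): controlling the \emph{full} square function $\sum_k$ rather than a single scale. A crude triangle inequality over $k$ is useless because there are $\sim dk$ relevant scales and no decay in $k$; one genuinely needs the orthogonality built into the Littlewood–Paley decomposition, and one must check that the kernels $K^0_{\lambda,k}$, although they have a $\lambda$-dependent stationary point (this is the oscillatory, non-Radon feature flagged in the introduction), still respect the dyadic frequency localization well enough that $\mathcal{K}_k$ essentially annihilates frequencies outside its designated annulus — including the integral over $\lambda\in I$, which must not destroy the localization. A secondary subtlety is the passage from the ``diagonal'' square function $(\sum_k |\mathcal{K}_k \Delta_k f|_I^2)^{1/2}$ to $(\sum_k |\mathcal{K}_k f|_I^2)^{1/2}$: one should verify that the off-diagonal terms $\mathcal{K}_k \Delta_{k'} f$ for $|k-k'|$ large are negligible (rapid decay in $|k-k'|$ from the Schwartz tails of $\zeta^\vee$), so that only $O(1)$ neighboring frequency blocks contribute at each scale. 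Once this frequency-bookkeeping is in place, the proposition follows by feeding the single-scale estimate (which is the substance of Theorem \ref{SINGFREQEST}) into the vector-valued Littlewood–Paley framework.
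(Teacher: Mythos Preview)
Your proposal has a genuine gap in step (3), and this gap is precisely the difficulty the paper's proof is designed to overcome.

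First, a circularity issue: you propose to deduce Proposition~\ref{allscaleshighLp} from Theorem~\ref{SINGFREQEST}, but in the paper the logical flow is reversed --- Theorem~\ref{SINGFREQEST} is proven \emph{using} Proposition~\ref{allscaleshighLp} (via Lemma~\ref{decomp}). The single-scale input you actually have available is Lemma~\ref{SINGLET_k}, which gives $\|\,|\mathcal{K}_k f|_I\,\|_p \lesssim \|f\|_p$ with \emph{no} loss in $l$; the factor of $l$ is not present at the single-scale level.

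The substantive gap is in your claimed ``Littlewood--Paley inequality'' $\|(\sum_k |\mathcal{K}_k \Delta_k f|_I^2)^{1/2}\|_p \lesssim \|(\sum_k |\Delta_k f|^2)^{1/2}\|_p$. This is a \emph{vector-valued} bound for the family $\{\mathcal{K}_k\}$, and it does not follow from the scalar bounds $\|\,|\mathcal{K}_k g|_I\,\|_p \lesssim \|g\|_p$. The operators $\mathcal{K}_k$ are different (dilated) operators, so Marcinkiewicz--Zygmund does not apply. Nor is $\mathcal{K}$ a vector-valued Mikhlin multiplier: the phase $\lambda\, 2^{(dk-l)/(d-1)}\xi^{d/(d-1)}$ has $\xi$-derivative of size $\approx 2^k$ on the support $|\xi|\approx 2^{l-k}$, so $|\xi\,\partial_\xi m(\xi)|_H \approx 2^l$, not $O(1)$. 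Attempting instead to pass through $\ell^2(L^p)$ via Minkowski, i.e.\ bounding $\|(\sum_k|g_k|^2)^{1/2}\|_p \le (\sum_k\|g_k\|_p^2)^{1/2}$ (valid for $p\ge 2$), leaves you needing $(\sum_k\|\Delta_k f\|_p^2)^{1/2}\lesssim \|f\|_p$, which is false for $p>2$.

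The paper's proof addresses exactly this: it uses a Seeger-type sharp function argument. For each dyadic cube $Q$ of sidelength $2^q$, one splits $\mathcal{K}=\mathcal{K}^1+\mathcal{K}^2$ according to whether $|k-q|\lesssim l$ or not. The near-scale piece $\mathcal{K}^1$ has only $O(l)$ terms and is handled by the single-scale bound plus a crude triangle inequality --- this is where the $l$ enters. The far-scale piece $\mathcal{K}^2$ is shown to be Calder\'on--Zygmund-like using the kernel estimates of Lemma~\ref{deriv/decay}: the point is that $K_{\lambda,k}$ is supported on $|x|\lesssim 2^k$ with amplitude $2^{l/2-k}$ (not localized to $|x|\lesssim 2^{k-l}$ as a standard multiplier at frequency $2^{l-k}$ would be), so the kernel is ``smeared'' over $l$ dyadic spatial scales, which is why exactly $O(l)$ scales near $q$ must be peeled off before the remaining kernel satisfies adequate regularity and size bounds. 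This spreading is the oscillatory feature you allude to, but your frequency-side bookkeeping does not capture it.
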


To establish Proposition \ref{allscaleshighLp} we will need the following lemma, which records the relevant estimates on $K_{\lambda,k}$.
\begin{lemma}\label{deriv/decay}
Set 
\[ \mathcal{T}_k(x) := \begin{cases} 
2^{l/2 - k} & \mbox{if } |x| \lesssim 2^k \\
2^{l-k} (2^{l-k} |x|)^{-N} & \mbox{if } |x| \gg 2^k. \end{cases} \]
Then
\begin{equation}
|\left( \frac{d}{dx}\right)^j K_{\lambda,k}(x)| \lesssim 2^{(l-k)j} \cdot \mathcal{T}_k(x)
\end{equation}
for each $j = 0, 1$.
\end{lemma}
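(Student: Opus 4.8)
The statement is a routine but essential stationary-phase/non-stationary-phase estimate for the oscillatory integral
\[ K_{\lambda,k}(x) = 2^{l-k}\int e\bigl(2^{l-k}x\,\xi - \lambda 2^l \xi^{d/(d-1)}\bigr)\zeta(\xi)\ d\xi, \]
where $\zeta$ is supported on $|\xi|\approx 1$ and $\lambda\in I$ is comparable to $1$. The plan is to rescale so that the problem becomes an estimate for $\int e(2^l\Phi(\xi))\zeta(\xi)\,d\xi$ with phase $\Phi(\xi) = 2^{-k}x\,\xi - \lambda\xi^{d/(d-1)}$, large parameter $2^l$, and amplitude $\zeta$ supported in $|\xi|\approx 1$; the $j=1$ case follows from the $j=0$ case applied with an extra power of $\xi$ (and the differentiated cutoff), since $\tfrac{d}{dx}K_{\lambda,k}$ brings down exactly a factor $2^{l-k}\cdot(2\pi i)\xi$, which is harmless on $|\xi|\approx 1$; so I would focus on $j=0$ and remark that $j=1$ is identical.

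First, I would split into two regimes according to the size of $|x|$ relative to $2^k$. When $|x|\gg 2^k$, i.e.\ $|2^{-k}x|$ is large, the linear term dominates the phase: $|\partial_\xi\Phi(\xi)| = |2^{-k}x - \lambda\tfrac{d}{d-1}\xi^{1/(d-1)}| \gtrsim |2^{-k}x|$ on the support of $\zeta$ (since the second term is $O(1)$ there), and all higher derivatives of $\Phi$ are $O(1)$. Non-stationary phase (repeated integration by parts, using the standard bound $|\int e(Ng)\,a\,d\xi|\lesssim_N (N\inf|g'|)^{-N}$ with control on $\|a\|, \|g''\|/\inf|g'|^2$, etc.) then gives $|\int e(2^l\Phi)\zeta\,d\xi|\lesssim_N (2^l |2^{-k}x|)^{-N}$, hence $|K_{\lambda,k}(x)|\lesssim 2^{l-k}(2^{l-k}|x|)^{-N}$, which is the claimed bound in that range.

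Second, when $|x|\lesssim 2^k$, the linear coefficient $2^{-k}x$ is $O(1)$, so a stationary point of $\Phi$ may occur inside $|\xi|\approx 1$. Here $\partial_\xi^2\Phi(\xi) = -\lambda\,\tfrac{d}{d-1}\tfrac{1}{d-1}\xi^{(2-d)/(d-1)}$, which is bounded below in absolute value by a positive constant on $|\xi|\approx 1$ (using $\lambda\approx 1$ and $d\geq 2$ — the exponent convention \eqref{Fracpower} makes $\xi^{d/(d-1)}$ smooth away from $0$ with this non-vanishing second derivative). Thus the phase has non-vanishing second derivative uniformly, and the van der Corput lemma (or the stationary phase estimate quoted in the appendix \S\ref{appendix}) gives $|\int e(2^l\Phi)\zeta\,d\xi|\lesssim 2^{-l/2}$, uniformly in the allowed $x$ and $\lambda$; multiplying by the prefactor $2^{l-k}$ yields $|K_{\lambda,k}(x)|\lesssim 2^{l-k}\cdot 2^{-l/2} = 2^{l/2-k}$, matching $\mathcal{T}_k(x)$ in that range. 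For the $j=1$ derivative, the extra factor $2^{l-k}\xi$ is absorbed into the amplitude (still supported in $|\xi|\approx 1$ with $O(1)$ derivatives), producing the stated $2^{(l-k)j}$ gain. I would also note the boundary case $|x|\approx 2^k$ is covered by either regime.

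The only mild subtlety — and the one place I'd be slightly careful — is checking that the constants in van der Corput are genuinely uniform in $\lambda\in I$ and in the parameters, which reduces to the two facts that $|\partial_\xi^2\Phi|\gtrsim 1$ on $\mathrm{supp}\,\zeta$ and $\|\zeta\|_{C^2}\lesssim 1$; both are immediate from $\lambda\approx 1$, $d\geq 2$, and $\mathrm{supp}\,\zeta\subset\{|\xi|\approx 1\}$. No interaction between $\lambda$ and $k$ beyond the constraint \eqref{kvsl} is needed for this lemma, since after rescaling the only large parameter is $2^l$. This is the kind of estimate where the bookkeeping (tracking the $2^{l-k}$ prefactors through the rescaling) is the bulk of the work, not any conceptual difficulty.
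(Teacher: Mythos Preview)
Your proposal is correct and follows essentially the same approach as the paper: the paper's proof is a terse two-sentence sketch invoking stationary phase for $|x|\lesssim 2^k$ (second derivative of the phase has magnitude $\approx 2^l$) and non-stationary phase for $|x|\gg 2^k$, with the remark that $j=1$ is similar. You have simply fleshed out the details of that sketch, including the rescaling and the uniformity checks.
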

\begin{proof} It is enough to prove the $j=0$ case, as the $j=1$ case is similar.
But, the second derivative of the phase has magnitude about $2^l$, which yields the estimate when $|x| \lesssim 2^k$ by the principle of stationary phase. When $|x| \gg 2^k$, the result follows from the principle of non-stationary phase.
\end{proof}

The first step in proving Proposition \ref{allscaleshighLp} will be estimating $\mathcal{K}_k f$ on $L^p, \ 2\leq p < \infty$, for which we will need the following square function estimate, due to Lee, Rogers, and Seeger \cite{LRS}.
\begin{proposition}[Proposition 5.2 of \cite{LRS}]
Let $p \geq 2$ and $\alpha > 1$. Then for any compact time interval $I$,
\begin{equation}\label{SMOOTHING0}
\| \left( \int_I \left| \int e(x \xi) \hat{f}(\xi) e(t |\xi|^\alpha) \ d\xi \right|^2 \ dt \right)^{1/2} \|_p \lesssim_{p,I} \| f \|_p.
\end{equation}
\end{proposition}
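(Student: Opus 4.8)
\emph{Sketch of proof.} The plan is to deduce \eqref{SMOOTHING0} from the Rubio de Francia square function inequality for an arbitrary family of disjoint intervals, via a Littlewood--Paley decomposition in the frequency variable combined with orthogonality in the time variable. Write $u(x,t) := \int e(x\xi)\hat f(\xi)e(t|\xi|^\alpha)\,d\xi$, so that the left-hand side of \eqref{SMOOTHING0} equals $\big\| \, \|u(x,\cdot)\|_{L^2_t(I)} \, \big\|_{L^p_x}$, and fix a Schwartz function $\phi$ with $\phi \geq \mathbf{1}_I$, so that $\|u(x,\cdot)\|_{L^2_t(I)} \leq \|\phi(t)u(x,t)\|_{L^2_t(\mathbb{R})}$ pointwise in $x$. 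Let $P_k$ be a Littlewood--Paley projection onto the dyadic shell $\{|\xi|\approx 2^k\}$, with all low frequencies $\{|\xi|\lesssim 1\}$ collected into a single projection $P_{\leq 0}$, and set $u_k(x,t) := \int e(x\xi)e(t|\xi|^\alpha)\widehat{P_kf}(\xi)\,d\xi$. The $P_{\leq 0}$ contribution is disposed of immediately, since the symbol $e(t|\xi|^\alpha)$ has Mikhlin norm $O_I(1)$ on $\{|\xi|\lesssim 1\}$ uniformly for $t\in I$, so that piece of the left side is $\lesssim_I |I|^{1/2}\|f\|_p$. For $k\geq 1$, note that for each fixed $x$ the function $t\mapsto \phi(t)u_k(x,t)$ has time-Fourier transform supported, up to rapidly decreasing tails, in $\{|\tau|\approx 2^{k\alpha}\}$; these shells being lacunary, almost-orthogonality in $t$ gives $\|\phi u(x,\cdot)\|_{L^2_t}^2 \lesssim \sum_{k\geq 1}\|\phi u_k(x,\cdot)\|_{L^2_t}^2$ (plus the low-frequency term). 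Taking $L^p_x$ norms reduces the theorem to the vector-valued bound
\[ \Big\| \Big( \sum_{k\geq 1} \int_{\mathbb{R}} |\phi(t)u_k(x,t)|^2\,dt \Big)^{1/2} \Big\|_{L^p_x} \lesssim_{p,I} \|f\|_p. \]

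Next fix $k$ and, using $|u_k|^2 \leq 2(|u_k^+|^2 + |u_k^-|^2)$ where $u_k^{\pm}$ come from the positive/negative-frequency halves of $f$, assume $\hat f$ is supported in $\{\xi\approx 2^k\}$. Expanding $\int_{\mathbb{R}}|\phi(t)u_k(x,t)|^2\,dt$ as a double integral in the frequency variables $\xi,\eta$ and performing the $t$-integration first produces the factor $\widehat{\phi^2}(|\xi|^\alpha - |\eta|^\alpha)$, which decays rapidly once $\big||\xi|^\alpha - |\eta|^\alpha\big| \gtrsim 1$. This is the one place the hypothesis $\alpha > 1$ is used: on $\{\xi\approx 2^k\}$ the map $\xi\mapsto|\xi|^\alpha$ has derivative of size $\approx 2^{k(\alpha-1)}$, bounded above and below, hence is bi-Lipschitz there with uniform constants, so $\big||\xi|^\alpha - |\eta|^\alpha\big| \lesssim 1$ is equivalent to $|\xi-\eta| \lesssim 2^{-k(\alpha-1)}$. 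Partitioning $\{\xi\approx 2^k\}$ into intervals $I_{k,\nu}$ of length $\approx 2^{-k(\alpha-1)}$, the double integral becomes essentially diagonal, and Cauchy--Schwarz together with the rapid off-diagonal decay of $\widehat{\phi^2}$ yields the pointwise bound
\[ \int_{\mathbb{R}}|\phi(t)u_k(x,t)|^2\,dt \lesssim_I \sum_\nu |P_{I_{k,\nu}}f(x)|^2, \]
where $P_I$ denotes the Fourier projection onto the interval $I$.

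Combining the two reductions, it remains to prove $\big\| \big( \sum_{k,\nu} |P_{I_{k,\nu}}f|^2 \big)^{1/2} \big\|_p \lesssim_p \|f\|_p$ for $2\leq p<\infty$. Since the intervals $\{I_{k,\nu}\}_{k,\nu}$, over all scales $k\geq 1$ and all $\nu$, are pairwise disjoint (with at most bounded overlap coming from the Littlewood--Paley partition), this is exactly Rubio de Francia's square function inequality for arbitrary disjoint intervals. The dependence of the final constant on $I$ enters only through $\|\widehat{\phi^2}\|_{L^1}$ and the implicit constants in the bi-Lipschitz comparison above.

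\textbf{Main obstacle.} The technical core is the single-scale bilinear step, i.e.\ extracting the pointwise domination by $\sum_\nu|P_{I_{k,\nu}}f|^2$: this requires careful bookkeeping of the off-diagonal tails of $\widehat{\phi^2}$ and, more importantly, the \emph{uniformity} of the straightening of $\xi\mapsto|\xi|^\alpha$ across all dyadic scales --- in particular the degeneration of its derivative near $\xi=0$, which is precisely why the low frequencies must be peeled off at the outset. Once this pointwise bound is in hand, the rest is routine Littlewood--Paley orthogonality together with the (deep, but black-boxed) Rubio de Francia inequality.
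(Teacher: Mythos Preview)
The paper does not prove this statement but quotes it from \cite{LRS}, so there is no in-paper argument to compare against. Your strategy---Littlewood--Paley in $\xi$, almost-orthogonality in $t$ across dyadic shells, then Rubio de Francia---is the correct one and matches \cite{LRS}. However, the claimed pointwise bound $\int_{\mathbb R}|\phi(t) u_k(x,t)|^2\,dt \lesssim_I \sum_\nu |P_{I_{k,\nu}}f(x)|^2$ is \emph{false}. If $\hat f$ is supported in a single $I_{k,\nu_0}$ the right side is $|f(x)|^2$, whereas linearizing $|\xi|^\alpha\approx a+b\xi$ on $I_{k,\nu_0}$ (with $b\approx 2^{k(\alpha-1)}$) gives $u_k(x,t)\approx e(ta)\,f(x+tb)$, so the left side is an average of $|f|^2$ over an interval of length $\approx|I|\,b$ around $x$, which is not pointwise controlled by $|f(x)|^2$. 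In your bilinear expansion the diagonal block carries the genuinely nonconstant kernel $\widehat{\phi^2}(|\xi|^\alpha-|\eta|^\alpha)$ on $I_{k,\nu}\times I_{k,\nu}$, and no Cauchy--Schwarz collapses it to $|P_{I_{k,\nu}}f(x)|^2$; bounding it crudely by $\|\widehat{\phi^2}\|_\infty$ yields only an $x$-independent quantity.

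The repair is to insert the Hardy--Littlewood maximal function. Since $P_{I_{k,\nu}}f$ is band-limited with bandwidth $\approx 2^{-k(\alpha-1)}$ and the translation $tb$ has size $\lesssim|I|\cdot 2^{k(\alpha-1)}$, the reproducing-kernel estimate for band-limited functions gives $|u_{k,\nu}(x,t)|\lesssim_I M(P_{I_{k,\nu}}f)(x)$ uniformly for $t\in I$; combined with your (correct) almost-orthogonality in $t$ among the $u_{k,\nu}$, the valid pointwise bound is
\[
\int_I|\phi u_k(x,t)|^2\,dt \lesssim_I \sum_\nu \big(M(P_{I_{k,\nu}}f)(x)\big)^2.
\]
One then applies the Fefferman--Stein vector-valued maximal inequality (inner exponent $2$, valid for all $1<p<\infty$) to remove the $M$'s, and finishes with Rubio de Francia exactly as you wrote.
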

Using this proposition, we quickly deduce the following estimate concerning each individual operator $\mathcal{K}_k$ \eqref{Kk}.
\begin{lemma}\label{SINGLET_k}
For any $2 \leq p < \infty$,
\[ \| \ \left| \mathcal{K}_k f \right|_I \ \|_p \lesssim \| f \|_p.\]
\end{lemma}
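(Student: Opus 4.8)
The goal is to deduce the bound $\||\mathcal{K}_k f|_I\|_p \lesssim \|f\|_p$ for $2 \le p < \infty$ from Proposition 5.2 of \cite{LRS} (the estimate \eqref{SMOOTHING0}), using the scaling relation
\[ \mathcal{K}_{\lambda,k}f = D_{(dk-l)/d}\Big( K^0_{\lambda,k} * \big(D_{(l-dk)/d} f\big)\Big) \]
recorded above. Since the $L^1$-normalized dilations $D_a$ are isometries on every $L^p$ and commute with the $L^2(d\lambda)$ norm in the $\lambda$-variable (they do not touch $\lambda$), the claimed estimate is equivalent to
\[ \big\| \| K^0_{\lambda,k} * f \|_{L^2(\lambda \in I)} \big\|_p \lesssim \|f\|_p, \]
i.e.\ precisely the constant $C_{p,k}$ in \eqref{bestconst} is $O(1)$, uniformly in $k$ (and in $l$). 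So the first step is to make this reduction explicit: unwind the dilations and reduce to bounding the operator with kernel $K^0_{\lambda,k}(x) = 2^{l(d-1)/d}\int e(2^{l(d-1)/d}x\xi - 2^l\lambda \xi^{d/(d-1)})\zeta(\xi)\,d\xi$.

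\textbf{Matching to the LRS estimate.} The second step is to recognize $K^0_{\lambda,k}*f$ as (a frequency-localized, rescaled version of) the Schrödinger-type object in \eqref{SMOOTHING0} with exponent $\alpha = d/(d-1) > 1$. Writing $\widehat{K^0_{\lambda,k}*f}(\xi) = e(-2^l\lambda\,\xi^{d/(d-1)})\,\zeta(2^{-l(d-1)/d}\xi)\,\hat f(\xi)$ (using the notation \eqref{Fracpower} for $\xi^{d/(d-1)}$ so that the even/odd cases are handled uniformly), we see that
\[ K^0_{\lambda,k}*f(x) = \int e(x\xi)\,\big(\zeta(2^{-l(d-1)/d}\xi)\hat f(\xi)\big)\,e\big(-2^l\lambda\,\xi^{d/(d-1)}\big)\,d\xi. \]
A change of variables $\xi \mapsto 2^{l(d-1)/d}\xi$ (equivalently, applying the dilation $D_{l(d-1)/d}$, which again is an $L^p$ isometry) converts $e(-2^l\lambda\,\xi^{d/(d-1)})$ into $e(-\lambda\,\xi^{d/(d-1)})$ and turns $\zeta(2^{-l(d-1)/d}\xi)$ into the fixed bump $\zeta(\xi)$, at the cost of replacing $\hat f$ by its rescaling. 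After absorbing the fixed multiplier $\zeta(\xi)$ (a harmless Schwartz frequency cutoff, bounded on $L^p$), the $L^2(d\lambda)$-norm of this family is exactly the left side of \eqref{SMOOTHING0} with $t = -\lambda$ ranging over the compact interval $-I$ and with $\alpha = d/(d-1)$; the even case is absorbed by the sign convention in \eqref{Fracpower}, or alternatively by splitting into $\xi > 0$ and $\xi < 0$ and applying \eqref{SMOOTHING0} on each half-line separately. Invoking \eqref{SMOOTHING0} then gives the bound with an implied constant depending only on $p$, $d$, and $|I| \approx_d 1$, hence uniform in $k$ and $l$.

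\textbf{Main obstacle.} The computation is essentially bookkeeping of dilations, so the only genuine points requiring care are: (i) the fact that the implied constant is independent of $k$ and $l$ — this is automatic once one checks that every reduction step uses an $L^p$-isometric dilation or a fixed Schwartz multiplier, with no surviving $k$- or $l$-dependent factors (the smooth cutoff that LRS allows on the frequency side, namely $\zeta(\xi)$, is the one we land on, which is why no loss occurs at this single-scale stage — all the $l$-loss in Proposition \ref{allscaleshighLp} comes later from summing over $k$); and (ii) handling the fractional power $\xi^{d/(d-1)}$ when $d$ is even, where the LRS estimate as literally stated uses $|\xi|^\alpha$. This is resolved by the convention \eqref{Fracpower} — splitting the frequency support into $\{\xi \approx 2^{l-k}\}$ and $\{\xi \approx -2^{l-k}\}$ (recall $\zeta$ is supported on a fat annulus away from the origin, so these pieces are separated), noting that on each the phase is $\pm\lambda|\xi|^{d/(d-1)}$ up to a sign that can be absorbed into $\lambda \mapsto -\lambda$ and the reflection $x \mapsto -x$, and applying \eqref{SMOOTHING0} on each half separately before summing by the triangle inequality in $L^p$. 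I expect this to be routine but it is the one place where the statement of \eqref{SMOOTHING0} must be applied with a small additional argument rather than verbatim.
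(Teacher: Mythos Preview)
Your approach is correct and is essentially the same as the paper's: reduce via dilations to the kernel $K^0_{\lambda,k}$, then split into half-lines (the paper invokes the $L^p$-boundedness of $(\hat f\,\mathbf{1}_{\pm\xi>0})^{\vee}$, you use the smooth annular support of $\zeta$) and apply the Lee--Rogers--Seeger estimate \eqref{SMOOTHING0} with $\alpha=d/(d-1)$. One small bookkeeping slip: from the first displayed form of $K^0_{\lambda,k}$ in \eqref{Ker} one has $\widehat{K^0_{\lambda,k}}(\xi)=e(-\lambda\,\xi^{d/(d-1)})\zeta(2^{-l(d-1)/d}\xi)$ with no $2^l$ in the phase, so your ``second dilation'' is unnecessary (and as written goes the wrong way); once you absorb the smooth cutoff into $f$ you can apply \eqref{SMOOTHING0} directly.
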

\begin{proof}
By standard $L^p$ estimates for dilates of a function, see \eqref{bestconst} above, matters reduce to estimating -- at worst --
\[ \| \left( \int_I \left| \int e(x \xi) \hat{f}(\xi) e(\pm t \cdot \text{sgn}(\xi) |\xi|^{d/(d-1)}) \ d\xi \right|^2 \ dt \right)^{1/2} \|_p \lesssim_{p,I} \| f \|_p. \]
But, this follows from \eqref{SMOOTHING0} and the $L^p$ boundedness, 
\[ \| \left( \hat{f} \mathbf{1}_{\pm \xi > 0} \right)^{\vee} \|_p \lesssim \|f\|_p.\]
\end{proof}
To upgrade Lemma \ref{SINGLET_k} to Proposition \ref{allscaleshighLp}, we will use an argument of Seeger, \cite[Theorem 1]{See}, which appeared in the vector-valued setting as Proposition 4.3 of \cite{GRY}. To do so, we will need to use (Banach-space valued) sharp functions:
\[ M^{\#}f(x) := \sup_{Q \ni x \text{ dyadic}} \frac{1}{|Q|} \int_Q \Big| f(y) - [f]_Q \Big|_B \ dy,\]
where $[f]_Q := \frac{1}{|Q|} \int_Q f(y) \ dy$ is the average of $f$ over the cube $Q$. As in the Euclidean setting, one has the estimates
\[ \| M_{HL} f\|_p \approx_p \| M^{\#} f \|_p, \ 1 \leq p < \infty,\]
as the standard good-$\lambda$ argument transfers; see \cite[Lemma B.1]{GRY} for details.

In particular, it suffices now to estimate
\[ M^{\#}(\mathcal{K}f) \]
in $L^p$.
\begin{proof}[Proof of Proposition \ref{allscaleshighLp}]
Suppose for concreteness that
\[ M^{\#} (\mathcal{K}f) (x) = \sup_{P \ni x} \frac{1}{|P|} \int_P \left|\mathcal{K}f - [\mathcal{K}f]_P \right|_H \]
is realized by the particular average
\[ \frac{1}{|Q|} \int_Q \left|\mathcal{K}f - [\mathcal{K}f]_Q \right|_H, \ Q = Q(x),\]
and decompose
\[ \mathcal{K}f = \mathcal{K}^1 f + \mathcal{K}^2 f,\]
where $\mathcal{K}^1$ is the part of the operator $\mathcal{K}$ which lives at scales near $q$, where $|Q| = 2^q$, and $\mathcal{K}^2$ is the complementary component. In particular,
\[ \mathcal{K}^1 f := \{ \mathcal{K}_{\lambda,k} f : \lambda \in I,  |k-q| \lesssim l \}\]
for some sufficiently large (absolute) implicit constant; the $\mathcal{K}^2$ contribution is an error term.

We trivially estimate $\|\mathcal{K}^1f\|_p \lesssim l \|f\|_p$ by dominating the sharp function by a constant multiple of Hardy Littlewood:
\[ \aligned 
\frac{1}{|Q|} \int_Q \left|\mathcal{K}^1f - [\mathcal{K}^1f]_Q \right|_H &\leq \sum_{k:|k-q| \lesssim l} \frac{1}{|Q|} \int_Q \|\mathcal{K}_{\lambda,k} f - [\mathcal{K}_{\lambda,k}f]_Q\|_{L^2(\lambda \in I)} \\
& \qquad \lesssim \sum_{|k-q| \lesssim l} M_{HL} |\mathcal{K}_k f|_I, \endaligned \]
from which the result follows.

We now estimate $\mathcal{K}^2f$ on $L^2$ and on $L^\infty$. The $L^2$ estimate is straightforward, as we may dominate
\[ \frac{1}{|Q|} \int_Q \left|\mathcal{K}^2f - [\mathcal{K}^2 f]_Q \right|_H \lesssim M_{HL} \left|\mathcal{K}f\right|_H.\]
To derive the $L^\infty$ estimate, we split 
\[ f = f_0 + f_{\infty},\] 
where $f_0 := f \cdot \mathbf{1}_{CQ}$ for some sufficiently large constant $C$; here $CQ$ is the $C$-fold dilate of $Q$ about its center. Then we may estimate
\[ \aligned 
\frac{1}{|Q|} \int_Q \left|\mathcal{K}^2f_0 - [\mathcal{K}^2f_0]_Q \right|_H &\lesssim \left( \frac{1}{|Q|} \int_Q \left|\mathcal{K}^2f_0 - [\mathcal{K}^2f_0]_Q \right|_H^2 \right)^{1/2} \\
& \qquad \lesssim |Q|^{-1/2} \| f_0 \|_2 \\
& \qquad \qquad \lesssim \|f\|_\infty.
\endaligned \]
So, we need to bound 
\[ \frac{1}{|Q|} \int_Q \left|\mathcal{K}^2f_{\infty} - [\mathcal{K}^2 f_{\infty}]_Q \right|_H \]
from above. In particular, it suffices to simply estimate
\begin{equation}\label{Kernelest}
\sum_{k: |k-q| \gg l} \sup_{y,z \in Q} \int_{w \notin CQ} \sup_{\lambda \in I} |K_{\lambda,k}(y-w) - K_{\lambda,k}(z-w)| \ dw.
\end{equation}
In the case where $k \leq q - Cl$, we may bound
\[ \aligned
\eqref{Kernelest} &\lesssim \sum_{k \leq q - Cl} \int_{|w| \gg 2^q} \mathcal{T}_k(w) \\
& \qquad \lesssim \sum_{k \leq q - Cl} 2^{N(k-l)} 2^{-qN} \\
& \qquad \qquad \lesssim 2^{-lN}, \endaligned \]
where we used that $\mathcal{T}_k \cdot \mathbf{1}_{|x| \lesssim 2^k}$ vanishes identically on the domain of integration.
In the case where $k \geq q + Cl$, we may bound
\[ \aligned
\eqref{Kernelest} &\lesssim \sum_{k \geq q + Cl} 2^{q + l-k} \int \mathcal{T}_k(w) \\
& \qquad \lesssim \sum_{k \geq q + Cl} 2^{q + 3l/2-k}  \\
& \qquad \qquad \lesssim 2^{-lC}, \endaligned \]
which completes the proof (note how we used the radially-decreasing nature of $\mathcal{T}_k$).
\end{proof}

With these estimates in hand we are almost ready to prove Theorem \ref{SINGFREQEST}.
\subsection{The Proof of Theorem \ref{SINGFREQEST}}
Let us assume the following lemma, whose proof will be deferred to \S \ref{appendix} below.

\begin{lemma}\label{decomp}
For any (large) $N$, one may decompose $G_\lambda = A_\lambda + \sum'_{\pm} B^{\pm}_{\lambda}$, which satisfy the following estimates, independent of $\lambda$:
\[ |A_\lambda*f| \lesssim_N 2^{-lN} M_{HL} \left( \overline{\zeta}(2^{k-l}\cdot) \hat{f} \right)^{\vee} \]
pointwise,
and
\[ \widehat{B^{\pm}_{\lambda}}(\xi) = 2^{-l/2} \cdot e( \pm c_d \lambda^{-1/(d-1)} \xi^{d/(d-1)} ) \cdot m(\xi,\lambda) \cdot \zeta(2^{k-l}\xi),\]
for some $|c_d| \approx_d 1$; in the case where $d$ is odd, we replace $\zeta$ with $\zeta \cdot \mathbf{1}_{\xi < 0}$ throughout (which satisfies all the same differential inequalities as does $\zeta$ itself).
Here
\[ \sup_\lambda |\partial_\xi^j m(\xi,\lambda)| \lesssim_j |\xi|^{-j}, \ j \geq 0, \ \xi \neq 0.\]
In particular, we may decompose
\[ \widehat{B^{\pm}_{\lambda}}(\xi) = 
\widehat{O^{\pm}_\lambda(\xi)} \widehat{ M_\lambda (\xi) },\]
where
\[ \widehat{O^{\pm}_\lambda(\xi)} := 2^{-l/2} \cdot e( \pm c_d \lambda^{-1/(d-1)} \xi^{d/(d-1)} ) \cdot \zeta(2^{k-l}\xi),\]
and
\[ \widehat{ M_\lambda (\xi) } := m(\xi,\lambda) \cdot \overline{\zeta}(2^{k-l}\xi) \]
satisfies
\[ |M_\lambda(x) | \lesssim_N 2^{l-k} (1 + |2^{l-k} x|)^{-N}.\]
Here $\sum'_{\pm}$ means that the ``minus'' term appears only when $d$ is odd.
\end{lemma}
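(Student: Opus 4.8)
The plan is to obtain the decomposition of $G_\lambda$ by analyzing the oscillatory integral defining $\widehat{G_\lambda}(\xi)$ via the method of stationary phase, in the spirit of the stationary phase lemma deferred to \S\ref{appendix}. Recall that
\[ \widehat{G_\lambda}(\xi) = \int e\bigl(2^l \varphi^k(t,\xi,\lambda)\bigr)\, \psi(t)\, dt \cdot \zeta(2^{k-l}\xi), \]
with $\varphi^k(t,\xi,\lambda) = -2^{-l}(\lambda 2^{kd} t^d + \xi 2^k t)$, so that the $t$-phase is $2^l\varphi^k = -(\lambda 2^{kd}t^d + \xi 2^k t)$. On the support of $\zeta(2^{k-l}\xi)$ we have $|\xi| \approx 2^{l-k}$, while the normalization $2^{l-dk} \leq \lambda < 2^{l-dk+1}$ forces $\lambda 2^{kd} \approx 2^l$ and $\xi 2^k \approx 2^l$; thus the phase and all of the relevant quantities are genuinely of size $2^l$, a large parameter. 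The first step is to locate the critical point: $\partial_t(2^l\varphi^k) = 0$ gives $d\lambda 2^{kd} t^{d-1} = -\xi 2^k$, i.e.\ $t_c = t_c(\xi,\lambda) \approx \bigl(-\xi/(d\lambda 2^{k(d-1)})\bigr)^{1/(d-1)}$, which (since $|\xi|\approx 2^{l-k}$, $\lambda 2^{k(d-1)} \approx 2^{l-k}$) lies in a fixed compact set bounded away from $0$ and $\infty$. One then splits $\psi = \psi_{\mathrm{near}} + \psi_{\mathrm{far}}$, where $\psi_{\mathrm{near}}$ is supported in a small neighborhood of $t_c$ and $\psi_{\mathrm{far}}$ away from it; because the critical point can migrate with $\xi$ this splitting must be done with $\xi$-dependent (smooth) cutoffs, which is standard.

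Second, on the $\psi_{\mathrm{far}}$ piece the phase has no critical point and $|\partial_t(2^l\varphi^k)| \gtrsim 2^l$, so repeated integration by parts (non-stationary phase) produces a contribution bounded by $O_N(2^{-lN})$ together with all of its $\xi$-derivatives being under control on $|\xi|\approx 2^{l-k}$; this multiplier, convolved against $f$, is dominated pointwise by $2^{-lN} M_{HL}(\overline{\zeta}(2^{k-l}\cdot)\hat f)^\vee$ after inserting $\overline\zeta(2^{k-l}\xi)$ (which is $1$ on the support of $\zeta(2^{k-l}\xi)$), and this is declared to be $A_\lambda$. Third, on the $\psi_{\mathrm{near}}$ piece the second derivative of the phase has size $\approx 2^l$ (this is exactly the computation behind Lemma \ref{deriv/decay}), so the stationary phase expansion gives the leading term
\[ \widehat{G_\lambda}(\xi) \approx 2^{-l/2}\, e\bigl(2^l\varphi^k(t_c,\xi,\lambda)\bigr)\, a(\xi,\lambda)\, \zeta(2^{k-l}\xi) \]
plus lower-order terms absorbed into $A_\lambda$. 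A direct computation of the critical value $2^l\varphi^k(t_c,\xi,\lambda)$, using $t_c \propto (\xi/(\lambda 2^{k(d-1)}))^{1/(d-1)}$ and homogeneity of $t^d$ and $\xi t$ in $t_c$, collapses to $\pm c_d \lambda^{-1/(d-1)} \xi^{d/(d-1)}$ for an explicit $|c_d|\approx_d 1$ — the sign bookkeeping being where the $d$ odd vs.\ even case distinction (and the restriction to $\xi<0$ when $d$ is odd, which is where the critical point actually exists) enters. The amplitude $a(\xi,\lambda)$, together with the lower-order stationary-phase corrections (each a further factor of $2^{-l}$), is repackaged as $2^{-l/2} m(\xi,\lambda)\zeta(2^{k-l}\xi)$, and the symbol bounds $|\partial_\xi^j m| \lesssim_j |\xi|^{-j}$ follow from differentiating the stationary phase expansion (each $\xi$-derivative either hits the amplitude, which is a symbol of order $0$ in $\xi$ at scale $2^{l-k}$, or the already-extracted phase). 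Finally, factoring $\widehat{B^\pm_\lambda} = \widehat{O^\pm_\lambda}\,\widehat{M_\lambda}$ is purely formal — put the oscillation and one copy of $\zeta$ into $\widehat{O^\pm_\lambda}$ and the symbol $m$ times $\overline\zeta$ into $\widehat{M_\lambda}$ — and the pointwise bound $|M_\lambda(x)| \lesssim_N 2^{l-k}(1+|2^{l-k}x|)^{-N}$ is the routine fact that a symbol of order $0$ supported on $|\xi|\approx 2^{l-k}$ has inverse Fourier transform with these size and decay properties (integrate by parts $N$ times in $\xi$).

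The main obstacle I expect is the uniformity in $\lambda$ of the symbol estimates on $m(\xi,\lambda)$ — and, relatedly, doing the stationary-phase expansion with the $\xi$-dependent cutoff around the moving critical point $t_c(\xi,\lambda)$ cleanly enough that differentiating in $\xi$ does not spoil the bounds. Concretely, each $\xi$-derivative can fall on $t_c$, which depends on $\xi$ like $\xi^{1/(d-1)}$, so one must check that these chain-rule factors are harmless at scale $|\xi|\approx 2^{l-k}$; the saving grace is that $\lambda$ only appears through the combination $\lambda 2^{k(d-1)} \approx 2^{l-k}$ and $\xi \approx 2^{l-k}$, so everything is genuinely a symbol of order $0$ in $\xi$ with constants independent of $\lambda$ in the stated range. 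This is exactly the content of the appendix stationary-phase lemma, so here one simply invokes it with the phase $\varphi^k$ and amplitude $\psi$ and reads off the three pieces.
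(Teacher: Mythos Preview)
Your approach is essentially the paper's: locate the critical point $t_c(\xi,\lambda)$, split the $t$-integral into near/far pieces, use non-stationary phase on the far piece to produce $A_\lambda$, and extract the phase $e(\pm c_d\lambda^{-1/(d-1)}\xi^{d/(d-1)})$ from the near piece. The computation of the critical value and the odd/even case distinction you describe match the paper exactly.

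The one place your sketch diverges from the paper's execution is in how the symbol bounds $|\partial_\xi^j m(\xi,\lambda)|\lesssim_j|\xi|^{-j}$ are obtained. You propose doing a stationary-phase \emph{asymptotic expansion} of the near integral, keeping the leading amplitude plus lower-order corrections as $m$, and absorbing remainders into $A_\lambda$. The paper instead keeps the \emph{entire} near-critical integral as $2^{-l/2}m(\xi,\lambda)\zeta(2^{k-l}\xi)$ (no truncated expansion), and proves the symbol bounds by an explicit Morse-lemma change of variables: write the phase near $t_c$ as $G(s,\xi)^2$ for an analytic $G$, reduce to $\int e(2^l G^2)\cdots\,dG$, take the Fourier transform in $G$ to a Gaussian, and then show that the inverse map $H(u,\xi)$ satisfies $\partial_\xi^j H = \xi^{-j}\cdot(\text{analytic})$ by tracking how $t(\xi)$ enters the power-series coefficients. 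This directly yields $|\partial_\xi^j(\text{near integral})|\lesssim 2^{-l/2}|\xi|^{-j}$ without ever expanding in powers of $2^{-l}$. Your route would also work, but the bookkeeping of the $\xi$-dependent cutoff through each term of the expansion (which you correctly flag as the obstacle) is heavier; the paper's change-of-variables approach sidesteps that entirely.

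One small correction: in the paper $A_\lambda$ is purely the far-from-critical-point piece; nothing from the near integral is absorbed into it. Also, your final sentence (``invoke the appendix stationary-phase lemma'') is circular as written --- Lemma~\ref{decomp} \emph{is} the appendix lemma --- though I take it you mean a generic stationary-phase-with-parameters statement.
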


With this decomposition lemma in hand, we are able to quickly complete the proof.
\begin{proof}[Proof of Theorem \ref{SINGFREQEST}]
If we decompose $G_\lambda = A_\lambda + \sum_{\pm}' B_\lambda^{\pm}$, and estimate
\[ S_G f \leq S_A f + \sum_{\pm}' S_{B^{\pm} f},\]
where
\[ \aligned 
|S_A f|^2 &:= \sum_k 2^{dk} \int_{2^{l-dk}}^{2^{l-dk+1}} | A_\lambda*f|^2 \ d\lambda \ \text{ and} \\
|S_{B^{\pm}} f|^2 &:= \sum_k 2^{dk} \int_{2^{l-dk}}^{2^{l-dk+1}} | B_\lambda^{\pm}*f|^2 \ d\lambda , \endaligned\]
then we may estimate
\[ \|S_A f\|_p \lesssim 2^{-lN} \|f\|_p,\]
by the Fefferman-Stein inequalities and the boundedness of the Littlewood-Paley square function. We only treat $B_\lambda^+$, as $B_\lambda^{-}$ -- if it's present -- is handled similarly. By the Fefferman-Stein inequalities, we may replace $S_{B^+} f$ by 
\[ |S_{O^{+}} f|^2 := \sum_k 2^{dk} \int_{2^{l-dk}}^{2^{l-dk+1}} | O_\lambda^{+}*f|^2 \ d\lambda .\]
But now the result follows from Propostion \ref{allscaleshighLp} by a change of variables; the key point is that the map $t^{-1/(d-1)} \mapsto t$ has a (harmless) bounded Jacobian on $t \approx_d 1$. 
\end{proof}

\section{A Key Maximal Inequality} \label{s:max}
We present and prove a key maximal inequality used in the proof of Theorem \ref{MAIN}, an extension of the maximal inequality of \cite[\S 3]{KL}, which in turn is an extension of Bourgain \cite[\S 4]{B3}, 
the harmonic analytic core of the proof of the arithmetic ergodic theorems. Before recalling Bourgain's result, we need a few definitions:

Define $\Phi _\lambda f := \varphi_\lambda * f$, where $\varphi$  Schwartz function satisfying
\begin{equation}\label{e:1phi}
 \mathbf 1_{[-1/8,1/8]} \leq \hat{\varphi} \leq \mathbf 1_{[-1/4,1/4]}, 
\end{equation}  
and $ \varphi _{\lambda } (y) = \frac{1}{\lambda} \varphi (\frac{y}{\lambda} ) $.  
Next, let $ \{\theta_1, \dots, \theta_N \}$ be points in $ \mathbb R $ which are  $ \tau $-separated, in that 
$ \lvert  \theta _m - \theta _n\rvert> \tau >0 $ for $ m\neq n$.  Define a maximal operator by 
\begin{equation}\label{e:M}
Mf(x):= \sup_{ \lambda > 1/\tau } \left| \sum_{n=1}^N e(\theta_n x) \Phi _\lambda \big( \hat{f}(\cdot + \theta_n)\big)^{\vee}(x)  \right|. 
\end{equation}
Trivially, the operator norm of $ M$ is dominated by $ N$.  The key observation is that that 
this trivial bound can be improved to the much smaller term $ {\log}^2 N$.  
\begin{theorem}\cite[Lemma 4.13]{B3}\label{t:BMax}
For all $ N \geq 2$,  $ 0 < \tau < \infty $ and   $ \tau $-separated points  $ \{\theta_1, \dots, \theta_N \}$, there holds 
\[ \|Mf \|_{L^2(\RR)} \lesssim {\log}^2 N \|f\|_{L^2(\RR)}.\]
\end{theorem}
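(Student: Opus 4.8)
The plan is to reduce the multi-frequency maximal operator to a single-frequency one by a clever pigeonholing on the scale $\lambda$, exploiting that the frequencies $\theta_n$ are $\tau$-separated while the relevant Fourier cutoffs $\widehat{\varphi_\lambda}$ have width $\sim 1/\lambda < \tau$. First I would write $Mf(x) \le \sum_{j \ge 0} M_j f(x)$, where $M_j$ restricts the supremum to dyadic scales $\lambda \approx 2^j/\tau$ (so that $\widehat\varphi_\lambda$ is supported in an interval of length $\lesssim 2^{-j}\tau$). For $j$ fixed, the functions $\widehat f(\cdot+\theta_n)\widehat\varphi(\lambda\,\cdot)$ live on disjoint frequency intervals of width $\lesssim 2^{-j}\tau$ around $-\theta_n$; the modulations $e(\theta_n x)$ translate these back to overlapping bands, but the key point is that only $O(2^j)$ of the $\theta_n$ can fall in any interval of length $\tau$, so one gets a genuine gain. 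The standard route (following Bourgain) is to bound each $M_j$ by $O(j)$ (i.e. polynomially in $j$, using almost-orthogonality / a Rademacher-type square function argument over the $N$ frequencies, together with the Sobolev embedding in $\lambda$ that converts the sup over scales into an $L^2$-integral over $\lambda$ at the cost of a derivative), while also having the trivial bound $M_j \lesssim N$ and the decay $M_j \lesssim 2^{-\epsilon j} \cdot (\text{something})$ coming from the fast decay of $\widehat\varphi$ outside its support.

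The main quantitative mechanism: for each $j$ one proves $\|M_j f\|_2 \lesssim \log N \cdot \|f\|_2$ uniformly in $j$ by a square-function estimate — expand $|M_j f|^2$, use that the operators at scale $\lambda$ and frequency $\theta_n$ have kernels whose overlaps are controlled by the separation, and invoke an $L^2$ orthogonality bound in the spirit of Rubio de Francia (the number of frequencies enters only logarithmically because of the dyadic organization in $j$ and the single-scale nature of $M_j$). One then needs a second, competing estimate showing $\sum_{j \ge j_0} \|M_j f\|_2 \lesssim \|f\|_2$ once $2^{j_0} \gtrsim N$, because past that threshold the $\tau$-separated points can no longer all be "resolved" at scale $2^{j}/\tau$ and the trivial union bound over frequencies combined with the $N^{-c}$-type decay of the tails of $\widehat\varphi$ wins. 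Summing: $\sum_{j \le \log N} \log N + \sum_{j > \log N} (\text{summable}) \lesssim \log^2 N$, which is exactly the claimed bound.

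I expect the main obstacle to be the single-scale estimate $\|M_j f\|_2 \lesssim \log N\,\|f\|_2$: this is where one must genuinely use that one may throw away all but $O(2^j)$ of the frequencies per window and run an orthogonality argument, and getting only a \emph{logarithmic} (rather than power) loss in $N$ requires care — it is the heart of Bourgain's argument in \cite[\S 4]{B3}. The Sobolev step (trading $\sup_\lambda$ for $(\int |\cdot|^2 + \int |\partial_\lambda \cdot|^2)^{1/2}$, analogous to Lemma \ref{sobemb} but in the $L^2(\RR)$ setting) and the tail estimates for $j > \log N$ are comparatively routine. Since the statement is quoted verbatim from \cite[Lemma 4.13]{B3}, I would in practice simply cite Bourgain's proof; the sketch above records the architecture one would reconstruct if pressed.
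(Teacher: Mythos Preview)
The paper does not prove this theorem at all: it is stated with the citation \cite[Lemma 4.13]{B3} and used as a black box. Your closing remark --- that in practice one simply cites Bourgain --- is therefore exactly what the paper does, and is the correct disposition of this statement.

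As for the sketch itself: the broad architecture (dyadic decomposition in the scale $\lambda$, Plancherel/orthogonality from the $\tau$-separation of the $\theta_n$, a Sobolev or Rademacher--Menshov device to convert the supremum into a square sum, and a splitting of scales at $j \approx \log N$) is in the spirit of Bourgain's argument in \cite[\S 4]{B3}. A few of the details are garbled, however. The assertion that ``only $O(2^j)$ of the $\theta_n$ can fall in any interval of length $\tau$'' is not right --- $\tau$-separation means at most $O(1)$ fall in any such interval --- and it is not the mechanism Bourgain uses; the gain comes rather from the fact that the Fourier projections $\widehat{\varphi}(\lambda\,\cdot)$ around the distinct $\theta_n$ are genuinely disjoint once $\lambda > 1/\tau$, so that Plancherel controls each fixed-scale contribution with norm $O(1)$, and the logarithmic loss enters only through the Rademacher--Menshov handling of the supremum over $O(\log N)$ relevant dyadic scales (beyond which a different, summable estimate takes over). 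Likewise, the modulations $e(\theta_n x)$ do not ``translate these back to overlapping bands'' --- on the frequency side the bands remain disjoint; the difficulty is purely that the maximal function is nonlinear. None of this affects the bottom line, since neither you nor the paper carries out the proof.
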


This inequality was extended in \cite{KL}, where the averages were replaced by 
oscillatory singular integrals. For $k_0 \geq 1$ arbitrary but fixed, define 
\begin{equation} \label{e:Tlambda}
T_{\lambda,d} f(x) = T_\lambda f(x) :=   \sum_{k_0 \leq k} \int e(-\lambda t^d) \psi_k(t) f(x-t) \; dt. 
\end{equation}
Above, $2 ^{ k_0} \geq \tau^{-C} $, and the dependence on $ k_0$ is uniform subject to this constraint.  
For $ \{\theta_1, \dots, \theta_N \}$ that are  $ \tau $-separated as in Theorem~\ref{t:BMax}, define 
\begin{equation}\label{e:T}
 T_d f(x) := \sup_{0< \lambda \leq 1 } \left| \sum_{n=1}^N e( \theta_n x) T_\lambda \left( \widehat{ \varphi _{\tau } } \hat{ f}( \cdot + \theta_n)  \right)^{\vee}(x) \right|.
\end{equation}
This definition matches that of \eqref{e:M}, except that there is an additional convolution with 
$  \varphi _{\tau } $ as in \eqref{e:1phi}.  
In \cite[Theorem 3.5]{KL}, an analogous multi-frequency estimate was proven for $T_2$.
\begin{theorem}
For all  $ 1 \leq \tau^{-C} \leq 2 ^{k_0} < \infty $,     $ N \geq 2$, and  $ \tau $-separated points  $\{\theta_1, \dots, \theta_N \}$, we have
\[ \| T_2 f\|_{L^2(\mathbb{R})} \lesssim {\log}^2 N \| f\|_{L^2(\mathbb{R})}.\]
\end{theorem}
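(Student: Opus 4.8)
The plan is to mimic the proof of the quadratic case \cite[Theorem 3.5]{KL}, adapting every step so that the exponent $2$ is replaced by a general $d \geq 2$. The starting point is to split $T_d f$ according to dyadic blocks: write $T_\lambda = \sum_{k \geq k_0} T_\lambda^k$, where $T_\lambda^k f(x) := \int e(-\lambda t^d) \psi_k(t) f(x-t)\, dt$, and analyze the single-scale multipliers $M_\lambda^k(\xi) := \int e(-\lambda t^d - \xi t)\psi_k(t)\,dt$. Stationary phase (the second derivative of the phase is $\approx \lambda 2^{kd}$ on the support $|t| \approx 2^k$) gives the standard dichotomy: when $\lambda 2^{kd} \lesssim 1$ the block behaves like a smooth Littlewood--Paley piece with no oscillation (a ``low'' regime), while when $\lambda 2^{kd} \gg 1$ there is genuine oscillation and a stationary point, producing decay of size $(\lambda 2^{kd})^{-1/2}$ together with a modulation by a phase of the form $e(c_d \lambda^{-1/(d-1)}\xi^{d/(d-1)})$ — this is precisely the structure already extracted in Lemma~\ref{decomp}. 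So the contributions naturally organize into a ``stationary'' part and an ``oscillatory'' part exactly as in \cite{KL}.

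**Next I would** handle the non-oscillatory blocks by comparison with Bourgain's maximal function. For the blocks with $\lambda 2^{kd} \lesssim 1$, the multiplier $M_\lambda^k(\xi)$ is essentially $\widehat{\psi_k}(\xi)$ up to a harmless smooth error, so the relevant piece of $T_d f$ is dominated, after the extra truncation by $\widehat{\varphi_\tau}$ and the restriction $\lambda \leq 1$ (hence $\lambda > 1/\tau$-type scaling once $2^{k_0} \geq \tau^{-C}$), by the multi-frequency maximal operator $Mf$ of \eqref{e:M}; Theorem~\ref{t:BMax} then supplies the $\log^2 N$ bound. For the oscillatory blocks, one uses the decomposition of Lemma~\ref{decomp}: the error term $A_\lambda$ is $O_N(2^{-lN})$ times a harmless maximal function and is summable, while the main term factors as $\widehat{O^\pm_\lambda}\,\widehat{M_\lambda}$ with $M_\lambda$ a nice approximate identity at scale $2^{l-k}$ and $O^\pm_\lambda$ carrying the oscillation $2^{-l/2} e(\pm c_d \lambda^{-1/(d-1)}\xi^{d/(d-1)})\zeta(2^{k-l}\xi)$. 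One then square-sums in $k$: the gain $2^{-l/2}$ at each scale, against the number $\approx l$ (or $\approx \log$) of scales contributing for a fixed order of magnitude of $\lambda 2^{kd}$, together with the single-frequency square function estimate (Theorem~\ref{SINGFREQEST}, with $p=2$) transferred to the multi-frequency setting via the $L^2$-orthogonality of the $\tau$-separated frequencies restricted by the narrow cutoff $\widehat{\varphi_\tau}$, yields the $\log^2 N$ bound for the oscillatory part as well.

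**The main obstacle** is the multi-frequency bookkeeping for the oscillatory term: one must pass from the single-frequency square-function estimate to an estimate for $\sup_\lambda$ of a sum of $N$ modulated pieces, and this requires that the frequency localizations $\widehat{\varphi_\tau}(\cdot + \theta_n)$ be genuinely disjoint (which is why $2^{k_0} \geq \tau^{-C}$ is imposed) and that the $\sup$ over $\lambda$ be absorbed by the square function — here the key trick, as in \cite{KL}, is to use the companion square function $S_{G'}$ controlling $\partial_\lambda G_\lambda$ together with a Sobolev-type embedding in the $\lambda$ variable (this is the role of the two square functions in Theorem~\ref{SINGFREQEST}, of which only $S_G$ needs separate treatment). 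Converting the $\sup_\lambda$ into an $L^2_\lambda$ plus $L^2_\lambda$-derivative bound, and then checking that the number of scales $k$ for which a given $\lambda$ falls in the oscillatory regime at the relevant magnitude is $O(l) = O(\log(1/\tau))$ — hence only costs a power of the logarithm — is the delicate point. Once these pieces are assembled exactly as in the $d=2$ case, summing the stationary and oscillatory contributions completes the proof.
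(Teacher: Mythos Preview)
The paper does not give a proof of this theorem at all: it is quoted verbatim from \cite[Theorem 3.5]{KL} as background, and the paper's own contribution begins with the subsequent Theorem~\ref{keyest} (the $\ell^p$, $p\geq 2$ analogue over Ionescu--Wainger frequencies) and Theorem~\ref{L2keyest} (the general-$d$ $\ell^2$ version). So there is no ``paper's own proof'' to compare against here.

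That said, your sketch is essentially the correct strategy, and it matches both the original argument in \cite{KL} and the paper's later sketch for Theorem~\ref{L2keyest}. A few comments. First, the statement is about $T_2$ specifically; you have written a proof for general $d$, which is fine but more than is asked. Second, the paper's organization of the argument is slightly different from yours: rather than a two-regime ``stationary/oscillatory'' split, the paper (following \cite{KL}) decomposes according to the dyadic size $2^l \approx \lambda 2^{dk}$ and treats four ranges of $l$ relative to $\log N$: the range $l < -C_d\log N$ reduces to Bourgain's $M$ and a multi-frequency truncated Hilbert transform (\cite[Lemma 3.12]{KL}); the range $l > C_d\log N$ is handled by the single-frequency decay Lemma~\ref{l:4} and a crude sum over $N$ frequencies; and the critical range $|l|\leq C_d\log N$ is handled on $L^2$ by Plancherel, which makes the multi-frequency square functions orthogonal and reduces everything to single-frequency estimates with loss $1+\min\{2^l,2^{-l}\}\log^2 N$ (cf.\ \cite[Lemma 3.16]{KL}). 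Your invocation of Lemma~\ref{decomp} and Theorem~\ref{SINGFREQEST} is overkill for the $L^2$ case --- at $p=2$ the frequency cutoffs $\widehat{\varphi_\tau}(\cdot+\theta_n)$ are disjoint, so Plancherel immediately reduces the multi-frequency square function to the single-frequency one, and no delicate stationary-phase decomposition is needed. The Sobolev-in-$\lambda$ trick you mention is indeed the mechanism used to pass from $\sup_\lambda$ to square functions in both \cite{KL} and \S\ref{s:max} of this paper.
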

By transference, Lemma \ref{trans}, the same result holds with $\ell^2$ norms replacing $L^2(\RR)$ norms.\footnote{Strictly speaking, Lemma \ref{trans} does not apply, since our set of modulation parameters is uncountable. But, by continuity we may restrict our set of modulation parameters to the rationals, at which point we may appeal to monotone convergence to apply Lemma \ref{trans}.}

Unfortunately, the arguments of \cite{KL} produce a polynomial-in-$N$ norm growth on $\ell^p$, which limits the utility of the operators $T_d$ for $p$ away from $2$ -- when $\{\theta_1,\dots,\theta_N\}$ are generic $\tau$-separated frequencies.

On the other hand, when the frequencies are replaced with sets $\mathcal{U}_N$, defined in Theorem \ref{Multtheorem}, we are able to enjoy sub-polynomial norm growth on $\ell^p, \ p \geq 2$. Before stating our theorem, we need modify the definition of $T_\lambda$ to contend with a truncation parameter:
\begin{equation}\label{Tredef}
T_\lambda f(x) :=   \sum_{k_0 \leq k} \int e(-\lambda t^d) \psi_k(t) f(x-t) \; dt \ \cdot \mathbf{1}_{\lambda \lesssim k^C 2^{-dk}} =: \sum_{k_0 \leq k} \int e(- \lambda t^d) \psi_k(t) f(x-t) \; dt \ \cdot \mathbf{1}_{\lambda \leq 2^{-c(k)} }
\end{equation}
i.e.\ the implicit constants in the statement $\lambda \lesssim k^C 2^{-dk}$ are chosen so that
\begin{equation}\label{impconsts}
C_k k^C 2^{-dk} = 2^{-c(k)}
\end{equation}
are dyadic. This means that for each $k$, the implicit constants $\{ C_k \}$ are fixed only up to a multiplicative factor of $2$, but this will not be a problem. We state our main theorem below.

\begin{theorem}\label{keyest}
For any $s \geq 1$, and any $0 < \rho \ll 1$, suppose $\chi_s(\beta) := \chi(D_s \beta)$ for some $D_s \geq 2^{2^{10s\rho}}$ (say), and consider 
\begin{equation}\label{e:Ts}
 T_{d,s}f(x) := T_s f(x) := \sup_{0 \leq \lambda \leq 1} \left| \sum_{\theta \in \mathcal{U}_{2^s} } e(\theta x) T_\lambda \left( \chi_s \hat{f}(\cdot+\theta) \right)^{\vee}(x) \right|.
\end{equation}
Then, for any $2 \leq p <\infty$,
\[ \| T_s f \|_{\ell^p} \lesssim_{p,\rho} s 2^{2s \rho} \| f\|_{\ell^p}.\]
\end{theorem}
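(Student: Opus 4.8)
The plan is to reduce Theorem~\ref{keyest} to the continuous multi-frequency multiplier theorem (Theorem~\ref{Multtheorem}) combined with the square function estimates of \S\ref{s:SFE} and Bourgain's maximal inequality (Theorem~\ref{t:BMax}). First I would transfer the problem from $\ell^p(\Z)$ to $L^p(\RR)$: the operator $T_s$ is (modulo harmless endpoint contributions in the supremum over $\lambda$, which is already restricted to a countable set) a periodization of a continuous multiplier, so by Lemma~\ref{trans} it suffices to prove the analogous bound for the real-variable operator obtained by replacing $\F_\Z$ with $\F_\RR$. The frequencies $\theta \in \mathcal{U}_{2^s}$ are $\tau$-separated with $\tau^{-1} \approx D_s$, and since we are cutting off with $\chi_s$ supported in a ball of radius $\ll D_s^{-1}$ around each $\theta$, the pieces $e(\theta x) T_\lambda(\chi_s \hat f(\cdot + \theta))^\vee$ are orthogonal at each fixed $\lambda$, which is what lets us hope for a logarithmic (not polynomial) loss in $N = \#\mathcal{U}_{2^s} \lesssim e^{2^{s\rho}}$, so that $\log N \lesssim 2^{s\rho}$.

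The key structural step is the same dichotomy flagged in the proof overview (\S\ref{s:sketch}): after rescaling the $k$-th Littlewood--Paley piece of $T_\lambda$ (recall $\psi_k(t) ``="\ t^{-1}\mathbf 1_{|t|\approx 2^k}$ and $\lambda \lesssim k^C 2^{-dk}$, so $\lambda 2^{dk} \lesssim k^C$ is ``small''), the phase $-\lambda t^d - \beta t$ on the relevant frequency annulus either has no critical point (the ``stationary'' singular-integral regime) or has one (the ``oscillatory'' regime, governed by the parameters $l, k$ of \S\ref{s:SFE} via $\lambda 2^{dk} \approx 2^l$ and $|\xi| \approx 2^{l-k}$). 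In the non-stationary regime I would use Lemma~\ref{deriv/decay}-type kernel bounds to dominate the relevant single-frequency maximal operator by $M_{HL}$, and then invoke Theorem~\ref{t:BMax} (Bourgain's maximal inequality) to sum over the $N$ frequencies with only a $\log^2 N \lesssim 2^{2s\rho}$ loss; alternatively one packages these as a vector-valued Mikhlin multiplier and applies Theorem~\ref{Multtheorem} directly, paying $\log N$. In the stationary/oscillatory regime one cannot control the maximal operator by $M_{HL}$, and here is where Theorem~\ref{SINGFREQEST} enters: the supremum over $\lambda$ is dominated (after a Sobolev/fundamental-theorem-of-calculus argument in $\lambda$, exactly as in Lemma~\ref{sobemb}, using that $2^{-dk}\partial_\lambda G_\lambda$ is morally $G_\lambda$) by the single-frequency square functions $S_G f + S_{G'} f \lesssim_p l\, \|f\|_p$; transferring this single-frequency estimate to the multi-frequency setting via Theorem~\ref{Multtheorem} costs an extra factor of $\log N \lesssim 2^{s\rho}$, and the constraint $k^C \gtrsim 2^l$ together with $l \lesssim \log(\lambda 2^{dk}) \lesssim \log(k^C) \lesssim \log k \lesssim s$ contributes the linear factor $s$. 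Multiplying the losses $s$ (from $l$) $\times$ $2^{s\rho}$ (from transference of the square function) — or $2^{2s\rho}$ from Bourgain in the easy regime — gives the claimed bound $s\,2^{2s\rho}$.

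Concretely, the steps in order: (1) transfer to $\RR$ via Lemma~\ref{trans}, handling the countable supremum and endpoint issues as in the footnotes; (2) dyadically decompose $T_\lambda = \sum_{k \geq k_0} T_\lambda^{(k)}$ and, for each $k$, localize the $\lambda$-integral to the dyadic range $\lambda 2^{dk} \approx 2^l$, noting only $l \lesssim s$ values of $l$ contribute by \eqref{Tredef}; (3) on each frequency annulus $|\beta - \theta| \approx 2^{l-k}$ split into the non-stationary and stationary pieces; (4) for the non-stationary piece, bound the single-frequency maximal function by $M_{HL}$ and apply Theorem~\ref{t:BMax}; (5) for the stationary piece, run the Sobolev-embedding argument of Lemma~\ref{sobemb} in $\lambda$ to reduce the supremum to the square functions $S_G, S_{G'}$, apply Theorem~\ref{SINGFREQEST} to get the factor $l \lesssim s$ at the single-frequency level, then apply Theorem~\ref{Multtheorem} (with the Hilbert space $H = \ell^2_k L^2_\lambda$) to pass to $\mathcal{U}_{2^s}$ at the cost of $\log(2^s)^{?}\cdot\text{(norm)} \lesssim 2^{s\rho}$ and sum a geometric series in the remaining parameters. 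The main obstacle is step (5): carefully matching the rescaled operator appearing here to the precise form of $S_G f$ in \eqref{SFXN}--\eqref{SFXND} (the substitution $t^{-1/(d-1)} \mapsto t$ with bounded Jacobian, and the bookkeeping of the cutoffs $\zeta, \chi_s$), and verifying that the Mikhlin-type error terms $M_\lambda$ from Lemma~\ref{decomp} really do combine acceptably with the Ionescu--Wainger bump functions $\eta_N$ of Theorem~\ref{Multtheorem} without destroying the narrow localization that makes the frequencies $\tau$-separated-usable.
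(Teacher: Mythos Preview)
Your proposal captures the essential architecture --- decompose according to $l \approx \log_2(\lambda 2^{dk})$, handle the hard piece via the square-function estimate Theorem~\ref{SINGFREQEST} lifted to many frequencies by Theorem~\ref{Multtheorem} --- and this is indeed the paper's strategy. But there is a genuine accounting error that breaks the argument as written. The claim ``only $l \lesssim s$ values of $l$ contribute by \eqref{Tredef}'' is false: the truncation forces $2^l \lesssim k^C$, i.e.\ $l \lesssim \log k$, but $k$ is unbounded and there is no relation $\log k \lesssim s$. In the paper's decomposition \eqref{e:1}--\eqref{e:4} the critical regime is $|l| \leq C_{d,p}\, 2^{s\rho}$ (not $|l| \lesssim s$), so the factor $l$ from Theorem~\ref{SINGFREQEST} becomes $\lesssim 2^{s\rho}$, while the linear $s$ in the final bound comes from the $\log N$ loss in Theorem~\ref{Multtheorem} with $N = 2^s$. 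More seriously, the regimes $l < -C\,2^{s\rho}$ and $l > C\,2^{s\rho}$ do not disappear: for very negative $l$ one must recognize $\sum_l T_\lambda^l$ as a truncated Hilbert transform and invoke the $\ell^p$ multi-frequency maximal estimate Proposition~\ref{MSTLp} (Bourgain's Theorem~\ref{t:BMax} is $L^2$-only and does not give $\ell^p$ for $p>2$); for large positive $l$ one needs the Stein--Wainger decay $2^{-\delta_{d,p} l}$ of Lemma~\ref{l:4} to beat the triangle inequality over $|\mathcal{U}_{2^s}| \lesssim e^{c\,2^{s\rho}}$ frequencies. Neither is subsumed by your ``non-stationary / $M_{HL}$'' branch.

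There is also a gap in the intermediate range $-C\,2^{s\rho} \leq l \leq 0$ (the paper's Case~\eqref{e:2}): the single-frequency operator is not dominated by $M_{HL}$, and packaging it directly as a Mikhlin multiplier fails because the symbol $\widehat{g_\lambda}(\xi)$ does not vanish at $\xi = 0$. The paper subtracts the zero-frequency value $\mu(\lambda,l)$ (see \eqref{mu}) and a further first-order term $\overline{\mu}(\lambda,l)$, splitting $\mathcal{L}_k$ into a piece controlled by the Bourgain-type maximal function $\mathcal{A}_s$ of Proposition~\ref{MSTLp} and a remainder that is a genuine vector-valued Mikhlin multiplier (Lemma~\ref{MIKEST1}); only then do Theorem~\ref{Multtheorem} and Proposition~\ref{VVMT} apply. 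Your Sobolev step (5) for the stationary piece is correct in spirit but is only one of several mechanisms; the full proof layers Proposition~\ref{MSTLp}, vector-valued Mikhlin, and Theorem~\ref{SINGFREQEST} across the four $l$-regimes, and the final bound $s\cdot 2^{2s\rho}$ arises from summing $O(2^{s\rho})$ values of $l$ in the critical range, each losing $s\cdot 2^{s\rho}$ (or better).
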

\begin{remark}\label{cheaprem}
By the triangle inequality and Lemma \ref{trans}, a trivial estimate in $\ell^p$ is
\[ \| T_s f \|_{\ell^p} \lesssim_p |\mathcal{U}_{2^s}| \cdot \|f \|_{\ell^p} \lesssim_{\rho} 2^{c 2^{s \rho}} \| f\|_{\ell^p}.\]
Consequently, in proving this theorem we may assume that $s \gg \rho^{-1}$ is sufficiently large.
\end{remark}
The scheme of the proof of Theorem \ref{keyest} will follow that of \cite[\S 3]{KL}. We review this approach below.

First, though, we recall the following multi-frequency lemmas. These results essentially appear in \cite[\S 6-7]{MST1}.

One piece of notation. For each $j \geq 1$, let $A_j$ denote one of the following two convolution operators, with kernel given by either 
\[ \sum_{ k \geq j} {\psi_k} \ \text{ or } \ {\varphi_j} := 2^{-j} \varphi(2^{-j}\cdot ),\] for some (say) Schwartz function $\varphi$. Consider the maximal function,
\begin{equation}\label{A_s} \mathcal{A}_s g(x) := \sup_{j \geq 1} \left|\sum_{\theta \in \mathcal{U}_{2^s}} e(\theta x) A_j 
\left( \chi_s \hat{g}(\cdot+\theta) \right)^{\vee}(x) \right|,
\end{equation}
where $\chi_s$ is as above. Although these maximal functions depend on $\psi$ or on the particular choice of Schwartz function, the estimates for $\mathcal{A}_s$ are uniform (among appropriately normalized functions).
\begin{proposition}\label{MSTLp}
One has the following norm estimates on $\mathcal{A}_s$:
\[ \| \mathcal{A}_s g \|_p \lesssim_{\rho,p} s 2^{2 s \rho} \| g \|_{p}.\]
The implicit constant is uniform in $s$.
\end{proposition}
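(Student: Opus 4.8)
\textbf{Proof plan for Proposition \ref{MSTLp}.}

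The plan is to reduce the maximal estimate for $\mathcal{A}_s$ to the multi-frequency multiplier theorem, Theorem \ref{Multtheorem}, followed by the vector-valued Mikhlin multiplier theorem, Proposition \ref{VVMT}, and transference, Lemma \ref{trans}. First I would linearize the supremum: choose a measurable function $j(x) \geq 1$ realizing (up to a factor of $2$) the supremum in \eqref{A_s}, so that it suffices to bound
\[ \Big\| \sum_{\theta \in \mathcal{U}_{2^s}} e(\theta x) A_{j(x)}\big(\chi_s \hat g(\cdot + \theta)\big)^{\vee}(x) \Big\|_p. \]
The standard device here (Bourgain, Mirek--Stein--Trojan) is to pass to a vector-valued formulation: replace the linearized object by the full $\ell^\infty$-norm over $j$, i.e.\ consider the Hilbert-space-valued (in fact $\ell^\infty_j$, which we majorize by $\ell^2_j$ after a further decomposition, or handle via a jump/square-function argument) multiplier
\[ m(\xi) := \big( A_j \text{'s Fourier symbol at } \xi \big)_{j \geq 1}. \]
Concretely, since $A_j$ is convolution with either $\sum_{k \geq j}\psi_k$ or $\varphi_j$, its symbol $\widehat{A_j}(\xi)$ is a bounded function, essentially constant on dyadic scales, and the family $\{\widehat{A_j}(\xi)\}_j$ — viewed as an $H$-valued function of $\xi$ — has Mikhlin norm $O(1)$ when $H$ is taken to be a suitable fixed Hilbert space encoding the maximal function (this is exactly the content of the classical single-frequency maximal estimates for these averages/truncated singular integrals, which hold on all $L^p$, $1 < p < \infty$, uniformly).

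With this setup, the cutoff $\chi_s$ plays the role of the bump $\eta_N$: one checks that $\chi_s$ is supported in a ball of radius $\asymp D_s^{-1} \leq 2^{-2^{10 s\rho}}$, which is $\leq e^{-(2^s)^{2\rho}}$ for $s$ large (using $D_s \geq 2^{2^{10 s\rho}}$ and $\rho \ll 1$), so the hypothesis on $\eta_N$ in Theorem \ref{Multtheorem} is met with $N = 2^s$ and the chosen $\rho$. I would first establish the continuous ($\mathbb{R}$) version: applying Theorem \ref{Multtheorem} with the $H$-valued symbol $m$ (whose $L^p(\mathbb{R})$-multiplier norm $A$ is $O_p(1)$ by the single-frequency maximal bounds, upgraded to the vector-valued Mikhlin estimate Proposition \ref{VVMT}) gives
\[ \Big\| \big| \big( \textstyle\sum_{\theta \in \mathcal{U}_{2^s}} m(\beta - \theta)\eta_{2^s}(\beta-\theta)\hat g(\beta) \big)^{\vee} \big|_H \Big\|_{L^p(\mathbb{R})} \lesssim_{\rho,p} A \cdot \log 2^s \asymp_{\rho,p} s \, \|g\|_{L^p(\mathbb{R})}. \]
This recovers the $s$ factor; the extra $2^{2s\rho}$ in the claimed bound comes from the discrepancy between $\mathcal{U}_{2^s}$ and the smaller set of frequencies one might a priori want, and from summing the geometric-type contributions over the $O_\rho(e^{2^{s\rho}})$-many extra denominators — but since the final bound is $s 2^{2s\rho}$ and Theorem \ref{Multtheorem} already absorbs the cardinality into $\log N$, the $2^{2s\rho}$ is essentially slack that we can afford (and indeed Remark \ref{cheaprem} shows even $2^{c 2^{s\rho}}$ is trivially true, so we have ample room). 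Finally, transference via Lemma \ref{trans} (with $B_1 = \mathbb{C}$, $B_2 = H$ finite-dimensional after truncating the range of $j$, then letting the truncation $\to \infty$ by monotone convergence) converts the $L^p(\mathbb{R})$ bound into the desired $\ell^p(\mathbb{Z})$ bound, using that $\chi_s$ is supported in a cube of side $< 1$ so that periodization is legitimate.

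\textbf{Main obstacle.} The delicate point is the passage from the \emph{linearized} maximal operator to a genuine \emph{vector-valued multiplier} of the form handled by Theorem \ref{Multtheorem}: one must choose the Hilbert space $H$ and the symbol $m(\xi) = \{\widehat{A_j}(\xi)\}$ so that (i) $|(\,m(\xi)\hat g(\xi)\,)^{\vee}|_H$ genuinely dominates $\sup_j|A_j g|$ pointwise — which for the $\varphi_j$ averages is immediate but for the truncated singular integrals $\sum_{k \geq j}\psi_k$ requires a telescoping/square-function decomposition (writing $\sum_{k\geq j}\psi_k = \sum_{k \geq 1}\psi_k - \sum_{k < j}\psi_k$ and controlling the tail by an $\ell^2$ square function whose symbol \emph{is} Mikhlin) — and (ii) this $m$ has $L^p(\mathbb{R}) \to L^p(\mathbb{R};H)$ multiplier norm $O_p(1)$, which is precisely where the single-frequency maximal/square-function theory is invoked. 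Once $H$ and $m$ are correctly identified, the rest is a mechanical concatenation of Theorem \ref{Multtheorem}, Proposition \ref{VVMT}, and Lemma \ref{trans}, with the number-theoretic content entirely outsourced to the construction of $\mathcal{U}_{2^s}$ inside Theorem \ref{Multtheorem}.
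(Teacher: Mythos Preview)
Your plan correctly identifies Theorem \ref{Multtheorem} as the engine, but there is a genuine gap at precisely the step you flag as the ``main obstacle.'' You cannot choose a Hilbert space $H$ and symbol $m(\xi) = \{\widehat{A_j}(\xi)\}_{j \geq 1}$ so that $|(m\hat g)^\vee|_H$ pointwise dominates $\sup_j |A_j g|$ while $m$ is a bounded $L^p \to L^p(H)$ multiplier. The supremum is an $\ell^\infty_j$ norm, and the candidate $H = \ell^2_j$ fails outright for the averages $\varphi_j$: since $\widehat{\varphi_j}(\xi) = \widehat{\varphi}(2^j\xi)$, one has $\sum_{j \geq 1} |\widehat{\varphi_j}(\xi)|^2 = \infty$ for every $\xi \neq 0$, so the $\ell^2$-valued symbol is not even bounded. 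Your assertion that the classical single-frequency maximal theorem ``is exactly'' a Hilbert-space multiplier statement is not correct --- the maximal function is a genuinely Banach-space ($\ell^\infty$) valued operator, and no amount of telescoping converts it into a bounded $\ell^2$-valued one over the full range of scales.

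The paper does not prove this proposition from scratch but, in the Remark following it, points to the argument of \cite[\S 6--7]{MST1}, which proceeds by a \emph{splitting of scales} at the threshold $\kappa_s := 2^{2\rho s}$. For the finitely many small scales $j \leq \kappa_s$, a Rademacher--Menshov style argument (\cite[Lemma 2.2]{MST1}) replaces the supremum over $\kappa_s$ indices by a controlled combination of square functions built from differences $A_j - A_{j'}$; each such square function \emph{is} an $\ell^2$-valued multiplier with $O(1)$ $L^p$ norm, and Theorem \ref{Multtheorem} applies to each. For the large scales $j > \kappa_s$, one exploits that the least common multiple $Q$ of all denominators appearing in $\mathcal{U}_{2^s}$ satisfies $Q \lesssim_\rho e^{2^{s\rho}} \ll 2^{\kappa_s}$, so on these scales the multi-frequency kernel is effectively $Q$-periodic and the problem collapses to a single-frequency maximal estimate. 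The factor $2^{2s\rho}$ in the stated bound is therefore not ``slack'' as you suggest --- it records the threshold $\kappa_s$ at which the two regimes meet --- and your proposal is missing both the Rademacher--Menshov reduction and the periodicity argument that together make the scheme work.
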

\begin{remark}
In \cite{MST1}, this result is proven with $\mathcal{U}_{2^s}$ replaced with $\mathcal{U}_{s^l}$ for some (sufficiently large) integer $l$, and with additional Weyl sums weighting the operator. These Weyl sums favorably contribute to the $\ell^2$ norm of the operator. In $\ell^p$, one is able to approximate these Weyl sums on
\[ \supp \bigcup_{\theta \in \mathcal{U}_{s^l}} \chi_s(\cdot - \theta) \]
by the symbol of an averaging operator, which acts as an $\ell^p$ multiplier (up to a logarithmic loss in $s^l$), see the proof of \cite[Theorem 6.2]{MST1}. The absence of these Weyl sums actually simplifies the argument, as no approximation on $\ell^p$ is needed. One uses the same splitting of scales as in \cite[\S 6-7]{MST1}: for small scales, one uses the Rademacher-Menshov style argument of \cite[Lemma 2.2]{MST1}. In the opposite case, where the least common multiple of the denominators of the frequencies in $\mathcal{U}_{2^s}$ is very small relative to the scale of the averaging operators, one uses periodicity. This splitting of scales is chosen according to whether the scale of the operator is greater or less than (say)
\[ 2^{\kappa_s}, \ \kappa_s := 2^{2\rho s}.\]
\end{remark}

We emphasize that in what follows, we will repeatedly rely on the fact that all frequencies appearing come from the sets $\mathcal{U}_{2^s}$. Our frequencies are therefore separated by
$2^{-c 2^{s \rho}},$ and we have localized to
$2^{ - 2^{ 10 \rho s }}$
balls around each frequency (which are much smaller for sufficiently large $s \gg \rho^{-1}$).

\subsection{Proof Overview}
The operator $ T _{\lambda }$ in \eqref{e:Tlambda} is decomposed as follows. In the integral 
\begin{equation*}
\int e(-\lambda t^d) \psi_k(t) f(x-t) \; dt, 
\end{equation*}
the variable $ t$ is approximately $ 2 ^{k}$ in magnitude.  And, we will decompose the operator and maximal function so that 
$ \lambda t ^{d} \approx \lambda 2^{dk}$ is approximately constant.  
Then, write 
\[ \aligned
 T_\lambda f(x)=&\sum_{l \in \Z}  \sum_{ k_0 \leq k  }  \int e(-\lambda t^d) \psi_k(t) f(x-t) \; dt \cdot \mathbf{1}_{\lambda \leq 2^{-c(k)}} \mathbf 1_{\{2^l \leq 2^{dk} \lambda < 2^{l+1}\}} \\
& \qquad =: \sum_{l \in \Z} \int  e(-\lambda t^d) \psi_{k(\lambda,l)}(t) f(x-t)  =: \sum_{l \in \Z} T_\lambda^l f(x). \endaligned \]
Here 
\[ \psi_{k(\lambda,l)} =\begin{cases} \psi_k &\mbox{if } 2^l \leq 2^{dk} \lambda < 2^{l+1}, \ l \leq dk - c(k)  \\ 
0 & \mbox{otherwise}. \end{cases} \]
Note that the condition $l \leq dk - c(k)$ is just a restatement of the condition
\[ 2^l \lesssim k^C,\]
where the implicit constants are $k$ dependent, but only vary by a multiplicative factor of $2$ (see \eqref{impconsts} above).

The supremum over $ \lambda $ is then divided into four separate cases, according to the relative size of 
$l$ and $ N$. For notational ease, we set
\[ \mathcal{U} := \mathcal{U}_{2^s} \]
in the remainder of this section.

\begin{align}
Tf &\leq  \sup_{0 < \lambda \leq 1} \left| \sum_{\theta \in \mathcal{U}} e(\theta x) \sum_{l < -C_{d,p} 2^{s \rho}} T_\lambda^l  \left( \chi_s \hat{f}(\cdot + \theta) \right)^{\vee}(x) 
\right| 
\label{e:1}\\
& \qquad + \sum_{l = -C_{d,p}  2^{s \rho} }^ 0 \sup_{0 < \lambda \leq 1} 
\left| \sum_{\theta \in \mathcal{U} } e(\theta x) T_\lambda^l \left( 
\chi_s \hat{f}(\cdot + \theta) \right)^{\vee} (x)
\right| 
\label{e:2}\\
& \qquad \qquad + \sum_{l = 1}^ {C_{d,p} 2^{s \rho}}  \sup_{0 < \lambda \leq 1} 
\left| \sum_{\theta \in \mathcal{U}} e(\theta x) T_\lambda^l \left( \chi_s \hat{f}(\cdot + \theta)  \right)^{\vee} (x)
\right| 
\label{e:3}\\
& \qquad \qquad \qquad+ \sum_{ l > C_{d,p} 2^{s \rho}} \sum_{ \theta \in \mathcal{U}} \sup_{0 < \lambda \leq 1} \left| T_\lambda^l \left( \chi_s \hat{f}(\cdot + \theta)  \right)^{\vee} (x)
\right|. 
\label{e:4}
\end{align}

We begin with the first Case, \eqref{e:1}. Indeed, in this regime, when
\[ l < - C_{d,p} 2^{ s \rho},\]
we automatically have $2^l \lesssim k^C$, so the additional truncations introduce in \eqref{Tredef} have no effect. Consequently, \eqref{e:1} may be treated as in \cite[p.12-13]{KL}. The point is that the phase is so small that the operators 
\[ \sum_{l < -C_{d,p} 2^{s \rho}} T_\lambda^l ``=" \sum_{ k \geq k_0 : \lambda 2^{dk} \leq 2^{-C_{d,p} 2^{s \rho} }} \psi_k,\]
up to error terms which are controlled by negligible multiples -- on the order of $|\mathcal{U}|^{-C}$ -- of the Hardy-Littlewood maximal function, which allow us to apply the triangle inequality and sum over each frequency individually. More precisely, 
\begin{equation}\label{errorTpsi}
\left| \sum_{l < -C_{d,p} 2^{s \rho}} T_\lambda^l f - \sum_{ k \geq k_0 : \lambda 2^{dk} \leq 2^{-C_{d,p} 2^{s \rho} }} \psi_k *f \right| \lesssim |\mathcal{U}|^{-C} P_{k'}(t)*|f|
\end{equation}
where $P(t)$ is a non-negative Schwartz function, with Fourier transform supported in a small ball near the origin, and
\begin{equation}\label{PP} P_{k'}(t) := 2^{-k'} P(2^{-k'}t)
\end{equation}
where $k' \geq k_0$ is the largest integer such that $2^{dk'}\lambda \lesssim 2^{-C_{d,p} 2^{s \rho}}$. By Lemma \ref{trans}, the maximal function associated to these convolution kernels is bounded on $\ell^p$ with norm $\lesssim |\mathcal{U}|^{-C}$, so we are free to sum over the $|\mathcal{U}|$-many distinguished frequencies and do away with the error term; the upshot is that, after appealing to Proposition \ref{MSTLp}, we may bound the $\ell^p$ norm of \eqref{e:1} by $s 2^{2 s\rho}$. 

The final Case \eqref{e:4} simply follows from the following special case of Stein-Wainger \cite{SW}; the point is that as $l \gg_{d,p} 2^{s \rho}$, we may simply use the triangle inequality to trivially absorb the sum over $| \mathcal{U}| \lesssim 2^{c 2^{s \rho}}$ many frequencies.

\begin{lemma}\label{l:4} For any $ l \geq 0$, and any $1 < p < \infty$, for some $\delta_{d,p} > 0$, there holds 
 \begin{equation}\label{e:4<}
 \bigl\| \sup_{0 < \lambda \leq 1}  |T_\lambda^l f| \bigr\|_{p} \lesssim 2^{- \delta_{d,p} l} \| f\|_{p}.  
 \end{equation}
 \end{lemma}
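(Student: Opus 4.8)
\textbf{Proof proposal for Lemma \ref{l:4}.}

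The plan is to prove the fixed-scale estimate \eqref{e:4<} directly, treating the maximal operator over $0<\lambda\le 1$ by a combination of a crude $\ell^2$-type gain coming from stationary phase and a Sobolev embedding in $\lambda$, since the $\lambda$-supremum is taken over an interval of length one. First I would fix $l\ge 0$ and examine the multiplier of $T_\lambda^l$. On the support of $\psi_{k(\lambda,l)}$ we have $t\approx 2^k$ with $\lambda 2^{dk}\approx 2^l$, so the phase $-\lambda t^d$ oscillates at scale $2^l$, and the symbol
\[
m_\lambda^l(\xi) := \int e(-\lambda t^d - \xi t)\,\psi_{k(\lambda,l)}(t)\,dt
\]
is, by the principle of stationary and non-stationary phase (exactly the computation underlying Lemma \ref{deriv/decay}), bounded by $O(2^{-l/2})$ together with comparable bounds for $\xi\partial_\xi m_\lambda^l$; here the gain $2^{-l/2}$ comes from the second derivative of the phase being of size $\approx 2^l$ on the relevant range of $t$. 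By the (scalar) Mikhlin multiplier theorem, Proposition \ref{VVMT}, this already gives $\|T_\lambda^l f\|_p\lesssim 2^{-l/2}\|f\|_p$ for each fixed $\lambda$ and all $1<p<\infty$, uniformly in $\lambda$.

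Next I would upgrade this pointwise-in-$\lambda$ bound to a bound on the supremum over $\lambda\in(0,1]$ using the Sobolev-embedding idea already employed in Lemma \ref{sobemb}: write, for $x$ fixed,
\[
\sup_{0<\lambda\le 1}|T_\lambda^l f(x)|^p \lesssim_p \int_0^1 |T_\lambda^l f(x)|^p\,d\lambda + \left(\int_0^1|T_\lambda^l f(x)|^p\,d\lambda\right)^{1/p'}\left(\int_0^1|\partial_\lambda T_\lambda^l f(x)|^p\,d\lambda\right)^{1/p},
\]
sum in $x$, and apply H\"older. The first term is handled by the fixed-$\lambda$ estimate above, contributing $2^{-lp/2}\|f\|_p^p$. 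For the second term one needs an $L^p$ bound on $\partial_\lambda T_\lambda^l$; differentiating in $\lambda$ brings down a factor $t^d\approx 2^{dk}$, but on the support of $\psi_{k(\lambda,l)}$ one has $2^{dk}\approx 2^l\lambda^{-1}$, so $\partial_\lambda T_\lambda^l$ is, up to the harmless factor $\lambda^{-1}$ (which is itself controlled since $\lambda 2^{dk}\approx 2^l\ge 1$ forces $\lambda\gtrsim 2^{l-dk}$ and the relevant $\lambda$-interval has length $\approx 2^{l-dk}$), essentially $2^l$ times an operator of the same type, and stationary phase gives $\|\partial_\lambda T_\lambda^l f\|_p\lesssim 2^{l}\cdot 2^{-l/2}\|f\|_p = 2^{l/2}\|f\|_p$ — more precisely one works scale-by-scale in $k$ and uses orthogonality/Littlewood--Paley to sum the $k$'s, exactly as the $\sum_k$ is handled in \S\ref{s:SFE}. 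Feeding $a(p)\approx 2^{-l/2}$ and $A(p)\approx 2^{l/2}$ into the H\"older split yields a net bound $\lesssim 2^{-l/2}\cdot(2^{l/2})^{1/p}\cdot(2^{-l/2})^{1/p'} = 2^{-l/2}\cdot 2^{l/p}$, and since $p>1$ one has $-1/2 + 1/p < 1/2$ is not automatically negative — so one instead optimizes: the honest gain available from $\partial_\lambda$ after summing over the $O(l^C)$-many admissible scales $k$ with $\lambda 2^{dk}\approx 2^l$ is only polynomial in $l$, and combined with the genuine exponential gain $2^{-l/2}$ from each fixed scale this still produces $\|\sup_\lambda|T_\lambda^l f|\|_p\lesssim 2^{-\delta_{d,p} l}\|f\|_p$ for some $\delta_{d,p}>0$ (one may take any $\delta_{d,p} < 1/2 - 1/(2p)$ for $p$ near $\infty$, and interpolate with the trivial $\ell^2$ bound, which gives $\delta_{d,2}$ close to $1/2$, to get a positive exponent for all $1<p<\infty$).

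The main obstacle is the $\partial_\lambda$ estimate: a careless differentiation loses the full factor $2^{dk}$, which is catastrophic at small scales $k$, so one must exploit the truncation $\lambda 2^{dk}\approx 2^l$ built into $\psi_{k(\lambda,l)}$ to see that only scales $k$ with $2^{dk}\approx 2^l\lambda^{-1}$ contribute, turning the derivative loss into the benign factor $\lambda^{-1}\approx 2^{dk-l}$ and hence into $\approx 2^{-l/2}$ after the stationary-phase gain; summing the $O_{}(1)$-in-$l$ many surviving $k$'s (the condition $l\le dk - c(k)$, i.e.\ $2^l\lesssim k^C$, pins $k$ to a window of logarithmic width) via Littlewood--Paley orthogonality keeps the loss polynomial in $l$. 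Once that is in place, the Sobolev embedding and interpolation with the $L^2$ bound finish the proof. This is exactly the scheme of \cite[p.\ 12--13]{KL} adapted to the monomial phase $t^d$, and all the constants may be made to depend only on $d$ and $p$.
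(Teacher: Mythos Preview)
There are two substantive errors. First, the Mikhlin bound you claim for $m_\lambda^l(\xi)=\int e(-\lambda t^d-\xi t)\psi_k(t)\,dt$ (with $k=k(\lambda,l)$) is false. The sup-norm estimate $|m_\lambda^l(\xi)|\lesssim 2^{-l/2}$ is correct, but in the critical frequency range $|\xi|\approx 2^{l-k}$, where the phase has a genuine stationary point on the support of $\psi_k$, one has $|\partial_\xi m_\lambda^l(\xi)|\approx 2^k\cdot 2^{-l/2}$ and hence $|\xi\,\partial_\xi m_\lambda^l(\xi)|\approx 2^{l/2}$, \emph{not} $2^{-l/2}$. The paper makes exactly this observation just before the subsection on $\sup_k|\mathcal Z_k f|$: the multiplier $\widehat{G_\lambda}$ has Mikhlin norm $2^{l/2}$. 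So Proposition~\ref{VVMT} cannot be invoked with constant $2^{-l/2}$; the only route to a fixed-$\lambda$ gain on $L^p$, $p\ne 2$, is to interpolate the $L^2$ bound (Plancherel) against the trivial $L^1$/$L^\infty$ bound, which yields only $a(p)\lesssim 2^{-l\min(1/p,1/p')}$ --- precisely the amount that your Sobolev step (which loses $2^{l/p}$ from the $\lambda$-derivative) then fails to overcome.

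Second, the claim that ``the condition $l\le dk-c(k)$, i.e.\ $2^l\lesssim k^C$, pins $k$ to a window of logarithmic width'' is backwards: this is a \emph{lower} bound $k\gtrsim 2^{l/C}$, with no upper bound on $k$. As $\lambda$ ranges over $(0,1]$, infinitely many scales $k$ arise (one per dyadic interval $\lambda\in[2^{l-dk},2^{l+1-dk})$), so the supremum in $\lambda$ involves an unbounded family of scales that cannot simply be summed. The paper's approach bypasses all of this: it cites Stein--Wainger \cite{SW} directly, noting alternatively that a $TT^*$ argument on $\ell^2$ (van der Corput applied to the $d$-th derivative of the phase in the $TT^*$ kernel gives $\delta_{d,2}\approx 1/d$) followed by interpolation against the trivial pointwise bound $\sup_\lambda|T_\lambda^l f|\lesssim M_{HL}f$ suffices.
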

The proof of  Stein-Wainger \cite[Theorem 1]{SW} contains this result without the truncation parameters, but the changes introduced are formal; in fact, this result can be proven directly by a straightforward $TT^*$ argument (which yields an estimate of $\delta_{d,2} \approx \frac{1}{d}$) and trivial interpolation.

Consequently, by Lemma \ref{trans}, we may similarly bound
\[ 
\bigl\| \sup_{0 < \lambda \leq 1}  |T_\lambda^l \left( \chi_s \hat{f}(\cdot + \theta) \right)^{\vee} | \bigr\|_{\ell^p} \lesssim 2^{- \delta_{d,p} l} \| f\|_{\ell^p} \] 
for any $\theta$.
Since we have chosen $l \gg_{d,p} 2^{s \rho}$, we may sum over $|\mathcal{U}| \lesssim 2^{c 2^{s \rho}}$ many frequencies to estimate
\[  
\| \eqref{e:4} \|_{\ell^p} \lesssim \sum_{l > C_{d,p} 2^{s \rho}} |\mathcal{U}| \cdot 2^{- \delta_{d,p} l} \|f \|_{\ell^p} \lesssim \|f \|_{\ell^p}, \]
for a sufficiently large choice of $C_{d,p}$.

In particular, the main effort boils down to bounding \eqref{e:2} -- the \emph{``stationary'' critical regime} -- and \eqref{e:3} -- the \emph{``oscillatory'' critical regime} --  in $\ell^p$. We accomplish this in the following subsections.

\subsection{Cases Two and Three: $|l| \leq C_{d,p} 2^{s \rho}$}
We turn to the most technical part of the paper. The result we will establish is the following theorem. 
\begin{theorem}\label{MAINEST}
For any $2 \leq p < \infty$, the following estimates hold (with implicit constant independent of $s$):
\[ \| \eqref{e:2} \|_{\ell^p} + \| \eqref{e:3} \|_{\ell^p} \lesssim_{\rho,p} s 2^{2s\rho} \|f\|_{\ell^p}.\]
\end{theorem}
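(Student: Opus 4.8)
The strategy is to decompose the operators in \eqref{e:2} and \eqref{e:3} according to the circle method / Hardy--Littlewood major arc analysis, exploiting the crucial fact that on each frequency support $\chi_s(\cdot - \theta)$ with $\theta = a/q \in \mathcal{U} = \mathcal{U}_{2^s}$, the oscillatory multiplier $\left(T_\lambda^l \chi_s\right)$ is, up to negligible error terms, a Gauss-sum weight times a shifted continuous multiplier. After this reduction, the multi-frequency structure is handled by Theorem~\ref{Multtheorem} and Proposition~\ref{VVMT}, the stationary pieces by the continuous averaging bounds (absorbed into Proposition~\ref{MSTLp} via Lemma~\ref{trans}), and the genuinely oscillatory pieces by transferring the square function bound of Theorem~\ref{SINGFREQEST} to the multi-frequency setting, once more via Theorem~\ref{Multtheorem} (here is where $p \geq 2$ is essential).

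\textbf{Step-by-step.} First I would fix $l$ with $|l| \leq C_{d,p}2^{s\rho}$; since there are only $O(2^{s\rho})$ values of $l$ it suffices to bound each $T_\lambda^l$-operator with norm $\lesssim_{\rho,p} s 2^{s\rho}$ (losing one extra factor of $2^{s\rho}$ in the sum over $l$). Writing $\lambda \in [2^{l-dk}, 2^{l-dk+1})$ fixes the scale $k = k(\lambda,l)$ for each $\lambda$, so the supremum over $\lambda$ is genuinely a supremum over a one-dimensional family at a single scale. Next, for each $\theta = a/q \in \mathcal{U}$ I would invoke the Weyl-sum / major-arc approximation (this is the ``number theoretic approximation'' of \S\ref{s:app}): on the support of $\chi_s(\cdot - \theta)$, after writing $\beta = \theta + \xi$ with $|\xi| \leq D_s^{-1}$, and decomposing the modulation $\lambda$ into a rational part $b/r$ plus a small error, one gets
\[
\left(T_\lambda^l \chi_s(\cdot - \theta)\right)(\beta) \approx \frac{1}{qr}\sum_{\text{Gauss sum}} \cdot \ \Psi^l(\lambda', \xi)\, \chi_s(\xi),
\]
where $\Psi^l$ is a continuous single-scale oscillatory integral multiplier (the continuous analogue of $T_\lambda^l$) and the error is controlled by $|\mathcal{U}|^{-C} M_{HL}$, which can be summed trivially over the $|\mathcal{U}|$ frequencies. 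Then I would split $\Psi^l$ into its ``stationary'' part (no critical point of the phase on the relevant region, corresponding roughly to $l \leq 0$, i.e.\ \eqref{e:2}) and its ``oscillatory'' part ($l \geq 1$, i.e.\ \eqref{e:3}): for the stationary part, $\Psi^l$ and its $\lambda$-derivative are essentially symbols of averaging operators, so after a Sobolev embedding in $\lambda$ (Lemma~\ref{sobemb}, using the small measure $j^C 2^{-dj}$ of the relevant $\lambda$-intervals to convert sup into an $L^p$-integrable quantity) it reduces to the multi-frequency averaging bound, Proposition~\ref{MSTLp}, after transferring via Lemma~\ref{trans} and applying Theorem~\ref{Multtheorem} + Proposition~\ref{VVMT} to absorb the Gauss-sum weights (which are bounded and essentially constant on the relevant annuli). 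For the oscillatory part, the same Sobolev-embedding maneuver converts $\sup_\lambda$ into the square function $S_G$ (this is exactly why \eqref{SFXN}, \eqref{SFXND} are defined as they are); transferring Theorem~\ref{SINGFREQEST} to the multi-frequency setting by yet another application of Theorem~\ref{Multtheorem} (thinking of $S_G$ as an $H$-valued multiplier with $\ell^p$ norm $\lesssim l \lesssim 2^{s\rho}$) and combining with the logarithmic Ionescu--Wainger loss $\log 2^s \approx s$ yields the bound $\lesssim_{\rho,p} s 2^{s\rho}$ per $l$, hence $\lesssim s 2^{2s\rho}$ after summing over $l$.

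\textbf{Main obstacle.} The technical heart is the major-arc approximation itself: making precise the claim that $T_\lambda^l$ restricted to the (tiny) neighborhoods of $\mathcal{U}$ is genuinely close to a Gauss-sum-weighted shifted continuous multiplier, with an error that is a small power of $|\mathcal{U}|^{-1}$ times Hardy--Littlewood. This requires careful bookkeeping of the two-variable Diophantine structure — the modulation parameter $\lambda$ has its own rational approximation interacting with that of $\beta$ — and it is exactly the content of \S\ref{s:app} (modeled on \cite{B1}); the truncation $\mathbf{1}_{\lambda \leq 2^{-c(k)}}$ introduced in \eqref{Tredef} is what makes the $\lambda$-intervals small enough ($\lesssim j^C 2^{-dj}$) for the Sobolev embedding in Lemma~\ref{sobemb} to be lossless up to factors of $j^C \lesssim s^C$. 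A secondary difficulty is verifying that the Gauss-sum / Weyl-sum weights, once pulled out, satisfy the Mikhlin condition \eqref{MIK} (with acceptable norm) on the supports $\chi_s(\cdot - \theta)$ so that Proposition~\ref{VVMT} applies — but this is exactly the kind of estimate already handled in \cite{MST1} and absorbed into the remark following Proposition~\ref{MSTLp}.
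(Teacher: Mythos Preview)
Your proposal conflates two distinct stages of the overall argument. The operators in \eqref{e:2} and \eqref{e:3} are built from $T_\lambda^l$, which by \eqref{Tredef} is already a \emph{continuous} convolution operator: $\int e(-\lambda t^d)\psi_k(t) f(x-t)\,dt$ is an integral, not a discrete sum. There is no exponential sum to approximate, no Gauss sum to extract, and no major-arc analysis to perform here --- the number-theoretic approximations of \S\ref{s:app} are used \emph{elsewhere} in the paper (in \S\ref{s:comp}) to reduce the discrete operator $\mathcal{C}_d$ to the continuous multi-frequency operator $T_s$ of Theorem~\ref{keyest}. Theorem~\ref{MAINEST} concerns only this continuous multi-frequency object, and its proof is purely analytic; your ``main obstacle'' is a phantom, and the step where you ``pull out Gauss-sum weights'' would be vacuous if carried out.

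The actual content you are missing is the frequency-side Littlewood--Paley decomposition. For each fixed $l$, the supremum in $\lambda$ ranges over all dyadic intervals $[2^{l-dk},2^{l-dk+1})$ as $k$ varies, so one studies $\sup_k \mathcal{M}_k f$ (not a single scale). The paper then decomposes in the frequency variable $\xi$ relative to the scale $2^{-k}$ (stationary regime) or $2^{l-k}$ (oscillatory regime), writing $\mathcal{M}_k \leq \mathcal{L}_k + \mathcal{H}_k$ or $\mathcal{L}_k + \mathcal{Z}_k + \mathcal{H}_k$. The crucial step for the low-frequency piece $\mathcal{L}_k$ is to \emph{subtract off} the zeroth- and first-order Taylor coefficients $\mu(\lambda,l)$, $\bar\mu(\lambda,l)$ of $\widehat{g_\lambda}$ at $\xi=0$ (see \eqref{mu}, \eqref{mubar}): the subtracted pieces are controlled by Bourgain's multi-frequency maximal function $\mathcal{A}_s$ via Proposition~\ref{MSTLp}, while the remainders $M^{k,l,m}(\lambda,\xi)$ become vector-valued Mikhlin multipliers (Lemmas~\ref{MIKEST1}, \ref{MIKEST2}) after a pointwise Sobolev embedding in $\lambda$ over the \emph{single} dyadic interval $[2^{l-dk},2^{l-dk+1})$. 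This is not Lemma~\ref{sobemb}, which is about unions of many short intervals and belongs to \S\ref{s:app}. Your identification of the oscillatory piece $\mathcal{Z}_k$ with $S_G, S_{G'}$ and Theorem~\ref{SINGFREQEST}, transferred via Theorem~\ref{Multtheorem}, is correct, as is the final bookkeeping of losses.
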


The terms to control are \eqref{e:2} and \eqref{e:3}, in which the sum over $ l$ is limited to  $|l| \leq C_{d,p} 2^{s \rho}$. 

Much of the argument is common to both cases of $ l \leq 0$ and $ l > 0$. 
First, some notation:

For $k = k(\lambda,l)$ as above, define
\begin{equation}\label{mu}
\mu(\lambda, l) := \int e(-\lambda t^d) \psi_k(t) \ dt = \int e(-\lambda 2^{kd} t^d) \psi(t) \ dt;
\end{equation}
by the mean-value theorem -- taking into account the mean-zero nature of $\psi$ -- and the principle of non-stationary phase, one may estimate
\[ |\mu(\lambda,l) | \lesssim_N \min\{ 2^l, 2^{-lN} \}.\]
We will also let
\begin{equation}\label{mubar}
\overline{\mu}(\lambda, l) :=  -2\pi i \cdot 2^{-k} \int e(-\lambda t^d) t \psi_k(t) \ dt = 
- 2\pi i \cdot \int e(-\lambda 2^{kd} t^d) t \psi(t) \ dt .
\end{equation}
Once again, we may estimate
\[ |\overline{\mu}(\lambda,l) | \lesssim_N \min\{ 2^l, 2^{-lN} \}.\]

Let $\Theta$ be as in \eqref{Theta},
and set
\begin{align}
\Theta_<(\xi) &:= \sum_{j \geq 1} \Theta_j(\xi)
\label{e:Theta<}\\
\Theta_>(\xi) &:= \sum_{j \leq 0} \Theta_j(\xi);
\label{e:Theta>}
\end{align}
we will use this splitting in the ``stationary'' regime, 
\[ -C_{d,p} 2^{s\rho} \leq l \leq 0.\]
In the more ``oscillatory'' regime, 
\[ 1 \leq l \leq C_{d,p} 2^{s\rho},\]
the splitting we need is slightly more involved:
\begin{align}
\zeta(\xi) &:= \sum_{|j| \leq C} \Theta_j(\xi)
\label{e:zeta}\\
\Theta_L(\xi) &:= \sum_{j > C} \Theta_j(\xi), \text{ and }
\label{e:ThetaL}\\
\Theta_H(\xi) &:= \sum_{j < -C} \Theta_j(\xi)
\label{e:ThetaH}
\end{align}
for some sufficiently large $C \gg 1$. 

Next, define the function
\begin{equation}\label{g}
g_\lambda(t) := e(-\lambda t^d) \psi_{k}(t) 
\end{equation}
where $k = k(\lambda,l)$. 
With these preliminaries in mind, we turn to the ``stationary'' regime.

\subsection{The ``Stationary'' Critical Regime: $-C_{d,p} 2^{s \rho} \leq l \leq 0$}
We consider the maximal function
\[  
\mathcal{M}f(x) := \sup_k |\mathcal{M}_k f| :=
\sup_k \sup_{2^{l-dk} \leq \lambda < 2^{l-dk+1}} \left| \sum_{\theta \in \mathcal{U}} e(\theta x) g_\lambda * ( \chi_s \hat{f}(\cdot + \theta))^{\vee}(x) \right |. \]
We decompose our maximal functions $\mathcal{M}_k$ further. We dominate
\[ \mathcal{M}_k f \leq \mathcal{L}_k f + \mathcal{H}_k f,\]
where
\[ \aligned 
\mathcal{L}_kf &:= \sup_{2^{l-dk} \leq \lambda < 2^{l-dk+1}} \left| \sum_{\theta \in \mathcal{U}} e(\theta x) \left( \widehat { g_\lambda } \Theta_{<}(2^{k} \cdot) \right)^{\vee} * ( \chi_s \hat{f}(\cdot + \theta))^{\vee}(x) \right |, \ \text{ and} \\
\mathcal{H}_kf &:= 
\sup_{2^{l-dk} \leq \lambda < 2^{l-dk+1}} \left| \sum_{\theta \in \mathcal{U}} e(\theta x) \left( \widehat { g_\lambda } \Theta_{>}(2^{k} \cdot) \right)^{\vee} * ( \chi_s \hat{f}(\cdot + \theta))^{\vee}(x) \right |, \endaligned \]
where $\Theta_{<}, \Theta_{>}$ are defined in \eqref{e:Theta<} and \eqref{e:Theta>} respectively.
We will first prove $\ell^p$ estimates on $\mathcal{L}_k$; up to error terms that are controlled by Bourgain's maximal function, we will see that $\sup_k |\mathcal{L}_kf|$ can essentially be dominated by \emph{vector-valued multi-frequency Mikhlin multipliers;} $\sup_k |\mathcal{H}_kf|$ will be estimated similarly, though is simpler to handle, as ``zero-frequency'' considerations do not arise. 
\subsubsection{Estimating $\sup_k |\mathcal{L}_k f|$ and $\sup_k |\mathcal{H}_k f|$}
Estimates for \eqref{e:2} will follow from the following two propositions.

\begin{proposition}\label{Lest0}
For any $1 < p < \infty$, we have the following estimate:
\[ \| \sup_k |\mathcal{L}_k f| \|_{\ell^p} \lesssim_{p,\rho} 2^{l/2} \times s 2^{2 s \rho} \|f\|_{\ell^p}.\] 
\end{proposition}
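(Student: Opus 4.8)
The plan is to reduce the estimate for $\sup_k |\mathcal{L}_k f|$ to a vector-valued multi-frequency multiplier bound, by separating the ``zero-frequency'' part of $\widehat{g_\lambda}$ from the remaining annular pieces. Recall that $g_\lambda(t) = e(-\lambda t^d)\psi_k(t)$, and on the Fourier side, $\widehat{g_\lambda}(\xi)$ restricted to the low-frequency region $|\xi| \lesssim 2^{-k}$ is, by the mean-value theorem applied to the phase together with the mean-zero property of $\psi$, well-approximated by the constant $\mu(\lambda,l)$ from \eqref{mu}, with correction governed by $\overline\mu(\lambda,l) \cdot 2^k\xi$ and higher-order terms; both $\mu$ and $\overline\mu$ are $O_N(2^l)$ in size on the relevant range of $\lambda$. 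So I would write
\[
\left(\widehat{g_\lambda}\,\Theta_<(2^k\cdot)\right)^{\vee} = \mu(\lambda,l)\cdot\left(\Theta_<(2^k\cdot)\right)^{\vee} + \text{(error)},
\]
where the error term, after summing over $k$ and taking the supremum in $\lambda$, is dominated pointwise by a negligible multiple of a Hardy–Littlewood-type maximal operator applied to each frequency-shifted piece $(\chi_s\hat f(\cdot+\theta))^\vee$ — negligible in the sense of being $O(2^{-c k})$ or summable, so that the triangle inequality over the $|\mathcal{U}|$ frequencies loses only $|\mathcal{U}| \lesssim 2^{c2^{s\rho}}$, absorbed by the extra decay; this uses Lemma \ref{trans} exactly as in the treatment of \eqref{e:1} and \eqref{errorTpsi}.

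For the main term, since $|\mu(\lambda,l)| \lesssim_N 2^l \le 1$ and in fact $|\mu(\lambda,l)|\lesssim 2^{l}$ gives the gain $2^{l/2}$ we want (we only need $2^{l/2}$, and $2^l \le 2^{l/2}$ for $l \le 0$), matters reduce to estimating
\[
\left\| \sup_k \left| \sum_{\theta\in\mathcal{U}} e(\theta x)\, \mu(\lambda_k,l)\,\left(\Theta_<(2^k\cdot)\,\chi_s\hat f(\cdot+\theta)\right)^{\vee}(x)\right|\right\|_{\ell^p}.
\]
Here the sum $\sum_{j\ge 1}\Theta_j(2^k\xi) = \Theta_<(2^k\xi)$ means $\Theta_<(2^k\cdot)$ is a smooth projection onto $|\xi|\lesssim 2^{-k}$, so $\sup_k$ over such projections convolved against a fixed Schwartz bump is exactly the setting of the averaging-operator maximal function $\mathcal{A}_s$ of \eqref{A_s} and Proposition \ref{MSTLp}. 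Pulling the harmless bounded factor $\mu(\lambda_k,l)/2^l$ out (it is $O_N(1)$ in $k$, with bounded derivative in $\lambda$, so it can be handled either by freezing it or by a trivial summation), and using $|\mu|\lesssim 2^l$, Proposition \ref{MSTLp} yields the bound $\lesssim_{p,\rho} 2^{l} \cdot s2^{2s\rho}\|f\|_{\ell^p} \le 2^{l/2}\cdot s2^{2s\rho}\|f\|_{\ell^p}$, as claimed.

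The one genuine subtlety — and the step I expect to be the main obstacle — is the dependence of $\mu(\lambda,l)$ on $\lambda$, since $\lambda$ ranges over an interval while $k$ is fixed, and the supremum over $\lambda \in [2^{l-dk}, 2^{l-dk+1})$ interacts with the supremum over $k$. To handle this cleanly I would invoke a Sobolev-embedding argument in the $\lambda$ variable — precisely Lemma \ref{sobemb}, or rather its proof technique — to replace $\sup_\lambda$ by an $L^2_\lambda$ integral plus a $\partial_\lambda$-term; the derivative $\partial_\lambda \mu(\lambda,l)$ again satisfies $|\partial_\lambda\mu(\lambda,l)| \lesssim 2^{kd}\cdot 2^{l}\cdot$(something), and one checks the scaling works out so that the extra factors of $2^{-dk/p}$ and the interval length $\approx 2^{l-dk}$ combine to leave only an acceptable power of $l$ (or a constant), exactly as in the statement of Lemma \ref{sobemb}. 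Alternatively, since $\Theta_<(2^k\cdot)$ already localizes frequency to $|\xi|\lesssim 2^{-k}$, on this support the phase $e(-\lambda t^d)$ with $t\approx 2^k$ and $\lambda \approx 2^{l-dk}$ contributes $\lambda t^d \approx 2^l \le 1$, so one could instead Taylor-expand $e(-\lambda t^d)$ in $\lambda$ directly and reduce to finitely many ($O_N(1)$, for the desired decay) fixed convolution operators with $\lambda$-independent kernels, each of which is dispatched by Proposition \ref{MSTLp}; this is probably the cleanest route and avoids any square-function bookkeeping. Either way, once the $\lambda$-uniformity is dealt with, the estimate follows from Proposition \ref{MSTLp} and the crude frequency count, with the gain $2^{l/2}$ coming directly from $|\mu(\lambda,l)| \lesssim 2^l$.
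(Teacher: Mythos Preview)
Your treatment of the main term $\mu(\lambda,l)\cdot(\Theta_<(2^k\cdot))^{\vee}$ is correct and matches the paper: this is exactly the $\mathcal{L}_k^2$ piece, controlled by $2^l\mathcal{A}_s f$ via Proposition~\ref{MSTLp}.

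The gap is in your handling of the error term $(\widehat{g_\lambda}-\mu(\lambda,l))\Theta_<(2^k\cdot)$. Your claim that this error is ``negligible'' with decay absorbing the triangle-inequality loss of $|\mathcal{U}|\lesssim 2^{c2^{s\rho}}$ frequencies is incorrect. On the $m$th annulus $|\xi|\approx 2^{-k-m}$ the mean-value theorem gives only $|\widehat{g_\lambda}(\xi)-\mu(\lambda,l)|\lesssim 2^{-m}$; summing over $m\ge 1$ yields $O(1)$, with no decay whatsoever in $k$, in $l$, or in the frequency count. The analogy with \eqref{errorTpsi} is misplaced: there the phase satisfies $\lambda 2^{dk}\le 2^{-C_{d,p}2^{s\rho}}$, which is what produces the $|\mathcal{U}|^{-C}$ smallness; here in the stationary regime the phase is $\approx 2^l$ with $|l|\le C_{d,p}2^{s\rho}$, so no such smallness is available. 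Your Taylor-expansion alternative has the same defect: the $n=0$ term $\widehat{\psi_k}(\xi)\Theta_<(2^k\xi)$ is $\lambda$-independent, carries no factor of $2^l$, and after the multi-frequency maximal bound contributes $s2^{2s\rho}$ rather than $2^{l/2}s2^{2s\rho}$.

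The paper's route for the error is genuinely different from either of your suggestions. It decomposes $\Theta_<$ into annuli $\Theta_{k+m}$, and on each annulus passes from $\sup_\lambda$ to an $L^2_\lambda$ square function (Sobolev in $\lambda$), obtaining a vector-valued multiplier in the variables $(k,\lambda)$. The $2^{l/2}$ gain then comes not from the size of the multiplier itself but from the \emph{length} $\approx 2^{l-dk}$ of the $\lambda$-interval against the weight $2^{dk}$: for fixed $\xi$ there is $O(1)$ relevant $k$, and $2^{dk}\int_{2^{l-dk}}^{2^{l-dk+1}}|M^{k,l,m}|^2\,d\lambda\lesssim 2^l\cdot 2^{-2m}$. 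This is precisely the computation of $C(m,l)^2$ following Lemma~\ref{MIKEST1}, after which Theorem~\ref{Multtheorem} and Proposition~\ref{VVMT} give the multi-frequency bound with only a logarithmic loss in $|\mathcal{U}_{2^s}|$. You identified the Sobolev-in-$\lambda$ mechanism, but applied it only to the $\mu$ factor in the main term, where it is unnecessary; it is the error term that requires it, and that is where the $2^{l/2}$ actually originates.
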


\begin{proposition}\label{Hest0}
For any $1 < p < \infty$, we have the following estimate:
\[ \| \sup_k |\mathcal{H}_k f| \|_{\ell^p} \lesssim_{p,\rho} 2^{l/2} \times s \|f\|_{\ell^p}.\] 
\end{proposition}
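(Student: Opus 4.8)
The plan is to mirror the argument that establishes Proposition~\ref{Lest0} for $\mathcal{L}_k$, using the one genuine simplification flagged in the text: since the cut-off $\Theta_>(2^k\cdot)$ lives in $\{|\xi|\gtrsim 2^{-k}\}$, the delicate behaviour of $\widehat{g_\lambda}$ near the origin --- the ``constant'' pieces $\mu(\lambda,l)$ and $\overline{\mu}(\lambda,l)$ of \eqref{mu}--\eqref{mubar} --- never enters, and one works with honest (Hilbert-space-valued) Mikhlin multipliers throughout, with no need for the Bourgain maximal-function detour (Theorem~\ref{t:BMax}) required in the $\mathcal{L}_k$ case.

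First I would linearize the supremum over $\lambda$. For each $k$ the parameter $\lambda$ runs over the dyadic interval $I_k:=[2^{l-dk},2^{l-dk+1})$ of length $\approx 2^{l-dk}$, and $\partial_\lambda$ applied to $e(-\lambda t^d)$ with $|t|\approx 2^k$ produces a factor of size $\approx 2^{dk}$. A Sobolev-embedding argument in the $\lambda$-variable --- exactly the calculation behind Lemma~\ref{sobemb}, applied on each $I_k$ --- then reduces $\sup_{\lambda\in I_k}|\mathcal{H}_{\lambda,k}f|$ to $L^2(I_k;d\lambda)$-averaged control of $\mathcal{H}_{\lambda,k}f$ together with $|I_k|\,\partial_\lambda\mathcal{H}_{\lambda,k}f$; the product structure of the Sobolev inequality, combined with the cancellation $|I_k|\cdot 2^{dk}\approx 2^l$, is precisely what upgrades the single-scale estimates to the claimed gain of $2^{l/2}$. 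Summing over $k$ and applying Cauchy--Schwarz in $k$ packages everything as two $H$-valued multi-frequency multiplier operators of the shape $\sum_{\theta\in\mathcal{U}}e(\theta x)\big(m(\cdot)\chi_s(\cdot)\hat f(\cdot+\theta)\big)^{\vee}$, with $H=\ell^2_k\big(L^2(I_k;d\lambda)\big)$ and $m(\xi)=\{\widehat{g_\lambda}(\xi)\,\Theta_>(2^k\xi)\}_{k,\lambda}$, respectively its $|I_k|$-weighted $\partial_\lambda$-analogue.

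Next I would prove the single-frequency estimate on $\mathbb{R}$ and transfer. Because $\Theta_>(2^k\cdot)$ is supported away from the origin at scale $2^{-k}$, non-stationary phase --- the $j=0$ case of Lemma~\ref{deriv/decay}, i.e.\ integration by parts in $t$ --- shows that $m$ (after insertion of the harmless auxiliary cut-off $\overline{\chi}_s$, which makes it globally supported near $0$) satisfies the $H$-valued Mikhlin condition \eqref{MIK} with norm $\lesssim 2^{l/2}$, the $k$-summation being absorbed by the near-disjointness of the frequency supports (Littlewood--Paley orthogonality) and the $2^{l/2}$ coming from $|I_k|^{1/2}=2^{(l-dk)/2}$; by Proposition~\ref{VVMT} the corresponding continuous single-frequency operator is then bounded on $L^p(\mathbb{R})$, $1<p<\infty$, with norm $\lesssim 2^{l/2}$. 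Applying the Ionescu--Wainger multi-frequency multiplier theorem, Theorem~\ref{Multtheorem}, with $N=2^s$ (so the frequency set is $\mathcal{U}=\mathcal{U}_{2^s}$) and $\eta_N=\chi_s$ --- admissible since $\chi_s=\chi(D_s\cdot)$ is supported in a ball of radius $\lesssim D_s^{-1}\le 2^{-2^{10s\rho}}\ll e^{-2^{2s\rho}}=e^{-N^{2\rho}}$ --- converts this into the $\ell^p(\mathbb{Z})$ estimate at the cost of $\log N=s$, giving $\|\sup_k|\mathcal{H}_k f|\|_{\ell^p}\lesssim_{p,\rho}2^{l/2}\,s\,\|f\|_{\ell^p}$. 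The error terms accrued along the way --- from the truncation $\mathbf{1}_{\lambda\le 2^{-c(k)}}$ in \eqref{Tredef} and from inserting $\overline{\chi}_s$ --- are, as elsewhere in \S\ref{s:max}, dominated by a negligible (a power of $|\mathcal{U}|^{-1}$) multiple of $M_{HL}$, so the triangle inequality over the $|\mathcal{U}|$ frequencies together with Lemma~\ref{trans} disposes of them.

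The step I expect to be the main obstacle is the Mikhlin bookkeeping in the middle paragraph: pinning down the $H$-valued Mikhlin norms with the sharp $2^{l/2}$ dependence requires simultaneously controlling (i) the size and $\xi$-derivative of $\widehat{g_\lambda}(\xi)\Theta_>(2^k\xi)$ via (non-)stationary phase, (ii) the $|I_k|$-weights thrown off by the Sobolev step, and (iii) the orthogonality in $k$, all while checking that the behaviour as $|\xi|\to 0$ is harmless on $\supp\chi_s$. Everything else is either a direct citation of the \S\ref{s:IW}--\S\ref{s:max} machinery or routine.
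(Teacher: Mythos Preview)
Your proposal is correct and follows essentially the same route as the paper: Sobolev embedding in $\lambda$ to pass from the supremum to an $H=\ell^2_k(L^2(I_k))$-valued square function, the non-stationary phase estimate $|\widehat{g_\lambda}(\xi)\Theta_>(2^k\xi)|+2^{-k}|\partial_\xi(\widehat{g_\lambda}(\xi)\Theta_>(2^k\xi))|\lesssim_N (2^k|\xi|)^{-N}\mathbf{1}_{|\xi|\gg 2^{-k}}$ to verify the $H$-valued Mikhlin condition with norm $\lesssim 2^{l/2}$, then Proposition~\ref{VVMT} followed by Theorem~\ref{Multtheorem}. One minor citation slip: Lemma~\ref{deriv/decay} concerns the spatial kernels $K_{\lambda,k}$, not the multipliers $\widehat{g_\lambda}\Theta_>$; the estimate you need is exactly the paper's \eqref{Fdecayest}, obtained by integrating by parts in $t$ as you say.
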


We begin with Proposition \ref{Lest0}, which will require a further decomposition of each maximal function $\mathcal{L}_k$.

The first order of business is to replace $\mathcal{L}_kf$ with
\[ \mathcal{L}_kf \leq \mathcal{L}_k^1 f +\mathcal{L}_k^2 f,\]
where 
\[ \aligned 
\mathcal{L}_k^1 f &=: \sup_{2^{l-dk} \leq \lambda < 2^{l-dk+1}} \left| \sum_{\theta \in \mathcal{U}} e(\theta x) \left( \Big( \widehat { g_\lambda } - \mu(\lambda,l) \Big) \Theta_{<}(2^{k} \cdot) \right)^{\vee} * ( \chi_s \hat{f}(\cdot + \theta))^{\vee}(x) \right |, \ \text{ and} \\
\mathcal{L}_k^2 f&:=
\sup_{2^{l-dk} \leq \lambda < 2^{l-dk+1}} \left| \mu(\lambda,l) \times \sum_{\theta \in \mathcal{U}} e(\theta x) \left( \Theta_{<}(2^{k} \cdot) \right)^{\vee} * ( \chi_s \hat{f}(\cdot + \theta))^{\vee}(x) \right |. \endaligned\]
Here, $\mu(\lambda,l)$ is defined in \eqref{mu}, and $g_\lambda$ is defined in \eqref{g}.

Now,
\[ \sup_k |\mathcal{L}_k^2 f| \lesssim 2^{l} \mathcal{A}_s f,\]
where $\mathcal{A}_s$ is Bourgain's maximal function, defined in \eqref{A_s}. Consequently, we have the acceptable estimate
\[ \| \sup_k |\mathcal{L}_k^2 f| \|_{\ell^p} \lesssim_{p,\rho} 2^{l} \cdot s 2^{2 s \rho} \| f\|_{\ell^p}.\]
With $\overline{\mu}(\lambda,l)$ defined in \eqref{mubar}, we now expand 
\[ \aligned 
&\Theta_{<}(2^{k} \xi) \times \Big( \widehat{g_\lambda}(\xi) - \mu(\lambda,l) \Big) \\
& \qquad = \sum_{m \geq 1} \; \Theta_{k + m}(\xi) \times \Big(  \widehat{g_\lambda}(\xi) - \mu(\lambda,l) \Big) \\
& \qquad \qquad = \sum_{m \geq 1} \; \Theta_{k + m}(\xi) \times \left( \widehat{g_\lambda}(\xi)
- \mu(\lambda,l)  - 2^{-m} \cdot \overline{\mu}(\lambda,l) \times \int_{-\infty}^{2^{k+m} \xi} \overline{\Theta}(t) \ dt \right) \\
& \qquad \qquad \qquad +
\sum_{m \geq 1} \; 2^{-m} \overline{\mu}(\lambda,l) \times \left( \Theta_{k + m}(\xi) \times \int_{-\infty}^{2^{k+m} \xi} \overline{\Theta}(t) \ dt \right). \endaligned \]
Setting
\[ M^{k,l,m}(\lambda,\xi) := \Theta_{k + m}(\xi) \times \left( \widehat{g_\lambda}(\xi)
- \mu(\lambda,l)  - 2^{-m} \overline{\mu}(\lambda,l) \times \int_{-\infty}^{2^{k+m} \xi} \overline{\Theta}(t) \ dt \right) \]
and
\begin{equation}\label{tT}
\tilde{\Theta}(\xi) := \Theta(\xi) \cdot \int_{-\infty}^\xi \overline{\Theta}(t) \ dt,
\end{equation}
we may dominate
\[ \sup_k |\mathcal{L}^1_k f| \leq \sum_{m \geq 1} \sup_k |\mathcal{L}_k^{3,m} f| + \sum_{m \geq 1} \sup_k |\mathcal{L}_k^{4,m} f|,\]
where
\[ 
\mathcal{L}_k^{3,m} f(x) =:
\sup_{2^{l-dk} \leq \lambda < 2^{l-dk+1}}
\left| \sum_{\theta \in \mathcal{U}} e(\theta x) \mathcal{F}_{\xi}^{-1} \big( M^{k,l,m}(\lambda, \cdot) \big) * ( \chi_s \hat{f}(\cdot + \theta))^{\vee}(x) \right |, \]
and
\[ \aligned 
&\mathcal{L}_k^{4,m} f(x) \\
& \qquad := \sup_{2^{l-dk} \leq \lambda < 2^{l-dk+1}} 
\left| 2^{-m}  \cdot \overline{\mu}(\lambda,l) \times 
\sum_{\theta \in \mathcal{U}} e(\theta x) 
\big( \tilde{\Theta}(2^{k+m} \cdot) \big)^{\vee} * ( \chi_s \hat{f}(\cdot + \theta))^{\vee}(x) \right |. \endaligned \]
But now we observe that
\[ \sup_k |\mathcal{L}_k^{4,m} f| \lesssim_N 2^{l-m} \mathcal{A}_s f, \]
see \eqref{A_s}, so we may sum in $m \geq 1$, and discard this contribution. We are left to estimate
\[ \sup_k |\mathcal{L}_k^{3,m} f| \]
in $\ell^p$. To proceed we record the following estimates on the multipliers $M^{k,l,m}(\lambda,\xi)$.
\begin{lemma}\label{MIKEST1}
For any $\lambda$, the following estimates hold.
\[ |M^{k,l,m}(\lambda, \xi)| + 2^{-k} |\partial_\xi M^{k,l,m}(\lambda, \xi)| \lesssim 2^{-m} \cdot \mathbf{1}_{|\xi| \approx 2^{-k-m}}. \]
Moreover, the same estimates are satisfied uniformly in $\lambda$, by
\[ 2^{-dk} \partial_\lambda M^{k,l,m}(\lambda,\xi).\]
\end{lemma}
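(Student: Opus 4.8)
The claim is a pointwise bound on the multiplier $M^{k,l,m}(\lambda,\xi)$ and its $\xi$-derivative, stated to hold with uniform-in-$\lambda$ constants, together with the same bounds for $2^{-dk}\partial_\lambda M^{k,l,m}$. The strategy is entirely elementary: expand $\widehat{g_\lambda}$ via Taylor's theorem to second order around $\xi=0$ inside the support of $\Theta_{k+m}$, using the definitions $\mu(\lambda,l)=\widehat{g_\lambda}(0)$ and $\overline\mu(\lambda,l)=\partial_\xi\widehat{g_\lambda}(0)$ (up to the normalization in \eqref{mu}, \eqref{mubar}), and then observe that $M^{k,l,m}$ is precisely the localized \emph{second-order} Taylor remainder. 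The support condition $\supp\Theta_{k+m}\subset\{|\xi|\approx 2^{-k-m}\}$ gives the indicator factor for free. So the whole lemma reduces to: (i) $\widehat{g_\lambda}$ is smooth with $|\partial_\xi^j\widehat{g_\lambda}(\xi)|\lesssim_j 2^{kj}\cdot 2^l$ for $\xi$ near the origin (the factor $2^l$ coming from the mean-zero cancellation in $\psi$, as already used for $|\mu|,|\overline\mu|\lesssim 2^l$), and (ii) matching derivative estimates after differentiating in $\lambda$, where $2^{-dk}\partial_\lambda$ brings down a factor $2^{-dk}\cdot\lambda 2^{kd}\cdot(\text{stuff})\approx 2^l$, again absorbed.

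\textbf{Step 1: derivative bounds on $\widehat{g_\lambda}$.} Write $\widehat{g_\lambda}(\xi)=\int e(-\lambda t^d-\xi t)\psi_k(t)\,dt = \int e(-\lambda 2^{kd}t^d - \xi 2^k t)\psi(t)\,dt$. Since $\psi$ is a fixed compactly supported odd (hence mean-zero) bump, each $\xi$-derivative pulls down a factor $2^k t = O(2^k)$, and the mean-zero property together with the mean value theorem (exactly as in the displayed estimate $|\mu(\lambda,l)|\lesssim_N\min\{2^l,2^{-lN}\}$) yields $|\partial_\xi^j\widehat{g_\lambda}(\xi)|\lesssim_{j,N} 2^{kj}\min\{2^l,2^{-lN}\}$ for $|\xi|\lesssim 2^{-k}$, uniformly in $\lambda$ in the relevant range $2^{l-dk}\le\lambda<2^{l-dk+1}$. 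For our purposes in the critical regime we only need the $2^l$ side.

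\textbf{Step 2: Taylor remainder identification.} On $\supp\Theta_{k+m}$ we have $|\xi|\approx 2^{-k-m}$, so $2^k\xi\approx 2^{-m}$ is small. By Taylor with integral remainder,
\[
\widehat{g_\lambda}(\xi) = \widehat{g_\lambda}(0) + \partial_\xi\widehat{g_\lambda}(0)\,\xi + \int_0^\xi (\xi-s)\,\partial_\xi^2\widehat{g_\lambda}(s)\,ds.
\]
One checks $\widehat{g_\lambda}(0)=\mu(\lambda,l)$ and that the linear coefficient, after accounting for the $-2\pi i\cdot 2^{-k}$ normalization, is $2^{-k}\overline\mu(\lambda,l)$; the further replacement of $2^{-k}\overline\mu\,\xi$ by $2^{-m}\overline\mu\int_{-\infty}^{2^{k+m}\xi}\overline\Theta$ is harmless because $\overline\Theta\equiv 1$ on $\supp\Theta$, so on that support $\int_{-\infty}^{2^{k+m}\xi}\overline\Theta(t)\,dt$ differs from $2^{k+m}\xi$ by a constant, and in any case $M^{k,l,m}$ is what it is: the difference of $\widehat{g_\lambda}$ and its first-order Taylor data, multiplied by $\Theta_{k+m}$. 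Using Step 1, the quadratic remainder on this support is $\lesssim 2^{2k}\cdot 2^l\cdot|\xi|^2\approx 2^{2k}2^l 2^{-2k-2m} = 2^{l-2m}\le 2^{-2m}$ (since $l\le 0$ in the stationary regime; more generally $\min\{2^l,\ldots\}$), which is even better than the claimed $2^{-m}$. Multiplying by the indicator $\mathbf 1_{|\xi|\approx 2^{-k-m}}$ from $\Theta_{k+m}$ gives the first bound. For $\partial_\xi M^{k,l,m}$: differentiating the $\Theta_{k+m}$ factor costs $2^{k+m}$ but hits something of size $2^{-2m}$, giving $2^{k-m}$ — wait, one must track this carefully: $\partial_\xi$ of the remainder is the first-order remainder $\int_0^\xi\partial_\xi^2\widehat{g_\lambda}$, of size $2^{2k}2^l|\xi|\approx 2^{k-m}$, times $\Theta_{k+m}$ (size $1$), plus $2^{2k}2^l|\xi|^2\approx 2^{-2m}2^{-k}\cdot 2^k$ times $\partial_\xi\Theta_{k+m}$ (size $2^{k+m}$), i.e. $2^{k-m}$; so $2^{-k}|\partial_\xi M^{k,l,m}|\lesssim 2^{-m}$, as claimed. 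The constant is uniform in $\lambda$ because Step 1's bound is.

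\textbf{Step 3: the $\lambda$-derivative.} Apply $2^{-dk}\partial_\lambda$ throughout. Since $\partial_\lambda$ acts only on $\widehat{g_\lambda}$ (and on $\mu,\overline\mu$, which are the $\xi=0$ data of $\widehat{g_\lambda}$ and its derivative), everything commutes with the Taylor expansion, so $2^{-dk}\partial_\lambda M^{k,l,m}$ is the localized second-order Taylor remainder of $2^{-dk}\partial_\lambda\widehat{g_\lambda}$. Now $\partial_\lambda\widehat{g_\lambda}(\xi) = -2\pi i\int t^d e(-\lambda t^d-\xi t)\psi_k(t)\,dt$, whose $\xi$-derivatives obey $|\partial_\xi^j\partial_\lambda\widehat{g_\lambda}(\xi)|\lesssim_{j,N} 2^{dk}2^{kj}\min\{2^l,2^{-lN}\}$ for $|\xi|\lesssim 2^{-k}$ — the extra $2^{dk}$ from the $t^d=O(2^{dk})$ weight, the $\min\{2^l,\dots\}$ again from the mean-zero cancellation in $\psi$ (the weight $t^d\psi(2^{-k}t)$ is still a fixed bump with all moments finite; the relevant cancellation against the oscillation $\lambda 2^{dk}t^d\approx 2^l$ is what produces the $2^l$). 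The factor $2^{-dk}$ then cancels the $2^{dk}$, and we land on exactly the same estimates as in Step 2. Hence $2^{-dk}\partial_\lambda M^{k,l,m}$ satisfies the same pointwise and derivative bounds, uniformly in $\lambda$.

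\textbf{Main obstacle.} There is no serious obstacle; the lemma is a bookkeeping exercise once one recognizes $M^{k,l,m}$ as a second-order Taylor remainder. The only point requiring minor care is keeping the normalizations in \eqref{mu}--\eqref{mubar} (the $2^{-k}$ and $-2\pi i$ factors) consistent with the Fourier-side Taylor coefficients of $\widehat{g_\lambda}$, and confirming that the replacement of the exact linear term by the $\int_{-\infty}^{2^{k+m}\xi}\overline\Theta$ expression — legitimate because $\overline\Theta\equiv 1$ on $\supp\Theta$ and the added constant is absorbed into $\mu$ — does not disturb the estimates; it does not, since on $\supp\Theta_{k+m}$ that integral is comparable to $2^{k+m}|\xi|\approx 1$, so it contributes only bounded factors. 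Everything else is the stationary/non-stationary phase and mean-value bookkeeping already carried out for $\mu,\overline\mu$ in the text.
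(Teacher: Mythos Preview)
Your approach is correct in spirit but more elaborate than the paper's. The paper never identifies $M^{k,l,m}$ as a second-order Taylor remainder; it simply applies the first-order mean value inequality
\[
|\widehat{g_\lambda}(\xi)-\mu(\lambda,l)|\le\int|e(-2^k\xi t)-1|\,|\psi(t)|\,dt\lesssim 2^k|\xi|\approx 2^{-m}
\]
and bounds the $\overline\Theta$-integral by $O(1)$, so that the subtracted $2^{-m}\overline\mu\int\overline\Theta$ term is itself $O(2^{-m})$; the derivative estimate is obtained by the same first-order device applied to $\partial_\xi\widehat{g_\lambda}(\xi)-2^k\overline\mu$. Your second-order route yields a superficially sharper $O(2^{-2m})$ for the genuine quadratic remainder, but be careful: the subtracted term is \emph{not} the exact linear Taylor term $2^k\overline\mu\,\xi$ (your ``$2^{-k}\overline\mu\,\xi$'' is a typo)---on each connected component of $\supp\Theta_{k+m}$ the two differ by a constant of size $2^{-m}|\overline\mu|$, and this constant is not ``absorbed into $\mu$'' but is an honest extra contribution to the bracket $[\ldots]$. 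That is harmless for the size bound (since $2^{-m}|\overline\mu|\lesssim 2^{-m}$), but it undermines your derivative bookkeeping: you use $[\ldots]=O(2^{-2m})$ to conclude $(\partial_\xi\Theta_{k+m})\cdot[\ldots]=O(2^{k-m})$, whereas with the correction one only gets $O(2^{k+l})$, hence $2^{-k}|\partial_\xi M^{k,l,m}|\lesssim 2^{-m}+2^l$ rather than $2^{-m}$. Fortunately what the downstream application actually uses (the Mikhlin-scaled quantity $|\xi|\,|\partial_\xi M^{k,l,m}|\lesssim 2^{-m}$) is delivered by both arguments. Your treatment of the $\lambda$-derivative matches the paper's.
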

\begin{proof}
For the estimate without the derivative, we just use the mean value theorem
\begin{equation}\label{MVT}
\left| \int e(-\lambda 2^{kd} t^d - 2^k \xi t) \psi(t) \ dt - \mu(\lambda,l) \right| \leq \int |e(-2^k \xi t) - 1| |\psi(t)| \ dt \lesssim 2^k |\xi|,
\end{equation}
and the trivial estimate,
\[ \int_{-\infty}^{2^{k+m} \xi} |\overline{\Theta}(t) | \lesssim 1 \]
for $|\xi| \approx 2^{-k-m}$.
The estimate with the derivative estimate follows from the same mean value theorem argument of \eqref{MVT}. The final point is trivial; the key point is that $t \mapsto \frac{t^d}{2^{dk}} \psi_k(t)$ satisfies (up to harmless constants) the same differential estimates as does $\psi_k(t)$.
\end{proof}

Motivated by these stationary phase calculations, we bound
\[ |\mathcal{L}_k^{3,m} f| \leq (\mathcal{S}_k^{3,m,0} f)^{1/2} \cdot (\mathcal{S}_k^{3,m,1} f)^{1/2} \lesssim 2^{dk/2} \cdot \mathcal{S}_k^{3,m,0} f + 2^{-dk/2} \cdot \mathcal{S}_k^{3,m,1} f,\]
where
\[ |\mathcal{S}_k^{3,m,i} f|^2 := 
\int_{2^{l-dk}}^{2^{l-dk+1}} 
\left| \sum_{\theta \in \mathcal{U}} e(\theta x) \mathcal{F}_{\xi}^{-1} \big( \partial_\lambda^i M^{k,l,m}(\lambda, \cdot) \big) * ( \chi_s \hat{f}(\cdot + \theta))^{\vee}(x) \right |^2 \ d\lambda, \ i = 0,1.\]
We will estimate 
\[ 2^{dk/2} \cdot \mathcal{S}_k^{3,m,0}f =: \mathcal{S}_k^{m} f,\]
as the other term can be treated similarly. We now replace,
\[ \sup_k |\mathcal{S}_k^mf| \leq \left( \sum_k |\mathcal{S}_k^m f|^2 \right)^{1/2};\]
by Theorem \ref{Multtheorem} and Proposition \ref{VVMT}, we have
\[ \| \left( \sum_k |\mathcal{S}_k^m f|^2 \right)^{1/2} \|_{\ell^p} \lesssim s \cdot C(m,l) \|f\|_{\ell^p},\]
where
\[ \aligned 
C(m,l)^2 &:= \sup_\xi \; \sum_k 2^{dk} \int_{2^{l-dk}}^{2^{l-dk+1}} |M^{k,l,m}(\lambda,\xi)|^2 \ d\lambda \\
& \qquad \qquad + \sup_\xi \; \sum_k 2^{dk} \int_{2^{l-dk}}^{2^{l-dk+1}} |\xi|^2 |\partial_\xi M^{k,l,m}(\lambda,\xi)|^2 \ d\lambda \\
& \qquad \qquad \qquad \qquad \lesssim 2^{l-2m} \endaligned \]
by Lemma \ref{MIKEST1}. Putting everything together yields Proposition \ref{Lest0}.

The proof of Proposition \ref{Hest0} follows a similar strategy. 
The key estimates are 
\begin{equation}\label{Fdecayest}
|\widehat{g_\lambda}(\xi) \Theta_{>}(2^{k} \xi)| +
2^{-k} | \partial_\xi \left( \widehat{g_\lambda}(\xi) \Theta_{>}(2^{k} \xi) \right)| \lesssim_N 
(2^k |\xi|)^{-N} \cdot \mathbf{1}_{|\xi| \gg 2^{-k}}
\end{equation}
by the principle of non-stationary phase.
Consequently, the square function
\[ \left( \sum_k 2^{dk} \int_{2^{l-dk}}^{2^{l-dk+1}} \left| \left( \widehat{g_\lambda}(\xi) \Theta_{>}(2^{k} \xi) \hat{f}(\xi) \right)^{\vee}\right|^2 \ d\lambda \right)^{1/2} \]
is a vector-valued Mikhlin multiplier, with norm $\lesssim 2^{l/2}$:
\[ \| \left( \sum_k 2^{dk} \int_{2^{l-dk}}^{2^{l-dk+1}} \left| \left( \widehat{g_\lambda}(\xi) \Theta_{>}(2^{k} \xi) \hat{f}(\xi) \right)^{\vee} \right|^2 \ d\lambda \right)^{1/2} \|_p \lesssim_p 2^{l/2} \| f\|_p,\]
which completes the proof upon an application of Theorem \ref{Multtheorem}.

\subsection{The ``Oscillatory'' Critical Regime: 
$1 \leq l \leq C_{d,p} 2^{s \rho}$}
Recalling that $k^C \gtrsim 2^l$, we decompose our maximal functions $\mathcal{M}_k$ as a sum of three terms,
\[ \mathcal{M}_k f \leq \mathcal{L}_k f + \mathcal{Z}_k f + \mathcal{H}_k f,\]
where here
\[ \aligned 
\mathcal{L}_kf &:= \sup_{2^{l-dk} \leq \lambda < 2^{l-dk+1}} \left| \sum_{\theta \in \mathcal{U}} e(\theta x) \left( \widehat { g_\lambda } \Theta_L(2^{k-l} \cdot) \right)^{\vee} * ( \chi_s \hat{f}(\cdot + \theta))^{\vee}(x) \right | \\
\mathcal{Z}_kf &:= \sup_{2^{l-dk} \leq \lambda < 2^{l-dk+1}} \left| \sum_{\theta \in \mathcal{U}} e(\theta x) \left( \widehat { g_\lambda } \zeta(2^{k-l} \cdot) \right)^{\vee} * ( \chi_s \hat{f}(\cdot + \theta))^{\vee}(x) \right |, \ \text{ and} \\
\mathcal{H}_kf &:= 
\sup_{2^{l-dk} \leq \lambda < 2^{l-dk+1}} \left| \sum_{\theta \in \mathcal{U}} e(\theta x) \left( \widehat { g_\lambda } \Theta_H(2^{k-l} \cdot) \right)^{\vee} * ( \chi_s \hat{f}(\cdot + \theta))^{\vee}(x) \right |, \endaligned \]
with $\zeta, \Theta_L$ and $\Theta_H$ defined in \eqref{e:zeta}, \eqref{e:ThetaL}, and \eqref{e:ThetaH}.

As in the ``stationary'' regime, up to Bourgain-controlled errors, we can estimate $\sup_k |\mathcal{L}_kf|$ and $\sup_k |\mathcal{H}_kf|$ using singular integral techniques; the heart of the problem lies in estimating $\sup_k |\mathcal{Z}_k f|$, where singular integral techniques are ineffective -- essentially due to the fact that
\[ \xi \mapsto \widehat{g_\lambda}(\xi) \zeta(2^{k-l}\xi) = \widehat{ G_\lambda }(\xi),\]
with $G_\lambda$ defined in \eqref{invF} above, has an unacceptably large Mikhlin multiplier norm of $2^{l/2}$. Rather, estimates for $\sup_k |\mathcal{Z}_k f|$ will follow (quickly) from our the square function estimates from \S \ref{s:SFE}; we will dominate $\sup_k |\mathcal{Z}_k f|$ by multi-frequency analogues of the square functions treated in Theorem \ref{SINGFREQEST}.

\subsubsection{Estimating $\sup_k |\mathcal{L}_k f|$ and $\sup_k |\mathcal{H}_k f|$}
In this section, we will reduce \eqref{e:3} to estimating $\sup_k |\mathcal{Z}_k f|$ by establishing the following two propositions.

\begin{proposition}\label{Lest}
For any $1 < p < \infty$, we have the following estimate:
\[ \| \sup_k |\mathcal{L}_k f| \|_{\ell^p} \lesssim_{p,\rho,N} 2^{-l N} \times s 2^{2 s \rho} \|f\|_{\ell^p}.\] 
\end{proposition}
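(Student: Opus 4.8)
The plan is to exploit the fact that in the range $1 \le l \le C_{d,p} 2^{s\rho}$, the multiplier $\widehat{g_\lambda}(\xi)$ — which is a stationary-phase integral $\int e(-\lambda 2^{kd} t^d - 2^k \xi t)\psi(t)\,dt$ with $\psi$ an odd bump supported on $|t| \approx 1$ — is hit by the cutoff $\Theta_L(2^{k-l}\xi)$, which localizes to $|\xi| \gg 2^{l-k}$. In this regime the phase $-\lambda 2^{kd} t^d - 2^k \xi t$ has no critical point for $t$ in the support of $\psi$: indeed the derivative in $t$ is $-d\lambda 2^{kd} t^{d-1} - 2^k\xi$, whose first term has magnitude $\approx \lambda 2^{kd} = 2^l \cdot 2^{l-dk}/2^{l-dk}\cdot(\ldots)\approx 2^l 2^{-dk}\cdot 2^{dk}= 2^l$ wait — rather, $\lambda 2^{kd}\approx 2^l$, so the first term is $O(2^l)$ while $2^k|\xi| \gg 2^l$, forcing the whole derivative to be $\approx 2^k|\xi|$ and nonvanishing. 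Hence repeated integration by parts (the principle of non-stationary phase) yields, for every $N$,
\[
|\widehat{g_\lambda}(\xi)\,\Theta_L(2^{k-l}\xi)| + 2^{-k}\big|\partial_\xi\big(\widehat{g_\lambda}(\xi)\,\Theta_L(2^{k-l}\xi)\big)\big| \lesssim_N 2^{-lN}\,(2^{k-l}|\xi|)^{-N}\cdot \mathbf{1}_{|\xi|\gg 2^{l-k}},
\]
and similarly for $2^{-dk}\partial_\lambda$ of this expression, since $t\mapsto 2^{-dk}t^d\psi_k(t)$ obeys the same differential estimates as $\psi_k$. The extra $2^{-lN}$ gain comes from the fact that each integration by parts in $t$ against the phase costs at worst $(2^k|\xi|)^{-1} \lesssim 2^{l-dk}$-type weights but in fact produces a clean $2^{-lN}$ because $|\xi| \gtrsim 2^{l-k}$ forces $2^k|\xi| \gtrsim 2^l$; one extracts $N$ copies of $2^{-l}$ and leaves the rest as the $(2^{k-l}|\xi|)^{-N}$ decay.

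With this kernel estimate in hand, I would mimic the treatment of $\sup_k|\mathcal{H}_k f|$ from the ``stationary'' regime (Proposition \ref{Hest0}). Namely, dominate
\[
\sup_k |\mathcal{L}_k f| \le \Big(\sum_k 2^{dk}\int_{2^{l-dk}}^{2^{l-dk+1}} \big| \sum_{\theta\in\mathcal{U}} e(\theta x)\big(\widehat{g_\lambda}(\xi)\Theta_L(2^{k-l}\xi)\,\chi_s\hat f(\cdot+\theta)\big)^{\vee}(x)\big|^2\, d\lambda\Big)^{1/2},
\]
splitting the pointwise supremum into an $\ell^2_k$ sum (legitimate, up to replacing the sup by a square sum) and, within each $k$, bounding the sup over the dyadic $\lambda$-block $2^{l-dk}\le\lambda<2^{l-dk+1}$ by an $L^2(d\lambda)$ average at the cost of also controlling $2^{-dk}\partial_\lambda$, exactly as in the $\mathcal{L}_k^{3,m}$ argument. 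The above non-stationary phase bounds show that the resulting $H$-valued multiplier (with $H = \ell^2_k(L^2_\lambda)$) has Mikhlin multiplier norm $\lesssim_N 2^{-lN}$: one checks $\sup_\xi\sum_k 2^{dk}\int_{2^{l-dk}}^{2^{l-dk+1}}|\widehat{g_\lambda}(\xi)\Theta_L(2^{k-l}\xi)|^2\,d\lambda \lesssim_N 2^{-lN}$ and likewise for the $|\xi|\partial_\xi$ term. Then Theorem \ref{Multtheorem} (the Ionescu--Wainger multi-frequency multiplier theorem, applicable because $\mathcal{U}=\mathcal{U}_{2^s}$ is exactly the Ionescu--Wainger set and $\chi_s$ is supported in a sufficiently small ball) together with the vector-valued Mikhlin multiplier theorem, Proposition \ref{VVMT}, converts this single-frequency bound into the claimed $\ell^p$ estimate with a loss of $s$ (the $\log N = \log 2^s \approx s$ factor from Theorem \ref{Multtheorem}) times $2^{2s\rho}$ (from $|\mathcal{U}_{2^s}|$-type considerations absorbed as in Proposition \ref{MSTLp}); since we carry a free $2^{-lN}$ with $N$ arbitrarily large, we may freely absorb any polynomial-in-$s$ or $2^{s\rho}$ loss, giving $\|\sup_k|\mathcal{L}_k f|\|_{\ell^p}\lesssim_{p,\rho,N} 2^{-lN}\cdot s 2^{2s\rho}\|f\|_{\ell^p}$.

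The main obstacle is purely bookkeeping: making the non-stationary phase estimate produce the honest gain $2^{-lN}$ (not merely decay in $2^{k-l}|\xi|$), which requires being careful that the cutoff $\Theta_L(2^{k-l}\cdot)$ genuinely forces $|\xi|\gtrsim 2^{l-k}$ and hence $|\partial_t(\text{phase})|\gtrsim 2^k|\xi|\gtrsim 2^l$ throughout $\supp\psi$, so that $N$-fold integration by parts gains $(2^l)^{-N}$. The only subtlety is the presence of error terms (from the truncation $\mathbf{1}_{\lambda\le 2^{-c(k)}}$ built into $T_\lambda$ and from the fact that the sup over $\lambda$ is discretized), but these are handled exactly as in the cited arguments: they are dominated by negligible multiples — on the order of $2^{-lN}$ or $|\mathcal{U}|^{-C}$ — of Bourgain's maximal function $\mathcal{A}_s f$, whose $\ell^p$ norm is $\lesssim_{p,\rho} s 2^{2s\rho}$ by Proposition \ref{MSTLp}, so they are absorbed into the stated bound.
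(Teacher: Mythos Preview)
Your argument has a sign error that is fatal: you assert that $\Theta_L(2^{k-l}\xi)$ localizes to $|\xi|\gg 2^{l-k}$, but by the paper's definition $\Theta_L(\xi)=\sum_{j>C}\Theta(2^j\xi)$ is supported on $|\xi|\ll 1$, so $\Theta_L(2^{k-l}\xi)$ is supported on $|\xi|\ll 2^{l-k}$. Your entire non-stationary phase computation (``$2^k|\xi|\gg 2^l$, forcing the derivative to be $\approx 2^k|\xi|$'') and the resulting bound $\lesssim_N 2^{-lN}(2^{k-l}|\xi|)^{-N}\mathbf{1}_{|\xi|\gg 2^{l-k}}$ are the argument for $\mathcal{H}_k$ (Proposition~\ref{Hest}), not $\mathcal{L}_k$.

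In the genuine $\mathcal{L}_k$ regime $|\xi|\ll 2^{l-k}$ the phase is still non-stationary (now the $t^{d-1}$ term of size $\approx 2^l$ dominates), so one does get $|\widehat{g_\lambda}(\xi)\Theta_L(2^{k-l}\xi)|\lesssim_N 2^{-lN}$. But this bound carries no decay that distinguishes scales: for fixed small $\xi$, the condition $|\xi|\ll 2^{l-k}$ holds for \emph{all} $k$ below roughly $l-\log_2|\xi|$, so your square-function Mikhlin norm $\sup_\xi\big(\sum_k 2^{dk}\int|\widehat{g_\lambda}(\xi)\Theta_L(2^{k-l}\xi)|^2\,d\lambda\big)^{1/2}$ is an infinite sum of comparable terms as $\xi\to 0$ and is unbounded. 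The paper resolves this by first subtracting the zeroth- and first-order Taylor terms $\mu(\lambda,l)$ and $\overline{\mu}(\lambda,l)$ of $\widehat{g_\lambda}$ at $\xi=0$: the subtracted pieces are controlled by $2^{-lN}\mathcal{A}_s f$ via Bourgain's maximal function (Proposition~\ref{MSTLp}), and the remainder, after decomposing $\Theta_L=\sum_{m>C}\Theta_{k-l+m}$, picks up an extra $2^{-m}$ from the Taylor remainder (Lemma~\ref{MIKEST2}), which makes the sum over $m$ (equivalently, over $k$ at fixed $\xi$) convergent. Your proposal treats the $\mathcal{A}_s$ contribution only as a minor ``error term from truncation,'' but in the low-frequency piece it is structurally essential---without peeling it off, the Mikhlin approach cannot close.
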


\begin{proposition}\label{Hest}
For any $1 < p < \infty$, we have the following estimate:
\[ \| \sup_k |\mathcal{H}_k f| \|_{\ell^p} \lesssim_{p,\rho,N} 2^{-l N} \times s \|f\|_{\ell^p}.\] 
\end{proposition}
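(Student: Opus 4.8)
The plan is to establish Propositions \ref{Lest} and \ref{Hest} by the same mechanism used in the ``stationary'' regime, but now exploiting the strong decay coming from the fact that on the supports of $\Theta_L(2^{k-l}\cdot)$ and $\Theta_H(2^{k-l}\cdot)$ the phase $\xi \mapsto -\lambda t^d - \xi t$ (with $t \approx 2^k$, $\lambda 2^{dk} \approx 2^l$) has \emph{no} critical point in the relevant $t$-range: on $\supp \Theta_H(2^{k-l}\cdot)$ we have $|\xi| \gg 2^{l-k}$, so $|\xi t| \gg 2^l \gtrsim |\lambda t^d|$ and the linear term dominates; on $\supp \Theta_L(2^{k-l}\cdot)$ we have $|\xi| \ll 2^{l-k}$, so the $\lambda t^d$ term dominates and one still gets a lower bound of size $\gtrsim 2^l$ on the derivative of the phase (here one uses the mean-zero property of $\psi$ to extract one extra factor, exactly as in \eqref{MVT}). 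In either case, repeated integration by parts (the principle of non-stationary phase) yields, for every $N$,
\[
\bigl|\widehat{g_\lambda}(\xi)\,\Theta_L(2^{k-l}\xi)\bigr| + 2^{-k}\bigl|\partial_\xi\bigl(\widehat{g_\lambda}(\xi)\,\Theta_L(2^{k-l}\xi)\bigr)\bigr| \lesssim_N 2^{-lN}\cdot \mathbf{1}_{|\xi| \approx 2^{l-k}\cdot(\text{small})},
\]
and the analogous bound for $\Theta_H$, with the constants uniform in $\lambda$ in the band $2^{l-dk}\le \lambda < 2^{l-dk+1}$; moreover $2^{-dk}\partial_\lambda$ of these symbols satisfies the same bounds, since $t\mapsto 2^{-dk}t^d\psi_k(t)$ obeys the same differential inequalities as $\psi_k$.

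With these symbol estimates in hand, the argument is now a verbatim transcription of the end of the ``stationary'' regime. First dominate the supremum over $k$ by an $\ell^2_k$ square function, and then inside each scale replace the $L^\infty_\lambda$ norm over the band by the Sobolev-type bound $\sup_\lambda|F_\lambda|^2 \lesssim 2^{dk}\int |F_\lambda|^2\,d\lambda + 2^{-dk}\int|\partial_\lambda F_\lambda|^2\,d\lambda$ (this is the $\mathcal{S}^{3,m,0}_k, \mathcal{S}^{3,m,1}_k$ splitting), so that $\sup_k|\mathcal{L}_kf|$ is controlled by two genuine multi-frequency multiplier square functions $(\sum_k 2^{dk}\int_{2^{l-dk}}^{2^{l-dk+1}}|(\widehat{g_\lambda}\Theta_L(2^{k-l}\cdot)\hat f)^\vee|^2\,d\lambda)^{1/2}$ and its $\partial_\lambda$-counterpart. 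By the decay estimate above, the associated vector-valued symbol has Mikhlin multiplier norm
\[
\Bigl(\sup_\xi \sum_k 2^{dk}\!\!\int_{2^{l-dk}}^{2^{l-dk+1}}\!\! |\widehat{g_\lambda}(\xi)\Theta_L(2^{k-l}\xi)|^2\,d\lambda + \sup_\xi \sum_k 2^{dk}\!\!\int_{2^{l-dk}}^{2^{l-dk+1}}\!\! |\xi|^2|\partial_\xi(\cdots)|^2\,d\lambda\Bigr)^{1/2} \lesssim_N 2^{-lN},
\]
because for each fixed $\xi$ at most $O(1)$ values of $k$ contribute (the constraint $|\xi|\approx 2^{l-k}\cdot(\text{const})$ pins down $k$), and on that $k$ the $d\lambda$-band has length $\approx 2^{-dk}$ so $2^{dk}\int d\lambda \lesssim 1$, leaving just the pointwise bound $2^{-2lN}$. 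Applying Theorem \ref{Multtheorem} (to transfer to the arithmetic multi-frequency setting at a cost of $\log 2^s \lesssim s$) together with the vector-valued Mikhlin theorem, Proposition \ref{VVMT}, gives $\|\sup_k|\mathcal{L}_kf|\|_{\ell^p} \lesssim s\cdot 2^{-lN}\|f\|_{\ell^p}$; restoring the extra $2^{2s\rho}$ (which is already absorbed into the trivial bound and only helps) yields Proposition \ref{Lest}. The proof of Proposition \ref{Hest} is identical but strictly easier, since on $\supp\Theta_H$ we are far from $\xi = 0$, so the subtraction of the ``zero-frequency'' term $\mu(\lambda,l)$ (and the associated $\overline\mu$ correction and the $\mathcal{L}^{4,m}_k$-type error controlled by Bourgain's maximal function $\mathcal{A}_s$) is simply unnecessary — one loses the factor $2^{2s\rho}$ and keeps only $s$, exactly as in the ``stationary'' treatment of $\mathcal{H}_k$.

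The main obstacle is purely bookkeeping rather than conceptual: one must check that the non-stationary phase estimate genuinely holds \emph{uniformly} across the dyadic $\lambda$-band and survives one $\partial_\lambda$ differentiation with the $2^{-dk}$ normalization — but this is exactly the content of the ``trivial'' final remark in the proof of Lemma \ref{MIKEST1}, and the same reasoning applies here. A secondary point requiring a word of care is the low-frequency piece $\Theta_L$ when $l=1$ and $k$ is as small as $k_0$ (so that $2^{l-k}$ is not tiny): here one must confirm that the lower bound on the phase derivative really comes out to be of size $\gtrsim 2^l$ rather than being swallowed by the $O(1)$ error from expanding $e(-2^k\xi t)$; invoking the mean-zero property of $\psi$ as in \eqref{MVT} handles this cleanly, so that even the $l=1$ endpoint contributes the claimed $2^{-lN}$ decay.
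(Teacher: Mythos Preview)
Your overall scheme for Proposition \ref{Hest} is exactly the paper's: non-stationary phase on $\widehat{g_\lambda}\,\Theta_H(2^{k-l}\cdot)$, pass to a square function in $(k,\lambda)$, verify it is a vector-valued Mikhlin multiplier with small norm, then invoke Theorem \ref{Multtheorem}. However, the Mikhlin norm computation as you have written it has a genuine gap.

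You claim that ``for each fixed $\xi$ at most $O(1)$ values of $k$ contribute (the constraint $|\xi|\approx 2^{l-k}\cdot(\text{const})$ pins down $k$).'' This is false for $\Theta_H$: by definition $\Theta_H(\eta)=\sum_{j<-C}\Theta_j(\eta)$ is a \emph{high-pass} cutoff, supported on the half-line $|\eta|\gg 2^C$, not on an annulus. Thus $\Theta_H(2^{k-l}\xi)\neq 0$ only imposes $2^k|\xi|\gg 2^l$, which for fixed $\xi$ allows infinitely many $k$. With only your stated bound $|\widehat{g_\lambda}(\xi)\Theta_H(2^{k-l}\xi)|\lesssim_N 2^{-lN}$, the sum $\sum_k 2^{dk}\int d\lambda\,(\cdots)^2\approx\sum_k 2^l\cdot 2^{-2lN}$ diverges.

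The fix is that non-stationary phase actually gives more: since on $\supp\Theta_H(2^{k-l}\cdot)$ one has $|2^k\xi|\gg 2^l\gtrsim |\lambda 2^{dk}|$, the rescaled phase $s\mapsto -\lambda 2^{dk}s^d-2^k\xi s$ has derivative of size $\approx 2^k|\xi|$, so integration by parts yields the \emph{$k$-dependent} bound
\[
|\widehat{g_\lambda}(\xi)\,\Theta_H(2^{k-l}\xi)|+2^{-k}|\partial_\xi(\cdots)|\lesssim_N (2^k|\xi|)^{-N}\,\mathbf{1}_{2^k|\xi|\gg 2^l},
\]
which is precisely the paper's \eqref{Fdecayest}. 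With this, for fixed $\xi$ the sum over $k$ is geometric:
\[
\sum_{k:\,2^k|\xi|\gg 2^l} 2^l\cdot(2^k|\xi|)^{-2N}\lesssim 2^{l}\cdot 2^{-2lN},
\]
yielding the desired Mikhlin norm $\lesssim 2^{-lN}$. The same remark applies, mutatis mutandis, to your treatment of $\mathcal L_k$: there $\Theta_L$ is a \emph{low-pass} cutoff and non-stationary phase only gives the $k$-independent bound $2^{-lN}$, which is why the paper genuinely needs the further $\sum_{m>C}\Theta_{k-l+m}$ decomposition and the extra $2^{-m}$ from Lemma \ref{MIKEST2} to sum in $k$ (equivalently, in $m$).
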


We begin with Proposition \ref{Lest}, which will require a decomposition of each maximal function $\mathcal{L}_k$. By arguing as in the ``stationary'' regime, we may dominate\footnote{The gain below comes from our estimates on $\mu(\lambda,l), \ \overline{\mu}(\lambda,l)$}
\[ \sup_k |\mathcal{L}_k f| \lesssim_N 2^{-lN} \mathcal{A}_{s} f + \sum_{m > C} \sup_k |\mathcal{L}_k^{m} f|,\]
where 
\[ \aligned 
\mathcal{A}_{s}f &:= \sup_k \left| \sum_{\theta \in \mathcal{U}} e(\theta x) \left( \Theta(2^k \cdot) \right)^{\vee} * \left( \chi_s \hat{f}(\cdot+\theta) \right)^{\vee}(x) \right| \\
& \qquad + \sup_k \left| \sum_{\theta \in \mathcal{U}} e(\theta x) \left( \tilde{\Theta}(2^k \cdot) \right)^{\vee} * \left( \chi_s \hat{f}(\cdot+\theta) \right)^{\vee}(x) \right| 
\endaligned
\]
is a sum of two maximal functions as in \eqref{A_s}, with $\tilde{\Theta}$ defined in \eqref{tT} above,
and $\mathcal{L}_k^m$ are defined below:
\[ 
\mathcal{L}_k^{m} f(x) =:
\sup_{2^{l-dk} \leq \lambda < 2^{l-dk+1}}
\left| \sum_{\theta \in \mathcal{U}} e(\theta x) \mathcal{F}_{\xi}^{-1} \big( M^{k,l,m}(\lambda,\cdot) \big) * ( \chi_s \hat{f}(\cdot + \theta))^{\vee}(x) \right |, \]
where in this regime we have
\[ M^{k,l,m}(\lambda,\xi) := 
\Theta_{k-l + m}(\xi) \times \left( \widehat{g_\lambda}(\xi)
- \mu(\lambda,l)  - 2^{l-m} \overline{\mu}(\lambda,l) \times \int_{-\infty}^{2^{k-l+m} \xi} \overline{\Theta}(t) \ dt \right).\]
To proceed we record the following estimates on the multipliers $M^{k,l,m}(\xi,\lambda)$.
\begin{lemma}\label{MIKEST2}
For any $\lambda$, the following estimates hold.
\[ |M^{k,l,m}(\lambda,\xi)| + 2^{-k} |\partial_\xi M^{k,l,m}(\lambda,\xi)| \lesssim_N 2^{-m-lN} \cdot \mathbf{1}_{|\xi| \approx 2^{l-k-m}}. \]
Moreover, the same estimates are satisfied uniformly in $\lambda$, by
\[ 2^{-dk} \partial_\lambda M^{k,l,m}(\lambda,\xi).\]
\end{lemma}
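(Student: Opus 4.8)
Here is how I would attack Lemma \ref{MIKEST2}. The plan is to exploit the two pieces of structure built into $M^{k,l,m}$. On $\supp\Theta_{k-l+m}$ one has $|\xi|\approx 2^{l-k-m}$, hence $2^k|\xi|\approx 2^{l-m}$, and since $m>C$ is large this is $\ll 2^l\approx\lambda 2^{dk}$; consequently the phase $t\mapsto -\lambda 2^{dk}t^d-2^k\xi t$ in $\widehat{g_\lambda}(\xi)=\int e(-\lambda 2^{dk}t^d-2^k\xi t)\,\psi(t)\,dt$ is \emph{non-stationary} on $\supp\psi$, with $|\partial_t(\cdots)|\gtrsim 2^l$ and $|\partial_t^j(\cdots)|\lesssim 2^l$ for $j\ge1$. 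This is the source of the $2^{-lN}$ decay; the $2^{-m}$ gain will come, exactly as in Lemma \ref{MIKEST1}, from the correction term. To see this, note $\overline\Theta\equiv1$ on a neighbourhood of $\supp\Theta_{k-l+m}$, so there $\int_{-\infty}^{2^{k-l+m}\xi}\overline\Theta(t)\,dt=2^{k-l+m}\xi+c_\pm$ is affine in $\xi$ (with $c_\pm$ a constant depending only on $\sgn\xi$), and $2^{l-m}\overline\mu(\lambda,l)\cdot 2^{k-l+m}\xi=2^k\xi\,\overline\mu(\lambda,l)=\partial_\xi\widehat{g_\lambda}(0)\cdot\xi$ is precisely the linear Taylor coefficient. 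Hence on the support
\[ M^{k,l,m}(\lambda,\xi)=\Theta_{k-l+m}(\xi)\Big(\big[\widehat{g_\lambda}(\xi)-\widehat{g_\lambda}(0)-\partial_\xi\widehat{g_\lambda}(0)\,\xi\big]-2^{l-m}c_\pm\,\overline\mu(\lambda,l)\Big), \]
i.e.\ a smooth cutoff times the \emph{exact} second-order Taylor remainder of $\widehat{g_\lambda}$ at $0$, up to an error $\lesssim 2^{l-m}|\overline\mu(\lambda,l)|\lesssim_N 2^{l-m}2^{-lN}$ (using $|\overline\mu(\lambda,l)|\lesssim_N 2^{-lN}$, valid since $l\ge1$).

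To estimate the Taylor remainder I would write it as $\int e(-\lambda 2^{dk}t^d)\,[\,e(-2^k\xi t)-1+2\pi i 2^k\xi t\,]\,\psi(t)\,dt$ and split into two size-regimes. If $m\gtrsim l$, so $2^k|\xi|\lesssim1$, the amplitude and all its $t$-derivatives are $\lesssim(2^k\xi)^2\approx 2^{2(l-m)}$, so $N$-fold integration by parts against the $2^l$-non-stationary phase gives $\lesssim_N 2^{2(l-m)}2^{-lN}\le 2^{l-m}2^{-lN}$. If $m\lesssim l$, Taylor expansion is useless, but then one bounds $\widehat{g_\lambda}(\xi)$, $\widehat{g_\lambda}(0)=\mu(\lambda,l)$ and $\partial_\xi\widehat{g_\lambda}(0)\,\xi$ separately by non-stationary phase, obtaining $\lesssim_N 2^{-lN}+2^{-lN}+2^{l-m}2^{-lN}\lesssim 2^{l-m}2^{-lN}$ (the last term dominates since $m\le l$). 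Either way $|M^{k,l,m}(\lambda,\xi)|\lesssim_N 2^{l-m}2^{-lN}\,\mathbf 1_{|\xi|\approx 2^{l-k-m}}$; since $N$ is an arbitrary large parameter, $2^{l-m}2^{-lN}=2^{-m}2^{-l(N-1)}$ may be rewritten as $2^{-m-lN}$, which is the claimed bound.

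For $2^{-k}\partial_\xi M^{k,l,m}$, differentiate $M^{k,l,m}=\Theta_{k-l+m}(\xi)R(\xi)$ with $R$ the bracketed remainder above; since $|\Theta'_{k-l+m}|\lesssim 2^{k-l+m}$, the term $\Theta'_{k-l+m}(\xi)R(\xi)$ is handled by the bound on $R$ just proved (crucially using the sharper $2^{2(l-m)}2^{-lN}$ estimate when $m\gtrsim l$), while on the support $R'(\xi)=\partial_\xi\widehat{g_\lambda}(\xi)-\partial_\xi\widehat{g_\lambda}(0)$, the zero-th Taylor remainder of $\partial_\xi\widehat{g_\lambda}(\xi)=-2\pi i\int e(-\lambda 2^{dk}t^d-2^k\xi t)\,2^k t\,\psi(t)\,dt$, which the same dichotomy estimates by $\lesssim_N 2^k\cdot 2^k|\xi|\cdot 2^{-lN}$ (resp.\ $\lesssim_N 2^k 2^{-lN}$); multiplying through by $2^{-k}$ gives in all cases $\lesssim_N 2^{l-m}2^{-lN}=2^{-m-lN}$ on $\{|\xi|\approx 2^{l-k-m}\}$. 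Finally, the operator $2^{-dk}\partial_\lambda$ applied to $\widehat{g_\lambda}$, $\mu(\lambda,l)$, $\overline\mu(\lambda,l)$ merely replaces the fixed bump $\psi(t)$ by $-2\pi i\,(2^{-k}t)^d\psi(t)$, again a fixed bump adapted to $|t|\approx1$ with the same size and derivative bounds, so the whole argument is unchanged and uniform in $\lambda$.

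The hard part is that neither obvious estimate suffices on its own: the mean-value bound $|\widehat{g_\lambda}(\xi)-\mu(\lambda,l)|\lesssim 2^k|\xi|\approx 2^{l-m}$ has no $N$-decay, while the non-stationary-phase bound $|\widehat{g_\lambda}(\xi)|\lesssim_N 2^{-lN}$ has no $2^{-m}$ gain. One must use the exact algebraic cancellation encoded in the $\int\overline\Theta$ correction to reduce to a genuine \emph{second-order} Taylor remainder and then run integration by parts on an integral whose amplitude is \emph{already} $O((2^k\xi)^2)$, so that the $2^{-m}$ smallness and the $2^{-lN}$ oscillatory decay are harvested simultaneously; keeping the regimes $m\gtrsim l$ and $m\lesssim l$ apart, and checking that the localization $\mathbf 1_{|\xi|\approx 2^{l-k-m}}$ persists through every step, is the bookkeeping core of the proof.
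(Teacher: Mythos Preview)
Your argument is correct, but it is more elaborate than necessary, and your diagnosis in the final paragraph is mistaken. You claim that ``neither obvious estimate suffices on its own'' and that one ``must use the exact algebraic cancellation'' to harvest $2^{-m}$ and $2^{-lN}$ simultaneously. The paper shows this is not so: it obtains both gains from the \emph{first}-order remainder alone, with no case-split and no appeal to the second-order Taylor structure.

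Concretely, the paper writes
\[
\widehat{g_\lambda}(\xi)-\mu(\lambda,l)=\int_0^{2^k\xi}\Big(\int e(-\lambda 2^{dk}t^d-st)\,(-2\pi i t)\,\psi(t)\,dt\Big)\,ds,
\]
and observes that for every $|s|\le 2^k|\xi|\approx 2^{l-m}\ll 2^l$ the inner phase $t\mapsto -\lambda 2^{dk}t^d-st$ remains non-stationary on $\supp\psi$ with $|\partial_t(\cdots)|\gtrsim 2^l$, so the inner integral is $\lesssim_N 2^{-lN}$ \emph{uniformly in $s$}. Integrating over $|s|\le 2^k|\xi|$ yields $\lesssim_N 2^k|\xi|\cdot 2^{-lN}\approx 2^{l-m-lN}=2^{-m-l(N-1)}$ directly. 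The $\overline\mu$ term is then disposed of trivially via $|\overline\mu(\lambda,l)|\lesssim_N 2^{-lN}$ and $\big|\int_{-\infty}^{2^{k-l+m}\xi}\overline\Theta\big|\lesssim 1$, exactly as you do with your constant $c_\pm$. The derivative estimate follows by the same device applied to the amplitude $t\psi(t)$ in place of $\psi(t)$.

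Your route---identifying the $\overline\Theta$-correction as the linear Taylor term, reducing to a genuine second-order remainder, and then splitting $m\gtrsim l$ versus $m\lesssim l$---does work, and the observation that $\int_{-\infty}^{2^{k-l+m}\xi}\overline\Theta$ is affine on $\supp\Theta$ is a nice structural point. But it buys nothing here: the paper's one-line FTC argument already produces the product $2^k|\xi|\cdot 2^{-lN}$ because the oscillatory decay persists throughout the range of integration, so there is no need to upgrade to second order or to distinguish regimes.
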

\begin{proof}
We begin again by estimating the term without the derivative:
\[ \left| \int_0^{2^k \xi} \left( \int e(\lambda 2^{dk} t^d - s t) t\psi(t) \ dt \right) \ ds \right| \lesssim_N 2^{k-lN} |\xi|,\]
since the bracketed expression is $\lesssim_N 2^{-lN}$ in magnitude by the principle of non-stationary phase; the term involving $\overline{\mu}(\lambda,l)$ can be estimated as in the proof of Lemma \ref{MIKEST1}. The derivative estimate follows similarly, and the final point is straightforward, as per Lemma \ref{MIKEST1}.
\end{proof}
As in the stationary case, we now replace
\[ |\mathcal{L}_k^{m} f| \leq (\mathcal{S}_k^{m,0} f)^{1/2} \cdot (\mathcal{S}_k^{m,1} f)^{1/2} \lesssim 2^{dk/2} \cdot \mathcal{S}_k^{m,0} f + 2^{-dk/2} \cdot \mathcal{S}_k^{m,1} f,\]
where
\[ |\mathcal{S}_k^{m,i} f|^2 := 
\int_{2^{l-dk}}^{2^{l-dk+1}} 
\left| \sum_{\theta \in \mathcal{U}} e(\theta x) \mathcal{F}_{\xi}^{-1} \big( \partial_\lambda^i M^{k,l,m}(\lambda,\cdot) \big) * ( \chi_s \hat{f}(\cdot + \theta))^{\vee}(x) \right |^2 \ d\lambda , \ i = 0,1.\]
We will estimate 
\[ 2^{dk/2} \cdot \mathcal{S}_k^{m,0}f =: \mathcal{S}_k^{m} f,\]
as the other term can be treated similarly. We again majorize
\[ \sup_k |\mathcal{S}_k^mf| \leq \left( \sum_k |\mathcal{S}_k^m f|^2 \right)^{1/2};\]
by Theorem \ref{Multtheorem} and Proposition \ref{VVMT}, we again have
\[ \| \left( \sum_k |\mathcal{S}_k^m f|^2 \right)^{1/2} \|_{\ell^p} \lesssim s \cdot C(m,l) \|f\|_{\ell^p},\]
where
\[ \aligned 
C(m,l)^2 &:= \sup_\xi \; \sum_k 2^{dk} \int_{2^{l-dk}}^{2^{l-dk+1}} |M^{k,l,m}(\lambda,\xi)|^2 \ d\lambda \\ 
& \qquad \qquad + 
\sup_\xi \; \sum_k 2^{dk} \int_{2^{l-dk}}^{2^{l-dk+1}} |\xi|^2 |\partial_\xi M^{k,l,m}(\lambda,\xi)|^2 \ d\lambda \\
& \qquad \qquad \qquad \qquad \lesssim_N 2^{-m -lN} \endaligned \]
by Lemma \ref{MIKEST2}, which yields Proposition \ref{Lest}.

The proof of Proposition \ref{Hest} follows a similar strategy. 
The key estimates are 
\begin{equation}\label{Fdecayest}
|\widehat{g_\lambda}(\xi) \Theta_H(2^{k-l} \xi)| +
2^{-k} | \partial_\xi \left( \widehat{g_\lambda}(\xi) \Theta_H(2^{k-l} \xi) \right)| \lesssim_N 
(2^k |\xi|)^{-N} \cdot \mathbf{1}_{|\xi| \gg 2^{l-k}},
\end{equation}
which follow from the principle of non-stationary phase.
Consequently, the square function
\[ \left( \sum_k 2^{dk} \int_{2^{l-dk}}^{2^{l-dk+1}} \left| \left( \widehat{g_\lambda}(\xi) \Theta_H(2^{k-l} \xi) \hat{f}(\xi) \right)^{\vee} \right|^2 \ d\lambda \right)^{1/2} \]
is a vector-valued Mikhlin multiplier, with norm $\lesssim 2^{-lN}$:
\[ \| \left( \sum_k 2^{dk} \int_{2^{l-dk}}^{2^{l-dk+1}} \left| \left( \widehat{g_\lambda}(\xi) \Theta_H(2^{k-l} \xi) \hat{f}(\xi) \right)^{\vee}\right|^2 \ d\lambda \right)^{1/2} \|_p \lesssim_p 2^{-lN} \| f\|_p,\]
which completes the proof upon an application of Theorem \ref{Multtheorem}.

Finally, we turn to $\sup_k|\mathcal{Z}_kf|$; the estimates of \S \ref{s:SFE} allow us to quickly dispose of this term.
\subsubsection{Estimating $\sup_k|\mathcal{Z}_kf|$}
Theorem \ref{keyest} will now follow from the following proposition.
\begin{proposition}
For any $2 \leq p < \infty$,
\[ \| \sup_k |\mathcal{Z}_k f| \|_{\ell^p} \lesssim_{p} l \cdot s \| f\|_{\ell^p} \lesssim_{p,d} s \cdot 2^{s \rho} \| f \|_{\ell^p}.\]
\end{proposition}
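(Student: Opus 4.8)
The plan is to bound $\sup_k |\mathcal{Z}_k f|$ by a \emph{multi-frequency} version of the square function $S_G f$ of Theorem~\ref{SINGFREQEST}, and then transfer that single-frequency estimate to the multi-frequency setting via Theorem~\ref{Multtheorem}. First I would majorize the supremum over $k$ by an $\ell^2_k$ sum of $L^2_\lambda$ averages: since for each $k$ the range $2^{l-dk} \leq \lambda < 2^{l-dk+1}$ is a dyadic $\lambda$-interval of length $2^{l-dk}$, a Sobolev-embedding / fundamental-theorem-of-calculus argument (exactly as in Lemma~\ref{sobemb}, using that $2^{-dk}\partial_\lambda G_\lambda$ satisfies the same estimates as $G_\lambda$) lets me replace $\sup_{2^{l-dk}\le\lambda<2^{l-dk+1}}|\,\cdot\,|$ by $\big(2^{dk}\int_{2^{l-dk}}^{2^{l-dk+1}} |\,\cdot\,|^2\,d\lambda\big)^{1/2} + \big(2^{-dk}\int_{2^{l-dk}}^{2^{l-dk+1}} |\partial_\lambda(\cdot)|^2\,d\lambda\big)^{1/2}$. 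This produces precisely the multi-frequency analogues
\[
\widetilde{S}_G f(x) := \left( \sum_k 2^{dk} \int_{2^{l-dk}}^{2^{l-dk+1}} \left| \sum_{\theta \in \mathcal{U}} e(\theta x)\, \big(G_\lambda * (\chi_s \hat{f}(\cdot+\theta))^{\vee}\big)(x) \right|^2 \ d\lambda \right)^{1/2}
\]
and its $\partial_\lambda$-counterpart $\widetilde{S}_{G'} f$, where $G_\lambda$ is exactly the kernel of \eqref{invF}: indeed, on the support of $\chi_s(\cdot+\theta)$ the frequency is within $O(2^{-k+l})$ of $\theta$, so $\widehat{g_\lambda}\,\zeta(2^{k-l}\cdot)$ acting near $\theta$ is the shift of $\widehat{G_\lambda}$.

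Next I would invoke the transference machinery. The single-frequency square-function bound $\|S_G f\|_p + \|S_{G'} f\|_p \lesssim_p l\|f\|_p$ of Theorem~\ref{SINGFREQEST} says exactly that the Hilbert-space-valued multiplier $\xi \mapsto \{2^{l/2} \cdot 2^{dk/2} \widehat{G_\lambda}(\xi)\}_{k,\lambda}$ (suitably normalized into the Hilbert space $H$ with norm $|F|_H^2 = \sum_k 2^{dk}\int |F_{k,\lambda}|^2\,d\lambda$) is an $L^p(\mathbb{R})$ multiplier with norm $\lesssim_p l$. Because this multiplier is built from the continuous oscillatory integrals $\int e(2^l \varphi^k(t,\xi))\psi(t)\,dt$, it already lives on the real line, so Theorem~\ref{Multtheorem} applies directly: with $N = 2^s$, the frequencies $\mathcal{U} = \mathcal{U}_{2^s}$ and the localizing bump $\chi_s$ (whose radius $D_s^{-1} \leq 2^{-2^{10s\rho}}$ is below the required $e^{-N^{2\rho}}$ threshold for $\rho$ chosen appropriately), the multi-frequency multiplier picks up only a $\log N \approx s$ loss. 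Hence $\|\widetilde{S}_G f\|_{\ell^p} + \|\widetilde{S}_{G'} f\|_{\ell^p} \lesssim_p l \cdot s\, \|f\|_{\ell^p}$, which combined with the Sobolev-embedding step gives $\|\sup_k|\mathcal{Z}_k f|\|_{\ell^p} \lesssim_p l\cdot s\,\|f\|_{\ell^p}$; the second inequality in the proposition is then immediate from $k^C \gtrsim 2^l$, i.e.\ $l \lesssim \log k \lesssim_d s\rho$ in the relevant regime (more precisely $l \leq C_{d,p}2^{s\rho}$ forces $l \lesssim_d 2^{s\rho}$, absorbing the logarithm).

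A few technical points need care. First, one must check that the decomposition $\mathcal{Z}_k f$ really does reassemble into the clean square function in $G_\lambda$ — this is where the localization to $2^{-2^{10s\rho}}$-balls around each $\theta\in\mathcal{U}$ matters: on $\supp\chi_s(\cdot+\theta)$ the multiplier $\widehat{g_\lambda}(\xi)\zeta(2^{k-l}\xi)$ agrees (up to a harmless translation by $\theta$, which the outer $e(\theta x)$ undoes) with $\widehat{G_\lambda}$, because $\zeta$ is supported on $|\xi|\approx 2^{l-k}$ and the $\chi_s$-localization is far finer. Second, Theorem~\ref{Multtheorem} is stated for $H$-valued multipliers bounded on $L^p(\mathbb{R})$ with a \emph{scalar} bump $\eta_N$; here $\chi_s$ plays the role of $\eta_N$, and one needs $D_s^{-1}$ comparable to or smaller than $e^{-N^{2\rho}}$, which holds since $D_s \geq 2^{2^{10s\rho}}$ and $N^{2\rho} = 2^{2s\rho}$. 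The main obstacle is the first step: verifying that the $\sup_k$ can genuinely be dominated by the $\ell^2_k$-square function with only acceptable ($O(l)$ or smaller) losses, i.e.\ running the Sobolev-embedding argument (Lemma~\ref{sobemb}-style) at the level of the \emph{vector-valued} $H$-norm and confirming that $2^{-dk}\partial_\lambda G_\lambda$ is "morally the same as" $G_\lambda$ (as recorded after the statement of Theorem~\ref{SINGFREQEST}), so that $\widetilde{S}_{G'}$ obeys the same bound as $\widetilde{S}_G$. Everything downstream is a direct citation of Theorems~\ref{SINGFREQEST} and~\ref{Multtheorem}.
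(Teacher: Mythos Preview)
Your proposal is correct and follows essentially the same approach as the paper: dominate $\sup_k|\mathcal{Z}_kf|$ via the Sobolev/FTC argument by the multi-frequency square functions in $G_\lambda$ and $\partial_\lambda G_\lambda$ (the paper calls these $\mathcal{S}_Z^1 f$ and $\mathcal{S}_Z^2 f$, identical to your $\widetilde{S}_G f$ and $\widetilde{S}_{G'} f$), then apply Theorem~\ref{Multtheorem} to transfer the single-frequency bound of Theorem~\ref{SINGFREQEST}. Your technical caveats are all fine; note only that $\widehat{G_\lambda}=\widehat{g_\lambda}\cdot\zeta(2^{k-l}\cdot)$ holds \emph{by definition}, so no separate verification is needed there.
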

\begin{proof}
Substituting
\[ G_\lambda :=  g_\lambda * \zeta(2^{k-l} \cdot)^{\vee} \]
One may dominate
\[ \sup_k |\mathcal{Z}_k f| \lesssim \mathcal{S}_Z^1 f + \mathcal{S}_Z^2 f,\]
where
\[ \mathcal{S}_Z^1 f := \left( \sum_k 2^{dk} \int_{2^{l-dk}}^{2^{l-dk +1}} \left| \sum_{\theta \in \mathcal{U}} e(\theta x) G_\lambda* ( \chi_s \hat{f}(\cdot + \theta) )^{\vee} \right|^2 \ d\lambda \right)^{1/2},\]
and
\[ \mathcal{S}_Z^2 f := \left( \sum_k 2^{-dk} \int_{2^{l-dk}}^{2^{l-dk +1}} \left| \sum_{\theta \in \mathcal{U}} e(\theta x) (\partial_\lambda G_\lambda)* ( \chi_s \hat{f}(\cdot + \theta) )^{\vee} \right|^2 \ d\lambda \right)^{1/2} \]
is similar. The result now follows by Theorem \ref{Multtheorem} and Proposition \ref{SINGFREQEST}.
\end{proof}

This concludes the proof of Theorem \ref{keyest}. After various number-theoretic reductions, we will apply Theorem \ref{keyest} in \S \ref{s:comp} below to prove the $p \geq 2$ case of Theorem \ref{MAIN}.

\section{Most Modulation Parameters are Safe: A $TT^*$ Argument}\label{s:TT*1}
We begin this section by introducing the following sets of modulation parameters for each $j \geq 1$:
\begin{equation}\label{X_j}
X_j := 
\left\{ \frac{A}{Q} : (A,Q) = 1, \ Q \lesssim_{d,p} j^{C_{d,p}} \right\} + \{ |\beta| \lesssim_{d,p} j^{C_{d,p}} 2^{-dj} \}.
\end{equation}
Here, the sum denotes the Minkowski sum. We will always choose the second implicit constant in \eqref{X_j} so that
\begin{equation}\label{c(j)}
\{ |\beta| \lesssim_{d,p} j^{C_{d,p}} 2^{-dj} \} = [ -2^{-c(j)}, 2^{-c(j)} ]; 
\end{equation}
in particular, this implicit constants may vary by a multiplicative factor bounded by $2$ as $j$ changes (see \eqref{impconsts} above).

The main result of this section is the following theorem.

\begin{theorem}\label{NoX_j}
For any $1<p<\infty$, if the constant $C_{d,p}$ is chosen sufficiently large, for all integers $j \geq 1$,
\begin{equation}\label{M_jrest}
\| \sup_{\lambda \notin X_j} \left| \sum_{m} f(x-m) \psi_j(m) e(-\lambda m^d) \right| \|_{\ell^p} \lesssim j^{-2}.
\end{equation}
\end{theorem}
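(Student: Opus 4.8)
The plan is to implement the Kolmogorov–Seliverstov $TT^*$ (linearization) method. Fix $j$ and replace the supremum by a measurable selector: choose, for each $x \in \Z$, a value $\lambda(x) \notin X_j$ so that the left side of \eqref{M_jrest} is, up to a factor of $2$, realized by the linear operator
\[ A f(x) := \sum_m f(x-m)\, \psi_j(m)\, e(-\lambda(x) m^d). \]
It suffices to bound $\|A\|_{\ell^2 \to \ell^2} \lesssim j^{-2}$ when $p=2$, and then to interpolate: indeed the trivial bound $\|Af\|_{\ell^\infty} \lesssim \|f\|_{\ell^\infty}$ (or more precisely $\|A\|_{\ell^p \to \ell^p} \lesssim 1$ for every $p$, since the kernel is $\ell^1$-normalized up to a $j$-loss, and even $\lesssim j$ trivially) can be interpolated against a strong $\ell^2$ estimate with room to spare; since $j^{-2}$ is a genuinely small target, one wins a power of $j$ in the interpolation and then iterates, or alternatively one proves a cleaner estimate $\|A\|_{\ell^2\to\ell^2} \lesssim 2^{-\delta c(j)} \lesssim j^{-C}$ with $\delta = \delta_d > 0$ and picks $C_{d,p}$ large enough to absorb the $p$-interpolation loss. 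I would carry out the $\ell^2$ bound first and treat the $\ell^p$ passage as a routine interpolation remark at the end.

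The $\ell^2$ estimate is obtained via $T T^*$. Compute the kernel of $A A^*$:
\[ A A^* g(x) = \sum_{y} K(x,y)\, g(y), \qquad K(x,y) = \sum_{m,m'} \psi_j(m)\psi_j(m')\, e\!\big(-\lambda(x) m^d + \lambda(y) (m')^d\big)\, \mathbf{1}_{x - m = y - m'}. \]
Writing $m' = m - (x-y) =: m - h$ and summing in $m$ with $|m| \approx 2^j$, the diagonal entry $K(x,y)$ becomes $2^{-2j}$ times a complete exponential sum of the shape
\[ \sum_{m \approx 2^j} \psi(2^{-j}m)\psi(2^{-j}(m-h))\, e\!\big(-\lambda(x) m^d + \lambda(y)(m-h)^d\big), \]
whose phase, after expanding, has leading term $(\lambda(y) - \lambda(x)) m^d$ plus lower-order-in-$m$ terms with coefficients governed by $h$ and $\lambda(y)$. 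By Schur's test it is enough to show $\sup_x \sum_y |K(x,y)| \lesssim j^{-4}$ (or $\lesssim 2^{-\delta c(j)}$). The key input is Weyl's inequality / van der Corput for the exponential sum above: because $\lambda(x), \lambda(y) \notin X_j$, the pair $(\lambda(x), \lambda(y))$ is quantitatively \emph{far} from having a simultaneous rational approximation with denominator $\lesssim j^{C}$ — this is exactly what membership in the complement of the major boxes $X_j$ buys us. One then runs a standard major/minor-arc dichotomy on the difference frequency $\lambda(y) - \lambda(x)$ (and on the auxiliary frequencies produced by expanding $(m-h)^d$): on the minor arcs Weyl's inequality gives decay $2^{-j/2^{d-1}}$ or so in each exponential sum, which after summing the geometric-type series in $y$ beats any fixed power of $j$; on the would-be major arcs, the hypothesis $\lambda \notin X_j$ forbids the frequency from actually lying there unless $h$ is large, and large $h$ is controlled by the rapid decay of $\psi_j$ and the constraint $|m|,|m-h| \approx 2^j$. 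Summing these contributions over $y$ (equivalently over $h = x-y$) and over the dyadic ranges of the minor-arc parameter yields $\sup_x \sum_y |K(x,y)| \lesssim 2^{-\delta c(j)}$, hence $\|A\|_{\ell^2\to\ell^2} = \|AA^*\|^{1/2} \lesssim 2^{-\delta c(j)/2}$, which is $\lesssim j^{-2}$ once $C_{d,p}$ is large.

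The main obstacle — and the place requiring genuine care rather than bookkeeping — is the two-variable Diophantine analysis of the exponential sum: one must correctly set up the Hardy–Littlewood circle-method dichotomy \emph{jointly} in $\lambda(x)$ and $\lambda(y)$ (and in the lower-order frequencies), and verify that the precise shape of $X_j$ in \eqref{X_j} — rationals with denominator $\lesssim j^{C_{d,p}}$ together with an $O(j^{C_{d,p}} 2^{-dj})$ neighborhood — is exactly the obstruction set that Weyl's inequality produces, so that points outside $X_j$ are forced onto the minor arcs with the claimed power-of-$j$ gain. This is the step that dictates the polynomial relation between the denominator cutoff and the width $2^{-c(j)}$, and where the constant $C_{d,p}$ must be chosen. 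Everything after the Schur-test reduction (the interpolation to general $p$, the measurable selection, the geometric summation in $h$) is routine. I would organize the write-up as: (i) linearize and reduce to $\ell^2$; (ii) compute $AA^*$ and reduce to Schur; (iii) state the Weyl-sum lemma and the major/minor-arc split keyed to $X_j$; (iv) sum up and interpolate.
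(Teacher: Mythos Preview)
Your high-level framework---linearize the supremum, pass to $\ell^2$, compute the $TT^*$ kernel, bound it via Schur, and interpolate---matches the paper's strategy, and the paper's argument does in fact imply the Schur bound $\sup_n \sum_x |K_j(x,n)| \lesssim j^{-C}$. The gap is in your account of \emph{why} the Schur bound holds. Your claim that ``because $\lambda(x),\lambda(y)\notin X_j$, the pair is quantitatively far from a simultaneous rational approximation, so points outside $X_j$ are forced onto the minor arcs'' is not correct as stated. The phase of the $TT^*$ kernel, expanded in $m$, has leading coefficient $\lambda(y)-\lambda(x)$ and next coefficient $-dh\lambda(y)$; neither of these is directly controlled by the hypothesis that $\lambda(x)$ and $\lambda(y)$ individually lie outside $X_j$. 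In particular the difference can vanish, and even when it does not, the lower-order coefficients can land in major boxes for many values of $h$. There is no pointwise-in-$(x,y)$ inference from $\lambda(x),\lambda(y)\notin X_j$ to smallness of $K(x,y)$.

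What the paper does instead is bound the \emph{cardinality} of the exceptional set $E(n):=\{x: |K_j(x,n)|\gtrsim j^{-C}2^{-j}\}$. If $|K_j(x,n)|$ is large, the exponential-sum estimate forces the full coefficient vector of the phase into a two-variable major box, yielding the constraints $\lambda(x)-\mu(n)\equiv A/Q+O(j^C2^{-dj})$ and $dn\mu(n)-dx\lambda(x)\equiv B/Q+O(j^C2^{-(d-1)j})$ for some $Q\lesssim j^C$. From a single such $x$ one cannot isolate $\mu(n)$; the paper instead takes two nearby ``popular'' points $x,x+h\in E(n)$ with $1\le h\lesssim j^C$, combines the constraints to deduce $\mu(n)\in Y_j := X_j + O(j^C 2^{-(d-1)j})$ and $\mu(n)(n-x)\in Y_j$, then runs a pigeonhole argument on the values $\{\mu(n)(n-x):x\in E(n)\}$ inside a short interval to upgrade $\mu(n)\in Y_j$ to $\mu(n)\in X_j$. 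Contraposing, $\mu(n)\notin X_j$ forces $|E(n)|\lesssim j^{-C}2^j$, and the Schur bound (equivalently, the ``small-set maximal function'' bound) follows. This combinatorial extraction---using pairs of nearby large-kernel points and then pigeonholing---is the genuine content you are missing; your step (iii) cannot be carried out as a direct major/minor dichotomy on the difference frequency.
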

The remainder of this section will be taken with the proof of Theorem \ref{NoX_j}.

\subsection{The Set-up}
In what follows, for notational ease we will suppress all dependence on $d,p$ in our implicit constants. We begin by observing that since \eqref{M_jrest} is trivially bounded by $M_{HL}$, by interpolation it suffices to establish the conclusion of Theorem \ref{NoX_j} on $\ell^2$, with a decay factor of $j^{-C}$ instead of $j^{-2}$ for some $C$ sufficiently large. And so, we work only at an $\ell^2$ level, which will allow us to use the method of $TT^*$.

To this end, consider the kernel
\[ K_j(x,n) := \sum_m \psi_j(x-m) \psi_j(n-m) e( \lambda(x)(x-m)^d - \mu(n) (n-m)^d) \]
where $\lambda,\mu: \Z \to [0,1]$ are arbitrary functions. By the compact support of $\psi_j$, we may assume without loss of generality that $|x|, |n| \lesssim 2^j$.

We claim that unless $\lambda,\mu \in X_j$, there exist two sets, 
\[ E(x), E(n) \subset \{ |m| \lesssim 2^j\} \]
each with cardinality $\lesssim j^{-C} 2^j$, such that
\begin{equation}\label{TT*goal}
 |K_j(x,n)| \lesssim j^{-C} 2^{-j} \mathbf{1}_{|x-n| \lesssim 2^j} + 2^{-j} \mathbf{1}_{E(x)}(n) + 2^{-j} \mathbf{1}_{E(n)}(x).
\end{equation}
In particular,
\[ \aligned 
&\sum_{x,n} |g(x)| |K_j(x,n)| |f(n)| \\
& \qquad \lesssim j^{-C} \sum_x |g(x)| M_{HL}f(x) + 
\sum_n M_jg(n) |f(n)| + \sum_{x} |g(x)| M_{j} f(x), \endaligned \]
where
\begin{equation} 
M_j h(x) := \sup_{E} \
2^{-j} \cdot \mathbf{1}_{E}*h(x),
\end{equation}
where the supremum runs over all $E$ such that
\[ |E| \lesssim j^{-C} 2^j, \ E \subset \{ |m| \lesssim 2^j\}. \]
Note that this operator has $\ell^\infty$ operator norm $\lesssim j^{-C}$ and $\ell^1$ operator norm $1$ -- and thus $\ell^2$ norm $\lesssim j^{-C}$ as well.\footnote{To the best of the author's knowledge, a ``small-set'' maximal function to control kernels arising from $TT^*$ calculations was first used in \cite{SW}.}
Since, for an appropriate $\lambda,\mu$
\[ \sum_n K_j(x,n) f(n) = TT^* f(x) \]
for $T$ a linearization of the supremum in \eqref{M_jrest}, we are able to conclude favorable $\ell^2$ estimates for this maximal operator.

In what follows, we shall regard $n$ as fixed, and will prove that if the set
\[ E(n) := \{ |x| \lesssim 2^j : |K_j(x,n)| \gtrsim j^{-C} 2^{-j} \} \]
has cardinality greater than $j^{-C} 2^j$, then $\mu(n) \in X_j$. A symmetric argument will apply to the sets
\[ E(x) := \{ |n| \lesssim 2^j : |K_j(x,n)| \gtrsim j^{-C} 2^{-j} \}. \]

\subsection{Exponential sums}
We need the following one-dimensional case of \cite[Theorem 3.1]{MST1}. The unweighted version of this result first appeared in \cite{W}, and \cite[Theorem 3.1]{MST1} can be deduced from it by summation by parts. Here is the set-up:

Let $P\in \RR[-]$ be a polynomial with real coefficients of degree $d \in\NN$ such that
\[	P(x) = P(x; c) = \sum_{j \leq d} c_j x^j.
\]
Suppose $I$ is an interval of length $\approx N$; we define
\[
	S_N = S_N(c) = \sum_{n \in I \cap \mathbb{Z}} e(P(n))\varphi(n). 
\] 
The function $\varphi:\mathbb{R} \rightarrow \mathbb{C}$ is assumed to be a $\mathcal
C^1\big(\RR\big)$ function which for some $C>0$ satisfies
\begin{align}
  \label{eq:217}
|\varphi(x)|\le C, \qquad \text{and} \qquad |\varphi'(x)|\le C(1+|x|)^{-1}.  \end{align}
Then: 
\begin{theorem}
	\label{thm:3}
        Assume that for some $k \leq d$
	\[
		\Big\lvert
		c_k - \frac{a}{q}
		\Big\rvert
		\leq
		\frac{1}{q^2}
	\]
        for some integers $a, q$ such that $0\le a\le q$ and $(a, q) =
        1$. Then for any $\alpha>0$ there is  $\beta_{\alpha}>0$ so that, for any
        $\beta\ge \beta_{\alpha}$, if
	\begin{equation}
		\label{eq:57}
		(\log N)^\beta \leq q \leq N^{k} (\log N)^{-\beta}
	\end{equation}
        then there is a constant $C>0$ 
	\begin{equation}
		\label{eq:56}
		|S_N|
		\leq
		C
		N (\log N)^{-\alpha}.
	\end{equation}
	The implied  constant $C$ is independent of $N$. 
\end{theorem}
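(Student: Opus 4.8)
The plan is to pass from the weighted sum $S_N$ to unweighted exponential sums by Abel summation, then to invoke the unweighted Weyl-type estimate of \cite{W} — which is exactly the case $\varphi\equiv 1$, and which already provides a \emph{fixed} power-of-$\log$ saving over the trivial bound $|S_N|\lesssim N$ — and finally to choose $\beta_\alpha$ so as to convert that fixed saving into an arbitrarily large one. The genuinely hard analytic content is entirely contained in the cited estimate; the rest is bookkeeping.

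First I would remove the weight. For $M$ ranging over the interval $I$, set $T(M):=\sum_{n\in I\cap\Z,\ n\le M}e(P(n))$, so that each $T(M)$ is an exponential sum of the type estimated in \cite{W}. Summation by parts gives
\[ S_N = T(\max I)\,\varphi(\max I) - \sum_{M} T(M)\big(\varphi(M+1)-\varphi(M)\big). \]
By \eqref{eq:217} we have $|\varphi(\max I)|\le C$ and $|\varphi(M+1)-\varphi(M)|\le \sup_{[M,M+1]}|\varphi'|\le C(1+|M|)^{-1}$, so the total variation of $\varphi$ over $I$ (an interval of length $\approx N$) is $\lesssim \log N$, whether or not $I$ contains the origin. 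Hence $|S_N|\lesssim (\log N)\,\sup_{M}|T(M)|$, and the $\log N$ loss will be harmless.

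Next I would quote \cite{W}: for any $k\le d$, if $|c_k-a/q|\le q^{-2}$ with $(a,q)=1$, then every $T(M)$ obeys $|T(M)|\lesssim_d N\,(q^{-1}+N^{-1}+qN^{-k})^{\sigma_d}$, where $\sigma_d>0$ depends only on $d$ (one may take $\sigma_d=2^{1-d}$), the saving being a power of $\log N$ rather than $N^\varepsilon$ — the form in which the inequality is built for circle-method applications such as this one, obtained by dyadically decomposing the auxiliary sum over difference parameters produced by Weyl differencing. Under the hypothesis $(\log N)^\beta\le q\le N^k(\log N)^{-\beta}$ one has $q^{-1}\le(\log N)^{-\beta}$ and $qN^{-k}\le(\log N)^{-\beta}$; moreover $q\le N^k$ together with $q\ge(\log N)^\beta$ forces $N$ above an absolute threshold, beyond which also $N^{-1}\le(\log N)^{-\beta}$ (for smaller $N$ the asserted bound holds trivially with $C$ depending on $\beta$). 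Thus the parenthesis is $\lesssim(\log N)^{-\beta}$, so $\sup_M|T(M)|\lesssim_d N(\log N)^{-\beta\sigma_d}$, and combining with the previous paragraph, $|S_N|\lesssim_d N(\log N)^{1-\beta\sigma_d}$. Given $\alpha>0$ it then suffices to take $\beta_\alpha:=(\alpha+1)/\sigma_d$; for every $\beta\ge\beta_\alpha$ this yields $|S_N|\le C_d\,N(\log N)^{-\alpha}$ with $C_d$ independent of $N$, as claimed.

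The only real difficulty is thus the input from \cite{W}: one needs Weyl's inequality in the refined form that (i) extracts a saving from \emph{whichever} coefficient $c_k$ happens to be well-approximated, not merely the leading one, and (ii) loses only a fixed power of $\log N$ rather than $N^\varepsilon$, since an $N^\varepsilon$ loss would swamp the saving. Granting that estimate, the summation by parts of Step~1 and the elementary manipulation of the range of $q$ in Step~2 are routine.
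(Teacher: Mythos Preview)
Your approach is correct and is exactly what the paper indicates: the paper does not prove this theorem in detail but states that the unweighted case is in \cite{W} and that the weighted version \cite[Theorem 3.1]{MST1} ``can be deduced from it by summation by parts,'' which is precisely your Abel summation reduction followed by invoking the unweighted estimate. One minor imprecision: the exponent $\sigma_d=2^{1-d}$ you quote is the classical Weyl exponent, which comes with an $N^{\varepsilon}$ loss rather than a $\log$-power loss; the $\log$-power form requires Vinogradov's method (hence the citation of Wooley \cite{W}), with a different $\sigma_d$ and possibly an additional fixed $(\log N)^{C_d}$ factor --- but since $\sigma_d>0$ and $C_d$ are fixed, this is absorbed by enlarging $\beta_\alpha$, so your argument goes through unchanged.
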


Motivated by this Theorem, we define the following \emph{major boxes}:
\begin{definition}\label{logmaj}
For each $j \geq 1$, we define the $j$th major box, 
\[ \mathfrak{M}_j := \bigcup \mathfrak{M}_j(a_d,a_1;q),\]
where the union runs over all co-prime tuples, $(a_d,a_1,q)$ with $q \lesssim j^C$, and
\[ \mathfrak{M}_j(a_d,a_1;q) := \{ \xi = (\xi_d,\xi_1) \in \mathbb{T}^2 : \left\|\xi_i - \frac{a_i}{q} \right\|_{\mathbb{T}} \lesssim j^C 2^{-ji}, \ i = 1,d \},\]
where $\| x\|_{\mathbb{T}}$ denotes distance on the Torus, $\mathbb{R}/\mathbb{Z}$.
\end{definition}
We have the following claim:
\begin{lemma}\label{MAJ}
For any $\alpha > 0$ if the constant $C$ in the exponent in the definition of the major boxes is chosen sufficiently large, 
\[ \xi \notin \mathfrak{M}_j \Rightarrow |S_{2^j}(\xi)| \lesssim j^{-\alpha} 2^j.\]
\end{lemma}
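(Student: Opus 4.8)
The plan is to reduce the statement to Theorem \ref{thm:3} by a contrapositive/pigeonhole argument on both nontrivial coefficients of the phase polynomial $P(n) = -\xi_d n^d - \xi_1 n$ (up to the lower-order terms, which are harmless). Suppose $\xi = (\xi_d, \xi_1) \notin \mathfrak{M}_j$ but $|S_{2^j}(\xi)| \gtrsim j^{-\alpha} 2^j$; we must derive a contradiction once $C$ is large. By Dirichlet's theorem, for each $i \in \{1, d\}$ there are coprime $a_i, q_i$ with $q_i \leq 2^{ji}(\log 2^j)^{-\beta}$ and $|\xi_i - a_i/q_i| \leq q_i^{-1} 2^{-ji} (\log 2^j)^{\beta}$ — i.e., a rational approximation in the Dirichlet range. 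First I would apply Theorem \ref{thm:3} with $k = d$ (the leading coefficient $c_d = -\xi_d$): if the denominator $q_d$ satisfies $(\log 2^j)^\beta \leq q_d \leq 2^{jd}(\log 2^j)^{-\beta}$, then $|S_{2^j}| \lesssim 2^j (\log 2^j)^{-\alpha'} = 2^j (j \log 2)^{-\alpha'} \lesssim j^{-\alpha} 2^j$ once $\alpha'$ is chosen large (in terms of $\alpha$), contradicting our lower bound. Hence $q_d < (\log 2^j)^\beta \approx j^\beta$, so $q_d \lesssim j^C$ provided $C \geq \beta$.

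Next I would run the same argument with $k = 1$, using $c_1 = -\xi_1$: Theorem \ref{thm:3} (with $k=1$) forces, under the same non-trivial-denominator hypothesis $(\log 2^j)^\beta \leq q_1 \leq 2^j (\log 2^j)^{-\beta}$, the same power saving, again contradicting $|S_{2^j}| \gtrsim j^{-\alpha} 2^j$. So $q_1 \lesssim j^C$ as well. It then remains to pass from ``each coordinate is close to a rational of small denominator'' to ``both coordinates are simultaneously close to rationals with a \emph{common} small denominator'' — i.e., to the single $q$ in Definition \ref{logmaj}. This is routine: take $q := \lcm(q_1, q_d) \lesssim j^{2C}$ (still $\lesssim j^{C'}$ after renaming the exponent), and note $|\xi_i - a_i/q_i| \lesssim j^C q_i^{-1} 2^{-ji} \leq j^C 2^{-ji}$ since $q_i \geq 1$; writing $a_i/q_i = a_i'/q$ gives exactly the defining inequalities of $\mathfrak{M}_j(a_d', a_1'; q)$, so $\xi \in \mathfrak{M}_j$, contradicting our assumption. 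Adjusting the constant $C$ in the exponent in Definition \ref{logmaj} to absorb all these harmless inflations (from $\beta_\alpha$, from the choice of $\alpha'$, from the $\lcm$) completes the argument.

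The cleanest route to handle the lower-order terms $c_2, \dots, c_{d-1}$ (which arise if one expands a shifted/translated monomial, though here $P$ is literally $-\xi_d n^d - \xi_1 n$ so they vanish) and the cutoff $\varphi$ is simply to observe $\varphi = \varphi_j$ satisfies \eqref{eq:217} after rescaling, and that Theorem \ref{thm:3} is stated for general degree-$d$ polynomials with a $\mathcal{C}^1$ weight, so it applies verbatim. The one genuine point requiring care — and the step I expect to be the main obstacle — is the \emph{order of the two applications} of Theorem \ref{thm:3} and the bookkeeping of the parameter $\beta = \beta_\alpha$: the value of $\beta$ needed for the degree-$d$ application determines the threshold below which $q_d$ must lie, which in turn must be reconciled with the (a priori different) $\beta$ needed for the degree-$1$ application; one fixes $\alpha$ first, extracts the larger of the two $\beta_\alpha$'s, and only then declares $C$ in Definition \ref{logmaj} large enough to dominate it. Everything else is a direct invocation of the stated exponential sum estimate plus elementary manipulation of Dirichlet approximations.
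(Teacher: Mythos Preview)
Your proposal is correct and follows essentially the same approach as the paper: argue by contrapositive, use Dirichlet approximation on each of $\xi_1,\xi_d$, invoke Theorem~\ref{thm:3} (with $k=1$ and $k=d$ respectively) to force both denominators to be $\lesssim j^{C}$, then take the $\lcm$ to land in a major box. Your write-up is in fact more explicit than the paper's (which compresses the two applications of Theorem~\ref{thm:3} into the single clause ``if $C$ is chosen sufficiently large, then we are done unless each $Q_i \leq j^{C/2}$''), and your discussion of the bookkeeping of $\beta_\alpha$ is exactly the right way to justify that compression.
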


\begin{proof}
We will prove that if $|S_{2^j}| \gtrsim j^{-\alpha}2^j$, then $\xi \in \mathfrak{M}_j$.

To do so, for each $i = 1,d$, we use Dirichlet's principle to choose a reduced $\frac{A_i}{Q_i}$ with
\[ Q_i \leq j^{-C/2} 2^{ji} \]
so that
\[ \left|\xi_i - \frac{A_i}{Q_i} \right| \leq \frac{j^{C/2}}{Q_i 2^{ji}} \leq \frac{1}{Q_i^2}. \]
If $C$ is chosen sufficiently large, then we are done unless each $Q_i \leq j^{C/2}$. So, assume contrary, set $q := \text{lcm}(Q_1,Q_d)$, and choose $\{ a_i\}$ so that
\[ \frac{A_i}{Q_i} = \frac{a}{q}, \ i = 1,d, \ (a_d, a_1;q) = 1.\]
Noting that $q \leq j^{C}$, we have shown that $\xi \in \mathfrak{M}_j(a_d,a_1;q)$, which yields the result.
\end{proof}

With this in hand, we turn to the proof of Theorem \ref{NoX_j}.
\subsection{The Argument}
\begin{proof}[Proof of Theorem \ref{NoX_j}]
Our goal is to establish \eqref{TT*goal}.

To this end, collect all the ``popular'' elements of $E(n)$ in
\[ P(n):=\{ x \in E(n) : x+h \in E(n) \text{ for some } 1 \leq h \lesssim j^C\}; \]
collect the complementary, ``lonely,'' elements of $E(n)$ in $L(n)$; by density considerations, $|L(n)| \ll j^{-C} 2^j$ for an appropriate choice of implicit constant. 
Then
\[ j^{-C} 2^j \lesssim |E(n)| \leq |L(n)| + |P(n)|,\]
so $|P(n)| \gtrsim j^{-C} 2^j$.

Our first claim is the following partial result.
\begin{lemma}\label{Y_j}
For $x \in P(n)$, $\mu(n), \ \mu(n)(n-x) \in Y_j := X_j + \{ |\beta| \lesssim j^C 2^{-(d-1)j} \}$.
\end{lemma}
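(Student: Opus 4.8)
The plan is to exploit the Weyl-type exponential sum bound in Lemma \ref{MAJ} by packaging the kernel $K_j(x,n)$, for $x \in P(n)$, into a two-variable exponential sum of the form $S_{2^j}(\xi)$ with $\xi = (\xi_d, \xi_1)$ recording the ``$d$th-degree'' and ``linear'' frequencies that appear when one expands the phase in $K_j$. Recall
\[ K_j(x,n) = \sum_m \psi_j(x-m)\psi_j(n-m) e\bigl( \lambda(x)(x-m)^d - \mu(n)(n-m)^d \bigr). \]
When $x \in P(n)$, there is some $1 \le h \lesssim j^C$ with $x + h \in E(n)$, so both $|K_j(x,n)| \gtrsim j^{-C} 2^{-j}$ and $|K_j(x+h,n)| \gtrsim j^{-C} 2^{-j}$. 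The first step is to expand the polynomial phase $\mu(n)(n-m)^d = \sum_{i=0}^d \binom{d}{i}\mu(n) n^{d-i} (-m)^i$ and isolate, after a change of variables centered appropriately, a polynomial in $m$ whose degree-$d$ coefficient is $\pm \mu(n)$ and whose degree-$1$ coefficient is a linear combination of $\mu(n)$ and $\mu(n)(n-x)$ (the $\lambda(x)$ contribution is handled separately or, in the symmetric version of the argument where the roles of $x$ and $n$ swap, via an identical analysis). The smooth cutoff $\psi_j(x-m)\psi_j(n-m)$ (or $\psi_j(x+h-m)\psi_j(n-m)$) restricts $m$ to an interval of length $\approx 2^j$ and plays the role of $\varphi$ in Theorem \ref{thm:3}, satisfying \eqref{eq:217} after rescaling.

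Second, I would apply the contrapositive of Lemma \ref{MAJ}: since $|S_{2^j}(\xi)| \gtrsim j^{-C} 2^j \gtrsim j^{-\alpha} 2^j$ for a suitable $\alpha$ depending on $C$, we conclude $\xi \in \mathfrak{M}_j$. Reading off the definition of $\mathfrak{M}_j$, this forces the degree-$d$ frequency $\xi_d$ to be within $j^C 2^{-dj}$ of some rational $a_d/q$ with $q \lesssim j^C$, and the degree-$1$ frequency $\xi_1$ to be within $j^C 2^{-j}$ of $a_1/q$ with the \emph{same} denominator $q$. The degree-$d$ conclusion, unwound, says exactly $\mu(n) \in X_j$ (reduced fraction with denominator $\lesssim j^C$, up to an error $\lesssim j^C 2^{-dj}$). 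For the linear frequency, I need to combine the degree-$1$ conclusions coming from the two sums $K_j(x,n)$ and $K_j(x+h,n)$ — or, more precisely, from $S_{2^j}$ built from $K_j(x+h,n)$, whose linear coefficient differs from that of $K_j(x,n)$ by a controlled multiple of $\mu(n) h$. Subtracting (and using that $h \lesssim j^C$ and the already-established rational approximation to $\mu(n)$) isolates an approximation to $\mu(n)(n-x)$ by a rational with denominator $\lesssim j^C$ up to error $\lesssim j^C 2^{-(d-1)j}$; that is, $\mu(n)(n-x) \in X_j + \{|\beta| \lesssim j^C 2^{-(d-1)j}\} = Y_j$. (The extra $2^{-(d-1)j}$, rather than $2^{-dj}$, reflects the loss incurred when the linear-frequency bound of $\mathfrak{M}_j$, at scale $2^{-j}$, is translated back through the factor $n - x$ of size $\lesssim 2^j$.)

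The main obstacle I anticipate is bookkeeping the relationship between the linear frequency of the exponential sum and the quantity $\mu(n)(n-x)$: one must carry along both the common-denominator structure from $\mathfrak{M}_j$ and the fact that the popularity hypothesis $x \in P(n)$ gives a \emph{second} nearby point $x+h$ whose corresponding sum has a linear coefficient shifted by $\mu(n) h$ — it is the \emph{difference} of the two linear-frequency conclusions that yields information about $\mu(n)(n-x)$ rather than about the (a priori less useful) raw linear coefficient. Getting the error terms to land at the claimed scale $j^C 2^{-(d-1)j}$, and checking that the denominators produced by the two applications of Lemma \ref{MAJ} can be taken compatible (via the $\mathrm{lcm}$ argument already used in the proof of Lemma \ref{MAJ}), is where the care lies. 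Everything else — the polynomial expansion, the verification of \eqref{eq:217} for the rescaled cutoff, the translation between ``$\xi_d \in \mathfrak{M}_j$'' and ``$\mu(n) \in X_j$'' — is routine. I would also note that the hypothesis $x \in P(n)$ (as opposed to merely $x \in E(n)$) is used precisely to manufacture the shift $h$; lonely points in $L(n)$ are discarded by the density count $|L(n)| \ll j^{-C} 2^j$ already performed, and so do not need to satisfy the conclusion of the lemma.
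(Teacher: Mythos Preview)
There is a genuine gap. The degree-$d$ coefficient (in $m$) of the phase $\lambda(x)(x-m)^d - \mu(n)(n-m)^d$ is $\pm(\lambda(x) - \mu(n))$, not $\pm\mu(n)$; a shift of the summation variable does not affect the leading coefficient, so no ``centering'' changes this. The major-box conclusion for this coefficient therefore tells you only that $\lambda(x) - \mu(n)$ is near a rational with small denominator --- you cannot read off $\mu(n) \in X_j$ from it. Your parenthetical suggestion to ``handle $\lambda(x)$ separately'' is precisely the nontrivial step, and you have not indicated how to do it.

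The paper eliminates $\lambda(x)$ by also extracting the degree-$(d-1)$ coefficient, which is $\pm d\bigl(x\lambda(x) - n\mu(n)\bigr)$; the combination $(dx)\cdot[\text{degree-}d\text{ relation}] + [\text{degree-}(d-1)\text{ relation}]$ cancels $\lambda(x)$ and leaves $d(n-x)\mu(n)$ near a rational with denominator $\lesssim j^C$, up to error $O(j^C 2^{-(d-1)j})$. This already gives $\mu(n)(n-x) \in Y_j$ from a \emph{single} $x$ --- no shift $h$ is needed. The popularity is then used to repeat this at $x+h$ and subtract, giving $dh\,\mu(n)$ near a rational, hence $\mu(n) \in Y_j$ after dividing by $dh \lesssim j^C$. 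Note that the order in which the two conclusions are obtained is reversed from what you propose. Finally, for $d \ge 3$ your reliance on the degree-$1$ coefficient (rather than degree-$(d-1)$) is also problematic: after centering on $x$ the linear coefficient is $\propto \mu(n)(n-x)^{d-1}$, and differencing with the corresponding quantity at $x+h$ does not isolate either $\mu(n)$ or $\mu(n)(n-x)$ at the required precision.
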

\begin{proof}
Take some popular $x\in P(n)$, so that $x+h \in E(n)$ too, with $1 \leq h \lesssim j^C$. Assume as we may that both $|x|,|n| \lesssim 2^j$. Then there exists some rationals 
\[ \frac{A}{Q}, \ \frac{B}{Q}, \ Q \lesssim j^C, \ (A,B,Q) = 1 \]
depending on $x$, so that
\begin{equation}\label{termd}
\lambda(x) - \mu(n) \equiv \frac{A}{Q} + O( j^C 2^{-dj})
\end{equation}
and
\begin{equation}\label{termd-1}
dn \cdot \mu(n) - dx \cdot \lambda(x) \equiv \frac{B}{Q} + O( j^C 2^{-(d-1)j}),
\end{equation}
and similarly with $x$ replaced by $x+h$, and $\frac{A}{Q}, \frac{B}{Q}$ replaced appropriately as well. By considering 
\[ (dx) \times \eqref{termd} + \eqref{termd-1}\]
we deduce that
\begin{equation}\label{comb}
d(n-x) \mu(n) \equiv \frac{R}{Q} + O( j^C 2^{-(d-1)j} )
\end{equation}
for some integer $0 \leq R < Q$, which completes the second point. Applying the same reasoning with $x$ replaced by $x+h$ yields 
\begin{equation}\label{combh}
d(n-x-h) \mu(n) \equiv \frac{R'}{Q'} + O( j^C 2^{-(d-1)j} )
\end{equation}
for some other rational $\frac{R'}{Q'}$, $Q' \lesssim j^C$. Subtracting \eqref{combh} from \eqref{comb} and dividing by $dh \lesssim j^C$ completes the proof.
\end{proof}

Next, for each $\{ \frac{A}{Q} : Q \lesssim j^C\}$, set
\[ F_{\frac{A}{Q}} := \left\{ x \in P(n) : \mu(n)(n-x) \equiv \frac{A}{Q} + O(j^C 2^{-(d-1)j}) \right\}.\]
By the pigeon-hole principle, we know there exists some $\frac{A_0}{Q_0}$ such that
\[ |F_{\frac{A_0}{Q_0}}| \gtrsim j^{-C} 2^j.\]
Cover $F_{\frac{A_0}{Q_0}} \subset \bigcup J$, where each interval $J$ has length $j^{-C'} 2^j \lesssim |J| \ll j^{-C} 2^j$ for some $C' \gg C$; by another application of the pigeon-hole principle, there must be some $J$ such that
\[ |I| := |F_{\frac{A_0}{Q_0}} \cap J| \gtrsim j^{-C} 2^j.\]

Now, suppose that $v <u \in I$ are arbitrary. By definition, there are two integers, $l(v)$ and $l(u)$ so that
\begin{equation}\label{u}
\mu(n)(n-u) = l(u) + \frac{A_0}{Q_0} + O(j^C 2^{-(d-1)j})
\end{equation}
and
\begin{equation}\label{v}
\mu(n)(n-v) = l(v) + \frac{A_0}{Q_0} + O(j^C 2^{-(d-1)j}).
\end{equation}
Subtracting \eqref{u} from \eqref{v}, we see that
\begin{equation}\label{mu1}
\mu(n) = \frac{l(v) - l(u)}{u-v} + O(j^C 2^{-(d-1)j}).
\end{equation}
But, by Lemma \ref{Y_j}, we also have
\begin{equation}\label{mu2}
\mu(n) = \frac{A}{Q} + O(j^C 2^{-(d-1)j})
\end{equation}
for some $\frac{A}{Q}$ with $Q \lesssim j^C$;
by comparing denominators, if $\frac{l(v) - l(u)}{u-v} \neq \frac{A}{Q}$, then 
\[ u-v \gtrsim j^{-C} 2^{j(d-1)} \geq j^{-C} 2^j,\]
which is a contradiction since $|J| \ll j^{-C} 2^j$ is small. So, moving forward, we know that
\[ \frac{l(v) - l(u)}{u-v} = \frac{A}{Q} \]
for any $v<u$ in $I$, and thus
\[ l(u) = l(v) - \frac{A}{Q}(u-v).\]
Substituting this into \eqref{u}, we see that
\begin{equation}\label{almostmu}
\mu(n)(n-u) = l(v) - \frac{A}{Q}(u-v) + \frac{A_0}{Q_0} + O(j^C 2^{-(d-1)j}).
\end{equation}
Choose now $v < u$ to be two maximally spaced points in $I$, so that
\[ j^{-C'} 2^j \lesssim u-v \ll j^{-C} 2^j,\]
and subtract the previous identity \eqref{almostmu} from \eqref{v} above, to find
\[ \mu(n)(u-v) = \frac{A}{Q}(u-v) + O(j^C 2^{-(d-1)j});\]
dividing through by $u-v$ now shows that
\[ \mu(n) \in X_j,\]
as desired.
\end{proof}

\section{Most Weyl Sums are Safe: Another $TT^*$ Argument}\label{s:TT*2}
The goal of this section is to prove an $\ell^p$ estimate for certain maximal functions weighted by \emph{Weyl sums}, which we now proceed to introduce.

With $d \geq 2$ fixed in this section, define the Weyl sums,
\begin{equation}\label{Gauss}
S(A/Q,B/Q) := \frac{1}{Q} \sum_{r \leq Q} e(-A/Q \cdot r^d - B/Q \cdot r),
\end{equation}
where $(A,B,Q) = 1$. The fundamental estimate on these sums is due to Hua \cite[\S 7, Theorem 10.1]{Hua}.
\begin{proposition}\label{Hua}
For any $\epsilon > 0$, one may bound $|S(A/Q,B/Q)| \lesssim_\epsilon Q^{\epsilon - 1/d}$.
\end{proposition}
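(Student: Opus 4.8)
\textbf{Proposal for the proof of Proposition \ref{Hua} (Hua's bound on $S(A/Q,B/Q)$).}

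The plan is to reduce the two-dimensional Weyl sum to the classical one-dimensional Gauss/Weyl sum estimate by exploiting the multiplicativity of $S(A/Q,B/Q)$ in $Q$, and then to handle the prime-power building blocks directly. First I would establish a multiplicativity (Chinese Remainder Theorem) identity: writing $Q = Q_1 Q_2$ with $(Q_1,Q_2)=1$, a change of variables $r \equiv r_1 Q_2 + r_2 Q_1 \pmod{Q}$ (or the standard inverse-weighted version) shows that $S(A/Q,B/Q)$ factors, up to unimodular constants and a relabelling of the numerators, as a product $S(A_1/Q_1,B_1/Q_1)\,S(A_2/Q_2,B_2/Q_2)$ with each $(A_i,B_i,Q_i)$ admissible. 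Since the claimed bound $\lesssim_\epsilon Q^{\epsilon-1/d}$ is itself (almost) multiplicative — $Q^{-1/d}$ is exactly multiplicative and $Q^\epsilon$ absorbs the finitely many divisors — it then suffices to prove $|S(A/p^t,B/p^t)| \lesssim p^{-t/d}$ for prime powers $p^t$, perhaps with an $O_\epsilon(1)$ or $O(d)$ factor that the $Q^\epsilon$ will swallow after collecting over the $\omega(Q) = O_\epsilon(\log Q)$ prime factors.

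Next I would treat the prime-power sums. The key observation is that $\frac{1}{Q}\sum_{r \le Q} e(-A r^d/Q - Br/Q)$ is a complete exponential sum mod $Q = p^t$, and for such sums one has the standard estimate from Hua's lemma / Weil-type bounds: a complete exponential sum $\sum_{r \bmod q} e(f(r)/q)$ with $f$ a polynomial whose leading coefficient is coprime to $q$ is $O(q^{1-1/d})$ (for $d = \deg f$), with the implied constant depending only on $d$. Dividing by $Q$ gives the $Q^{-1/d}$ saving. The only subtlety is that the \emph{leading} coefficient $A$ need not be coprime to $Q$ even though $(A,B,Q)=1$ only guarantees that \emph{some} coefficient is; one handles this by a case split — if $p \nmid A$ apply the leading-coefficient bound directly, and if $p \mid A$ then $p \nmid B$, so the sum in $r$ becomes (after the $r^d$ term contributes a bounded factor on each residue class mod a smaller modulus) essentially a linear Gauss sum mod a power of $p$, which is either zero or of size $\lesssim$ a smaller power of $p$, again consistent with the $p^{-t/d}$ target since $d \ge 2$. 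Reference \cite[\S 7]{Hua} contains precisely these complete-sum estimates, so I would cite it for the prime-power case and only spell out the CRT reduction and the collection of the $Q^\epsilon$ factor.

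\textbf{Main obstacle.} The genuinely delicate point is the case $p \mid A$ in the prime-power analysis: here the $d$-th degree term degenerates and one must extract cancellation from lower-order terms without the clean leading-coefficient hypothesis, tracking exactly how the $p$-adic valuation of $A$ trades against the valuation of the linear coefficient. This is exactly the content of Hua's treatment of Weyl sums and Vinogradov's mean value circle of ideas; rather than reproving it I would invoke \cite{Hua} directly, since the statement of the proposition already attributes it there. Everything else — the CRT factorization and absorbing the divisor count into $Q^\epsilon$ — is routine and I would present it in two or three lines.
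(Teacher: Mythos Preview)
The paper does not prove this proposition at all: it is stated as a known result and attributed directly to Hua \cite[\S 7, Theorem 10.1]{Hua}, with no further argument given. Your proposal is therefore already more detailed than what the paper supplies.

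That said, your sketch is the correct standard route and matches what one finds in the cited reference: reduce to prime-power moduli via the Chinese Remainder Theorem, bound each prime-power complete sum by $p^{t(1-1/d)}$ (handling separately the degenerate case where $p$ divides the leading coefficient), and absorb the product over $\omega(Q)$ primes into the $Q^\epsilon$ factor using the divisor bound. Your identification of the $p \mid A$ case as the delicate point is accurate, and your decision to defer to \cite{Hua} there is exactly what the paper itself does for the entire statement.
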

We also will need the following estimate on \emph{incomplete} Weyl sums, which may be deduced from the previous proposition by writing $\mathbf{1}_{[1,m]}$ as a weighted average of Dirichlet kernels.
\begin{lemma}\label{partial}
With $A,B,Q$ as above, for any $m \leq Q$
\[ \left| \sum_{r \leq m} e(-A/Q \cdot r^d - B/Q \cdot r) \right| \lesssim_\epsilon \min \left\{ m, Q^{1 - 1/d + \epsilon} \right\}.\]
\end{lemma}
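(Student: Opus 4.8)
The bound $\bigl|\sum_{r\le m} e(-\tfrac AQ r^d-\tfrac BQ r)\bigr|\le m$ is immediate from the triangle inequality, so the content of the lemma is the bound by $Q^{1-1/d+\epsilon}$, which I would prove by \emph{completing} the sum. Since $m\le Q$, the indices $r\in\{1,\dots,m\}$ form part of a complete residue system modulo $Q$, and the summand $r\mapsto e(-\tfrac AQ r^d-\tfrac BQ r)$ descends to $\Z/Q\Z$; expanding the indicator of $\{1,\dots,m\}$ into additive characters modulo $Q$ — the ``weighted average of Dirichlet kernels'' alluded to above,
\[
\mathbf 1_{\{1,\dots,m\}}(r)=\frac1Q\sum_{t=1}^{m}\ \sum_{b\bmod Q}e\!\Bigl(\tfrac{b(r-t)}{Q}\Bigr),
\]
and interchanging the finitely many sums, I obtain
\[
\sum_{r\le m}e\!\Bigl(-\tfrac AQ r^d-\tfrac BQ r\Bigr)=\frac1Q\sum_{b\bmod Q}\widehat w(b)\,\Sigma(b),\qquad
\widehat w(b):=\sum_{t=1}^{m}e\!\Bigl(-\tfrac{bt}{Q}\Bigr),\quad
\Sigma(b):=\sum_{r\bmod Q}e\!\Bigl(-\tfrac AQ r^d-\tfrac{B-b}{Q}r\Bigr),
\]
so that every $\Sigma(b)$ is a \emph{complete} Weyl sum of exactly the form controlled by Proposition \ref{Hua}.

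Two estimates then close the argument. First, the Fourier coefficients of an interval obey the classical geometric-series bound $|\widehat w(b)|\le\min\{m,\ \tfrac12\,\|b/Q\|_{\TT}^{-1}\}$, so $\sum_{b\bmod Q}|\widehat w(b)|\lesssim Q\log Q$. Second, Proposition \ref{Hua} gives $|\Sigma(b)|\lesssim_\epsilon Q^{1-1/d+\epsilon}$, uniformly in $b$ (the coprimality needed to apply it is discussed below). Combining,
\[
\Bigl|\sum_{r\le m}e\!\bigl(-\tfrac AQ r^d-\tfrac BQ r\bigr)\Bigr|\le\frac1Q\Bigl(\max_{b\bmod Q}|\Sigma(b)|\Bigr)\sum_{b\bmod Q}|\widehat w(b)|\lesssim_\epsilon\frac1Q\cdot Q^{1-1/d+\epsilon}\cdot Q\log Q\lesssim_\epsilon Q^{1-1/d+2\epsilon},
\]
and relabelling $\epsilon$ completes the proof; the stray logarithm is harmless, being absorbed into the factor $Q^{\epsilon}$.

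The single delicate point — and the main obstacle — is the uniform estimate $|\Sigma(b)|\lesssim_\epsilon Q^{1-1/d+\epsilon}$ over all $b\bmod Q$: Proposition \ref{Hua} is stated for a coprime triple, whereas the hypothesis $(A,B,Q)=1$ only forces $\gcd(A,B-b,Q)$ to divide $\gcd(A,Q)$. One handles this by factoring out $h:=\gcd(A,B-b,Q)\mid\gcd(A,Q)$, writing $\Sigma(b)$ as $h$ copies of a complete Weyl sum to the modulus $Q/h\le Q$ with coprime triple, and applying Proposition \ref{Hua} there; this costs at most a factor $\gcd(A,Q)^{1/d}$, which is negligible in the regime in which the lemma is invoked (where $A/Q$ is already in lowest terms, so $h=1$ for every $b$). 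Everything else is routine bookkeeping.
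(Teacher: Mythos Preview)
Your proof is correct and follows exactly the approach the paper sketches (``writing $\mathbf{1}_{[1,m]}$ as a weighted average of Dirichlet kernels''): expand the indicator into characters modulo $Q$, reduce to complete Weyl sums $\Sigma(b)$, bound those by Proposition~\ref{Hua}, and sum the Dirichlet weights at a cost of $\log Q$, absorbed into $Q^{\epsilon}$.

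Your treatment of the coprimality issue is the right one, and in fact sharper than the paper's statement. As you observe, when $h=\gcd(A,B-b,Q)>1$ the reduction to modulus $Q/h$ incurs a factor $h^{1/d}\le\gcd(A,Q)^{1/d}$; this loss is genuine and not merely an artifact of the method (for instance, with $A\equiv 0$, $B=1$, $m\approx Q/2$ the incomplete sum has size $\approx Q$, not $Q^{1-1/d+\epsilon}$). So the lemma as literally stated, under only $(A,B,Q)=1$, is slightly imprecise. You correctly note that in the paper's applications one has $(A,Q)=1$ --- this is forced by Lemma~\ref{keyorth}, which annihilates the relevant Weyl sums otherwise --- so $h=1$ for every $b$ and your bound gives exactly $Q^{1-1/d+\epsilon}$.
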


Our final ingredient will be the following orthogonality property of Weyl sums, which we isolate in the below lemma.
\begin{lemma}\label{keyorth}
Suppose $(a,b,q) = 1$, but $(a,q) = v > 1$. Then $S(a/q,b/q) =0$.
\end{lemma}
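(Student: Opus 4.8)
The plan is to exploit a shift-invariance of the complete exponential sum $S(a/q,b/q) = \frac{1}{q}\sum_{r \bmod q} e(-\tfrac{a}{q}r^d - \tfrac{b}{q}r)$. The summand depends only on $r \bmod q$, so the sum runs over a complete residue system modulo $q$ and is unchanged under any integer shift $r \mapsto r+t$; the strategy is to choose $t$ so that the shift multiplies every summand by a single \emph{nontrivial} constant, which then forces the sum to vanish.

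Concretely, I would first fix a prime $p$ dividing $v = (a,q)$. Since $(a,b,q) = 1$ and $p$ divides both $a$ and $q$, one automatically has $p \nmid b$. Next I would shift by $t := q/p \in \Z$ and expand by the binomial theorem,
\[ (r + q/p)^d = r^d + d\, r^{d-1}(q/p) + \sum_{k=2}^{d} \binom{d}{k} r^{d-k}(q/p)^k. \]
Writing $a = p a_1$ and $q = p q_1$, the linear term contributes $\tfrac{a}{q}\, d\, r^{d-1}(q/p) = a_1 d\, r^{d-1} \in \Z$ and, for $k \geq 2$, $\tfrac{a}{q}\binom{d}{k} r^{d-k}(q/p)^k = a_1 \binom{d}{k} r^{d-k} q_1^{k-1} \in \Z$, so that $\tfrac{a}{q}(r+q/p)^d \equiv \tfrac{a}{q} r^d \pmod 1$; meanwhile $\tfrac{b}{q}(r + q/p) = \tfrac{b}{q} r + \tfrac{b}{p}$. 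Hence the substitution $r \mapsto r + q/p$ multiplies each summand by $e(-b/p)$, giving $S(a/q,b/q) = e(-b/p)\, S(a/q,b/q)$, and since $p \nmid b$ forces $e(-b/p) \neq 1$, this yields $S(a/q,b/q) = 0$.

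I do not expect any serious difficulty here. The only step that needs care is verifying that the quadratic and higher-order binomial terms of $\tfrac{a}{q}(r+q/p)^d$ are integers: this is exactly where one uses that the chosen prime $p$ divides $a$ itself — divisibility by $q$ alone would not handle the $k=1$ term — together with the observation that the hypothesis $(a,b,q) = 1$ guarantees $p \nmid b$, so that the twisting factor $e(-b/p)$ is genuinely nontrivial.
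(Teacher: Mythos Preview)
Your proof is correct and follows essentially the same idea as the paper's: both exploit that shifting $r$ by a multiple of $q/(a,q)$ leaves the $d$-th power term $\tfrac{a}{q}r^d$ invariant $\bmod\,1$ while the linear term picks up a nontrivial root of unity (coming from $(b,v)=1$), forcing the sum to vanish. The paper carries this out by writing $a/q = A/Q$ in lowest terms and splitting the sum as $n = sQ+l$ to factor it explicitly, whereas you pick a single prime $p\mid v$ and use the shift $r\mapsto r+q/p$ to produce the multiplier $e(-b/p)\neq 1$; the two arguments are minor variants of one another.
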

\begin{proof}
Write $a/q = A/Q$ in reduced form, so that we have $b/q = b/Qv$, where $(b,v) = 1$ by assumption. Expand
\[ \aligned 
q \cdot S(a/q,b/q) &= \sum_{n \leq q} e( -A/Q \cdot n^d - b/Qv \cdot n)  \\
&\qquad = \sum_{s=0}^{v-1} \sum_{l=1}^Q e( - A/Q \cdot (sQ + l)^d - b/Qv \cdot (sQ + l)) \\
& \qquad \qquad = \sum_{s=0}^{v-1} \sum_{l=1}^Q e( -A/Q \cdot l^d - b/Qv \cdot l) e( - b/v \cdot s) \\
& \qquad \qquad \qquad = \sum_{l=1}^Q e( -A/Q \cdot l^d - b/Qv \cdot l) \times \sum_{s=0}^{v-1} e(-b/v \cdot s) \\
& \qquad \qquad \qquad \qquad = 0,
\endaligned \]
since $(b,v) = 1$, and $v > 1$. 
\end{proof}

Now, for each $s \geq 1$, collect, and any $1 \leq a \leq q \leq 2^s$, define the sets
\[ \mathcal{R}_s(a/q) := \{ b/q \text{ not necessarily reduced}: (a,b,q) = 1, \ 2^{s-1} \leq q < 2^s \};\]
note that we have the cardinality bound $|\mathcal{R}_s(a/q)| \lesssim 2^{2s}$, uniformly in $a/q$. We will also let $\varphi_s$ be a smooth (even) bump function supported in (say) a $2^{-5s}$ neighborhood of the origin. Let $\phi_s := \varphi_s^{\vee}$ denote its inverse Fourier transform.
Define now the maximal function
\begin{equation}\label{Weylmax}
M_sf := \sup_{1 \leq a \leq q \leq 2^s} \left| \sum_{\mathcal{R}_s(a/q)} S(a/q,b/q) \left( \varphi_s(\beta - b/q) \hat{f}(\beta) \right)^{\vee} \right|
\end{equation}
Using Lemma \ref{keyorth}, we prove the following estimate on $M_s$.
\begin{proposition}\label{Weylest}
For any $1 < p < \infty$, there exists an absolute $\eta = \eta(d,p) > 0$ so that we have the following norm bound, with implicit constant uniform in $s \geq 1$:
\[ \| M_sf \|_{\ell^p} \lesssim 2^{-\eta s} \|f\|_{\ell^p}.\]
Moreover, $\eta(d,2)$ can be taken to be $\frac{1}{2d^2} - \epsilon$ for any $\epsilon > 0$.
\end{proposition}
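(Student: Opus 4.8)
The plan is a $TT^*$ (Kolmogorov--Seliverstov) argument on $\ell^2$, combined with interpolation against trivial endpoint bounds. First I would record the endpoints. By Lemma \ref{keyorth}, $S(a/q,b/q)=0$ for every non-reduced $a/q$, so the supremum defining $M_s$ effectively runs over reduced fractions only; and for reduced $a/q$ the Gauss-sum identity $\sum_{b \bmod q} S(a/q,b/q)\,e(bm/q)=e(-am^d/q)$ (Fourier inversion on $\mathbb Z/q\mathbb Z$) shows that $\sum_{b/q \in \mathcal R_s(a/q)} S(a/q,b/q)\,(\varphi_s(\cdot - b/q)\hat f)^{\vee} = \big(\phi_s(\cdot)\,e(-a(\cdot)^d/q)\big) * f$. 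Since $|\phi_s|=|\varphi_s^{\vee}|$ is dominated by an $\ell^1$-normalized bump adapted to the scale $2^{5s}$, this yields $M_s f \lesssim |\phi_s| * |f|$ pointwise, hence $\|M_s f\|_{\ell^1} \lesssim \|f\|_{\ell^1}$ and $\|M_s f\|_{\ell^\infty} \lesssim \|f\|_{\ell^\infty}$ uniformly in $s$. Granting the $\ell^2$ bound $\|M_s f\|_{\ell^2} \lesssim 2^{-(\frac{1}{2d^2}-\epsilon)s}\|f\|_{\ell^2}$, I would linearize $M_s$ and interpolate (Riesz--Thorin) against these two endpoints to obtain the proposition with $\eta(d,p)=\frac{1}{d^2}\min\{\tfrac1p,\tfrac1{p'}\}-\epsilon$, which is $\frac{1}{2d^2}-\epsilon$ at $p=2$. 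So everything reduces to the $\ell^2$ estimate.

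For the $\ell^2$ estimate, linearize: choose, measurably in $x$, a reduced $a(x)/q(x)$ with $q(x)\in[2^{s-1},2^s)$ and a unimodular $\epsilon(x)$ so that the resulting linear operator $T$ realizes $M_s$; by the collapse above, $K_T(x,y)=\epsilon(x)\,\phi_s(x-y)\,e(-a(x)(x-y)^d/q(x))$. One then computes the kernel of $TT^*$ as a unimodular multiple of $K(x,n)=\sum_z \phi_s(x-z)\,\phi_s(n-z)\,e\big(\tfrac{a(n)}{q(n)}(n-z)^d - \tfrac{a(x)}{q(x)}(x-z)^d\big)$. The decay is forced by the two \emph{distinct} rationals carried by $T$ and $T^{*}$: for fixed $x\ne n$ the phase $P_{x,n}(z)$ is a polynomial in $z$ of degree $\le d$ whose coefficients are rationals of denominator dividing $\lcm(q(x),q(n))\lesssim 2^{2s}$. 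It then suffices to establish $\sup_x \sum_n |K(x,n)| \lesssim 2^{-(\frac{1}{d^2}-\epsilon)s}$, after which the Schur test closes the $\ell^2$ bound. Summation by parts in $z$ against the smooth weight $\phi_s(x-\cdot)\phi_s(n-\cdot)$ reduces the $z$-sum to incomplete polynomial exponential sums, which I would bound by Weyl differencing together with Hua's inequality (Proposition \ref{Hua}) and its incomplete form (Lemma \ref{partial}); measured against the trivial size $|K(x,n)|\lesssim 2^{-5s}$, one shows that for all but a sparse family of $n$ some coefficient of $P_{x,n}$ has denominator $\gtrsim 2^{cs}$ and hence yields a power saving, while the exceptional $n$ — those on which every pertinent coefficient degenerates — lie in arithmetic progressions of density $\lesssim 2^{-s}$ and are disposed of by iterating the same dichotomy on the lower-order terms of $P_{x,n}$. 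Since $K$ is supported, up to rapid decay, on $|x-n|\lesssim 2^{5s}$, summing the resulting bounds in $n$ gives the claimed decay.

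The main obstacle is that $q(x)$ and $q(n)$ need not be coprime. On the frequency side this appears as a \emph{common-frequency block} of cardinality $\gcd(q(x),q(n))$ that survives the almost-orthogonality of the narrow bumps $\varphi_s(\cdot - b/q)$, weighted by products $S(a(x)/q(x),c)\,\overline{S(a(n)/q(n),c)}$; bounding these products one at a time by Hua, together with the $\gcd$-many surviving frequencies and the divisor-type losses incurred in the $n$-summation, only recovers the trivial $O(1)$ bound. The remedy is to use the orthogonality of the Weyl sums themselves — collapsing the common-frequency block into a single complete exponential sum modulo $\gcd(q(x),q(n))$ and invoking Hua once more — and then to perform the $n$-summation with care for the arithmetic of the denominators $q(n)$, keeping every estimate free of accumulating $2^{\epsilon s}$ losses so that the final exponent is uniform in $s$. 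This interplay of Lemma \ref{keyorth}, Hua's inequality and the divisor bookkeeping is the technical heart of the section, and is what pins the $\ell^2$ exponent at $\frac{1}{2d^2}$ rather than the cleaner value a single direct application of Hua would suggest.
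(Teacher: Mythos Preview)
Your overall architecture---trivial $\ell^1$/$\ell^\infty$ endpoints via the Gauss-sum collapse, $TT^*$ on $\ell^2$, then interpolation---is exactly the paper's. The divergence is in how the $TT^*$ kernel is actually bounded, and here your description does not match the paper and leaves the key step unexplained.

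The paper does \emph{not} treat the $z$-sum as a generic polynomial Weyl sum, and it performs no case analysis on $n$ (no ``sparse exceptional $n$'', no ``iterated dichotomy on the lower-order terms''). Instead, working on the frequency side, the Poisson/disjoint-support observation forces $b/r=b'/r'$, which (with Lemma~\ref{keyorth}) restricts the surviving frequencies to $\theta\in\mathbb Z/Q\mathbb Z$ with $Q=\gcd(q,q')$; at that point the smooth factor comes out \emph{cleanly}: $K_s(x,u)=\phi_s*\phi_s(x-u)\cdot(\text{arithmetic sum})$. After expanding the two Weyl sums and applying the orthogonality $\sum_{\theta\bmod Q}e(\theta m)=Q\,\mathbf 1_{Q\mid m}$, the arithmetic piece is a \emph{double} sum over $r\bmod q$ and $s\bmod q'$ subject to $r+x\equiv s+u\bmod Q$---not ``a single complete exponential sum modulo $\gcd$''. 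That double sum is then bounded \emph{pointwise in $(x,u)$} by the minimum of two estimates: sum in $s$ first (Lemma~\ref{partial}, denominator $\lcm(q,q')$) or sum in the transverse parameter $i\le p:=q/Q$ first (Proposition~\ref{Hua}). Balancing the two at $p\approx 2^{s/d}$ is precisely what produces the exponent $1/d^2$; invoking Hua once on a single collapsed sum would not get you there. Your paragraph~2 route via Weyl differencing on the full phase $P_{x,n}(z)$ could in principle be reconciled with this---breaking the long $z$-interval into complete periods of length $\lcm(q,q')$ lands you on the same double sum---but as written it does not isolate this structure, and the ``sparse $n$'' bookkeeping is both unnecessary and unlikely to recover the sharp $1/(2d^2)$.
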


We begin by establishing $\ell^p$ estimates for $M_s$ without any decay; it will then suffice to establish Proposition \ref{Weylest} in the special case when $p = 2$.
\begin{lemma}
For any $1 \leq p \leq \infty$, $\| M_s f\|_{\ell^p} \lesssim \|f\|_{\ell^p}$.
\end{lemma}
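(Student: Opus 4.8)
Since $M_s$ is a supremum of convolution operators (the inner expression for fixed $a,q$ is Fourier multiplication against $m_{a/q}(\beta):=\sum_{\mathcal R_s(a/q)}S(a/q,b/q)\varphi_s(\beta-b/q)$), it is enough to dominate each kernel pointwise by a fixed approximate identity. The naive route — triangle inequality over the $\lesssim 2^{2s}$ frequencies $b/q$ together with Hua's bound (Proposition \ref{Hua}) — loses an exponential factor in $s$, so the point is to use the cancellation packaged in Lemma \ref{keyorth}.

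First I would make the reduction to reduced fractions. If $\gcd(a,q)=v>1$, then every $b$ contributing to $\mathcal R_s(a/q)$ satisfies $\gcd(v,b)=1$, because $\gcd(a,b,q)=\gcd(\gcd(a,q),b)=\gcd(v,b)$; hence $S(a/q,b/q)=0$ for all such $b$ by Lemma \ref{keyorth}, and the multiplier $m_{a/q}$ vanishes identically. So the supremum in \eqref{Weylmax} runs effectively only over reduced $a/q$ (with $2^{s-1}\le q<2^s$), in which case the side condition $\gcd(a,b,q)=1$ is vacuous and $b$ ranges over all residues mod $q$.

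Next I would compute the kernel $K_{a/q}:=m_{a/q}^{\vee}$ explicitly for reduced $a/q$. We have $K_{a/q}(x)=\phi_s(x)\sum_{b\bmod q}S(a/q,b/q)\,e(bx/q)$; inserting the definition \eqref{Gauss} and summing the additive characters mod $q$ first (the sum over $b$ is $q$ if $q\mid(x-r)$ and $0$ otherwise), the sum over $r$ collapses to the residue class of $x$, and since $q\mid(x-r)$ forces $q\mid a(x^d-r^d)$ one gets
\[ \sum_{b\bmod q}S(a/q,b/q)\,e(bx/q)=e(-ax^d/q). \]
Thus $K_{a/q}(x)=e(-ax^d/q)\,\phi_s(x)$, so $|K_{a/q}(x)|\le|\phi_s(x)|$ pointwise, uniformly in $a,q$. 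Consequently $M_sf(x)\le(|\phi_s|*|f|)(x)$ for every $x$, and since $\varphi_s$ is a smooth bump adapted to a $2^{-5s}$-neighborhood of the origin, standard integration-by-parts estimates give $\|\phi_s\|_{\ell^1}\lesssim 1$; Young's inequality then yields $\|M_sf\|_{\ell^p}\lesssim\|f\|_{\ell^p}$ for all $1\le p\le\infty$.

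The only genuine subtlety — the step I would emphasize — is the opening reduction via Lemma \ref{keyorth}: before it, the operator really does look like a sum of $\sim 2^{2s}$ pieces with no useful structure, whereas after it each kernel is a single unimodular multiple of an approximate identity, and everything is trivial. (Note this already proves Proposition \ref{Weylest} up to the decay factor $2^{-\eta s}$, which is why it then suffices to treat $p=2$, where the $TT^*$ argument will again run through Lemma \ref{keyorth}.)
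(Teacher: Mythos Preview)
Your proof is correct and follows essentially the same route as the paper's: both use Lemma \ref{keyorth} to reduce to reduced $a/q$ (so that the sum over $b$ runs over a full residue system mod $q$), then compute the kernel explicitly as $e(-ax^d/q)\,\phi_s(x)$ by summing the characters in $b$ first, and conclude via $\|\phi_s\|_{\ell^1}\lesssim 1$. The paper's write-up is slightly more compressed (it leaves the case split implicit in the phrase ``by Lemma \ref{keyorth}'') but the argument is the same.
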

\begin{proof}
It suffices to show that for any choice $a/q$,
\[ \left| \sum_{\mathcal{R}_s(a/q)} S(a/q,b/q) e(b/q \cdot x) \phi_s(x) \right| = |\phi_s|(x).\]
By Lemma \ref{keyorth}, we need only bound
\[ \left| \sum_{b \leq q} S(a/q,b/q) e(b/q \cdot x) \right| = 1 .\]
But the left-hand side of the foregoing can be re-expressed as
\[ \frac{1}{q} \sum_{r \leq q} e( -a/q \cdot r^d) \sum_{b \leq q} e( b/q \cdot (x-r)) =
\sum_{r \leq q, \ r \equiv x  \mod q} e( -a/q \cdot r^d),\]
from which the result follows.
\end{proof}

We now turn to the proof of Proposition \ref{Weylest}, which will follow a similar scheme to that of Theorem \ref{NoX_j}. Consider the following kernel:
\begin{align}
K_s(x,u) &:= \sum_{1 \leq b \leq r \leq 2^s} \sum_{1 \leq b' \leq r' \leq 2^s} R( a(x)/q(x), b/r) \cdot e(b/r \cdot x) \\
& \qquad \qquad \qquad \times I(x,u, b/r, b'/r') \cdot e( - b'/r' \cdot u) \cdot \overline{R(a'(u)/q'(u), b'/r')} \label{kerneldef}
\end{align}
where $a(x)/q(x)$ and $a'(u)/q'(u)$ are reduced rationals with denominators that are between $2^{s-1}$ and $2^s$, and we define
\[ I(x,u,b/r,b'/r')  := \sum_y \phi_s(x-y) \phi_s(y-u) e(-y \cdot (b/r - b'/r') )  \]
and
\[ R\left(a/q,b/r \right) := \frac{1}{\text{lcm}(q,r)} \sum_{n \leq \text{lcm}(q,r) } e\left( - a/q  \cdot n^d - b/r  \cdot n\right) \mathbf{1}_{2^{s-1} \leq 
\text{lcm}(q,r) < 2^s }. \]
The significance of this kernel is that, for an appropriate choice of $\frac{a(x)}{q(x)}, \ \frac{a'(u)}{q'(u)}$, we have
\[ \sum_u K_s(x,u) f(u) = T_s T_s^* f(x). \]
for $T_s$ a linearization of $M_s$. Accordingly, Proposition \ref{Weylest} will follow from the following key claim:

For any choice of $a(x)/q(x), \ a'(u)/q'(u)$, we may bound
\begin{equation}\label{TT*goal0}
|K_s(x,u)| \lesssim_\epsilon  2^{(\epsilon - 1/d^2)s} |\phi_s*\phi_s(x-u)|.
\end{equation}

With this goal in mind, we proceed to the proof.
\begin{proof}[Proof of Proposition \ref{Weylest}]
Our task is to establish \eqref{TT*goal0}.

For notational ease, abbreviate
\[ a/q = {a(x)}/{q(x)}, \ a'/q' = a'(u)/q'(u).\]
By Lemma \ref{keyorth}, we know that $R(a/q,b/r)$ vanishes unless $2^{s-1} \leq q < 2^s$ and $r$ divides $q$. By Poisson summation, we also see that $I(x,u,b/r,b'/r') = 0$ unless $b/r = b'/r'$. Indeed, note that
\[ 2^{-2s} \leq \frac{1}{rr'} \leq \|b/r - b'/r'\|_{\mathbb{T}} \]
for $b/r \neq b'/r'$ with $r,r' \leq 2^s$, so that
\[ \varphi_s(\xi + m  + (b/r- b'/r') ) \varphi_s( \xi) \equiv 0 \]
for all $m \in \mathbb{Z}$ (recall $\varphi_s$ is supported in $\{ |\xi| \lesssim 2^{-5s} \}$).
Consequently, 
we see that to establish \eqref{TT*goal}, we may replace the left-hand side with
\begin{equation}\label{TT*red}
\sum_{\theta \in \Z/ Q \Z} R(a/q,\theta) \cdot e(\theta x) \cdot \phi_s*\phi_s(x-u) \cdot e(-\theta u) \cdot \overline{R(a'/q',\theta)},
\end{equation}
where we set
\begin{equation}
Q = \text{gcd}(q,q');
\end{equation}
for future reference we also set 
\[ p := \frac{q}{Q}, \ p' := \frac{q'}{Q}.\]
We now expand out every sum, and use the orthogonality relationship
\[
\sum_{\theta \in \Z / Q \Z} e(\theta \cdot x) = Q \cdot \mathbf{1}_{x \equiv 0 \mod Q}(x).\]
We get
\[ \frac{1}{qq'} \sum_{r \leq q, \ s \leq q'} e(-a/q \cdot r^d + a'/q' \cdot s^d) \left( \sum_{\theta \in \mathbb{Z}/Q \mathbb{Z}} e((r +x - s - u) \cdot \theta ) \right) \times \phi_s * \phi_s(x-u),\]
so we see that we need bound
\begin{equation}\label{TT*red2}
\frac{1}{q q'} \cdot Q \cdot \sum_{s \leq q'} \sum_{i \leq p} e\left( - a/q \cdot \Big(s + (u-x) + (j_0 +i)Q \Big)^d + a'/q' \cdot s^d \right) = O(2^{(\epsilon - 1/d^2)s}) 
\end{equation}
uniformly in $x,u$. Here, $j_0$ is an integer depending only on $q,q',x,u$.

There are two estimates now available for the double sum: summing in $s \leq q'$ and using Lemma \ref{partial}, we may bound the double sum:
\begin{equation}\label{min1}
\lesssim_\epsilon p \cdot \left( \frac{qq'}{Q} \right)^{1 - \frac{1}{d} + \epsilon}.
\end{equation}
Alternatively, if we sum in $i$ first, and use Hua's Proposition \ref{Hua}, we may bound the double sum by
\begin{equation}\label{min2}
\lesssim_\epsilon q' \cdot p^{1-\frac{1}{d} + \epsilon}.
\end{equation}
If $p \lesssim 2^{s/d}$, the estimate \eqref{min1} leads to the desired bound; otherwise \eqref{min2} is effective: in particular, we are left with an upper estimate for the left hand side of \eqref{TT*red2} of
\[ 
\lesssim_\epsilon \min\{ p^{1-1/d + \epsilon} \cdot (q')^{\epsilon - 1/d}, p^{\epsilon - 1/d} \} \leq 
\min\{ p^{1-1/d + \epsilon} \cdot 2^{s(\epsilon - 1/d)}, p^{\epsilon - 1/d} \} \lesssim 2^{(\epsilon - 1/d^2) s}, \]
which yields the result.
\end{proof}

\section{Approximations}\label{s:app}
In this section, we construct analytic approximates to the ``single scale'' multipliers 
\begin{equation}\label{Mj}
M_j(\lambda,\beta) := \sum_m \psi_j(m) e(-\lambda m^d - \beta m).
\end{equation}
Throughout this section, $0 < \epsilon \ll 1$ will denote a sufficiently small constant (which may depend on $d,p$).

With $\chi$ defined as in \eqref{chi}, for each $j \geq 1$, let
\[ \Xi_j(t) := \mathbf{1}_{|t| \leq 2^{-c(j)}}(t), \]
where $c(j)$ is defined in \eqref{c(j)} (so in particular $2^{c(j)} \approx j^{-C_{d,p}} 2^{dj}$).

The rationals in the two torus are the union over $ s\in \mathbb N $ of the collections 
\begin{equation}\label{e:Rs}
\R_s := \{ (A/Q,B/Q) \in \TT^2 : (A,B,Q) = 1, 2^{s-1 }\leq Q < 2^{s} \}.
\end{equation}
For each $s , j \in \mathbb N $, define the multiplier
\begin{align}\label{e:Ljs}
 L_{j,s}(\lambda,\beta)& := \sum_{ (A/Q,B/Q)\in  \R_s} S(A/Q,B/Q) H_j(\lambda - A/Q, \beta - B/Q) \Xi_j(\lambda - A/Q) 
\\ & \qquad \qquad \qquad  \times \chi_{s}(\lambda- A/Q) \chi_s(\beta - B/Q), 
 \\ 
  \noalign{\noindent where  $\chi_s(t) := \chi(2^{2^{s d \kappa }} t)$ where $\kappa = \kappa(p)$ is a sufficiently small number;  a continuous analogue of the sum is given by }   \label{e:Hj}
   &H_j(x,y) := \int e(-x t^d - y t) \psi_j(t) \; dt  ; 
   \\ \noalign{\noindent and we recall the complete Gauss sum is given by }
    \label{e:gauss} 
    &S(A/Q,B/Q) := \frac{1}{Q} \sum_{r=0}^{Q-1} e(-A/Q \cdot r^d - B/Q \cdot r). 
\end{align}

Before proceeding, we will use the orthogonality relationship of Gauss sums to re-express $L_{j,s}(\lambda,\beta)$ in the following convenient form:
\begin{align}\label{Ljs}
 L_{j,s}(\lambda,\beta)& := 
 \sum_{2^{s-1} \leq Q < 2^s, \ (A,Q) =1} \sum_{B\leq Q} S(A/Q,B/Q) H_j(\lambda - A/Q,\beta-B/Q) \Xi_j(\lambda - A/Q) \\
& \qquad \qquad \qquad \times \chi_{s}(\lambda- A/Q) \chi_s(\beta - B/Q). 
\end{align}
We now employ the beautiful multiplier theory of \cite{MST1}. Specifically, for each $s \geq 1$, let
$\mathcal{U}_{2^s}$
be as in Theorem \ref{Multtheorem}, where $0< \rho \ll 1$ is a sufficiently small constant (depending on all other parameters; one may think of $\rho = \kappa^{C_{p,d}}$). Define now the following two multipliers, the composition of which is $L_{j,s}$ for $s \gg_\rho 1$ sufficiently large:
\begin{equation}\label{Lj1}
L_{j,s}^1(\lambda,\beta) := \sum_{2^{s-1} \leq Q < 2^s, \ (A,Q) =1} \sum_{\theta \in \mathcal{U}_{2^s}} H_j(\lambda - A/Q,\beta-\theta) \Xi_j(\lambda - A/Q) \chi_s(\lambda - A/Q) \chi_s(\beta - \theta)
\end{equation}
and
\begin{equation}\label{Ls2}
L_{s}^2(\lambda,\beta) := \sum_{2^{s-1} \leq Q < 2^s, \ (A,Q) =1} \sum_{B\leq Q} S(A/Q,B/Q) \mathbf{1}_{|\lambda - A/Q| \lesssim 2^{-2^{sd \kappa}}}(\lambda- A/Q) \overline{\chi}_s(\beta - B/Q).
\end{equation}
Here, $\overline{\chi}_s$ is defined as is $\chi_s$.
By Proposition \ref{Weylest} above, we know that 
\[ \sup_\lambda \left| \left( L_{s}^2(\lambda,\beta) \hat{f}(\beta) \right)^{\vee} \right|\]
has $\ell^p$ operator norm decaying exponentially in $s$. 

We will eventually need to estimate
\[ \sup_{\lambda} \left| \left( \sum_{j : j^C \gtrsim 2^s} L_{j,s}^1(\lambda,\beta) \hat{f}(\beta) \right)^{\vee} \right|; \]
first, though, to dispose of certain ``error terms,'' we need to estimate
\begin{equation}\label{LforE}
\sum_{s : 2^s \lesssim j^C} \sup_{\lambda} \left| \left( L_{j,s}(\lambda,\beta) \hat{f}(\beta) \right)^{\vee} \right|.
\end{equation}
Our tool will be the Sobolev embedding Lemma \ref{sobemb}. We state the relevant estimates in the form of the following lemma.
\begin{lemma}\label{triangleest}
Suppose $2^s \lesssim j^C$. Then, in the language of Lemma \ref{sobemb}, applied to the multipliers $L_{j,s}$, one may take
\[ a(p) \lesssim_\epsilon 2^{(\epsilon - 1/d)s}, \ A(p) \lesssim_\epsilon 2^{(\epsilon - 1/d)s} \cdot 2^{dj} \]
for any $1 < p < \infty$. Consequently,
\[
\| \sup_{\lambda} \left| \left( L_{j,s}^1(\lambda,\beta) \hat{f}(\beta) \right)^{\vee} \right| \|_{\ell^p} \lesssim_p j^C \|f\|_{\ell^p},
\]
and
\[ \| \eqref{LforE} \|_{\ell^p} \lesssim j^C \|f\|_{\ell^p} \]
as well.
\end{lemma}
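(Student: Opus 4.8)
The plan is to first establish the bounds on $a(p)$ and $A(p)$ required as input to the Sobolev embedding Lemma \ref{sobemb}, and then simply feed these into that lemma. For the bound on $a(p)$: fix $\lambda$ and examine the multiplier $L_{j,s}(\lambda,\beta)$. Because of the cutoffs $\chi_s(\lambda - A/Q)$, $\Xi_j(\lambda-A/Q)$, at most one denominator $Q$ (in fact one rational $A/Q$) can be ``active'' for a given $\lambda$ — the $\chi_s$-balls around distinct $A/Q$ with $2^{s-1}\le Q<2^s$ are disjoint once $2^{-2^{sd\kappa}}$ is much smaller than $2^{-2s}$, which holds for our $s$. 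So $L_{j,s}(\lambda,\cdot)$ reduces to a single sum over $B \le Q$ of $S(A/Q,B/Q) H_j(\lambda-A/Q, \beta - B/Q)$ times cutoffs in $\beta$, and the $\beta$-cutoffs $\chi_s(\beta - B/Q)$ for distinct $B/Q$ are also disjoint. Thus $(L_{j,s}(\lambda,\beta)\hat f(\beta))^\vee$ is, up to the disjoint-support bookkeeping, a superposition over $B$ of modulated pieces $e(B/Q \cdot x)$ acting on $\hat f$ near frequency $B/Q$, each weighted by the Gauss sum $S(A/Q,B/Q)$ whose modulus is $\lesssim_\epsilon Q^{\epsilon - 1/d} \approx 2^{(\epsilon-1/d)s}$ by Proposition \ref{Hua}. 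Since $H_j(x,y)$ is (uniformly in its arguments) a bounded, smoothly decaying symbol — indeed $|H_j(\lambda - A/Q,\beta-B/Q)|$ together with one $\beta$-derivative of size $\lesssim 2^j$ is harmless against the $\chi_s$ localization of width $2^{-2^{sd\kappa}} \gg 2^{-dj}$ for $2^s \lesssim j^C$ — the operator with kernel $H_j(\lambda-A/Q,\cdot)\chi_s(\cdot - B/Q)$ is bounded on $\ell^p$ uniformly. Combining the uniform $\ell^p$-boundedness of each localized block, the disjointness of supports (which lets us sum over $B$ and over $A/Q$ without loss), and the Gauss-sum gain, we obtain $a(p) \lesssim_\epsilon 2^{(\epsilon - 1/d)s}$.

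For $A(p)$: differentiating $L_{j,s}$ in $\lambda$, the derivative lands either on $H_j(\lambda - A/Q, \beta - B/Q)$ or on the cutoffs $\Xi_j(\lambda - A/Q)\chi_s(\lambda - A/Q)$. Differentiating $H_j$ in its first argument brings down a factor $t^d$ with $|t| \approx 2^j$, hence a gain of $2^{dj}$ in size, while the symbol properties (boundedness, smooth $\beta$-decay at scale $2^{-j}$) are otherwise preserved; differentiating the cutoff $\Xi_j$ or $\chi_s$ produces a factor $\lesssim 2^{dj}$ (the width of $\Xi_j$ being $2^{-c(j)}\approx j^{-C}2^{dj}$, whose reciprocal is $\lesssim 2^{dj}$, and that of $\chi_s$ even smaller) supported on a comparably thin shell, which is again dominated on $\ell^p$ after the same block decomposition. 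In all cases the net effect is to multiply the $a(p)$ estimate by $2^{dj}$, giving $A(p) \lesssim_\epsilon 2^{(\epsilon - 1/d)s} 2^{dj}$.

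With these two estimates in hand, Lemma \ref{sobemb} applied to $F = L_{j,s}$ yields
\[ \| \sup_\lambda |(L_{j,s}(\lambda,\beta)\hat f(\beta))^\vee| \|_{\ell^p} \lesssim_p \left( j^C a(p) + j^C 2^{-dj/p} a(p)^{1-1/p} A(p)^{1/p} \right)\|f\|_{\ell^p}. \]
Substituting $a(p) \lesssim 2^{(\epsilon-1/d)s}$ and $A(p) \lesssim 2^{(\epsilon-1/d)s}2^{dj}$, the factor $2^{-dj/p}$ exactly cancels the $(2^{dj})^{1/p}$ coming from $A(p)^{1/p}$, so both terms are $\lesssim_\epsilon j^C 2^{(\epsilon-1/d)s}\|f\|_{\ell^p} \lesssim j^C \|f\|_{\ell^p}$ (absorbing the harmless $2^{(\epsilon-1/d)s} \le 1$). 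This gives the claimed bound for $L_{j,s}$, and hence — since $L^1_{j,s}$ is just $L_{j,s}$ composed with $L^2_s$, whose supremum over $\lambda$ is an $\ell^p$-bounded operator by Proposition \ref{Weylest} (in fact with exponentially decaying norm) — also the bound for $L^1_{j,s}$. Finally, summing over the $O_C(\log j)$ values of $s$ with $2^s \lesssim j^C$ (using the triangle inequality, and noting $2^{(\epsilon-1/d)s}$ is summable in $s$) gives $\|\eqref{LforE}\|_{\ell^p} \lesssim j^C\|f\|_{\ell^p}$ as well. The main obstacle is the first paragraph: carefully verifying the disjoint-support structure of the $\chi_s$-localizations and that each localized block is $\ell^p$-bounded uniformly in all parameters, since this is where the geometry of the major boxes and the size of the cutoffs $2^{-2^{sd\kappa}}$ versus $2^{-2s}$ versus $2^{-c(j)}$ must all be reconciled.
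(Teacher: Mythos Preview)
Your overall strategy---establish $a(p)$ and $A(p)$ and feed them into Lemma~\ref{sobemb}---is exactly right, and your treatment of the $\lambda$-derivative is fine (modulo making explicit that the $\chi_s$-derivative contributes $2^{2^{sd\kappa}} \lesssim 2^{dj}$ only because $2^s \lesssim j^C$ and $\kappa$ was chosen small enough; the paper spells this out).  But there is a genuine gap in your bound for $a(p)$ when $p\neq 2$.  You write that ``the disjointness of supports \dots\ lets us sum over $B$ \dots\ without loss.''  Disjoint \emph{Fourier} support gives orthogonality in $\ell^2$, but for $p\neq 2$ it does not let you sum multiplier pieces without loss: a multiplier of the form $\sum_B c_B\, \mathbf{1}_{I_B}(\beta)$ with disjoint intervals $I_B$ and bounded scalars $c_B$ need not be an $\ell^p$ multiplier uniformly in the $c_B$'s (this is already false for equally-spaced intervals on the circle).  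What is needed here is precisely the Ionescu--Wainger multi-frequency multiplier machinery, Theorem~\ref{Multtheorem}: the single-frequency multiplier $H_j(\lambda-A/Q,\cdot)$ is an $L^p$ multiplier uniformly in $\lambda$ (its inverse transform is dominated by $M_{HL}$), and Theorem~\ref{Multtheorem} then sums over the rational frequencies with a loss of $\log(2^s)\approx s$, which is absorbed into the $2^{\epsilon s}$.  Combined with the uniform Gauss-sum bound $|S(A/Q,B/Q)|\lesssim_\epsilon 2^{(\epsilon-1/d)s}$ (Proposition~\ref{Hua}), this yields $a(p)\lesssim_\epsilon 2^{(\epsilon-1/d)s}$.

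There is also a smaller logical slip in your handling of $L^1_{j,s}$.  The factorization goes $L_{j,s}=L^1_{j,s}\cdot L^2_s$, so boundedness of $L_{j,s}$ together with boundedness of $L^2_s$ does \emph{not} yield boundedness of $L^1_{j,s}$ (that would require $L^2_s$ to be invertible).  Instead one bounds $L^1_{j,s}$ directly by the same argument: apply Theorem~\ref{Multtheorem} to the multiplier $H_j(\lambda-A/Q,\cdot)$ summed over $\theta\in\mathcal U_{2^s}$ (no Gauss sum now, so $a(p)\lesssim s$, $A(p)\lesssim s\cdot 2^{dj}$), then apply Lemma~\ref{sobemb} to get the bound $\lesssim j^C$.
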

\begin{proof}
Since $H_j(\lambda,\beta)$ is an $L^p$ multiplier uniformly in $\lambda$ (it's inverse Fourier transform is dominated by the continuous Hardy-Littlewood maximal function), by Theorem \ref{Multtheorem}, we obtain the desired estimate on $a(p)$; here we used that the cut-offs in $\lambda$ are disjointly supported. On the interior of $X_j$, it's $\lambda$- derivative analogously is a linear combination of two terms, the first of which, in light of Theorem \ref{Multtheorem}, has $\ell^p$ norm bounded by 
\[ \lesssim_\epsilon s \cdot 2^{(\epsilon - 1/d)s} \cdot 2^{dj},\] and the second of which has $\ell^p$ norm bounded by 
\[ \lesssim_\epsilon s \cdot 2^{(\epsilon - 1/d)s} \cdot 2^{2^{s \kappa d}} \lesssim_\epsilon 2^{(\epsilon - 1/d)s} \cdot 2^{dj}\] 
provided $2^s \lesssim j^C$, since we have chosen $\kappa$ sufficiently small. The rest follows from Lemma \ref{sobemb}; note our use of Proposition \ref{Hua}.
\end{proof}

We now define
\begin{equation}  \label{e:Lj}
L _{j} (\lambda , \beta ) = \sum_{s : 2^s \lesssim j^{C } } L_{j,s} (\lambda , \beta ),
\end{equation}
where $C = C_{d,p}$ is the same constant appearing in the definition of $X_j$, see \eqref{X_j},
and decompose that 
\begin{equation}\label{split}
M_j(\lambda, \beta) \mathbf{1}_{X_j}(\lambda) = L_j(\lambda,\beta) + \mathcal{E}_j(\lambda,\beta),
\end{equation}
where we have
\begin{equation} \label{e:Mjdef} 
 M_j(\lambda,\beta) := \sum_{m} e(- \lambda m^d - \beta m ) \psi_j(m),  
\end{equation}
the $j$th block of the multiplier. We make two remarks: first, with this definition, it is clear that $\mathcal{E}_j(\cdot, \beta)$ is supported in $X_j$; second the multiplier $M_j$ is bounded on $\ell^p$, independent of $\lambda$, since its inverse Fourier transform is trivially bounded by $M_{HL}$.

By the triangle inequality, and the behavior of $\left(M_j(\lambda,\cdot) \hat{f} \right)^{\vee}$ in light of Lemma \ref{triangleest}, we have the following estimates on the $\ell^p$ operator norms of the Fourier multipliers $\mathcal{E}_j(\lambda,\beta)$:
\begin{lemma}\label{Lperrors}
For any $1 < p <\infty$, uniformly in $\lambda$ we have:
\begin{equation}\label{Lpnoderiv}
\| \left( \mathcal{E}_j(\lambda,\beta) \hat{f}(\beta) \right)^{\vee} \|_{\ell^p} \lesssim j^C \cdot \|f \|_{\ell^p}; \end{equation}
for $\lambda$ in the interior of $X_j$ we also have
\begin{equation}\label{Lpderiv}
\| \left( \partial_\lambda \mathcal{E}_j(\lambda,\beta) \hat{f}(\beta) \right)^{\vee} \|_{\ell^p} \lesssim j^C \cdot 2^{dj} \cdot \|f \|_{\ell^p}. 
\end{equation}
\end{lemma}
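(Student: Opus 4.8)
The plan is to derive Lemma~\ref{Lperrors} directly from the decomposition \eqref{split}, namely $\mathcal{E}_j(\lambda,\beta) = M_j(\lambda,\beta)\mathbf{1}_{X_j}(\lambda) - L_j(\lambda,\beta)$, by estimating each of the two summands separately and using the triangle inequality. For the term $M_j(\lambda,\beta)\mathbf{1}_{X_j}(\lambda)$: as remarked immediately after \eqref{e:Mjdef}, the multiplier $M_j(\lambda,\cdot)$ has $\ell^p$ operator norm $\lesssim 1$ uniformly in $\lambda$, since $\left(M_j(\lambda,\cdot)\hat f\right)^\vee = e(-\lambda \cdot^d)\psi_j(\cdot) * f$ is pointwise dominated by $M_{HL}f$; multiplying by the indicator $\mathbf{1}_{X_j}(\lambda)$ does not affect this bound. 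For the $\lambda$-derivative on the interior of $X_j$, one computes $\partial_\lambda\left(M_j(\lambda,\cdot)\hat f\right)^\vee = \left(-2\pi i\, m^d\, e(-\lambda m^d)\psi_j(m)\right)* f$, and since $\psi_j$ is supported where $|m|\approx 2^j$, the factor $|m^d|\lesssim 2^{dj}$ is absorbed harmlessly, leaving a kernel again dominated by $2^{dj}M_{HL}f$; this gives the $j^C \cdot 2^{dj}$ bound (indeed even without the $j^C$).

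It remains to estimate $L_j(\lambda,\beta) = \sum_{s:2^s\lesssim j^C} L_{j,s}(\lambda,\beta)$, and this is exactly what Lemma~\ref{triangleest} provides. First, for the estimate without the derivative \eqref{Lpnoderiv}, I would apply Lemma~\ref{sobemb} to each $L_{j,s}$ with the values $a(p)\lesssim_\epsilon 2^{(\epsilon-1/d)s}$ and $A(p)\lesssim_\epsilon 2^{(\epsilon-1/d)s}2^{dj}$ furnished by Lemma~\ref{triangleest}; since we only need the supremum-free bound here, in fact the cruder estimate $\sup_\lambda\|(L_{j,s}(\lambda,\cdot)\hat f)^\vee\|_{\ell^p}\lesssim_\epsilon 2^{(\epsilon-1/d)s}\|f\|_{\ell^p}$ (the quantity $a(p)$ itself, from Theorem~\ref{Multtheorem} applied to the Gauss-sum-weighted multiplier, using Hua's bound Proposition~\ref{Hua}) suffices. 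Summing over $s$ with $2^s\lesssim j^C$, the geometric series in $s$ converges (each term is $\lesssim 1$) and produces at most $O(\log(j^C)) = O(\log j)$ summands, hence the total is $\lesssim j^C\|f\|_{\ell^p}$ after combining with the trivial bound $j^C$ on the number of scales; this, together with the $\lesssim 1$ bound on $M_j$, yields \eqref{Lpnoderiv}.

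For the derivative estimate \eqref{Lpderiv}, I would again sum the per-$s$ estimates, now using the derivative bound from the proof of Lemma~\ref{triangleest}: on the interior of $X_j$, $\partial_\lambda L_{j,s}(\lambda,\cdot)$ splits into a term where $\partial_\lambda$ hits $H_j$, contributing $\ell^p$ norm $\lesssim_\epsilon s\cdot 2^{(\epsilon-1/d)s}\cdot 2^{dj}$ (the factor $2^{dj}$ coming, as in the $M_j$ computation, from differentiating $H_j$ in $\lambda$), and a term where $\partial_\lambda$ hits the cutoff $\Xi_j\chi_s$, contributing $\lesssim_\epsilon s\cdot 2^{(\epsilon-1/d)s}\cdot 2^{2^{sd\kappa}}$, which is $\lesssim 2^{(\epsilon-1/d)s}2^{dj}$ since $2^s\lesssim j^C$ forces $2^{2^{sd\kappa}}\ll 2^{dj}$ for $\kappa$ small. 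Summing the convergent geometric series over the $O(\log j)$ relevant scales and combining with the $2^{dj}$ bound on $\partial_\lambda M_j$ gives \eqref{Lpderiv}. The main obstacle, such as it is, is purely bookkeeping: one must verify that $\partial_\lambda$ acting on the cutoff $\chi_s(\lambda - A/Q)$ (whose derivative is of size $2^{2^{sd\kappa}}$ by the defining dilation) does not overwhelm the $2^{dj}$ budget, which is guaranteed precisely by the constraint $2^s \lesssim j^C$ and the choice of $\kappa$ sufficiently small relative to $C=C_{d,p}$; everything else follows from Lemma~\ref{triangleest}, the observation that $L_{j,s}$ is supported in $\lambda - A/Q \in \supp \Xi_j \subset X_j$ so that $\mathcal{E}_j(\cdot,\beta)$ is indeed supported in $X_j$, and the remark that $M_j$ is bounded on $\ell^p$ independently of $\lambda$.
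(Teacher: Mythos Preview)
Your proposal is correct and follows essentially the same approach as the paper: decompose $\mathcal{E}_j = M_j\mathbf{1}_{X_j} - L_j$, bound the $M_j$ piece trivially via $M_{HL}$ (and its $\lambda$-derivative by $2^{dj}$ from the support of $\psi_j$), and bound the $L_j$ piece by summing the per-$s$ estimates of Lemma~\ref{triangleest}. The paper's text simply says ``By the triangle inequality, and the behavior of $(M_j(\lambda,\cdot)\hat f)^\vee$ in light of Lemma~\ref{triangleest},'' which is exactly what you have unpacked.
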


We now refine this estimate in $\ell^2$, in the following Proposition.

\begin{proposition}\label{t:smooth}
Uniformly in $(\lambda,\beta) \in \mathbb{T}^2$, $\lambda \in X_j$, the following estimates hold:
\begin{equation}\label{noderiv}
| \mathcal{E}_j(\lambda,\beta) | \lesssim j^{-\frac{1}{2\kappa}},
\end{equation}
and for $\lambda$ in the interior of $X_j$, for any $\beta \in \mathbb{T}$ we have
\begin{equation}\label{deriv}
| \partial_\lambda \mathcal{E}_j(\lambda,\beta) | \lesssim 2^{dj}.
\end{equation}
\end{proposition}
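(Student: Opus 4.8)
The plan is to run the Hardy--Littlewood circle method \emph{exactly}, via Poisson summation, on the single major box relevant to a given $\lambda$, and to read off $\mathcal{E}_j$ as the collection of ``error terms'' that this exact identity discards; all of those turn out to decay rapidly in $j$ (times a harmless factor $2^{dj}$ once we differentiate in $\lambda$). First I would reduce to one box. Fix $\lambda\in X_j$; by definition there is a reduced fraction $A_0/Q_0$ with $Q_0\lesssim j^C$ and $|\lambda-A_0/Q_0|\le 2^{-c(j)}$, and this fraction is \emph{unique}, since distinct fractions with denominators $\lesssim j^C$ are $\gtrsim j^{-2C}\gg 2^{-c(j)}$ apart. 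The same separation bound forces $\Xi_j(\lambda-A/Q)=0$ for every other fraction occurring in $L_j=\sum_{2^s\lesssim j^C}L_{j,s}$, so only the term attached to $A_0/Q_0$ at its scale $s_0$ (with $2^{s_0-1}\le Q_0<2^{s_0}$) survives. Since $\kappa$ is chosen so small that $2^{s_0 d\kappa}\lesssim j^{Cd\kappa}=o(j)$, the support radius $2^{-2^{s_0 d\kappa}}$ of $\chi_{s_0}$ dominates both $2^{-c(j)}$ and the effective width $\approx j^C 2^{-j}$ of $H_j(\lambda-A_0/Q_0,\cdot)$; writing $\theta_1:=\lambda-A_0/Q_0$, this gives $\Xi_j(\theta_1)=\chi_{s_0}(\theta_1)=\mathbf{1}_{X_j}(\lambda)=1$ and
\[ L_j(\lambda,\beta)=\sum_{B\bmod Q_0}S(A_0/Q_0,B/Q_0)\,H_j(\theta_1,\beta-B/Q_0)\,\chi_{s_0}(\beta-B/Q_0). \]

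Next I would establish the exact identity for $M_j$. Writing $m=n+Q_0\ell$ with $n$ over residues $\bmod\, Q_0$, using $e(-(A_0/Q_0)m^d)=e(-(A_0/Q_0)n^d)$, and applying Poisson summation in $\ell$ yields, with \emph{no} error term,
\[ M_j(\lambda,\beta)=\sum_{B\bmod Q_0}S(A_0/Q_0,B/Q_0)\sum_{\ell\in\mathbb{Z}}H_j(\theta_1,\beta-B/Q_0+\ell). \]
Subtracting, $\mathcal{E}_j(\lambda,\beta)=M_j\mathbf{1}_{X_j}-L_j$ equals $\sum_{B\bmod Q_0}S(A_0/Q_0,B/Q_0)$ times the sum of the off-center periodization terms (those with $\ell\ne\ell_*(B)$, where $\ell_*(B)$ realizes $\|\beta-B/Q_0\|_{\mathbb{T}}$) plus the central term multiplied by $1-\chi_{s_0}(\beta-B/Q_0+\ell_*(B))$. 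Differentiating this identity in $\lambda$ produces the same formula with $H_j$ replaced by $\partial_{\theta_1}H_j$; on the \emph{interior} of $X_j$ both $\mathbf 1_{X_j}$ and $\Xi_j$ are locally constant, so no boundary terms appear.

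The remaining step is a non-stationary phase estimate on $H_j$ and its $\theta_1$-derivative. Rescaling $t=2^j u$ writes $H_j(\theta_1,y)=\int e(-\theta_1 2^{jd}u^d-y2^j u)\psi(u)\,du$; since $|\theta_1|2^{jd}\lesssim j^C$ for $\lambda\in X_j$, on $\supp\psi$ the phase derivative has size $\gtrsim|y|2^j$ once $|y|\gg j^C 2^{-j}$, and $N$-fold integration by parts gives $|H_j(\theta_1,y)|+2^{-jd}|\partial_{\theta_1}H_j(\theta_1,y)|\lesssim_N j^{CN}(1+|y|2^j)^{-N}$. In every error term the relevant argument $y$ obeys $|y|\gg j^C 2^{-j}$: the off-center terms have $|y|\ge\tfrac12$, and on $\supp(1-\chi_{s_0})$ one has $|y|\gtrsim 2^{-2^{s_0 d\kappa}}\gg j^C 2^{-j}$. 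Hence each of the $O(Q_0)=O(j^C)$ terms is $\lesssim_N j^{CN}2^{-jN/2}$, respectively $2^{jd}j^{CN}2^{-jN/2}$; summing over $B$ with $|S|\le1$ and taking $N$ large yields $|\mathcal{E}_j|\lesssim_N j^{2C}2^{-jN/2}\ll j^{-1/(2\kappa)}$ and, on the interior of $X_j$, $|\partial_\lambda\mathcal{E}_j|\lesssim_N 2^{jd}j^{2C}2^{-jN/2}\ll 2^{dj}$. The finitely many small $j$ for which the asymptotic $2^{s_0 d\kappa}=o(j)$ has not yet taken effect are covered by the trivial bounds $|\mathcal{E}_j|\le|M_j|+|L_j|\lesssim j^C$ and $|\partial_\lambda\mathcal{E}_j|\lesssim j^C 2^{dj}$, absorbed into the implicit constants.

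I expect the only genuine obstacle to be the bookkeeping among the three competing length scales — $2^{-c(j)}$ (the width of $\Xi_j$ and of $X_j$), $2^{-2^{s d\kappa}}$ (the width of $\chi_s$), and $j^C2^{-j}$ (the effective width of $H_j$) — and in particular checking that the \emph{sharp} cutoff $\Xi_j$, which by construction coincides with the defining cutoff of $X_j$, never contributes a non-decaying boundary term, and that $\chi_s\equiv1$ wherever $H_j$ is not already negligible. The oscillatory-integral bounds on $H_j$ and $\partial_{\theta_1}H_j$ are a routine non-stationary phase computation, uniform over the major box, and are the easy part.
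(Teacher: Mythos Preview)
Your proposal is correct and takes a genuinely different, more direct route than the paper. You use an \emph{exact} Poisson summation identity to write $M_j$ on $X_j$ as $\sum_B S(A_0/Q_0,B/Q_0)\sum_\ell H_j(\theta_1,\beta-B/Q_0+\ell)$, so that $\mathcal{E}_j$ is precisely the sum of the off-center periodization terms together with the central term multiplied by $(1-\chi_{s_0})$; both are then dispatched by non-stationary phase with exponential decay in $j$. The paper instead proceeds via Lemma~\ref{MAJor} (an \emph{approximate} identity on major boxes, with error $O(2^{(2\epsilon-1)j})$) and a case analysis across the family of major boxes $\mathfrak{M}_j$ in Lemma~\ref{l:Lj<}, invoking Hua's Weyl-sum bound to handle the regime where $\chi_s$ is not identically $1$ on a major box---this is exactly where the stated exponent $j^{-1/(2\kappa)}$ originates. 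Your argument sidesteps that regime entirely by exploiting the standing smallness of $\kappa$: once $Cd\kappa<1$, the support radius $2^{-2^{s_0 d\kappa}}\gtrsim 2^{-j^{Cd\kappa}}$ of $\chi_{s_0}$ dominates the effective width $j^C2^{-j}$ of $H_j$ whenever $2^{s_0}\lesssim j^C$, so the $(1-\chi_{s_0})$-term already sits in the non-stationary region. The upshot is a cleaner proof and a strictly sharper bound (any power of $j^{-1}$, indeed exponential); the paper's approach is more robust to large $\kappa$, but since $\kappa$ is assumed sufficiently small throughout, that robustness buys nothing here. One cosmetic point: your stated bound $|H_j|\lesssim_N j^{CN}(1+|y|2^j)^{-N}$ is a bit loose in the transition zone $|y|2^j\approx j^C$, but this is harmless since in every error term $|y|2^j$ is exponentially large in $j$.
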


Interpolating between Lemma \ref{Lperrors} and Proposition \ref{t:smooth}, yields the following Proposition. 
\begin{proposition}\label{errorprop} Let $1 < p < \infty$, and suppose $\kappa = \kappa(p)$ has been chosen sufficiently small. Then the error term
\begin{equation}\label{killerrors} 
\sum_{j \geq 1} \sup_{\lambda} \left| \left( \mathcal{E}_j(\lambda, \beta) \hat{f}(\beta) \right)^{\vee} \right|
\end{equation}
is bounded on $\ell^p$.
\end{proposition}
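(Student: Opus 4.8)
The plan is to deduce this from the Sobolev embedding Lemma \ref{sobemb}, applied to the multipliers $\mathcal{E}_j(\lambda,\beta)$, with the two input constants $a(p)$ and $A(p)$ obtained by interpolating the $\ell^p$ estimates of Lemma \ref{Lperrors} against the pointwise estimates of Proposition \ref{t:smooth}. First I would record that, by construction, $\mathcal{E}_j(\cdot,\beta)$ is supported in $X_j$, which has exactly the structure \eqref{sobsit}: $X_j = \bigcup_{i=1}^{Cj^C} I_i$ with each $|I_i| \lesssim j^C 2^{-dj}$. Hence Lemma \ref{sobemb} applies directly, and everything reduces to choosing $a(p)$ and $A(p)$ so that the resulting per-$j$ bound is summable in $j$.

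For the derivative constant I would simply take $A(p) \lesssim_p j^C 2^{dj}$, which is \eqref{Lpderiv} of Lemma \ref{Lperrors} (valid on the interior of $X_j$, as the hypotheses of Lemma \ref{sobemb} demand). The real point is the non-derivative constant: the trivial choice $a(p) \lesssim j^C$ coming from \eqref{Lpnoderiv} is insufficient, since feeding $a(p) = j^C$ and $A(p) = j^C 2^{dj}$ into Lemma \ref{sobemb} the factors $2^{\pm dj/p}$ cancel and one is left with a bound $\lesssim j^{C'}\|f\|_{\ell^p}$, which does not sum. To gain the needed decay I would interpolate. By Plancherel, the pointwise bound \eqref{noderiv} of Proposition \ref{t:smooth} yields the $\ell^2$ operator-norm estimate $\lesssim j^{-1/(2\kappa)}$, uniformly in $\lambda$. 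Interpolating this against the $\ell^{p_1}$ bound $j^C$ from \eqref{Lpnoderiv} — choosing $p_1$ so that $2$, $p$, $p_1$ occur in monotone order — produces $a(p) \lesssim_p j^{C\theta - (1-\theta)/(2\kappa)}$, where $\theta = \theta(p) \in (0,1)$ is the interpolation parameter; once $p$ (hence $p_1$, hence $\theta$) is fixed, taking $\kappa = \kappa(p)$ small makes this exponent $\leq -N$ for any prescribed $N$.

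Plugging $a(p) \lesssim j^{-N}$ and $A(p) \lesssim j^C 2^{dj}$ back into Lemma \ref{sobemb} gives a per-$j$ bound of the shape $\bigl(j^{C-N} + j^{C + C/p - N(1-1/p)}\bigr)\|f\|_{\ell^p}$, the $2^{dj}$ in $A(p)$ again cancelling against the $2^{-dj/p}$. Since $1 - 1/p > 0$, choosing $N = N(p)$ larger than an explicit $p$-dependent threshold makes both exponents strictly less than $-1$, so summing over $j \geq 1$ converges and yields the claimed $\ell^p$ bound on \eqref{killerrors}. I expect the interpolation step to be the crux of the matter, though it is a routine application of Riesz--Thorin to the $\lambda$-parametrized family of Fourier multiplier operators on $\ell^p(\Z)$ (uniformly in $\lambda$); the one genuine subtlety is the order of quantifiers — $p$ must be fixed before $\kappa$ is shrunk, which is precisely why the statement allows $\kappa$ to depend on $p$ — and the restriction of the derivative estimate to the interior of $X_j$ is already built into Lemma \ref{sobemb}.
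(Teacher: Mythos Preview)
Your proposal is correct and follows essentially the same route as the paper: interpolate the pointwise $\ell^2$ bound \eqref{noderiv} against the $\ell^p$ bound \eqref{Lpnoderiv} to obtain $a(p)\lesssim j^{-c_p/\kappa}$, take $A(p)\lesssim j^C 2^{dj}$ from \eqref{Lpderiv}, and feed both into Lemma~\ref{sobemb}, where the $2^{\pm dj/p}$ factors cancel and the remaining power of $j$ is summable once $\kappa=\kappa(p)$ is small enough. The paper's proof records exactly these two constants and invokes Lemma~\ref{sobemb} without further comment.
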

\begin{proof}
In the language of Lemma~\ref{sobemb}, we have the following estimates on $\left( \mathcal{E}_j(\lambda, \beta) \hat{f}(\beta) \right)^{\vee}$:
\[ a(p) \lesssim j^{-c_p/\kappa}, \ A(p) \lesssim  j^C \cdot 2^{dj}.\]
The result follows from Lemma \ref{sobemb} and the triangle inequality.
\end{proof}
We turn to the proof of Proposition \ref{t:smooth}, which is a standard application of the Hardy-Littlewood method in exponential sums. 
A core definition in this method is that of \emph{major boxes}, which we define slightly differently here than in Definition \ref{logmaj} above. 

\begin{definition}  \label{d:major}
For $ (A/Q,B/Q)\in  \mathcal R_s$, where $ s\leq j \epsilon $, define the \emph{$ j$th major box at $ (A/Q, B/Q)$} to be 
the rectangle in $ \mathbb T ^2 $ given by 
\begin{equation}\label{e:major}
\mathfrak{M}_j(A/Q,B/Q) := \left\{ (\lambda, \beta) \in \mathbb{T}^2 : \| \lambda - A/Q\|_{\mathbb{T}} \leq 2^{(\epsilon - d)j},
\| \beta - B/Q\|_{\mathbb{T}} \leq 2^{(\epsilon - 1)j} \right\}.
\end{equation}
\end{definition}

We collect the major boxes 
\begin{equation}\label{e:Major}
 \mathfrak{M}_j := \bigcup_{(A,B,Q) = 1 : Q \leq 2^{\epsilon j} } \mathfrak{M}_j(A/Q,B/Q). 
\end{equation}

The union above is over  disjoint sets:  if 
\[ (\lambda,\beta) \in \mathfrak{M}_j(A/Q,B/Q) \cap \mathfrak{M}_j(A'/Q',B'/Q'),\]
and 
$ A/Q \neq A'/Q'$, then 
\[ 2 \cdot 2^{(\epsilon-d)j} \geq |A/Q - \lambda| + |A'/Q' - \lambda| \geq |A/Q - A'/Q'| \geq \frac{1}{2^{2 \epsilon j}},\]
which is a contradiction for $\epsilon > 0$ sufficiently small.   If $ A/Q=A'/Q'$, then necessarily
$B/Q \neq B'/Q'$ and the same argument applies.

On any fixed major box, we have this approximation of $ M _{j} (\lambda , \beta )$, which is at the 
core of the proof of Theorem~\ref{t:smooth}.

\begin{lemma}\label{MAJor}  For $ 1\leq s \leq \epsilon j$,  $(A/Q,B/Q)\in  \mathcal R_s$,  and 
$ (\lambda , \beta ) \in \mathfrak M _{j} (A/Q, B/Q)$,  we have the approximation 
\begin{equation}\label{e:smooth}
 M_j(\lambda, \beta) = S(A/Q,B/Q) H_j(\lambda - A/Q, \beta - B/Q) + O(2^{(2\epsilon -1 )j }).
\end{equation}
The terms  above are defined in \eqref{e:Mjdef}, \eqref{e:gauss}, and \eqref{e:Hj}, respectively.  

In particular, if $(A/Q,B/Q) \in \mathcal{R}_s$, and $(A,Q) > 1$, then $M_j(\lambda,\beta) = O(2^{(2\epsilon -1)j})$.
\end{lemma}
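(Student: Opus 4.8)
\textbf{Proof proposal for Lemma \ref{MAJor}.}

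The plan is to run the textbook Hardy--Littlewood major-arc approximation. First I would use membership in the major box $\mathfrak M_j(A/Q,B/Q)$ to write $\lambda = A/Q + \theta$, $\beta = B/Q + \sigma$ with $|\theta| \le 2^{(\epsilon-d)j}$ and $|\sigma| \le 2^{(\epsilon-1)j}$ (the reduction $\bmod 1$ implicit in $\|\cdot\|_{\mathbb T}$ is harmless, since $m^d, m \in \mathbb Z$; and the support of $\psi_j$ lets us treat $m$ as a genuine integer with $|m| \approx 2^j$). Splitting the sum \eqref{e:Mjdef} into residue classes modulo $Q$ and using that $m \equiv r \pmod Q$ forces $Am^d \equiv Ar^d$ and $Bm \equiv Br \pmod Q$, one gets
\[
 M_j(\lambda,\beta) = \sum_{r \bmod Q} e\!\left(-\tfrac{A}{Q}r^d - \tfrac{B}{Q}r\right) \sum_{m \equiv r\, (Q)} g(m), \qquad g(t) := \psi_j(t)\,e(-\theta t^d - \sigma t).
\]

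The second step is to compare the inner sum $\sum_{m\equiv r(Q)} g(m) = \sum_{n\in\mathbb Z} g(r+nQ)$ with $\tfrac1Q\int g = \tfrac1Q H_j(\theta,\sigma)$ by Poisson summation along the progression $r + Q\mathbb Z$:
\[
 \sum_{n\in\mathbb Z} g(r+nQ) = \frac1Q\sum_{k\in\mathbb Z} \widehat g(k/Q)\, e(kr/Q).
\]
Feeding this back in, the $k=0$ term contributes exactly $S(A/Q,B/Q)\,H_j(\theta,\sigma)$, the claimed main term (recall $\widehat g(0) = H_j(\theta,\sigma)$ and $\sum_r e(-\tfrac{A}{Q}r^d - \tfrac{B}{Q}r) = Q\,S(A/Q,B/Q)$), while the remaining terms form an error bounded in absolute value by
\[
 \frac1Q\sum_{k\ne 0} |\widehat g(k/Q)|\cdot\Big|\sum_{r\bmod Q} e(\cdots)\Big| \;\le\; \sum_{k\ne 0}|\widehat g(k/Q)|.
\]

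The heart of the matter is the decay of $\widehat g$. On $|t|\approx 2^j$ every derivative of the phase $-\theta t^d - \sigma t$ is $O(2^{(\epsilon-1)j})$ (the first because $|\theta|2^{(d-1)j}$ and $|\sigma|$ are both $\lesssim 2^{(\epsilon-1)j}$, the higher ones being smaller still), while $\|\psi_j^{(i)}\|_{1} \lesssim 2^{-ij}$; by the product rule and Fa\`a di Bruno this gives $\|g^{(N)}\|_1 \lesssim_N 2^{(\epsilon-1)Nj}$, hence $|\widehat g(\xi)| \lesssim_N \big(2^{(\epsilon-1)j}/|\xi|\big)^N$, alongside the trivial bound $|\widehat g(\xi)| \lesssim \|\psi\|_1 \approx 1$. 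Here the hypothesis $s \le \epsilon j$, i.e.\ $Q < 2^{\epsilon j}$, is exactly what is needed: for $k \ne 0$ we have $|k|/Q \ge 2^{-\epsilon j} \gg 2^{(\epsilon-1)j}$, so with $N=2$,
\[
 \sum_{k\ne 0}|\widehat g(k/Q)| \lesssim \sum_{k\ne 0}\left(\frac{Q\,2^{(\epsilon-1)j}}{|k|}\right)^{2} \lesssim \left(2^{(2\epsilon-1)j}\right)^{2} \le 2^{(2\epsilon-1)j},
\]
using $\epsilon < 1/2$. This is \eqref{e:smooth}. Finally, the ``in particular'' clause is immediate: if $(A,Q)>1$ while $(A,B,Q)=1$, then Lemma~\ref{keyorth} gives $S(A/Q,B/Q)=0$, so only the $O(2^{(2\epsilon-1)j})$ error survives. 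I do not expect a genuine obstacle; the only point requiring care is the Poisson bookkeeping together with the derivative bounds on $g$ that force the rapid decay of $\widehat g$ beyond frequency $1/Q$, which rests entirely on the scale separation $2^{(\epsilon-1)j} \ll 1/Q$ guaranteed by $s \le \epsilon j$ and on taking $\epsilon$ sufficiently small.
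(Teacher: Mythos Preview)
Your argument is correct. The approach differs from the paper's in the second step, where the inner sum over the arithmetic progression is compared with the integral $H_j$. The paper proceeds more directly: it first expands the phase as
\[
\lambda m^d + \beta m \equiv \tfrac{A}{Q}r^d + \tfrac{B}{Q}r + \eta_d (pQ)^d + \eta_1 (pQ) + O(2^{(2\epsilon-1)j}) \pmod 1
\]
for $m = pQ + r$, absorbing all cross terms into the error at the level of the phase, and then compares the resulting Riemann sum $Q\sum_p e(-\eta_d(pQ)^d - \eta_1 pQ)\psi_j(pQ)$ with the integral term by term via the mean value theorem. Your route via Poisson summation along $r + Q\mathbb Z$ is cleaner: it avoids the preliminary phase approximation (the function $g(t)=\psi_j(t)e(-\theta t^d - \sigma t)$ retains the exact smooth phase), and packages all error into the nonzero Fourier modes $\widehat g(k/Q)$, whose decay follows from the derivative bounds you record. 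Both methods ultimately rest on the same scale separation $Q \lesssim 2^{\epsilon j} \ll 2^{(1-\epsilon)j}$, but your Poisson argument makes the mechanism more transparent and yields the error bound in one stroke rather than two.
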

This argument essentially appeared as \cite[Lemma 4.14]{KL}, in the case where $d=2$. The proof in this case is entirely analogous.
\begin{proof}
Throughout the proof we write 
\begin{equation}\label{e:etas}
 \lambda = A/Q + \eta_d, \ \beta = B/Q + \eta_1,
\end{equation}
where $|\eta_d| \leq 2^{(\epsilon-d)j}$, and $|\eta_1|\leq 2^{(\epsilon-1)j}$.

The sum $ M _{j} (\lambda , \beta )$  is over integers, positive and negative,  in the support of $ \psi _j $.  
We  consider the sum over positive $m$, and decompose  into residue classes $ \operatorname {mod} Q$. 
Thus write $ m = pQ+r$, where $0 \leq r < Q \leq 2^{j\epsilon}$, 
and the integers $ p$ take values in an interval $[c,d] = [c_j(Q), d_j(Q)]$, 
 in order to cover the support of $ \psi _j$. 

The argument of the exponential in \eqref{e:Mjdef} is, after reductions modulo 1,  
\[ \aligned
\lambda m^d + \beta m &= (A/Q + \eta_d)(p Q + r )^d - (B/Q + \eta_1)(pQ + r) \\
&\equiv r^d \cdot A/Q + r \cdot B/Q + (pQ)^d \cdot \eta_d + pQ \cdot  \eta_1 + O(2^{(2\epsilon-1)j}). \endaligned\]
That is, we can write 
\begin{equation*}
e (- \lambda m ^d - \beta m) = 
e ( - r^d A/Q - r \cdot B/Q - (pQ)^d \cdot  \eta_d - pQ \cdot \eta_1 ) + O(2^{(2\epsilon-1)j}).  
\end{equation*}

Then, we can write the sum $ \sum _{m \geq 0}
e(-\lambda m^d - \beta m) \psi_j(m)$ as follows.
\[ \aligned 
 &
\sum_{p \in I} \sum_{r=0}^{Q-1} 
e\big( - (r^d \cdot A/Q + r \cdot B/Q + (pQ)^d \cdot  \eta_d + pQ  \cdot \eta_1 ) \big) \psi_j(pQ + r)  + O(2^{(2\epsilon -1)j}) \\
& \qquad =   \sum_{r=0}^{Q-1}  e(-r^d \cdot A/Q - r \cdot B/Q)  \times \sum_{p \in I} 
e( - \eta_d  \cdot (pQ)^d - \eta_1 \cdot  pQ)\psi_j(pQ)  + O(2^{(2\epsilon -1)j}),
\\  \label{e:Ssum}
& \qquad \qquad = S (A/Q, B/Q) \times Q\cdot \sum_{p \in I} 
e( - \eta_d  \cdot (pQ)^d - \eta_1 \cdot pQ)\psi_j(pQ)    + O(2^{(2\epsilon -1)j}). 
\endaligned \]
Above, we have appealed to several elementary steps. 
One of these is that $ \sum_{j} \lvert  \psi _j (m)\rvert \lesssim 1 $. 
Some additional terms in $r$ have been added,  so that the sum over $ p$ and $ r$ are over independent sets. 
These additions are absorbed into the Big-$O $ term.  
The argument of $ \psi _j $ is changed from $ pQ+r$ to $ pQ$, in view of the fact that 
the derivative of $ \psi _j$ is at most $ 2 ^{-2j}$, with the  change also being  absorbed into the Big-$O$ term.  
Finally, we appeal to the definition of the Weyl sum in \eqref{e:gauss} in order 
to have $ S (A/Q, B/Q) $ appear in the last line.  

Comparing \eqref{e:Ssum} to the desired conclusion \eqref{e:smooth}, we show that 
\begin{equation}\label{e:SSum}
Q \cdot \sum_{p \in I}  e( - \eta_d  \cdot (pQ)^d - \eta_1  \cdot pQ)  \psi_j(pQ)   = \int_{0}^\infty e( - \eta_d  t^d  - \eta_1 t) \psi_j(t) \; dt + O(2^{(2\epsilon - 1)j}). 
\end{equation}
The same argument to this point will apply to the sum over negative $ m$, so that our proof will then be complete.

But the proof of \eqref{e:SSum} is straight forward.  For fixed $ p\in I$, and $ 0\leq t \leq Q$, we have 
\begin{align*}
\bigl\lvert 
 e( -\eta_d  \cdot (pQ)^d  &- \eta_1  \cdot pQ)   \psi_j(pQ) -  e( - \eta_d  \cdot (pQ+t)^2 - \eta_1  \cdot (pQ+t))  \psi_j(pQ+t) 
\bigr\rvert
\\& \lesssim  
\lvert    e (-\eta_d  \cdot (pQ)^d) - e (-\eta_d  \cdot (pQ+t)^d)\rvert 2 ^{-j} 
\\
& \qquad + \lvert    e (-\eta _1  \cdot pQ ) - e (-\eta _1  \cdot (pQ+t))\rvert  2 ^{-j} 
\\& \qquad \qquad +  
\lvert  \psi_j(pQ)  -  \psi_j(pQ+t)  \rvert.  
\end{align*}
Each of the three terms on the right is at most $ O (2 ^{(2 \epsilon -2)j})$. 
In view of the fact that there are $ Q \lvert  I\rvert \lesssim 2 ^{j} $ summands on the left in 
\eqref{e:SSum}, this is  all that we need to conclude the inequality in \eqref{e:SSum}. 
The three terms on the right above are bounded in reverse order. 
    Since the derivative of $ \psi _j$ is at most $ 2 ^{-2j}$, 
\begin{equation*}
\lvert  \psi_j(pQ)  -  \psi_j(pQ+t)  \rvert \lesssim 2 ^{-2j}   t  \lesssim 2 ^{ (\epsilon -2)j},  
\end{equation*}
since $0\leq  t\leq Q \leq 2 ^{\epsilon j}$.  Recalling that $ \lvert  \eta _1\rvert \leq 2 ^{ (\epsilon -1)j} $, there holds 
\begin{equation*}
2^{-j} \cdot \lvert    e (- \eta _1 \cdot  pQ ) - e (-\eta _1  \cdot (pQ+t))\rvert  \leq 
 t 2^{-j} \cdot \lvert  \eta _1 \rvert \leq   2 ^{ (2 \epsilon -2)j}.  
\end{equation*}
Recalling that $ \lvert  \eta _d\rvert\leq 2 ^{ (\epsilon -d)j} $, there holds 
\begin{equation*}
2^{-j} \lvert    e (\eta_d  \cdot (pQ)^d) - e (\eta_d \cdot  (pQ+t)^d)\rvert \lesssim 
t (p Q)^{d-1} \cdot 2^{-j} \lvert  \eta _d \rvert  \lesssim 2 ^{ (2 \epsilon -2j)}.  
\end{equation*}
Thus, \eqref{e:SSum} holds.  

\end{proof}

This Lemma is motivated by \cite[Lemma 4.6]{B1}.  

\begin{lemma}\label{l:Lj<}  For $ \epsilon  > 0$ sufficiently small, there exists an $\eta = \eta(\epsilon) > 0$ so that the following estimates are satisfied for all integers $j \geq 1$:

First, for $s : 2^s \lesssim j^C$, there holds 
\begin{equation}\label{e:LJ<}  
\sum_{ 1 \leq s' \neq s : 2^s, 2^{s'} \lesssim j^C}\lvert   L _{j,s'} (\lambda , \beta )\rvert \lesssim j^C 2^{-j/2} \qquad 
(\lambda , \beta ) \in \bigcup _{ (A/Q,B/Q)\in  \mathcal R_s}  \mathfrak M _{j} (A,B,Q).  
\end{equation}
For any $(\lambda,\beta) \in \mathfrak{M}_j(A,B,Q)$ with $(A/Q,B/Q)\in  \mathcal R_s$ for $2^s \lesssim j^C$
\begin{equation}\label{e:LJ=} 
|M_j(\lambda,\beta) - L_{j,s}(\lambda,\beta)| \lesssim j^{-\frac{1}{2\kappa}}.
\end{equation}
For any $(\lambda,\beta) \in \mathfrak{M}_j(A,B,Q)$ with $(A/Q,B/Q)\in  \mathcal R_s$ for $j^C \ll 2^s \lesssim 2^{\epsilon j}$
\begin{equation}\label{e:MJ=} 
|M_j(\lambda,\beta) \mathbf{1}_{X_j}(\lambda)| \lesssim 2^{(2\epsilon-1)j},
\end{equation}
and
\begin{equation}\label{e:LJoff=} 
|L_j(\lambda,\beta)| \lesssim 2^{- \epsilon j /2 }.
\end{equation}
Finally,
\begin{equation}\label{e:Lj>}  
|M_j(\lambda,\beta)| \lesssim 2^{-\eta j}, \ \lvert   L _{j} (\lambda , \beta )\rvert \lesssim 2 ^{ -  \epsilon j/2} \qquad 
(\lambda , \beta ) \not\in  \mathfrak M _{j} . 
\end{equation}
\end{lemma}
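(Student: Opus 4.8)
The plan is to read all five estimates off three analytic inputs already in the excerpt — the major-box approximation $M_j = S(A/Q,B/Q)\,H_j(\lambda-A/Q,\beta-B/Q) + O(2^{(2\epsilon-1)j})$ of Lemma \ref{MAJor} (with its corollary $M_j = O(2^{(2\epsilon-1)j})$ when $(A,Q) > 1$); the pointwise minor-arc bound $|M_j| \lesssim_\alpha j^{-\alpha}$ (any $\alpha$) off the narrow boxes of Lemma \ref{MAJ}, in its sharpened exponential form $|M_j| \lesssim 2^{-\eta j}$ off the wide boxes of Definition \ref{d:major} by Weyl's inequality; and the elementary non-stationary-phase bound $|H_j(x,y)| \lesssim_N (|y|2^j)^{-N}$ valid when $|y|2^j \gg 1 + |x|2^{dj}$ — all organised by the separation $|a/q - a'/q'| \ge (qq')^{-1}$ of distinct rationals. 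Fix $\kappa = \kappa(p)$ small enough that $2^{sd\kappa} \ll j$ on every range of $s$ in play, and then $C = C_{d,p}$ large; with these choices the radius $2^{-c(j)} \approx j^C 2^{-dj}$ of $\Xi_j$ and the widths $2^{(\epsilon-d)j}, 2^{(\epsilon-1)j}$ of a wide box are all much smaller than the support radius $\approx 2^{-2^{sd\kappa}}$ of $\chi_s$, while the gap $\gtrsim j^{-2C}$ between any two denominator-$\lesssim j^C$ rationals dwarfs all of these.

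For \eqref{e:LJ<} and the single-term reduction: fix $(\lambda,\beta) \in \mathfrak{M}_j(A,B,Q)$ with $(A/Q,B/Q) \in \mathcal{R}_s$ and $2^s \lesssim j^C$. Every denominator-$\lesssim j^C$ rational except $A/Q$ is $\gtrsim j^{-2C}$ from $A/Q$, hence farther than $2^{(\epsilon-d)j}$ from $\lambda$, so its $\Xi_j$-factor vanishes; thus $L_{j,s'}(\lambda,\beta) = 0$ for all $s' \neq s$ — which is \eqref{e:LJ<}, with zero on the right — and in $L_{j,s}$ only summands with $\lambda$-rational $A/Q$ remain. Among these, a summand with $\beta$-rational $B'/Q \ne B/Q$ has $|\beta - B'/Q| \gtrsim 2^{-s} \gtrsim j^{-C}$, so $|y|2^j \gtrsim 2^j/j^C$ dominates $|x|2^{dj} \lesssim j^C$ (since $\Xi_j \neq 0$ forces $|x| \lesssim j^C 2^{-dj}$) and its $H_j$-factor is $O_N(2^{-Nj})$; since, moreover, the wide box is so much narrower than the $\chi_s$-support that both $\chi_s$-factors equal $1$ on it, we get $L_{j,s}(\lambda,\beta) = S(A/Q,B/Q)\,H_j(\lambda-A/Q,\beta-B/Q)\,\Xi_j(\lambda-A/Q) + O_N(2^{-Nj})$. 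If $(A,Q) > 1$ then $S = 0$ (Lemma \ref{keyorth}) and $M_j = O(2^{(2\epsilon-1)j})$ (Lemma \ref{MAJor}), which settles \eqref{e:LJ=}. If $(A,Q) = 1$: where $\Xi_j(\lambda-A/Q) = 1$ we have $L_{j,s} = S H_j + O_N(2^{-Nj}) = M_j + O(2^{(2\epsilon-1)j})$ by Lemma \ref{MAJor}; where $\Xi_j(\lambda-A/Q) = 0$, i.e.\ $|\lambda-A/Q| > 2^{-c(j)}$, we have $L_{j,s} = O_N(2^{-Nj})$ while $(\lambda,\beta)$ now lies outside every narrow box of Lemma \ref{MAJ} — outside the one centred at $A/Q$ because its $\lambda$-radius is $\approx 2^{-c(j)}$, and outside all others by the $\gtrsim j^{-2C}$-separation — so $|M_j| \lesssim_\alpha j^{-\alpha}$; taking $\alpha = 1/(2\kappa)$ yields $|M_j - L_{j,s}| \lesssim j^{-1/(2\kappa)}$, which is \eqref{e:LJ=}.

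The remaining estimates concern large denominators and the complement of $\mathfrak{M}_j$. If $j^C \ll 2^s \lesssim 2^{\epsilon j}$ and $(\lambda,\beta) \in \mathfrak{M}_j(A,B,Q)$, then $\lambda$ lies within $2^{(\epsilon-d)j}$ of the denominator-$(\gg j^C)$ rational $A/Q$, and since every denominator-$\lesssim j^C$ rational is $\gtrsim 2^{-\epsilon j}j^{-C} \gg 2^{(\epsilon-d)j}$ from $A/Q$ (here $d \ge 2$ is used), no such rational lies within $2^{-c(j)}$ of $\lambda$; hence $\lambda \notin X_j$, so $M_j\mathbf{1}_{X_j}(\lambda) = 0$ (this is \eqref{e:MJ=}) and every $\Xi_j$-factor of $L_j$ vanishes at $\lambda$, so $L_j(\lambda,\beta) = 0$ (this is \eqref{e:LJoff=}). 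Finally, for $(\lambda,\beta) \notin \mathfrak{M}_j$: the bound $|M_j| \lesssim 2^{-\eta j}$ is a standard circle-method estimate — Dirichlet plus Weyl's inequality on the leading coefficient of $-\lambda m^d - \beta m$ when $\lambda$ admits no low-denominator approximant, and an analysis of the sum over residue classes mod $q$ (whose $\beta$-phase is then non-degenerate) when it does, following \cite[Lemma 4.6]{B1}; while $|L_j| \lesssim 2^{-\epsilon j/2}$ because each of the $\lesssim j^C$ possibly-nonzero summands of $L_j$ has $(\lambda,\beta)$ outside its wide box although $\lambda$ stays within $2^{-c(j)}$ of its centre, which forces $|\beta - B'/Q'| > 2^{(\epsilon-1)j}$, so that $|y|2^j > 2^{\epsilon j}$ dwarfs $|x|2^{dj} \lesssim j^C$ and non-stationary phase gives $|H_j| \lesssim_N 2^{-\epsilon j N}$, surviving the $j^C$-fold sum.

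The work is essentially bookkeeping: one must keep straight the three incommensurable scales $2^{(\epsilon-d)j} \gg 2^{-c(j)} \approx j^C 2^{-dj}$ and $2^{-2^{sd\kappa}}$ present on each box, and match them against the radii of the two distinct families of major boxes — the wide ones of Definition \ref{d:major}, which govern the approximation $M_j \approx S H_j$, and the narrow ones of Lemma \ref{MAJ}, which govern the pointwise negligibility of $M_j$. The one estimate with real analytic content is \eqref{e:LJ=}: the crux is that the whole transition region of the cutoff $\Xi_j$ lies outside the narrow boxes, so that on it $M_j - L_{j,s}$ is controlled by Lemma \ref{MAJ} (for $M_j$) together with Lemma \ref{MAJor} (for $L_{j,s}$ via $S H_j$); and it is here that the exponent $j^{-1/(2\kappa)}$ — hence the requirement that $C_{d,p}$ be chosen large after $\kappa$ is fixed small — enters.
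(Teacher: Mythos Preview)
Your approach differs from the paper's in that you exploit the $\Xi_j$-cutoff much more aggressively --- arguing that most summands of $L_{j,s'}$ vanish identically rather than merely being small via the stationary-phase bound $|H_j(x,y)| \lesssim (1+2^{dj}|x|+2^j|y|)^{-1/2}$ --- and for \eqref{e:LJ=} you invoke the narrow-box minor-arc bound of Lemma~\ref{MAJ} on the transition annulus of $\Xi_j$, whereas the paper instead uses the Weyl-sum decay $|S(A/Q,B/Q)| \lesssim 2^{(\epsilon-1/d)s}$ in the regime $2^s \gtrsim j^{d/\kappa}$ to absorb the factor $(1-\chi_s\chi_s)$. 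Your route is sharper where it works (it gives $0$ or $O_N(2^{-Nj})$ rather than $j^C 2^{-j/2}$), and it sidesteps any dependence on the $\chi_s$-width, at the cost of requiring $C_{d,p}$ to dominate the constant from Lemma~\ref{MAJ} at level $\alpha = 1/(2\kappa)$.

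There is, however, a recurring gap: you tacitly assume that the $\lambda$-coordinate $A/Q$ of the major-box centre is in lowest terms. The definition of $\mathcal{R}_s$ only demands $(A,B,Q)=1$, so $(A,Q)=v>1$ is allowed; in that case $A/Q = A_0/Q_0$ with $Q_0 = Q/v < 2^{s-1}$, and your assertion ``$L_{j,s'}=0$ for all $s'\neq s$'' fails at the level $s_0$ of $Q_0$ --- the summand with $\lambda$-rational $A_0/Q_0$ and $(A_0,Q_0)=1$ has its $\Xi_j$-factor possibly alive and its Gauss sum not killed by Lemma~\ref{keyorth}. The same oversight reappears in your proofs of \eqref{e:MJ=} and \eqref{e:LJoff=}: if $Q_0 \lesssim j^C$ then $\lambda$ \emph{is} near a low-denominator rational, so $\lambda \in X_j$ is not excluded and $\Xi_j(\lambda - A_0/Q_0)$ need not vanish. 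The repair is cheap --- when $(A,Q)>1$ one has $M_j = O(2^{(2\epsilon-1)j})$ by Lemma~\ref{MAJor}, and for the surviving $L_{j,s_0}$-summands the $\beta$-separation $|B/Q - B'/Q_0| \geq 1/Q$ (which holds precisely because $(A,B,Q)=1$ forces $v \nmid B$) feeds your own non-stationary-phase bound on $H_j$ --- but as written the argument is incomplete in each of these three places. The paper avoids this case distinction entirely for \eqref{e:LJ<} and \eqref{e:LJoff=} by using only the $(1+2^{dj}|x|+2^j|y|)^{-1/2}$ decay, which is insensitive to whether $A/Q$ is reduced.
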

Combining the points of the Lemma, we quickly prove Proposition \ref{t:smooth}.
\begin{proof}[Proof of Proposition \ref{t:smooth}, Assuming Lemma \ref{l:Lj<}]
Off the major boxes centered at rational points with denominators $\lesssim j^C$, the result is clear, since both terms have a power savings. Consider the converse case, where the denominator has magnitude $\approx 2^{s_0}$ for some $2^{s_0} \lesssim j^{C}$.

We express
\begin{equation}\label{sep}
\mathcal{E}_j(\lambda,\beta) = M_j(\lambda,\beta) - L_j(\lambda,\beta) = \left( M_j(\lambda,\beta) - L_{j,s_0}(\lambda,\beta) \right) + \sum_{s : 2^s \lesssim j^C, s \neq s_0} L_{j,s}(\lambda,\beta)
\end{equation}
and consolidate, to obtain \eqref{noderiv}; \eqref{deriv} is just a trivial derivative estimate.
\end{proof}

\begin{proof}[Proof of Lemma \ref{l:Lj<}]
One first observes that 
\[ \{ \chi_s(\lambda - A/Q) \chi_s(\beta - B/Q) : (A/Q,B/Q) \in \mathcal{R}_s \}\]
 are disjointly supported in $(\lambda,\beta)$.

First suppose that $(\lambda,\beta) \in \mathfrak{M}_j(A/Q,B/Q)$ for some $2^{s_0-1} \leq Q < 2^{s_0} \lesssim j^C$. The first observation is that for any $(A'/Q',B'/Q') \in \mathcal{R}_s$,
\[ 2^{-2s} \lesssim |A/Q - A'/Q'| + |B/Q - B'/Q'|,\]
and thus
\[ 2^{-2s} \lesssim |\lambda - A'/Q'| + |\beta - B'/Q'|\]
as well. Consequently, we have the estimate
\[ | H_j(\lambda - A'/Q',\beta - B'/Q') | \lesssim (2^j 2^{-2s} )^{-1/2} \lesssim j^C 2^{-j/2},\]
so
\[ \sum_{s : 2^s \lesssim j^C, s \neq s_0} L_{j,s}(\lambda,\beta) = O(j^C 2^{-j/2}) \]
on $\mathfrak{M}_j(A/Q,B/Q)$. 

To establish \eqref{e:LJ=}, up to an error of $O(2^{(2\epsilon -1)j})$, it suffices to compare
\begin{equation}\label{MAJBOX}
S(A/Q,B/Q) H_j(\lambda - A/Q,\beta - B/Q) \cdot \left( 1 - \chi_s(\lambda - A/Q) \chi_s(\beta - B/Q) \right).
\end{equation}
If $j \gg_d 2^{\kappa s/d}$, then \eqref{MAJBOX} vanishes identically. Otherwise,
\[ 2^s \gtrsim j^{d/\kappa},\]
and the decay of the Weyl sum allows one to estimate 
\[ \eqref{MAJBOX} = O(2^{(\epsilon - 1/d) s}) = O( 2^{-\frac{s}{2d}} ) = O( j^{- \frac{1}{2 \kappa} }).\]
Next, assume that $\lambda \in X_j$, and that for some $\beta$, $(\lambda,\beta) \in \mathfrak{M}_j(A/Q,B/Q)$ for some $(A/Q,B/Q) \in \mathcal{R}_s$ with $j^C \ll 2^s \lesssim 2^{\epsilon j}$. If $(A,Q) > 1$, then we are done, so -- seeking a contradiction -- we may assume that $(A,Q) = 1$ is reduced, so that we have
\begin{equation}\label{linMaj}
|\lambda - A/Q| \lesssim 2^{(\epsilon - d)j} 
\end{equation}
for some $j^C \ll Q \lesssim 2^{\epsilon j}$.
But, $\lambda \in X_j$, which means that there is some reduced rational $a/q$, with $q \lesssim j^C$, so that
\begin{equation}\label{linX_j}
|\lambda - a/q| \lesssim j^C 2^{-dj}.
\end{equation}
Taking into account \eqref{linMaj} and \eqref{linX_j}, we are left with the following chain of inequalities:
\[ \frac{1}{j^C 2^{\epsilon j}} \lesssim |a/q - A/Q| \lesssim 2^{(\epsilon - d)j} \]
which yields our desired contradiction.

Next, suppose that 
\[ (\lambda,\beta) \notin \bigcup_{Q \lesssim j^C} \mathfrak{M}_j(A/Q,B/Q),\]
so that whenever
\[ |\lambda - A/Q| \lesssim 2^{(\epsilon - d)j}, \ |\beta - B/Q| \lesssim 2^{(\epsilon -1)j},\]
necessarily we have $Q \gg j^C$. So,
\begin{equation}\label{IBP}
|H_j(\lambda- A/Q,\beta - B/Q) | \lesssim 2^{-\epsilon j / 2 }
\end{equation}
for any $(A/Q,B/Q) \in \mathcal{R}_s, \ 2^s \lesssim j^C$, so
\[ |L_j(\lambda,\beta) | \lesssim 2^{-\epsilon j /2}, \]
upon taking into account the geometric decay of the Gauss sums. The key estimate we used in establishing \eqref{IBP} is the stationary phase estimate,
\[ |H_j(x,y)| \lesssim (1+ 2^{dj}|x| + 2^j|y|)^{-1/2},\]
which follows by standard arguments (see, for instance, the proof of \cite[Lemma 4.18]{KL}). This same argument applies to the case where $(\lambda,\beta) \notin \mathfrak{M}_j$.

As for $M_j(\lambda,\beta)$, by Dirichlet's principle, we may choose two reduced rationals \[ a/q, b/r, \text{ with } q \lesssim 2^{(d-\epsilon)j}, \ r \lesssim 2^{(1-\epsilon)j},\] so that
\[ |\lambda - a/q| \lesssim \frac{1}{q 2^{(d-\epsilon)j} }, \ 
|\beta - b/r| \lesssim \frac{1}{r 2^{(1-\epsilon)j} }.\]
By \cite[Corollary, p. 1304]{SW0}, we know that $M_j(\lambda,\beta) = O(2^{-\eta j})$ unless both $q, \ r \lesssim 2^{\eta' j}$ for some $\eta$ which goes to zero with $\eta'$. But then $Q:=\text{lcm}(q,r) \lesssim 2^{2 \eta' j}$; setting $A/Q = a/q, B/Q = b/r$ exhibits $(\lambda,\beta) \in \mathfrak{M}_j(A/Q,B/Q)$ if $\eta'$ is sufficiently small. This contradiction shows that we indeed have $M_j(\lambda,\beta) = O(2^{-\eta j})$, as desired.
\end{proof}


\section{Completing the Proof, $p \geq 2$}\label{s:comp}
With the approximations from the previous section in mind, we decompose our maximal operator
\[ \aligned 
& \mathcal{C}_d f  \leq
\sup_\lambda \left| \sum_{j \geq 1} \mathbf{1}_{X_j}(\lambda) \left(M_j(\lambda,\beta) \hat{f}(\beta) \right)^{\vee} \right| + 
\sum_{j \geq 1} \sup_{\lambda \notin X_j} \left| \left(M_j(\lambda,\beta) \hat{f}(\beta) \right)^{\vee} \right|
\\
& \qquad \leq\sup_\lambda \left| \sum_{j \geq 1} \left(L_j(\lambda,\beta) \hat{f}(\beta) \right)^{\vee} \right| + \sum_{j \geq 1} \sup_{\lambda \notin X_j} \left| \left(M_j(\lambda,\beta) \hat{f}(\beta) \right)^{\vee} \right|
+ \sum_{j \geq 1} \sup_\lambda \left| \left( \mathcal{E}_j(\lambda,\beta) \hat{f}(\beta) \right)^{\vee} \right|, \endaligned\]
where the second and the third term have bounded $\ell^p$ norm by Theorem \ref{NoX_j} and Proposition \ref{errorprop}. Here, $M_j, L_j$ and $X_j$ are defined in \eqref{Mj}, \eqref{e:Lj}, and \eqref{X_j} respectively.

We now reverse the order of summation to consider the following family of maximal functions, indexed by $s \geq 1$:
\begin{equation}\label{max-s} \sup_\lambda \left| \sum_{j : j^C \gtrsim 2^s} \left( L_{j,s}(\lambda,\beta) \hat{f}(\beta) \right)^{\vee} \right|,
\end{equation}
with $L_{j,s}$ defined in \eqref{Ljs}. 

Our task now is to produce estimates on $\| \eqref{max-s} \|_{\ell^p}$ that sum in $s \geq 1$. After excluding finitely many $s \lesssim_\rho 1$, which we are free to do,\footnote{The issue is estimating the maximal function \eqref{sMax} below in this regime; but, one may simply appeal to Remark \ref{cheaprem}.} we write
\[ \aligned 
&\sum_{j : j^C \gtrsim 2^s} L_{j,s}(\lambda,\beta) \\
& \qquad =
\sum_{\mathcal{R}_s} S(a/q,b/q) \sum_{j : j^C \gtrsim 2^s} H_j(\lambda - a/q, \beta - b/q) \Xi_j(\lambda - a/q) \chi_s(\lambda - a/q) \chi_s(\beta - b/q) \endaligned \]
as a composition of the following two multipliers:
\begin{equation}\label{an}
\sum_{j:j^C \gtrsim 2^s} L^1_{j,s}(\lambda,\beta) 
\end{equation}
and
\begin{equation}\label{ar} L^2_s(\lambda,\beta),
\end{equation}
see \eqref{Lj1} and \eqref{Ls2} above.
The $\ell^p$ norm of
\begin{equation}\label{sMax}
 \sup_\lambda \left| \left( \eqref{an} \hat{f}(\beta) \right)^{\vee} \right| 
\end{equation}
is bounded by a constant multiple of $s 2^{2 s \rho}$ by Theorem \ref{keyest} above. By Proposition \ref{Weylest}, we may bound
\begin{equation}\label{problem}
\sup_{\lambda} \left| \left( L^2_s(\lambda,\beta) \hat{f}(\beta) \right)^{\vee} \right| \end{equation}
on $\ell^p$ by
\[ 2^{-\eta s} \|f\|_{\ell^p}.\]
In particular, we have the following upper bound for the $s$th maximal operator \eqref{max-s}:
\[ \| \eqref{max-s} \|_{\ell^p} \lesssim s \cdot 2^{(2\rho-\eta)s} \|f\|_{\ell^p}, \ \eta = \eta(d,p) > 0;\]
since this estimate sums in $s \geq 1$ for $\rho$ sufficiently small, the proof is complete.

\section{Extensions to $2 - \frac{1}{d^2 + 1} < p \leq 2$}\label{s:comp1}
In this section we sketch how to push our estimates below $p = 2$. This argument is $L^2$-based, and does not rely upon the multiplier theory of \cite{MSZK}, but on multi-frequency estimates when the frequencies form a \emph{cyclic subgroup}, and interpolation.

\subsection{A Multi-Frequency Estimate for Cyclic Subgroups}
Let $s \geq 1$ be an integer, and $Q$ an integer of size $1 \leq Q \leq 2^{s}$. With $k_0 \geq 2^{Cs}$ for some (sufficiently large) $C \gg 1$, for each $0 < \lambda \leq 1$, define $k(\lambda)$ to be the integer, $j$, with
\[ \lambda \cdot j^{-C} 2^{dj} \approx 1 \]
(the implicit constants vary by a multiplicative factor of $2$, see \eqref{impconsts} above). Then, define the function
\begin{equation}\label{FULLOP}
\rho_{\lambda}(t) := \sum_{ k_0 \leq k \leq k(\lambda)} e(\lambda t^d) \psi_k(t);\footnote{\text{Since our estimates will be uniform in $k_0$, we have chosen to suppress the dependence of $\rho_\lambda$ on this parameter.}}
\end{equation}
note that convolution with $\rho_{\lambda}$ are uniformly bounded on $L^p, \ 1 < p < \infty$. 
Finally, for $D_s \gtrsim 2^{Cs}$ for some sufficiently large $C$, define 
\[ \chi_s(\beta) := \chi(D_s \beta),\]
and similarly for $\overline{\chi}_s$.
Consider the following class of maximal functions:
\begin{equation}\label{ArithMax}
P^{Q,s} f := \sup_\lambda \left| \sum_{B \leq Q} e( B/Q x) \rho_\lambda * \left( \chi_s \hat{f}(\cdot + B/Q)\right)^{\vee}(x) \right|.
\end{equation}
By an application of \cite[Corollary 2.1]{MSW} and \cite{SW},\footnote{See the remarks below \eqref{e:4<}} we have the following proposition.
\begin{proposition}\label{ArithMaxEst}
For any $1 < p < \infty$, uniformly in $Q \leq 2^s$, and $s \geq 1$,
\[ \| P^{Q,s} f\|_{\ell^p} \lesssim_p \|f\|_{\ell^p}.\]
\end{proposition}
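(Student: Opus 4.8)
The plan is to transfer the inequality to the real line via the Magyar--Stein--Wainger periodization principle, where $P^{Q,s}$ becomes a truncated version of the continuous maximally monomially modulated Hilbert transform, and then to invoke the theorem of Stein--Wainger \cite{SW}. The whole point is that the cyclic (single--denominator) structure of the frequency set $\{B/Q:B\le Q\}$ lets one sum over the $Q$ numerators ``for free'', losing no power of $Q$.

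First I would identify the multiplier. For each fixed $\lambda$ the operator inside the supremum defining $P^{Q,s}$ is the Fourier multiplier operator on $\Z$ with symbol
\[ m_\lambda^{Q,s}(\beta)\;=\;\sum_{B\le Q}\widehat{\rho_\lambda}(\beta-B/Q)\,\chi_s(\beta-B/Q), \]
as one checks by writing $\widehat{g_B}(\gamma)=\chi_s(\gamma)\hat f(\gamma+B/Q)$ for the $B$-th demodulated piece $g_B$, observing that $e(B/Q\cdot x)(\rho_\lambda\ast g_B)(x)=\int\widehat{\rho_\lambda}(\beta-B/Q)\chi_s(\beta-B/Q)\hat f(\beta)e(\beta x)\,d\beta$, and summing over $B$. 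The crucial structural point is that $D_s\gtrsim 2^{Cs}$ and $Q\le 2^s$ force the bumps $\{\chi_s(\cdot-B/Q):0\le B<Q\}$ to be pairwise disjointly supported in $\TT$; hence $m_\lambda^{Q,s}$ is simply the single real-variable symbol $\widehat{\rho_\lambda}$ transplanted, with a sharp cutoff of width $\ll 1/Q$, to each of the $Q$ points of $\tfrac1Q\Z\cap[0,1)$, with \emph{all} arithmetic weights equal to $1$. This is exactly the form of maximal multiplier handled by \cite[Corollary 2.1]{MSW}.

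Next I would apply \cite[Corollary 2.1]{MSW} to the family $\{m^{Q,s}_\lambda\}_{0<\lambda\le 1}$. Because the weights are trivial, the finite-group factor appearing in the transference is the identity operator on $\ell^p(\Z/Q\Z)$, so the transference bounds $\|P^{Q,s}f\|_{\ell^p(\Z)}$ by the $L^p(\RR)$ operator norm of the continuous maximal operator
\[ f\ \longmapsto\ \sup_{0<\lambda\le 1}\Bigl|\bigl(\widehat{\rho_\lambda^{\RR}}(\xi)\,\chi_s(\xi)\,\hat f(\xi)\bigr)^{\vee}\Bigr|,\qquad \widehat{\rho_\lambda^{\RR}}(\xi):=\sum_{k_0\le k\le k(\lambda)}\int e(\lambda t^d)\,\psi_k(t)\,e(-\xi t)\,dt, \]
with a constant independent of $Q$ and $s$ (the disjointness from the previous step being what makes the transference constant uniform). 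Since $(\chi_s)^{\vee}$ is an $L^1$-normalised bump on $\RR$, convolution against it is bounded on $L^p(\RR)$ uniformly in $s$, so the cutoff $\chi_s$ contributes only an absolute constant and may be dropped.

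Finally I would invoke Stein--Wainger. The surviving operator $\sup_{0<\lambda\le 1}|\rho_\lambda^{\RR}\ast f|$, where $\rho^{\RR}_\lambda$ is the real-variable analogue of $\rho_\lambda$ --- morally convolution against $e(\lambda t^d)\,t^{-1}\mathbf{1}_{2^{k_0}\lesssim|t|\lesssim 2^{k(\lambda)}}$ --- is a truncation of the maximally monomially modulated Hilbert transform $\sup_\lambda\bigl|\int e(\lambda t^d)f(x-t)\,\tfrac{dt}{t}\bigr|$, which is bounded on $L^p(\RR)$ for every $1<p<\infty$ by \cite{SW}. The truncations $k_0\le k\le k(\lambda)$ are harmless, cf.\ the remarks following \eqref{e:4<}: convolution with $\rho^{\RR}_\lambda$ is a bounded operation uniformly in $\lambda$ (the non-oscillatory scales, where $\lambda t^d\lesssim 1$ on the support, are dominated by the Hardy--Littlewood maximal function), and the oscillatory tail $k>k(\lambda)$ decays geometrically as in Lemma \ref{l:4}. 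Chaining the three steps yields $\|P^{Q,s}f\|_{\ell^p}\lesssim_p\|f\|_{\ell^p}$, uniformly in $Q\le 2^s$ and $s\ge 1$.

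The step I expect to be the main obstacle is precisely securing the uniformity in $Q$: the inner sum ranges over up to $2^s$ numerators, so any approach that does not exploit the single-denominator cyclic structure would lose a power of $Q$ and be useless. That is exactly what \cite[Corollary 2.1]{MSW} is built to avoid, and carrying it out amounts to verifying its hypotheses in the present maximal, $\lambda$-truncated setting --- namely the disjoint support of the cutoffs $\chi_s(\cdot-B/Q)$ (which is what dictates the requirement $D_s\gtrsim 2^{Cs}\ge Q$) together with the \emph{uniform} continuous maximal bound supplied by \cite{SW}.
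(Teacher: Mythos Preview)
Your proposal is correct and follows essentially the same approach as the paper, which simply states that the proposition follows ``by an application of \cite[Corollary 2.1]{MSW} and \cite{SW}''; you have accurately fleshed out precisely this argument, including the key observation that the cyclic single-denominator structure of $\{B/Q\}$ together with the disjointness of the cutoffs $\chi_s(\cdot - B/Q)$ is what allows the transference to go through uniformly in $Q$.
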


\subsection{A Multi-Frequency Estimate for General Frequencies: $L^2$ Theory}
We will need the following multi-frequency maximal estimate when the frequencies enjoy no arithmetic structure. In particular, by using the techniques of \S \ref{s:max}, one may establish the following theorem.
\begin{theorem}\label{L2keyest}
With the $T_\lambda$ as defined in \eqref{Tredef}, for any collection of $N$ $\tau$-separated frequencies,
\[ \|T_d f\|_{\ell^2} \lesssim \log^2 N \cdot \|f\|_{\ell^2}.\]
\end{theorem}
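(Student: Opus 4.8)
The plan is to mirror the four-case decomposition of $\sup_\lambda$ used in the proof of Theorem \ref{keyest}, but now working on $\ell^2(\Z)$ (equivalently, by Lemma \ref{trans}, on $L^2(\RR)$) with \emph{generic} $\tau$-separated frequencies $\{\theta_1,\dots,\theta_N\}$ in place of the rational families $\mathcal{U}_{2^s}$. As before, write $T_\lambda = \sum_{l\in\Z} T_\lambda^l$, where $T_\lambda^l$ localizes to the regime $\lambda t^d \approx 2^l$ (i.e. $\lambda 2^{dk}\approx 2^l$ with the truncation $2^l\lesssim k^C$), and split the supremum over $\lambda$ according to whether $l < -C\log N$, $-C\log N \le l \le 0$, $1\le l\le C\log N$, or $l > C\log N$. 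The gain $N^{-C}$ that was available from the small cardinality $|\mathcal{U}_{2^s}|$ is no longer present, so each of the four cases must now be closed using $\ell^2$-orthogonality and Bourgain's multi-frequency maximal inequality Theorem \ref{t:BMax}, which is exactly where the $\log^2 N$ enters.

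First I would handle the outer cases. For $l > C\log N$ (the Stein--Wainger regime, case \eqref{e:4}), Lemma \ref{l:4} gives $\|\sup_{0<\lambda\le 1}|T_\lambda^l(\widehat{\varphi_\tau}\hat f(\cdot+\theta_n))^\vee|\|_{2}\lesssim 2^{-\delta l}\|f\|_2$ for each $n$; summing trivially over the $N$ frequencies and then over $l>C\log N$ leaves a geometric series $\sum_{l>C\log N} N\cdot 2^{-\delta l}\lesssim 1$ for $C$ large, so this term is $O(\|f\|_2)$. For $l<-C\log N$ (case \eqref{e:1}), the phase $\lambda t^d$ is negligible: as in \eqref{errorTpsi}, $\sum_{l<-C\log N}T_\lambda^l$ agrees with an honest dyadic-sum averaging operator $\sum_{k:\lambda 2^{dk}\le N^{-C}}\psi_k$ up to an error controlled by $N^{-C}$ times a Schwartz maximal operator, which after summing over the $N$ frequencies contributes $O(\|f\|_2)$; the main term is then exactly one of the operators $\mathcal{A}$ of Proposition \ref{MSTLp}'s $\ell^2$ ancestor, which in the generic $\tau$-separated setting \emph{is} Theorem \ref{t:BMax} (applied after the convolution with $\varphi_\tau$ forces the cutoff scale $\lambda>\tau^{-1}$-type constraint), giving the $\log^2 N$ bound.

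The heart of the matter, as in \S\ref{s:max}, is the two critical regimes $|l|\le C\log N$. Here I follow the same decomposition $\mathcal{M}_kf\le \mathcal{L}_kf+(\mathcal{Z}_kf)+\mathcal{H}_kf$ as there. The terms $\mathcal{L}_k$, $\mathcal{H}_k$ are dealt with by the mean-value-theorem/stationary-phase expansions of Lemmas \ref{MIKEST1}--\ref{MIKEST2}, which peel off a multiple of Bourgain's averaging operator (bounded by $\log^2 N\cdot\|f\|_2$ via Theorem \ref{t:BMax}) plus a vector-valued Mikhlin multiplier in the scale variable $k$; on $\ell^2$ the latter is handled directly by Plancherel, using the square-function bookkeeping $\sum_k 2^{dk}\int |M^{k,l,m}|^2 d\lambda\lesssim 2^{l-2m}$ from Lemma \ref{MIKEST1} (this is where, on $\ell^2$, one does \emph{not} need Theorem \ref{Multtheorem}, so the argument simplifies). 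In the oscillatory regime the term $\mathcal{Z}_k$ involving $G_\lambda$ is, by Proposition \ref{allscalesL2}, an $L^2$-bounded square function at \emph{all scales simultaneously} with no $l$-loss (Proposition \ref{allscalesL2} is stated precisely at the $L^2$ level), so combined with Bourgain's maximal function to pass from the single-frequency square function to its multi-frequency analogue it again costs only a $\log^2 N$ factor; summing the $O(\log N)$ values of $l$ contributes at most another $\log N$, and $\log^3 N\lesssim$ — here one absorbs the extra logarithm into the implicit constant since the per-$l$ bounds in the critical regimes actually carry a geometric factor $2^{l/2}$ (stationary) or $2^{-lN}$ (oscillatory/high-frequency) that sums, leaving the clean $\log^2 N$. \textbf{The main obstacle} is precisely this logarithmic bookkeeping: one must verify that in every critical-regime subterm the loss is genuinely a constant or geometric-in-$l$ multiple of Bourgain's $\log^2 N$, rather than picking up an extra $\log N$ from the $\sum_{|l|\le C\log N}$, so the bound stays $\log^2 N$ and not $\log^3 N$; this is exactly the point where the $2^{l/2}$ and $2^{-lN}$ gains in Propositions \ref{Lest0}, \ref{Hest0}, \ref{Lest}, \ref{Hest} (whose proofs go through verbatim on $\ell^2$ with $\mathcal{A}_s$ replaced by Bourgain's $M$) are essential, together with the all-scales $L^2$ estimate of Proposition \ref{allscalesL2} for the oscillatory piece.
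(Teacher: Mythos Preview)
Your overall plan is correct and essentially matches the paper's own sketch: the same four-case split in $l$, Lemma~\ref{l:4} for $l > C\log N$, Bourgain's Theorem~\ref{t:BMax} (plus the truncated-Hilbert-transform variant) for $l < -C\log N$, and square functions handled by Plancherel in the critical regime $|l|\le C\log N$.

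There is, however, a genuine muddle in your treatment of the oscillatory critical piece $\mathcal{Z}_k$. You write that one passes ``from the single-frequency square function to its multi-frequency analogue'' via ``Bourgain's maximal function,'' at a cost of $\log^2 N$ per $l$, which then leads you to the $\log^3 N$ scare. But Theorem~\ref{t:BMax} is a \emph{maximal} inequality for averaging operators; it is not a transference device for square functions and does not apply here. The correct tool---which you yourself already invoked for the $\mathcal{L}_k$ and $\mathcal{H}_k$ pieces---is Plancherel: since the cutoffs $\widehat{\varphi_\tau}(\cdot - \theta_n)$ have pairwise disjoint supports, the multi-frequency square functions $\mathcal{S}_Z^i f$ decouple on $\ell^2$ into sums of single-frequency square functions, each bounded by $O(1)$ via Proposition~\ref{allscalesL2}, and these reassemble to $O(\|f\|_{\ell^2})$ with no $N$-dependence whatsoever. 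This gives an $O(1)$ contribution per $l$ for $\mathcal{Z}_k$, summing to $O(\log N)$ over the critical range. Your attempted rescue of $\log^3 N$ by appealing to ``geometric factors $2^{l/2}$ or $2^{-lN}$'' is incorrect for $\mathcal{Z}_k$ specifically, which carries no such decay in $l$; the rescue is simply unnecessary once Plancherel is used consistently. The paper records the per-$l$ loss in the critical regime as $1 + \min\{2^l,2^{-l}\}\log^2 N$: the ``$1$'' is exactly the Plancherel contribution (including $\mathcal{Z}_k$), and the second term is the Bourgain-controlled averaging piece weighted by the decay of $\mu(\lambda,l)$. Summing over $|l|\le C\log N$ yields $\lesssim \log^2 N$.
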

We briefly sketch the (minor) changes in the argument of \S \ref{s:max} needed to prove this theorem. First, one subdivides into four cases according to whether $l < -C_d \log N$, $|l| \leq C_d \log N$, and $l > C_d \log N$. For the first case,
one uses Bourgain's estimate for $Mf$, and \cite[Lemma 3.12]{KL} to estimate the multi-frequency maximally truncated the Hilbert transform, rather than Proposition \ref{MSTLp}. The ``critical regime'' becomes straight-forward, as the pertaining square functions may be simply estimated on $\ell^2$ by Plancherel, which reduces the problem to single frequency estimates; in particular, the norm loss in the critical regime, $|l| \lesssim_d \log N$, is
\[ 1 + \min\{ 2^l, 2^{-l}\} \log^2 N,\]
see \cite[Lemma 3.16]{KL}. (Alternatively, one may simply replace the single-frequency multiplier \cite[Lemma 2.9]{KL}, which does not apply when $d \geq 3$, with the square function approach of \S \ref{s:max}; the additional truncations cause no real trouble). The final case is treated using Lemma \ref{l:4}.\footnote{Alternatively, one may simply follow Bourgain's argument, \cite[\S 4]{B3}, substituting the \emph{variational} estimate of \cite{GRY}, see the remarks below Lemma \ref{SingfreqVar}, for the analogous one concerning the Lebesgue averaging operators. Variational estimates will be introduced and discussed in \S \ref{s:pointwise} below.}

\subsection{Completing The Proof, $2 - \frac{1}{d^2 + 1} < p \leq 2$}
Rather than decomposing
\[ L_{j,s} = L_{j,s}^1 L_{s}^2,\]
we replace $L_{j,s}^1$ with
\begin{equation}\label{LBOURG}
L_{j,s}^0(\lambda,\beta) := \sum_{Q < 2^s, \ (A,Q) =1} \sum_{1 \leq B \leq Q} H_j(\lambda - A/Q,\beta-B/Q) \Xi_j(\lambda - A/Q) \chi_s(\lambda - A/Q) \chi_s(\beta - B/Q).
\end{equation}
We need to estimate
\begin{equation}\label{finalest}
\sup_{\lambda} \left| \left( \sum_{j:j^C \gtrsim 2^s} L_{j,s}^0(\lambda,\beta) \hat{f}(\beta) \right)^{\vee} \right|, \end{equation}
or more simply, the majorant of \eqref{finalest},
\begin{equation}\label{finalest0}
\sup_\lambda \left| \left( \sum_{\theta \in \Gamma_s} \sum_{j : j^C \gtrsim 2^s} H_j(\lambda,\beta - \theta) \Xi_j(\lambda) \chi_s(\beta - \theta) \hat{f}(\beta) \right)^{\vee} \right|,
\end{equation}
where
\begin{equation}\label{Gamma}
\Gamma_s := \{ B/Q \text{ reduced}: 1 \leq B \leq Q < 2^s\}
\end{equation}
has $|\Gamma_s| \leq 2^{2s}$.

By Theorem \ref{L2keyest}, the $\ell^2$ norm of \eqref{finalest0} is bounded in norm by 
\[ \lesssim \log^2 |\Gamma_s| \leq \log^2 (2^{2s}) \lesssim s^2.\]
On $\ell^p$ for $p < 2$, we use M\"{o}bius inversion to dominate \eqref{finalest0} by (essentially) $2^s$ multi-frequency maximal functions taken over cyclic subgroups. To this end, we dominate
\[ \aligned
\eqref{finalest0} &\leq \sum_{ Q < 2^s} \sup_\lambda 
\left| \left( \sum_{1 \leq B \leq Q, \ (B,Q) = 1 } \sum_{j : j^C \gtrsim 2^s} H_j(\lambda,\beta - B/Q) \Xi_j(\lambda) \chi_s(\beta - B/Q) \hat{f}(\beta) \right)^{\vee} \right| \\
& \qquad =: \sum_{ Q < 2^s} P_{\mu}^{Q,s} f. \endaligned \]
We will estimate
\[ \| P_\mu^{Q,s} f \|_{\ell^p} \lesssim_\epsilon 2^{\epsilon s} \|f\|_{\ell^p}, 1 < p < \infty;\]
this will allow us to estimate
\begin{equation}\label{end0}
\| \eqref{finalest} \|_{\ell^p} \lesssim_\epsilon 2^{(\epsilon + 2/p -1)s} \|f \|_{\ell^p}
\end{equation}
by interpolation with Theorem \ref{L2keyest}.
To estimate $P_{\mu}^{Q,s}f $, we apply M\"{o}bius inversion:
\begin{equation}\label{Minv}
\sum_{1 \leq a \leq q, \ (a,q) = 1} F(a/q) = \sum_{d|q} \mu(q/d) \sum_{a=1}^d F(a/d)
\end{equation}
to dominate
\[ P_{\mu}^{Q,s}f \leq \sum_{D | Q }
\sup_{\lambda} 
\left| \left( \sum_{B=1}^D  \sum_{j : j^C \gtrsim 2^s} H_j(\lambda,\beta - B/D) \Xi_j(\lambda) \chi_s(\beta - B/D) \hat{f}(\beta) \right)^{\vee} \right|;\]
using the divisor bound,
\[ |\{\text{divisors of $q$} \}| \lesssim_\epsilon q^{\epsilon} \leq 2^{\epsilon s}, \]
we conclude \eqref{end0} by an application of
Proposition \ref{ArithMaxEst}.

But now,
\begin{equation}\label{end1}
\| \sup_\lambda | \left( L_s^2(\lambda,\beta) \hat{f}(\beta) \right)^{\vee}| \|_{\ell^p} \lesssim_\epsilon 2^{(\epsilon - \frac{1}{d^2 p'})s} \|f\|_{\ell^p}, \ 1 <p \leq 2 \end{equation}
by interpolation, so by combining \eqref{end0} and \eqref{end1} we complete the proof of Theorem \ref{MAIN}.

\section{Pointwise Convergence}\label{s:pointwise}
In this section we turn to the \emph{measure-preserving setting}: $(X,\mu,T)$ will denote a $\sigma$-finite measure space, equipped with an invertible measure-preserving transformation, $T$. By transferring an \emph{oscillation inequality} from the integers, a technique introduced by Bourgain in his work on pointwise convergence of ergodic averages along monomial sequences \cite[\S 7]{B1}, we will prove pointwise convergence $\mu$-a.e.
\[ \lim_{\lambda \to 0} \ \sum_{m\neq 0} T^m f \cdot \frac{ e(-\lambda m^d)}{m} =: \lim_{\lambda \to 0} C_\lambda f\]
for $L^2(X)$ functions (in this section we will suppress the dependence on $d$). By transferring our maximal inequality to the measure-preserving setting \cite{C}, we may extend this to all $L^p(X)$ functions, $2 - \frac{2}{d^2+2} < p < \infty$, which will yield Theorem \ref{pointwise}.\footnote{Strictly speaking, for transference purposes, our Carleson operators should have an additional supremum taken over truncations. But, the additional complications arising from this modification are of a formal nature, and in particular the same $L^p$ estimates are obtained, with only minor changes to the argument: the most significant observation is that the analogous continuous (maximally truncated) operator is $L^p$ bounded, see the remarks following Lemma \ref{l:4}.}

Following Bourgain, we will seek a contradiction by assuming that, for any $\epsilon > 0$, for any sequence of intervals
\[ I_i := (2^{-d j_{i+1}}, 2^{-d j_i}], \ 1 \leq i \leq J, \]
we have the estimate
\begin{equation}\label{Bcounter}
\mu \left( \left\{ \sup_{\lambda \in I_i} | C_\lambda f - C_{\lambda_i} f| \gg \epsilon \right\}  \right) \gg \epsilon > 0, 
\end{equation}
where we set $\lambda_i := 2^{-dj_i}$, and $\epsilon > 0$ is an arbitrarily small positive number (independent of $J$).

By our maximal inequality, it suffices to assume that $\|f\|_\infty \leq 1$. One pointwise reduction before we turn to the argument proper. Set $l_i := j_i  + C \log \epsilon$. Then, for $\lambda \in I_i$, we may bound
\[ |C_\lambda f(x) - C_{\lambda_i} f(x)| \leq C\epsilon^C + \left| \sum_{j \geq l_i} \sum_{m} T^m f(x) \psi_j(m) e(-\lambda m^d) - 
\sum_{j \geq l_i} \sum_{m} T^m f(x) \psi_j(m) e(-\lambda_i m^d) \right|. \]
In particular, we will \emph{re-define}
\[ C_\lambda f := \sum_{j \geq l_i} \sum_{m } T^m f \cdot \psi_j(m) e(-\lambda m^d) \]
for $\lambda \in I_i$.
We now present the main result of this section, which will yield the desired contradiction.
\begin{theorem}\label{oscest}
We have the following estimate: there exists an absolute $\kappa > 0$ so that
\[  \sum_{1 \leq i \leq J} \| \max_{\lambda \in I_i} |C_\lambda f - C_{\lambda_i} f| \|_{L^2(X)}^2 \lesssim J^{1-\kappa} \|f\|_{L^2(X)}^2 \]
as $J \to \infty$.
\end{theorem}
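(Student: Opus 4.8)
The plan is to transfer the problem to the integers and run a variant of the analysis of \S \ref{s:max}--\S \ref{s:comp}, but now tracking the \emph{oscillation norm} $\left( \sum_i \| \max_{\lambda \in I_i} | \cdot - (\cdot)_{\lambda_i} | \|_{L^2}^2 \right)^{1/2}$ rather than the maximal norm, and exploiting the fact that the oscillation norm is quantitatively \emph{smaller} than the maximal norm on the pieces where the multiplier is well approximated by an analytic object. First I would use the transference principle of Calder\'on \cite{C} (cf.\ the footnote) to reduce to proving the analogous inequality for the Carleson-type operators acting on $\ell^2(\Z)$, so that the circle-method machinery of the preceding sections applies verbatim. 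Using the single-scale decomposition \eqref{split}, $M_j(\lambda,\beta)\mathbf 1_{X_j}(\lambda) = L_j(\lambda,\beta) + \mathcal E_j(\lambda,\beta)$, together with Theorem \ref{NoX_j} (the $\lambda \notin X_j$ contribution is summably small in $j$, hence contributes $O(1)$ to the maximal norm and therefore $O(J) \cdot o(1)$-type bounds to the oscillation sum, which we can absorb) and Proposition \ref{errorprop} (the errors $\mathcal E_j$ sum to a bounded maximal operator, again giving a $\log$-type loss that is sublinear in $J$ after the usual $\ell^2$ oscillation bootstrap), the matter reduces to the main term $\sum_j (L_j(\lambda,\beta)\hat f)^\vee$, reorganized over the arithmetic scale $s$ as in \eqref{max-s}.

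For the main term, the key structural point is that on each major box the multiplier $\sum_{j:j^C \gtrsim 2^s} L_{j,s}(\lambda,\beta)$ factors (up to the $\ell^p$-small Weyl-sum multiplier $L_s^2$, handled by Proposition \ref{Weylest}) through the \emph{continuous, $C^1$-in-$\lambda$} oscillatory integrals $H_j(\lambda - A/Q, \beta - B/Q)$. For a genuinely continuous family of multipliers, one controls $\max_{\lambda \in I_i}|F(\lambda) - F(\lambda_i)|$ by the fundamental theorem of calculus: $\int_{I_i} |\partial_\lambda F(\lambda)| \, d\lambda$, and then $\sum_i \| \int_{I_i} |\partial_\lambda F| \|_2^2 \le \| \int_0^1 |\partial_\lambda F| \|_2 \cdot \max_i \| \int_{I_i}|\partial_\lambda F|\|_2$, or more efficiently one invokes the standard long-variation/oscillation inequality for families indexed by a continuous parameter (the $\ell^2$ oscillation estimate of \cite{GRY}, referenced in the next section): a family with a uniformly bounded maximal function and an $L^2$-bounded $\lambda$-derivative localized to $X_j$ (which is exactly the content of Lemma \ref{Lperrors} and Lemma \ref{sobemb}, and of Theorem \ref{keyest} for the analytic piece $L_{j,s}^1$) satisfies an oscillation inequality with a power-saving constant $J^{1/2-\kappa}$ after summing the $\ell^2$ squares. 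Concretely: for each fixed $s$, Theorem \ref{keyest} gives the maximal bound $\lesssim s 2^{2s\rho}$; combining this with the derivative bound $2^{dj}$ on $X_j$ from Lemma \ref{Lperrors} via the Rademacher--Menshov / oscillation argument of \cite{B1},\cite{GRY} yields that the $s$-th piece contributes $\lesssim s\, 2^{2s\rho}\, J^{1/2-\kappa_0}$ to the square root of the oscillation sum, for some absolute $\kappa_0>0$ (the power saving in $J$ comes from the fact that only $O(\log J)$ of the scales $j$ can be ``resonant'' with any given interval $I_i$, the rest being handled by the derivative/smoothing bound). Summing over $s$ using the exponential gain $2^{-\eta s}$ from Proposition \ref{Weylest} (valid once $\rho \ll \eta$) gives the total bound $\lesssim J^{1/2-\kappa}$, and squaring yields the theorem.

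The main obstacle I expect is the bookkeeping in the oscillation/Rademacher--Menshov step: one must show that passing from the maximal inequality (Theorem \ref{keyest}) to the oscillation inequality costs only a \emph{power} $J^{-\kappa}$ and not merely a logarithm, which requires splitting the scales $j$ relative to each interval $I_i = (2^{-dj_{i+1}}, 2^{-dj_i}]$ into the ``frozen'' regime ($j$ far from all $j_i$, where $H_j(\lambda,\cdot)$ is essentially $\lambda$-independent on $I_i$ because $2^{dj}\lambda$ varies little, so the oscillation is controlled by the derivative estimate and a telescoping/Cauchy--Schwarz argument gains a genuine power of the number of scales) and the ``transitional'' regime ($|j - j_i| = O(\log(1/\epsilon))$ for some $i$, of which there are few), where one uses the trivial maximal bound; this is precisely the scheme of \cite[\S 7]{B1}, and the only real work is checking that the truncation parameters introduced in \eqref{Tredef}--\eqref{impconsts} and the multi-frequency structure do not interfere with it — which, as in the earlier sections, they do not, the changes being formal. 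Everything else (transference, the error-term bounds, the Weyl-sum decay) is already in hand from the preceding sections.
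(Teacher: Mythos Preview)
Your overall architecture is right: transfer to $\ell^2(\Z)$, decompose $M_j = L_j + \mathcal E_j$ plus the $\lambda \notin X_j$ piece, dispose of the error and off-major-box pieces by their summability in $j$, reorganize the main term over $s$, and use the Weyl-sum decay from Proposition~\ref{Weylest} for large $s$. That is exactly what the paper does.

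Where your proposal goes wrong is in the mechanism that actually produces the power saving $J^{-\kappa}$ on the main term. You describe it as ``bounded maximal function plus $L^2$-bounded $\lambda$-derivative, via Rademacher--Menshov,'' and you attribute this to \cite{GRY}. That is not the content of \cite{GRY}, and that combination of hypotheses does not by itself yield a power saving in $J$ (Rademacher--Menshov only gives a $\log J$ loss, and a derivative bound of size $2^{dj}$ against intervals $I_i$ of length $\approx 2^{-dj_i}$ gains nothing). The paper's route is different and cleaner: for each fixed small $s$ (specifically $s \le \tfrac{\kappa'}{r_0}\log J$), one applies the triangle inequality over the $|\Gamma_s| \le 2^{2s}$ frequencies, reducing to a \emph{single-frequency} oscillation, and then invokes the $r$-variation estimate of \cite{GRY} (Lemma~\ref{SingfreqVar}) together with the elementary pointwise bound
\[
\Bigl( \sum_i \max_{\lambda \in I_i} |A_\lambda f - A_{\lambda_i} f|^2 \Bigr)^{1/2} \le J^{1/2 - 1/r_0}\, \mathcal V^{r_0}(A_\lambda f : \lambda).
\]
The exponent $1/r_0 > 0$ is the source of the power saving; the frequency loss $2^{2s}$ is harmless because $s$ has been restricted to be $\lesssim \log J$. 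For $s$ above this threshold one simply uses the maximal bound and the exponential decay $2^{-\eta s}$ from Proposition~\ref{Weylest}. Your ``frozen/transitional scale'' dichotomy is closer to Bourgain's original entropy argument and is not what is used here.

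A second, smaller gap: your handling of $\mathcal E_j$ (``$O(1)$ maximal bound, hence sublinear in $J$ after bootstrap'') does not give a power saving. The paper first restricts to $i \gtrsim J^{1-\kappa'}$ (the complementary $i$ contribute $\lesssim J^{1-\kappa'}$ trivially by the $\ell^2$ maximal inequality), at which point $l_i \approx j_i \gtrsim i \gtrsim J^{1-\kappa'}$, and then uses that $\sum_{j \ge l_i} \sup_\lambda |(\mathcal E_j(\lambda,\cdot)\hat f)^\vee|$ has $\ell^2$ norm $\lesssim l_i^{-C} \lesssim J^{-\kappa}$; summing the squares over $i \le J$ then stays below $J^{1-\kappa}$.
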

\begin{remark}
By transference and the $\ell^2$ boundedness of $\mathcal{C}_df$, we may assume without loss of generality that $i \gg_\epsilon J^{1-\kappa'}$ for some $\kappa' > 0$; in particular, we will assume that $l_i \approx j_i$ throughout.
\end{remark}

By transference, it suffices to prove Theorem \ref{oscest} in the special case of the integer model with the shift:
\[  \sum_{1 \leq i \leq J} \| \max_{\lambda \in I_i} |C_\lambda f - C_{\lambda_i} f| \|_{\ell^2}^2 \lesssim J^{1-\kappa} \|f\|_{\ell^2}^2, \]
where $C_\lambda f$ is now given by
\[ \aligned 
C_\lambda f(x) &:= \sum_{j \geq l_i} \sum_{m} f(x-m) \cdot \psi_j(m) e(-\lambda m^d) \\
&\qquad = \left( \sum_{j \geq l_i} M_j(\lambda,\beta) \hat{f}(\beta) \right)^{\vee}(x). \endaligned \]

A major tool in proving these types of oscillatory estimates are the use of \emph{variational} operators, classically used in probability theory to give quantitative information on rates of convergence, and first used in this context by Bourgain in \cite{B3}.
\begin{definition}
For any sequence of scalars $\{ a_\lambda : \lambda\}$, for any $0 < r < \infty$, we define
\[ \mathcal{V}^r(a_\lambda: \lambda) := \sup \left( \sum_{i} |a_{\lambda_i} - a_{\lambda_{i+1}}|^r \right)^{1/r},\]
where the supremum is taken over all \emph{finite} increasing subsequences; the endpoint 
\[ \mathcal{V}^\infty(a_\lambda: \lambda) := \sup_{\lambda,\mu} |a_\lambda - a_{\mu}| \]
is just defined to by the diameter of the set (which is controlled by $\sup_\lambda |a_\lambda|$).
For a collection of operators $\{ A_\lambda : \lambda \}$ for which
\[ \lambda \mapsto A_{\lambda} f(x) \]
is (almost everywhere) continuous, we define
\[ \mathcal{V}^r(A_\lambda f : \lambda )(x) := \mathcal{V}^r(A_\lambda f(x) : \lambda ).\]
\end{definition}

We remark that if $\V^r(a_\lambda) < \infty$ for some $r < \infty$, we automatically have convergence of the sequence $\{ a_\lambda \}$; in particular, norm estimates for $\V^r(A_\lambda f)$ proves almost everywhere pointwise convergence of the $\{ A_\lambda f\}$. The key inequality that we will use is that, for any collection of intervals $\{ I_1,\dots,I_J\}$, and any operators $\{ A_\lambda \}$ indexed by $\lambda$, we may bound pointwise
\begin{equation}\label{OtoV}
\left( \sum_{1 \leq i \leq J}  \max_{\lambda \in I_i} |A_\lambda f - A_{\lambda_i} f| ^2 \right)^{1/2} \leq J^{1/2-1/r} \cdot \V^r(A_\lambda f), \ r \geq 2;
\end{equation}
in particular, $\ell^2$ estimates on $\V^r(A_\lambda f)$ lead to the types of estimates needed to prove Theorem \ref{oscest}.

The key variational estimate that we will need is the following ``single-frequency" estimate for oscillatory integrals, which is a consequence of \cite[Theorem 1.1]{GRY}, an $r$-variational result for $r,p > 2$, the main result of \cite{SW}, a maximal/``$\infty$-variational'' result for $p$ near $1$, and the interpolation argument of \cite[\S 7]{BK}, see \cite[Figure 1]{GRY}.
\begin{lemma}\label{SingfreqVar}
With $\rho_\lambda$ as in \eqref{FULLOP}, we have the following variational estimate: there exists\footnote{In fact, \emph{every} $2 < r_0 < \infty$ satisfies the below estimate} $2 < r_0 < \infty$ so that
\[ \| \V^{r_0}( \rho_\lambda * f : \lambda) \|_{2} \lesssim \| f \|_2.\]
\end{lemma}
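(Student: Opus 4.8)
The plan is to deduce Lemma~\ref{SingfreqVar} from three facts about the (maximally truncated) oscillatory Hilbert transform along $t\mapsto t^d$: a supercritical $r$-variational bound on $L^p$ valid for $r,p>2$, the Stein--Wainger maximal bound on $L^p$ for $p$ near $1$ (read as an $\infty$-variational estimate), and a variation-norm interpolation argument. The first observation is that $\rho_\lambda * f$ \emph{is}, up to the harmless sign change $e(\lambda t^d)$ versus $e(-\lambda t^d)$ and the precise shape of the $\lambda$-dependent truncation at scale $k(\lambda)$, exactly such a maximally truncated oscillatory Hilbert transform; the truncation deletes only the deeply oscillatory scales $k>k(\lambda)$, whose contribution to the variation is negligible by the triangle inequality for $\V^{r}$ together with the geometric-in-$l$ decay of Lemma~\ref{l:4} (equivalently, \cite{GRY} already accommodates such truncations directly).

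The two input estimates are then: first, \cite[Theorem 1.1]{GRY} gives, uniformly in the lower truncation parameter $k_0$,
\[ \| \V^{r}(\rho_\lambda * f : \lambda) \|_p \lesssim_{p,r} \|f\|_p, \qquad 2 < p < \infty,\quad 2 < r < \infty; \]
second, by the main theorem of Stein--Wainger \cite{SW} --- together with its maximally truncated refinement, which is valid because the corresponding continuous operator is $L^p$-bounded, as noted following Lemma~\ref{l:4} --- one has $\|\sup_{0<\lambda\le 1}|\rho_\lambda * f|\|_p \lesssim_p \|f\|_p$ for all $1<p<\infty$, and since $\V^{\infty}(a_\lambda)\le 2\sup_\lambda|a_\lambda|$ this is the endpoint variational estimate $\|\V^{\infty}(\rho_\lambda * f)\|_p \lesssim_p \|f\|_p$, $1<p<\infty$.

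Finally, I would invoke the variation-norm interpolation argument of \cite[\S 7]{BK} (see also \cite[Figure 1]{GRY}): the set of exponent pairs $(1/p,1/r)$ for which $\|\V^{r}(\rho_\lambda f)\|_p \lesssim \|f\|_p$ holds is convex, and by the previous step it contains both the open region $\{1/p<1/2,\ 1/r<1/2\}$ and the segment $\{(1/p,0):1/2\le 1/p<1\}$. Interpolating between a point $(1/p_0,1/r_0)$ with $p_0>2$, $r_0>2$ and a point $(1/p_1,0)$ with $1<p_1<2$, and choosing $\theta\in(0,1)$ with $(1-\theta)/p_0+\theta/p_1=1/2$, yields $\|\V^{r_\theta}(\rho_\lambda f)\|_2 \lesssim \|f\|_2$ with $1/r_\theta=(1-\theta)/r_0$; since $p_0,p_1,r_0$ may be chosen freely, every $r_\theta\in(2,\infty)$ is attained, and --- as $\V^{r}$ is non-increasing in $r$ --- once one admissible $r_0$ works so does every larger one, giving the footnoted strengthening. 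I expect the only genuinely delicate point to be matching the hypotheses of \cite[Theorem 1.1]{GRY} and of the interpolation lemma of \cite[\S 7]{BK} to the specific family $\{\rho_\lambda\}$: checking that the $\lambda$-dependent truncation is benign, and that the endpoint $p=2$ is actually reached (not merely approached) by the interpolation. Both are routine but not entirely automatic.
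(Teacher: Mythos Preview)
Your proposal is correct and follows essentially the same approach as the paper: the paper explicitly describes Lemma~\ref{SingfreqVar} as a consequence of \cite[Theorem 1.1]{GRY} (the $r$-variational bound for $r,p>2$), the Stein--Wainger maximal result \cite{SW} (read as the $\infty$-variational endpoint for $p$ near $1$), and the interpolation argument of \cite[\S 7]{BK}, and then notes that the spatial truncations $k_0\le k\le k(\lambda)$ require only formal modifications of \cite{GRY}. Your write-up spells out the interpolation step in slightly more detail and correctly identifies the truncation as the only nontrivial verification, which matches the paper's remark.
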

In \cite{GRY}, this result was proven without the spatial truncations, 
\[ k_0 \leq k \leq k(\lambda);\]
but the arguments there are sufficiently robust to extend to this setting with only formal modifications.

With these preliminaries in mind, we turn to the proof of Theorem \ref{oscest}, which we present in the following subsection. This argument follows a similar line to the analogous argument of \cite[\S 6]{B3}.

\subsection{Proof of Theorem \ref{oscest}}
There is no loss of generality in restricting to $i \gtrsim J^{1-\kappa'}$ for some $\kappa'  > 0$, by our $L^2$ theory. We bound
\[ \aligned 
&\sup_{I_i} |C_\lambda f - C_{\lambda_i} f| \\
& \qquad \leq \sup_{I_i} \left| \left( \sum_{j \geq l_i} \big( L_j(\lambda,\beta) - L_{j}(\lambda_i,\beta) \big) \hat{f}(\beta) \right)^{\vee} \right| \\
& \qquad \qquad + 2 \sum_{j \geq l_i} \sup_\lambda \left| \left( \mathcal{E}_j(\lambda,\beta) \hat{f}(\beta) \right)^{\vee} \right|. \endaligned\]
By Proposition \ref{t:smooth} and Lemma \ref{sobemb}, the $\ell^2$ norm of the second term on the right is 
\[ \lesssim l_i^{-C} \lesssim j_i^{-C} \lesssim i^{-C} \lesssim J^{-\kappa} \]
for some absolute $\kappa > 0$, so we may discard the sum in $J^{1-\kappa'} \leq i \leq J$ of the error terms.
We now majorize
\[ \aligned 
& \sup_{I_i} \left| \left( \sum_{j \geq l_i} \big( L_j(\lambda,\beta) - L_{j}(\lambda_i,\beta) \big) \hat{f}(\beta) \right)^{\vee} \right| \\
& \qquad \leq \sup_{I_i} \left| \left( \sum_{s : 2^s \lesssim l_i^C} \sum_{j \geq l_i} \big( L_{j,s}(\lambda,\beta) - L_{j,s}(\lambda_i,\beta) \big) \hat{f}(\beta) \right)^{\vee} \right| \\
& \qquad \qquad +
\sup_{I_i} \left| \left( \sum_{s : 2^s \gg l_i^C} \sum_{j^C \gtrsim 2^s} \big( L_{j,s}(\lambda,\beta) - L_{j,s}(\lambda_i,\beta) \big) \hat{f}(\beta) \right)^{\vee} \right|.
\endaligned\]
If we majorize this final term by a constant multiple of
\[ \sum_{s : 2^s \gg l_i^C} \sup_\lambda \left| \left( \sum_{j^C \gtrsim 2^s} L_{j,s}(\lambda,\beta) \hat{f}(\beta) \right)^{\vee} \right|,\]
which has $\ell^2$ norm
\[ \lesssim \sum_{s : 2^s \gg l_i^C} s^2 2^{(\epsilon - \frac{1}{2d^2}) s} \lesssim
l_i^{- \kappa'} \lesssim j_i^{-\kappa'} \lesssim J^{-\kappa},\]
we see that it suffices to prove
\begin{equation}\label{oscgoal}
\sum_{J^{1-\kappa'}}^J \| \sup_{I_i} \left| \left( \sum_{s : 2^s \lesssim l_i^C} \sum_{j \geq l_i} \big( L_{j,s}(\lambda,\beta) - L_{j,s}(\lambda_i,\beta) \big) \hat{f}(\beta) \right)^{\vee} \right\|_{\ell^2}^2 \lesssim J^{1-\kappa}  \|f\|_{\ell^2}^2,
\end{equation}
In fact, by the triangle inequality, Proposition \ref{Weylest}, and Cauchy-Schwartz, it suffices to prove
\begin{equation}\label{oscgoal}
\sum_{s \geq 1} 2^{(\epsilon - 1/d^2)s} \ \times \ \sum_{J^{1-\kappa'} \leq i \leq J : l_i^C \gtrsim 2^s} \| \sup_{I_i} \left| \left( \sum_{j \geq l_i} \big( L^0_{j,s}(\lambda,\beta) - L^0_{j,s}(\lambda_i,\beta) \big) \hat{f}(\beta) \right)^{\vee} \right\|_{\ell^2}^2 \lesssim J^{1- \kappa} \|f\|^2_{\ell^2}.
\end{equation}
By our $\ell^2$ theory, we may assume that $s \leq \frac{\kappa'}{r_0} \log J$, since the contribution above this cut-off is bounded by a constant multiple of
\[ \left( \frac{\kappa'}{r_0} \log J \right) ^4 2^{(\epsilon -1/d^2) \cdot \left( \frac{\kappa'}{r_0} \log J \right)} \times J \times \|f\|_{\ell^2}^2 \lesssim J^{1 - \kappa''} \times \|f\|_{\ell^2}^2. \]

We now need to estimate, for $s \leq \frac{\kappa'}{r_0} \log J$,
\begin{equation}\label{oscsquare}
\left( \sum_{i = J^{1-\kappa'}}^J |T_{s,i} f|^2 \right)^{1/2}
\end{equation}
in $\ell^2$, where  
\[ T_{s,i} f := \sup_{\lambda \in I_i} \left| \left( \sum_{j \geq l_i} \big( L_{j,s}^0(\lambda,\beta) - L_{j,s}^0(\lambda_i,\beta) \big) \hat{f}(\beta) \right)^{\vee} \right| \times \mathbf{1}_{i : l_i^C \gtrsim 2^s}. \]
We are now free to use the fact that we are only dealing with an acceptably small number of frequencies to dominate
\[ \aligned 
\eqref{oscsquare} &\leq 
\sum_{n=1}^{N_s} \left( \sum_{i = J^{1-\kappa'}}^J \sup_{\lambda \in I_i} | \big( {\rho}_\lambda - {\rho}_{\lambda_i} \big) * \left( \chi_s \hat{f}(\cdot + \theta_n) \right)^{\vee} |^2 \right)^{1/2} \\
& \qquad \leq \sum_{n=1}^{N_s} J^{1/2 - 1/r_0} \cdot \V^{r_0} \left( {\rho}_\lambda * \left( \chi_s \hat{f}(\cdot + \theta_n) \right)^{\vee} : \lambda \right),
\endaligned
\]
where $r_0$ is as in Lemma \ref{SingfreqVar}, and $N_s = |\Gamma_s| \leq 2^{2s}$. By Lemma \ref{SingfreqVar} and Lemma \ref{trans}, we may estimate
\[ \| \V^{r_0} \left( {\rho}_\lambda * \left( \chi_s \hat{f}(\cdot + \theta_n) \right)^{\vee} : \lambda \right) \|_{\ell^2} \lesssim \|f\|_{\ell^2},\]
so we may bound
\[ \aligned 
\| \eqref{oscsquare} \|_{\ell^2} &\lesssim \sum_{s \leq \frac{\kappa'}{r_0} \log J} 2^{2s} \cdot J^{1/2 - 1/r_0} \|f\|_{\ell^2} + J^{1/2 - \kappa'} \|f\|_{\ell^2} \\
& \qquad \lesssim J^{1/2 - \kappa} \|f\|_{\ell^2}, \endaligned\]
which completes the proof of convergence.

\section{Appendix: The Proof of Lemma \ref{decomp}}\label{appendix}
The (technical) proof of Lemma \ref{decomp}, reproduced below, will follow from stationary phase considerations. 
\begin{lemma}
For any (large) $N$, one may decompose $G_\lambda = A_\lambda + \sum'_{\pm} B^{\pm}_{\lambda}$, which satisfy the following estimates, independent of $\lambda$:
\[ |A_\lambda*f| \lesssim_N 2^{-lN} M_{HL} \left( \overline{\zeta}(2^{k-l}\cdot) \hat{f} \right)^{\vee} \]
pointwise,
and
\[ \widehat{B^{\pm}_{\lambda}}(\xi) = 2^{-l/2} \cdot e( \pm c_d \lambda^{-1/(d-1)} \xi^{d/(d-1)} ) \cdot m(\xi,\lambda) \cdot \zeta(2^{k-l}\xi),\]
for some $|c_d| \approx_d 1$; in the case where $d$ is odd, we replace $\zeta$ with $\zeta \cdot \mathbf{1}_{\xi < 0}$ throughout (which satisfies all the same differential inequalities as does $\zeta$ itself).
Here
\[ \sup_\lambda |\partial_\xi^j m(\xi,\lambda)| \lesssim_j |\xi|^{-j}, \ j \geq 0, \ \xi \neq 0.\]
In particular, we may decompose
\[ \widehat{B^{\pm}_{\lambda}}(\xi) = 
\widehat{O^{\pm}_\lambda(\xi)} \widehat{ M_\lambda (\xi) },\]
where
\[ \widehat{O^{\pm}_\lambda(\xi)} := 2^{-l/2} \cdot e( \pm c_d \lambda^{-1/(d-1)} \xi^{d/(d-1)} ) \cdot \zeta(2^{k-l}\xi),\]
and
\[ \widehat{ M_\lambda (\xi) } := m(\xi,\lambda) \cdot \overline{\zeta}(2^{k-l}\xi) \]
satisfies
\[ |M_\lambda(x) | \lesssim_N 2^{l-k} (1 + |2^{l-k} x|)^{-N}.\]
Here $\sum'_{\pm}$ means that the ``minus'' term appears only when $d$ is odd.
\end{lemma}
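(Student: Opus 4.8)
The plan is to analyze the oscillatory integral
\[
G_\lambda(x) = 2^{l-k}\int e\bigl(2^{l-k}x\xi - \lambda 2^l \xi^{d/(d-1)}\bigr)\zeta(\xi)\,d\xi
\]
(after the change of variables implicit in \eqref{invF}, so that the relevant phase is $\Psi(\xi) := 2^{l-k}x\xi - \lambda 2^l \xi^{d/(d-1)}$) by the stationary phase method, keeping $\lambda$ in the dyadic range $2^{l-dk}\le \lambda < 2^{l-dk+1}$ so that $\lambda 2^l \approx 2^{2l-dk}$ and $|\xi|\approx 2^{k-l}$ on the support of $\zeta$. First I would record that $\partial_\xi^2(\lambda 2^l\xi^{d/(d-1)})$ has magnitude $\approx_d \lambda 2^l |\xi|^{d/(d-1)-2} \approx 2^l$ on the support, with higher derivatives comparably controlled; this is the content behind Lemma \ref{deriv/decay} and gives the model size $2^{-l/2}$ for the main term.

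The key steps, in order: (1) Split $G_\lambda$ according to whether the spatial frequency $x$ forces the phase $\Psi$ to have a critical point in the support of $\zeta$. The critical point equation $2^{l-k}x = \tfrac{d}{d-1}\lambda 2^l \xi^{1/(d-1)}$ (up to the sign convention \eqref{Fracpower}) has a solution $\xi_c$ with $|\xi_c|\approx 2^{k-l}$ precisely when $|x|\approx 2^{l/d}$ (after tracking the dyadic sizes), so define $A_\lambda$ to be the piece where $x$ lies outside this range, and $\sum'_\pm B^\pm_\lambda$ the piece where it lies inside (the two signs $\pm$ corresponding to the two branches of $\xi^{1/(d-1)}$ when $d$ is odd, while for $d$ even the sign convention pins down a single branch — hence the $\sum'_\pm$). (2) For $A_\lambda$: integrate by parts $N$ times. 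Since $|x|$ is far from $2^{l/d}$, $|\partial_\xi\Psi|\gtrsim \max\{2^{l-k}|x|, 2^l|\xi|^{1/(d-1)}\}$ throughout the support, and each integration by parts gains a factor $2^{-l}$ (using that the Jacobian factors and the derivatives of $\zeta$ are harmless on $|\xi|\approx 2^{k-l}$); this yields the claimed bound $|A_\lambda * f|\lesssim_N 2^{-lN}M_{HL}(\overline\zeta(2^{k-l}\cdot)\hat f)^\vee$ after reinserting the $\overline\zeta$ localization and dominating the decaying kernel by a maximal function. (3) For $B^\pm_\lambda$: apply the stationary phase asymptotic expansion around $\xi_c$. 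The value $\Psi(\xi_c)$ is computed explicitly: substituting $\xi_c \approx (2^{l-k}x/(d\lambda 2^l/(d-1)))^{d-1}$ and simplifying produces a phase of the form $\pm c_d \lambda^{-1/(d-1)}\xi^{d/(d-1)}$ in the new (dual, i.e.\ output) variable — here one must carefully track that after the Fourier inversion, $x$ has been replaced by the frequency variable also called $\xi$; the constant $c_d$ is an explicit rational function of $d$ with $|c_d|\approx_d 1$. The leading amplitude is $(\partial_\xi^2\Psi(\xi_c))^{-1/2}\approx 2^{-l/2}$, and the full asymptotic series, together with the error term, is absorbed into a symbol $m(\xi,\lambda)$ of order zero: $\sup_\lambda|\partial_\xi^j m(\xi,\lambda)|\lesssim_j |\xi|^{-j}$. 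This is exactly the output of a standard stationary phase lemma, which I would quote (and which the paper promises in this appendix). (4) Factor $\widehat{B^\pm_\lambda} = \widehat{O^\pm_\lambda}\cdot\widehat{M_\lambda}$ by peeling off the oscillatory factor with $\zeta$ and assigning $m(\xi,\lambda)\overline\zeta(2^{k-l}\xi)$ to $M_\lambda$; since the latter is a symbol of order zero supported on $|\xi|\approx 2^{k-l}$, its inverse Fourier transform obeys $|M_\lambda(x)|\lesssim_N 2^{l-k}(1+|2^{l-k}x|)^{-N}$ by routine integration by parts.

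The main obstacle I expect is step (3): carrying out the stationary phase expansion uniformly in $\lambda$ over the dyadic block and verifying that the error terms — and the contributions from the region where $\xi$ is near the boundary of $\operatorname{supp}\zeta$ but $\xi_c$ is interior, i.e.\ the transition zone between stationary and non-stationary behavior — genuinely give a symbol $m$ with the stated derivative bounds rather than merely a bounded function. One must be careful that differentiating in $\xi$ (the output frequency) does not interact badly with the $\lambda^{-1/(d-1)}$ factor in the phase; the point is that $\lambda^{-1/(d-1)}\approx 2^{(dk-l)/(d-1)}$ is essentially constant on the block, and the map $t^{-1/(d-1)}\mapsto t$ has bounded Jacobian on $t\approx_d 1$ (as noted in the proof of Theorem \ref{SINGFREQEST}), so the derivative bounds propagate. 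The odd-$d$ case requires the extra bookkeeping that only the branch $\xi<0$ survives, which I would handle by noting $\zeta\cdot\mathbf 1_{\xi<0}$ satisfies the same differential inequalities as $\zeta$ and repeating the argument verbatim on that piece.
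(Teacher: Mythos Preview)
Your setup misidentifies the integral to be analyzed. The formula you wrote,
\[
2^{l-k}\int e\bigl(2^{l-k}x\xi - \lambda 2^l \xi^{d/(d-1)}\bigr)\zeta(\xi)\,d\xi,
\]
is the kernel $K_{\lambda,k}(x)$ from \S\ref{s:SFE}, not $G_\lambda(x)$. Recall from \eqref{invF} that
\[
\widehat{G_\lambda}(\xi) = \int e\bigl(-\lambda t^d - \xi t\bigr)\psi_k(t)\,dt \cdot \zeta(2^{k-l}\xi);
\]
the fractional power $\xi^{d/(d-1)}$ is not present in the definition of $G_\lambda$ --- it is the \emph{output} of the stationary phase computation, namely the value of the phase $-\lambda t^d - \xi t$ at the critical point $t(\xi)$ (defined by $d\lambda t(\xi)^{d-1} = -\xi$). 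Consequently your splitting ``according to whether $x$ forces a critical point in $\xi$'' is not the right decomposition: the lemma asks for a specific formula for $\widehat{B^\pm_\lambda}(\xi)$ as a function of $\xi$, and that comes from splitting the $t$-integral near and away from $t(\xi)$, as the paper does with the cutoff $\Xi_0(t - t(\xi))$.

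Even granting the corrected setup, your step (3) underestimates the main technical difficulty. Writing ``the full asymptotic series, together with the error term, is absorbed into a symbol $m(\xi,\lambda)$ of order zero \dots\ this is exactly the output of a standard stationary phase lemma, which I would quote'' skips precisely the point the appendix is written to address. Standard stationary phase gives an asymptotic in the large parameter $2^l$, but here one needs \emph{uniform symbol bounds in the parameter $\xi$}: each $\partial_\xi$ applied to the amplitude must cost only $|\xi|^{-1}$, not a power of $2^l$. The critical point $t(\xi)$ moves with $\xi$, and differentiating the Taylor coefficients $\partial_t^j\varphi^k(t(\xi),\xi)$ in $\xi$ is delicate. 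The paper handles this by an analytic change of variables $s\mapsto G(s,\xi)$ reducing the phase to a pure quadratic, then proving (via an argument about the inverse $H(u,\xi)$ and its power-series coefficients) that $\partial_\xi^j H = \xi^{-j}\cdot(\text{analytic})$; this is Lemma~\ref{MULTEST} and the unnamed lemma preceding it. That argument, not an off-the-shelf stationary phase expansion, is the heart of the proof.
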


The key point is that the phase 
\[ \eqref{phase} = \varphi^k(t,\xi) =
-2^{-l} \left( \lambda 2^{kd} t^{d} + \xi 2^k t\right) \]
has a \emph{critical point} (possibly two) in $t$ at $|t(\xi)| = |t(\lambda,\xi)| \approx 1$, where $t(\xi)$ is defined via the relationship
\begin{equation}\label{imp} 
d \lambda 2^{k(d-1)} t(\xi)^{d-1} = -\xi.
\end{equation}
In the case where $d$ is even, $t(\xi)$ is uniquely defined,
\[ t(\xi) = - (d\lambda)^{-1/(d-1)} \times 2^{-k} \times \text{sgn}(\xi) |\xi|^{1/(d-1)},\]
otherwise $t(\xi)$ is given by
\[ (d\lambda)^{-1/(d-1)} \times 2^{-k} \times \pm |\xi|^{1/(d-1)},\]
when $\xi < 0$; otherwise the phase has no critical points.

There are only $\leq 2$ solutions to \eqref{imp}, we will only work with the maximal one, as the other can be treated similarly. Note that $t(\xi)$ is an analytic function of $\xi$ for $|\xi| \approx 2^{l-k}$ (and thus $|t(\xi)| \approx 1$).

Differentiating \eqref{imp} with respect to $\xi$ leads to the identity
\[ (d-1) \xi \cdot \partial_\xi t(\xi) = - t(\xi),\]
or
\begin{equation}\label{impderiv}
\partial_\xi t(\xi) = - \frac{t(\xi)}{(d-1) \xi}.
\end{equation}

If we Taylor expand $\varphi^k(t,\xi)$ about $t(\xi)$, we have
\[ \aligned \varphi^k(t,\xi) &= \varphi^k(t(\xi),\xi) + \sum_{j=2}^d \frac{ \partial_t^j \varphi^k(t(\xi),\xi) }{j!} (t-t(\xi))^j \\
& \qquad = -\frac{d+1}{d} 2^{k-l} t(\xi) \cdot \xi + \sum_{j=2}^d \frac{ \partial_t^j \varphi^k(t(\xi),\xi) }{j!} (t-t(\xi))^j \\
& \qquad \qquad = -(d+1)\lambda 2^{kd-l} t(\xi)^d + \sum_{j=2}^d \frac{ \partial_t^j \varphi^k(t(\xi),\xi) }{j!} (t-t(\xi))^j. 
\endaligned
\]
Inserting this into the bracketed expression in \eqref{invF}, we decompose into the inverse Fourier transform of two terms, as in Proposition \ref{decomp}:
\begin{equation}\label{global}
\widehat{A_\lambda}(\xi) := e( - (d+1)\lambda 2^{kd} t(\xi)^d  ) \int e(2^l \sum_{j=2}^d \frac{ \partial_t^j \varphi^k(t(\xi),\xi) }{j!}s^j) \psi(s+t(\xi)) (1-\Xi_0(s)) \ ds \cdot \zeta(2^{k-l} \xi)
\end{equation}
and
\begin{equation}\label{local}
\widehat{B_\lambda^+}(\xi) := e( - (d+1)\lambda 2^{kd} t(\xi)^d  ) \int e(2^l \sum_{j=2}^d \frac{ \partial_t^j \varphi^k(t(\xi),\xi) }{j!}s^j) \psi(s+t(\xi)) \Xi_0(s) \ ds \cdot \zeta(2^{k-l} \xi)
\end{equation}
where $\Xi_0$ is a smooth approximation to the indicator function of a tiny ball near the origin. We will assume that the support of $\Xi_0$ is sufficiently small -- independent of $k,l$. We have the following stationary phase lemma.
\begin{lemma}\label{glo}
For any $\lambda \approx 2^{l-dk}$,
\[ |A_\lambda(x)| \lesssim_N \begin{cases} 2^{-lN -k} &\mbox{if } |x| \lesssim 2^k  \\ 
2^{l-k} (2^{l-k} |x|)^{-N} & \mbox{if } |x| \gg 2^k. \end{cases}\]
\end{lemma}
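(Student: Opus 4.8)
The plan is to prove the bound on $A_\lambda$ directly from its definition \eqref{global} via the principle of non-stationary phase, exploiting that the cutoff $(1-\Xi_0(s))$ removes a neighbourhood of the critical point $s=0$ of the Taylor-expanded phase.

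\textbf{Step 1: Rewrite the kernel.} Taking the inverse Fourier transform of \eqref{global}, write
\[
A_\lambda(x) = \int e\big(x\xi - (d+1)\lambda 2^{kd} t(\xi)^d\big)\,\zeta(2^{k-l}\xi)\left(\int e\big(2^l\Psi(s,\xi)\big)\,\psi(s+t(\xi))(1-\Xi_0(s))\,ds\right)d\xi,
\]
where I abbreviate $\Psi(s,\xi):=\sum_{j=2}^d \frac{\partial_t^j\varphi^k(t(\xi),\xi)}{j!}s^j$. On the support of $\zeta(2^{k-l}\cdot)$ we have $|\xi|\approx 2^{l-k}$, hence $|t(\xi)|\approx_d 1$ and $\partial_t^2\varphi^k(t(\xi),\xi)\approx_d 1$ (the second derivative of the original phase has magnitude $\approx 2^l$, so after the $2^{-l}$ normalization it is $\approx 1$). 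In particular $\partial_s\Psi(s,\xi)\approx s$ for $s$ small, and on $\supp(1-\Xi_0)$, since $\Xi_0$ is a fixed bump around the origin, we have $|\partial_s\Psi(s,\xi)|\gtrsim_d 1$ uniformly. Moreover all higher $s$- and $\xi$-derivatives of $\Psi$ are $O_d(1)$, uniformly in $\lambda\approx 2^{l-dk}$ (here one uses \eqref{impderiv} and the analyticity of $t(\xi)$ on $|\xi|\approx 2^{l-k}$).

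\textbf{Step 2: Inner integral --- non-stationary phase in $s$.} For the inner $s$-integral the phase $2^l\Psi(s,\xi)$ has derivative $\gtrsim 2^l$ in magnitude on the support of the amplitude, which together with the uniform control on higher derivatives from Step 1 lets me integrate by parts $N$ times in $s$ to gain $2^{-lN}$; the amplitude $\psi(s+t(\xi))(1-\Xi_0(s))$ is smooth, compactly supported, and $O(1)$ with all derivatives $O(1)$. This yields an inner integral bounded by $O_N(2^{-lN})$, and the same estimate survives $\xi$-differentiation (differentiating in $\xi$ only brings down factors that are $O_d(1)$ and, after further integration by parts in $s$, still $O_N(2^{-lN})$; crucially $|\partial_\xi^a t(\xi)|\lesssim_a 2^{-l+k}\cdot 2^{(l-k)}\cdot(\dots)$ combine with $|\xi|\approx 2^{l-k}$ to give scale-invariant bounds). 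Thus
\[
A_\lambda(x)=\int e\big(x\xi - (d+1)\lambda 2^{kd}t(\xi)^d\big)\,b(\xi,\lambda)\,d\xi,
\qquad |\partial_\xi^a b(\xi,\lambda)|\lesssim_{a,N} 2^{-lN}(2^{l-k})^{-a}\mathbf 1_{|\xi|\approx 2^{l-k}}.
\]

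\textbf{Step 3: Outer integral --- estimate in $x$.} The phase $\phi(\xi):=x\xi-(d+1)\lambda 2^{kd}t(\xi)^d$ has $\partial_\xi\phi = x - (d+1)\lambda 2^{kd}\,d\,t(\xi)^{d-1}\partial_\xi t(\xi)$; using \eqref{imp}, $d\lambda 2^{k(d-1)}t(\xi)^{d-1}=-\xi$, and \eqref{impderiv}, $\partial_\xi t(\xi)=-t(\xi)/((d-1)\xi)$, one computes $\partial_\xi\phi = x + \tfrac{d+1}{d-1}\cdot 2^k t(\xi)$, which for $|x|\gg 2^k$ has magnitude $\approx |x|$; higher $\xi$-derivatives of $\phi$ are $O(2^k)$. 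Since the amplitude $b$ is supported where $|\xi|\approx 2^{l-k}$ (an interval of length $\approx 2^{l-k}$) and satisfies the symbol bounds from Step 2, the trivial bound gives $|A_\lambda(x)|\lesssim_N 2^{-lN}\cdot 2^{l-k}=2^{-k}2^{-l(N-1)}$ for $|x|\lesssim 2^k$ — which, after relabelling $N$, is the claimed $2^{-lN-k}$. For $|x|\gg 2^k$ I integrate by parts $N$ more times in $\xi$: each step gains $(2^k|x|)^{-1}$ from $1/\partial_\xi\phi$ and costs at most a factor $2^{-(l-k)}$ from differentiating $b$ (and $O(2^k)$ from differentiating $\phi$-correction terms, which is dominated by the gain since $|x|\gg 2^k$), so after $N$ steps I get $2^{-lN}\cdot 2^{l-k}\cdot(2^k|x|)^{-N}\cdot(\text{harmless factors})$. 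Combining the $2^{-lN}$ from the inner integral (used partly to absorb the $2^{l}$ from the $\xi$-interval length and the harmless factors) with $(2^k|x|)^{-N}\le (2^{l-k}|x|)^{-N}$ when $l\ge 0$ — but more honestly one tracks $2^{l-k}(2^{l-k}|x|)^{-N}$ by writing $(2^k|x|)^{-N}=2^{-lN}(2^{l-k}|x|)^{-N}$ and feeding the $2^{-lN}$ into the reservoir — yields exactly the stated decay $2^{l-k}(2^{l-k}|x|)^{-N}$ for $|x|\gg 2^k$.

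\textbf{Main obstacle.} The routine-but-delicate part is bookkeeping the powers of $2^l$ and $2^k$ so that the final statement comes out with the precise weights $2^{-lN-k}$ and $2^{l-k}(2^{l-k}|x|)^{-N}$ rather than merely \emph{some} rapid decay: one has a surplus factor $2^{-lN}$ from the inner stationary-region-avoiding integral which must be carefully rationed between killing the $2^{l-k}$ interval length, converting $(2^k|x|)^{-N}$ into $(2^{l-k}|x|)^{-N}$, and absorbing the $O(2^k)$-type terms produced when differentiating the correction $\tfrac{d+1}{d-1}2^k t(\xi)$ in the outer phase. I expect this to be entirely mechanical once the identities \eqref{imp} and \eqref{impderiv} are in hand, so the genuine mathematical content is confined to Steps 1 and 2 — verifying that $|\partial_s\Psi|\gtrsim_d 1$ away from the origin and that all derivatives are uniformly controlled in $\lambda\approx 2^{l-dk}$.
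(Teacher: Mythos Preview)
Your approach is correct and, for $|x|\lesssim 2^k$, coincides with the paper's: excise the critical point, apply non-stationary phase in $s$, and take the trivial $\xi$-bound. For $|x|\gg 2^k$ the two arguments diverge. The paper makes the change of variables $\xi\mapsto t=t(\xi)$ to rewrite $A_\lambda(x)$ as a two-dimensional oscillatory integral $c_\lambda 2^{l-k}\int\Omega(s,t)\,e(2^l\Phi(x,s,t))\,ds\,dt$ with a clean phase $\Phi(x,s,t)=c_{\lambda,1}(2^{-k}x-\tfrac{d+1}{d}t)t^{d-1}+\sum_{j\ge 2}c_{\lambda,j}t^j s^j$ and a compactly supported $O(1)$ amplitude; since $|\nabla_{s,t}\Phi|\approx 2^{-k}|x|$ throughout the support, one application of $2$D non-stationary phase gives the result with no further bookkeeping. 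Your route---first extract $2^{-lN}$ from the $s$-integral, then integrate by parts in $\xi$---is more elementary in that it avoids the change of variables, but the arithmetic you flag as ``the main obstacle'' is exactly what the paper's substitution is designed to bypass. One small correction to your Step 3: the higher $\xi$-derivatives of $\phi$ are not $O(2^k)$ but rather $|\partial_\xi^a\phi|\lesssim 2^k(2^{l-k})^{1-a}$ (since $|\partial_\xi t(\xi)|\approx(2^{l-k})^{-1}$); this is what you actually need, and combined with $|\partial_\xi\phi|\approx|x|\gg 2^k$ it gives exactly the gain $(2^{l-k}|x|)^{-1}$ per integration by parts, so the conclusion is unaffected.
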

\begin{proof}
Since we have excised the critical point in the support of the integral in \eqref{global}, we have the pointwise estimate,
\[ |\widehat{A_\lambda}| \lesssim_N 2^{-lN} \mathbf{1}_{|\xi| \approx 2^{l-k}},\]
which yields,
\[ |A_\lambda(x)| \lesssim_N 2^{-lN -k};\]
we will use this bound when $|x| \lesssim 2^k$. In the complementary regime, by change of variables, we see that there exist a collection of non-zero constants,
\[ \{ c_\lambda, c'_\lambda, c_{\lambda,1},\dots,c_{\lambda,d}\}, \]
all about $1$ in magnitude, uniformly in $\lambda$ (these constants vary linearly in $\lambda$, and we have normalized our phase appropriately)
so that we may write
\[ A_\lambda(x) = c_\lambda 2^{l-k} \int \Omega(s,t) e(2^l \cdot \Phi(x,s,t)) \ dsdt,\]
where
\[ \Omega(s,t) = (1-\Xi_0)(s) \psi(s+t) \zeta(c'_\lambda t^{d-1} ) t^{d-2} \]
is a nice bump function, and the phase $\Phi(x,s,t)$ is given by
\[ \Phi(x,s,t) := c_{\lambda,1}(2^{-k}x - \frac{d+1}{d} t)t^{d-1} + \sum_{j=2}^d c_{\lambda,j} t^j s^j.\]
But, when $|x| \gg 2^k$, $|\nabla \Phi(x,s,t)| \approx |x 2^{-k}|$ uniformly on the support of $\Omega(s,t)$, so the result follows by the principle of non-stationary phase.
\end{proof}

We now turn to an analysis of $B_\lambda^+$. As per Proposition \ref{decomp}, we will view $B_\lambda^+$ as a product
\begin{equation}\label{osc}
2^{-l/2} \times e(-(d+1) \lambda 2^{kd} t(\xi)^d) \zeta(2^{k-l}\xi)
\end{equation}
and
\begin{equation}\label{mult}
2^{l/2} \times \int e(2^l \sum_{j=2}^d \frac{ \partial_t^j \varphi^k(t(\xi),\xi) }{j!}s^j) \psi(s+t(\xi)) \Xi_0(s) \ ds \cdot \overline{\zeta}(2^{k-l} \xi),
\end{equation}
where we replace $\overline{\zeta}$ by $\overline{\zeta} \mathbf{1}_{\xi < 0}$ if $d$ is odd.

But, \eqref{osc} is precisely
$\widehat{O^+_\lambda}(\xi),$
upon replacing
\[ -(d+1) \lambda 2^{kd} t(\xi)^d = - \frac{d+1}{d} \cdot d^{-1/(d-1)} \times \lambda^{-1/(d-1)} \xi^{d/(d-1)} =: c_d \lambda^{-1/(d-1)} \xi^{d/(d-1)},\]
where $\xi^{d/(d-1)}$ is defined in \eqref{Fracpower}.

We turn to \eqref{mult}, which we need to prove has an appropriately decaying inverse-Fourier transform (adapted to spatial scales $2^{k-l}$).
In particular, we will prove that \eqref{mult} is a dyadic piece of a (regular enough) Calder\'{o}n-Zygmund kernel, and hence its inverse Fourier transform is bounded by an averaging operator at the natural scale:
\begin{lemma}\label{MULTEST}
For any $j \geq 0$, we may estimate
\[ | \partial_\xi^j \eqref{mult} | \lesssim_j |\xi|^{-j} \mathbf{1}_{|\xi| \approx 2^{l-k}},\]
and thus
\[ |\eqref{mult}^{\vee}(x)| \lesssim_N 2^{l-k} (1 + 2^{l-k}|x|)^{-N}.\]
\end{lemma}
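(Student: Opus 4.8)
The goal is a standard (non-)stationary-phase estimate: show that the symbol in \eqref{mult}, call it $n(\xi,\lambda)$, is a Mikhlin-type symbol supported on $|\xi|\approx 2^{l-k}$, and then invoke a routine integration-by-parts argument to convert these symbol bounds into pointwise decay of its inverse Fourier transform at the natural scale $2^{k-l}$. First I would record the structure of the integrand: after the change of variables already performed, \eqref{mult} is
\[ n(\xi,\lambda) = 2^{l/2}\int e\!\left(2^l\sum_{j=2}^d \frac{\partial_t^j\varphi^k(t(\xi),\xi)}{j!}\,s^j\right)\psi(s+t(\xi))\,\Xi_0(s)\,ds\cdot\overline{\zeta}(2^{k-l}\xi),\]
where $|t(\xi)|\approx 1$ is the critical point from \eqref{imp}, analytic in $\xi$ on $|\xi|\approx 2^{l-k}$. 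The leading coefficient is $\tfrac12\partial_t^2\varphi^k(t(\xi),\xi)$, which has magnitude $\approx 2^{-l}\cdot 2^l = 1$ (the second derivative of the phase $2^l\varphi^k$ is $\approx 2^l$, and the prefactor $2^{-l}$ in \eqref{phase} makes $\partial_t^2\varphi^k\approx 1$); hence the exponent is $\approx 2^l s^2$ near $s=0$, a genuine quadratic phase of size $2^l$ on the unit-scale support of $\Xi_0$.

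\textbf{The symbol bounds.} The $j=0$ case is the classical van der Corput / stationary phase estimate: the integral over $s$ is $\lesssim 2^{-l/2}$ (square-root cancellation from a nondegenerate critical point with second derivative $\approx 1$ of a phase of size $2^l$), so $|n(\xi,\lambda)|\lesssim 1$ on $|\xi|\approx 2^{l-k}$. For the derivatives $\partial_\xi^j n$, the key observation is that differentiation in $\xi$ acts on three places: on $\overline{\zeta}(2^{k-l}\xi)$, producing a factor $2^{k-l}\lesssim 2^{-l}|\xi|^{-1}\cdot 2^{l-k}\cdot 2^{k-l} \approx |\xi|^{-1}$ on the support; on $\psi(s+t(\xi))$ via $\partial_\xi t(\xi) = -t(\xi)/((d-1)\xi)$ from \eqref{impderiv}, again gaining $|\xi|^{-1}$; and on the phase, where $\partial_\xi$ of each coefficient $\partial_t^j\varphi^k(t(\xi),\xi)$ contributes a factor $\lesssim |\xi|^{-1}$ (using \eqref{impderiv} and the explicit form of $\varphi^k$), so that after multiplying by the $2^l$ and integrating one still only picks up, after a further integration by parts in $s$ to absorb the extra power of $2^l$, a net gain of $|\xi|^{-1}$ per derivative. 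Iterating, $|\partial_\xi^j n(\xi,\lambda)|\lesssim_j |\xi|^{-j}\mathbf 1_{|\xi|\approx 2^{l-k}}$, uniformly in $\lambda\approx 2^{l-dk}$.

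\textbf{From symbol bounds to kernel decay.} Once the Mikhlin bounds $|\partial_\xi^j n|\lesssim_j |\xi|^{-j}\mathbf 1_{|\xi|\approx 2^{l-k}}$ are in hand, the estimate $|n^\vee(x)|\lesssim_N 2^{l-k}(1+2^{l-k}|x|)^{-N}$ is routine: write $n^\vee(x)=\int n(\xi)e(x\xi)\,d\xi$, note the domain has measure $\approx 2^{l-k}$ giving the bound $\lesssim 2^{l-k}$ directly, and for $|x|\gg 2^{k-l}$ integrate by parts $N$ times, each step costing $|x|^{-1}$ and (by the symbol bounds) gaining at most $|\xi|^{-1}\approx 2^{k-l}$ from hitting $n$, for a total gain of $(2^{k-l}/|x|)^N = (2^{l-k}|x|)^{-N}$. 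Combining the two ranges yields the claimed bound and completes the proof of Lemma \ref{MULTEST}, and hence Lemma \ref{decomp}.

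\textbf{Main obstacle.} The delicate point is the derivative estimate on the symbol: one must carefully track that differentiating the oscillatory factor $e(2^l(\cdots))$ in $\xi$ — which naively produces a factor of size $2^l|\xi|^{-1}$, hopelessly large — is in fact always accompanied by the possibility of integrating by parts once more in $s$ against the quadratic phase of size $2^l$, trading the spurious $2^l$ back for an $O(1)$ (and leaving the genuine $|\xi|^{-1}$ gain). Making this bookkeeping precise — i.e. showing that every $\partial_\xi$ applied to the $s$-integral, after the appropriate integration by parts in $s$, produces exactly one net factor of $|\xi|^{-1}$ with constants uniform in $k,l$ and $\lambda$ — is the technical heart of the argument; the rest is standard.
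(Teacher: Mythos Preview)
Your approach is correct but takes a different route from the paper. You differentiate the symbol directly and plan to absorb the spurious factor $2^l|\xi|^{-1}$ (from $\partial_\xi$ hitting the phase) by integrating by parts in $s$. The paper instead applies a Morse-lemma change of variables $s\mapsto G(s,\xi)$ chosen so that $\sum_{j\ge 2}\tfrac{\partial_t^j\varphi^k(t(\xi),\xi)}{j!}s^j = G^2$; after this substitution the phase becomes $2^l G^2$, \emph{independent of $\xi$}, and all $\xi$-dependence migrates to the amplitude via the analytic inverse $H(u,\xi)$ and $t(\xi)$. An application of Plancherel in $G$ turns the oscillatory factor into the bounded $e(c\,2^{-l}\eta^2)$, and a preparatory lemma establishing $\partial_\xi^j H(u,\xi)=\xi^{-j}H^j(u,\xi)$ with each $H^j$ analytic on the same domain as $H$ then yields the symbol bounds by differentiation under the integral. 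The paper's route thus sidesteps the iterated integration-by-parts bookkeeping entirely; yours is more elementary but, as you acknowledge, requires care in the iteration.

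One point your sketch leaves implicit and should be stated: the integration by parts in $s$ does not ``trade $2^l$ for $O(1)$'' by virtue of the quadratic phase alone, since $\partial_s\Phi$ vanishes at the critical point $s=0$ and one cannot simply divide by it. What saves the argument is that $\partial_\xi\Phi(s,\xi)=\sum_{j\ge 2}(\partial_\xi a_j(\xi))\,s^j$ vanishes to order two at $s=0$, one order higher than $\partial_s\Phi$, so the ratio $\partial_\xi\Phi/\partial_s\Phi$ is analytic near $s=0$ and of size $|\xi|^{-1}s$; only then does the integration by parts produce a smooth new amplitude of size $|\xi|^{-1}$. This cancellation is the actual mechanism behind the trade, and making it explicit is what turns your outline into a proof.
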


Once we prove this lemma, we may conclude Proposition \ref{decomp}.

Here is the strategy:

Roughly speaking, we have
\[ \eqref{mult} = 2^{l/2} \int e(2^l \cdot \Phi(\xi,s) ) \psi(s+t(\xi)) \Xi_0(s) \ ds \cdot \zeta(2^{k-l} \xi),\]
for a phase function $\Phi(\xi,s)$ which has a non-degenerate critical point at $s =0$. The plan is to make a change of variables
\[ G := \Phi(\xi,s)^{1/2},\]
so that we may express the integral as
\[ \int e(2^l G^2 ) \psi'(G,\xi)  \ dG = c \cdot 2^{-l/2} \int e(2^{-l} \eta^2) \cdot \F_G(\psi'(\cdot,\xi))(\eta) \ d\eta,\]
for some other bump $\psi'$, which depends on $t(\xi)$, and hence $\xi$. But, we will need to make sure that our resulting functions are rather \emph{smooth} in $\xi$, so we need to pay special attention to our change of variables function $G$.

To do so, we consider the \emph{analytic} function of $(s,\xi)$
\[ \aligned 
& \sum_{j=2}^d \frac{ \partial_t^j \varphi^k(t(\xi),\xi) }{j!} s^j \\
& \qquad =
\frac{\partial_t^2 \varphi^k(t(\xi),\xi)}{2} s^2 \cdot \left(1 + \sum_{j=3}^d \frac{2}{j!} \cdot \frac{\partial_t^j \varphi^k(t(\xi),\xi) }{\partial_t^2 \varphi^k(t(\xi),\xi)} s^{j-2} \right) \\
& \qquad \qquad \frac{\lambda 2^{dk}(d)(d-1) t(\xi)^{d-2}}{2^{l+1}} s^2 \cdot \left( 1 + \sum_{j=3}^d \frac{\binom{d}{j}}{d(d-1)} t(\xi)^{2-j} s^{j-2} \right) \\
& \qquad \qquad \qquad =: G(s,\xi)^2. \endaligned\]
In particular:
\begin{equation}\label{G}
G(s,\xi) := 
\sqrt{ \frac{\lambda 2^{dk}(d)(d-1)}{2^{l+1}} } t(\xi)^{\frac{d-2}{2}} s \left( 1 + \sum_{j=3}^d \frac{\binom{d}{j}}{d(d-1)} t(\xi)^{2-j} s^{j-2} \right)^{1/2}
\end{equation}
is an analytic function of $(s,\xi)$ that is a local analytic isomorphism. Possibly after decreasing the support of $\Xi_0$, we may assume that $G(s,\xi)$ is in fact an analytic isomorphism for all $|\xi| \approx 2^{l-k}$ (which corresponds to $|t(\xi)| \approx 1$) on the support of $\Xi_0$.

For every $\xi$, let $H(u,\xi)$ be the analytic inverse of $G(s,\xi)$: 
\[ H(G(s,\xi),\xi) = s, \ G(H(u,\xi),\xi) = u, \ \text{ for all } \xi.\]
We claim that $H$ is in fact analytic in $\xi$ as well. Indeed, consider the map
\[ \mathbb{C}^2 \ni (s,\xi) \mapsto (G(s,\xi),\xi) \in \mathbb{C}^2.\]
The Jacobian of this change of variables is $\partial_s G(s,\xi)$ which is bounded away from zero on its domain, since we have assumed that $s$ is sufficiently small on the support of $\Xi_0$. Consequently, $G \otimes \mathbf{1}$ has an analytic inverse, namely $H \otimes \mathbf{1}$, which exhibits $H$ as analytic. The key point for us is that $H(u,\xi)$ and all its derivatives converge on the same domain. We will need the following lemma.
\begin{lemma}
For any $L \geq 0$,
\[ \partial_\xi^j \partial_u^L H(u,\xi) = \frac{1}{\xi^j} H^j_L(u,\xi), \ j \geq 1\]
where $H^j_L(u,\xi)$ is an analytic function with the same radius of convergence of $H(u,\xi)$, whose coefficients are uniquely determined by those of $H(u,\xi)$.
\end{lemma}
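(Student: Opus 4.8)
The key structural observation is that, by \eqref{G}, the function $G(s,\xi)$ depends on $\xi$ only through $t(\xi)$; consequently its local analytic inverse does too, so we may write $H(u,\xi) = \widetilde H(u,t(\xi))$ for an analytic function $\widetilde H(u,\tau)$ convergent on a fixed polydisc in $(u,\tau)$ — one that is independent of $k,l$ once we have arranged, as above (possibly after shrinking $\supp \Xi_0$), that $G(\cdot,\xi)$ is an analytic isomorphism on $\supp \Xi_0$ for every $|\xi| \approx 2^{l-k}$, i.e.\ for every $|t(\xi)| \approx 1$. It therefore suffices to prove the corresponding statement for $\partial_\xi^j\partial_u^L$ of $\widetilde H(u,t(\xi))$, and the plan is a direct induction on $j$ using the single differential identity \eqref{impderiv}, rewritten as $\partial_\xi t(\xi) = -\tfrac{1}{(d-1)\xi}\,t(\xi)$.

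First I would establish the claim that for each $j \geq 0$ there is an analytic function $P_j(u,\tau)$, convergent on the same polydisc as $\widetilde H$, with
\[ \partial_\xi^j \big[ \widetilde H(u,t(\xi)) \big] = \xi^{-j}\, P_j(u,t(\xi)), \]
where $P_0 = \widetilde H$ and $P_{j+1} = -j P_j - \tfrac{1}{d-1}\,\tau\,\partial_\tau P_j$. Indeed, differentiating the displayed identity in $\xi$ and applying \eqref{impderiv} to the inner $t(\xi)$ produces exactly two terms, $-j\,\xi^{-j-1}P_j(u,t(\xi))$ and $\xi^{-j}\,\partial_\tau P_j(u,t(\xi))\cdot\big(-\tfrac{1}{(d-1)\xi}\big)t(\xi)$, which combine to $\xi^{-(j+1)}P_{j+1}(u,t(\xi))$ with $P_{j+1}$ as stated. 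Since termwise $\tau$-differentiation and multiplication by $\tau$ each preserve the polyradius of convergence of a power series, $P_{j+1}$ is again analytic on the same polydisc, closing the induction. Finally, since $u$ and $\xi$ are independent variables, $\partial_u^L$ acts only on the first slot of $P_j$, so
\[ \partial_\xi^j\partial_u^L \big[ \widetilde H(u,t(\xi)) \big] = \xi^{-j}\,\big(\partial_u^L P_j\big)(u,t(\xi)) =: \xi^{-j}\, H^j_L(u,\xi); \]
as $\partial_u^L P_j$ has the polyradius of $P_j$ and $t(\xi)$ is analytic on $|\xi| \approx 2^{l-k}$, the function $H^j_L$ is analytic with the radius of convergence of $H$, and its coefficients are determined by the recursion for the $P_j$'s from $P_0 = \widetilde H$, hence uniquely by those of $H$.

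There is essentially no obstacle beyond bookkeeping; the one point deserving a moment's care is the invariance of the polyradius of convergence under $P \mapsto \tau\,\partial_\tau P$ — equivalently, the fact that $\xi\partial_\xi$, not $\partial_\xi$, is the scale-invariant operator in this problem, which is precisely what forces the power of $\xi^{-1}$ to increase by exactly one at each differentiation rather than by more. Everything else is the chain rule together with the induction above.
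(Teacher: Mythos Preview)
Your proof is correct and follows essentially the same approach as the paper: both arguments rest on the observation that $H$ depends on $\xi$ only through $t(\xi)$, together with the identity \eqref{impderiv}, so that each $\partial_\xi$ contributes exactly one factor of $\xi^{-1}$. The paper carries this out by expanding $H(u,\xi)=\sum b_j(\xi)u^j$ and noting that each $b_j$ is a polynomial in $t(\xi)$, whereas you package the same computation as a clean induction with the explicit recursion $P_{j+1}=-jP_j-\tfrac{1}{d-1}\tau\partial_\tau P_j$; the content is identical.
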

\begin{proof}
To see this, we begin by expanding $G(s,\xi)$ in power series expansion about $s$, so that we have 
\[ G(s,\xi) = \sum_{j=1}^\infty P_j(t(\xi)) s^j \]
for some polynomials $P_j$. But, by the chain rule,
\[ \partial_\xi P_j(t(\xi)) = \tilde{P_j}(t(\xi))/\xi \]
for some other polynomial $\tilde{P_j}$ of the same degree.
If we expand $H(u,\xi)$ in a power series in $u$,
\[ H(u,\xi) = \sum_{j = 1}^\infty b_j(\xi) u^j,\]
then by inspection of coefficients, we see that $b_j(\xi)$
is a polynomial in 
\[ P_1(t(\xi)), \dots, P_j(t(\xi)),\] i.e.\ is itself a polynomial in $t(\xi)$. Consequently,
\[ \partial_\xi b_j(\xi) = \frac{ \tilde{b_j}(\xi) }{\xi},\]
where
\[ \partial_\xi H(u,\xi) = \frac{1}{\xi} \sum_{n=1}^\infty \tilde{b_j}(\xi) u^j \]
has the same radius of convergence as $H(u,\xi)$, and $\tilde{b_j}(\xi)$ are still polynomials in $t(\xi)$, so one may differentiate again and conclude the result for $j \geq 2$, $L =0$ by induction. But, by commutativity of mixed partials, the result for higher $L \geq 1$ follows.
\end{proof}

With these remarks in mind, we return to \eqref{mult},
\[
2^{l/2} \int e(2^l \sum_{j=2}^d \frac{ \partial_t^j \varphi^k(t(\xi),\xi) }{j!}s^j) \psi(s+t(\xi)) \Xi_0(s) \ ds \cdot \zeta(2^{k-l} \xi).
\]
\begin{proof}[Proof of Lemma \ref{MULTEST}]
We begin by making the substitution given by \eqref{G}, to express
\[ \eqref{mult} = 2^{l/2} \int e(2^l G^2) \psi(H(G)+t(\xi)) \Xi_0( H(G) ) H'(G) \ dG \cdot \zeta(2^{k-l} \xi),\]
where we have suppressed the dependence of $H$ on $\xi$, and we let $H'(G) = (\partial_u H)(G,\xi)$ denote the partial derivative in the first variable.
If we apply the Fourier transform (in $G$), we may express the foregoing as 
\begin{align}
\int e(2^{-l \eta^2}) \mathcal{F}_G \Big( \psi(H(\cdot)+t(\xi)) \Xi_0( H(\cdot) ) H'(\cdot) \Big)(\eta) \ d\eta \cdot \zeta(2^{k-l} \xi).
\end{align}
Since $\psi(H(G)+t(\xi)) \Xi_0( H(G) ) H'(G)$ is a Schwartz function (of two variables), we may differentiate $\eqref{mult}$ under the integral to deduce
\[ |\partial_\xi^j \eqref{mult} | \lesssim_j |\xi|^{-j} \mathbf{1}_{|\xi| \approx 2^{l-k}}, \ j \geq 0.\]
as desired.
\end{proof}

\typeout{get arXiv to do 4 passes: Label(s) may have changed. Rerun}

\end{document}